\documentclass[12pt]{article}
 \usepackage{tikz-cd}
\usepackage[margin=1in]{geometry}
\usepackage{amsmath,amssymb,amsfonts,amsthm,mathtools}
\usepackage{enumitem}
\usepackage{amsmath,amssymb}
\usepackage{amscd}  
\usepackage{hyperref}
\usepackage{cleveref}
\usepackage{comment}
\usepackage{mathtools}  
  \DeclareMathAlphabet      {\mathbf}{OT1}{cmr}{bx}{n}
\newtheorem{theorem}{Theorem}[section]
\newtheorem{thm}{Theorem}

\newtheorem{lemma}[theorem]{Lemma}
\newtheorem{corollary}[theorem]{Corollary}

\newtheorem{remark}[theorem]{Remark}
\newtheorem{definition}[theorem]{Definition}

\numberwithin{equation}{section}
\usepackage{microtype}

\newcommand{\R}{\mathbb R}
\newcommand{\Z}{\mathbb Z}
\newcommand{\F}{\mathbb F}
\newcommand{\Q}{\mathbb Q}

\newcommand{\ECH}{\operatorname{ECH}}
\newcommand{\ECC}{\operatorname{ECC}}

\newcommand{\Gr}{\operatorname{Gr}}
\newcommand{\SW}{\operatorname{SW}}
\newcommand{\Sym}{\operatorname{Sym}}
\newcommand{\coker}{\operatorname{coker}}

\newcommand{\degop}{\operatorname{deg}}

\begin{document}

\title{Computing the  $U$ maps  on ECH of prequantization bundles}
\author{Guanheng Chen}
\date{}
\maketitle

\begin{abstract}
The purpose of this paper is to study the $U$-module structure of the embedded contact homology of a class of prequantization circle bundles over closed symplectic surfaces. We assume that the Euler number of each circle bundle is strictly less than the Euler characteristic of the corresponding surface. We give an explicit description of the $U$ map with respect to a basis of filtered ECH defined via cobordism maps. We use the formula for the $U$ map to compute the ECH spectrum and, as an application, the ECH capacities of certain complements of symplectic divisors. Furthermore, we show that the ECH capacities of closed  $4$-manifolds with rational symplectic forms  are infinite.
\end{abstract}
\tableofcontents

\section{Introduction and Main results }

Let $Y$ be a contact  $3$-manifold with a contact form $\lambda$ satisfying $\lambda \wedge d\lambda>0$.  M. Hutchings introduces a homology group $\ECH(Y, \lambda, \Gamma)$ for each  $\Gamma \in H_1(Y, \Z)$ \cite{H1, H4}, together with a degree-$(-2)$ endomorphism
$$
U:\ECH(Y,\lambda,\Gamma)\to 
\ECH(Y,\lambda,\Gamma).
$$
Hutchings  calls the above  homology group  \textbf{embedded contact homology}, abbreviated as ECH.  Using the $U$ map and the action filtration,  he further defines a nondecreasing sequence of numerical invariants 
$$c_1(Y, \lambda)\le c_2(Y, \lambda)\le c_3(Y, \lambda)....  \le \infty$$
called  \textbf{ECH spectrum} \cite{H3}.  Hutchings uses the  ECH spectrum  to define the \textbf{ECH capacities} for  symplectic manifolds $(X, \omega)$,    denoted by $c_k(X, \omega), k\in \Z_{\ge 1}$. 
The ECH capacities  are obstructions to symplectic embedding in the following sense: Suppose we have an embedding $\varphi:   (X_1, \omega_1)  \hookrightarrow  (X_2, \omega_2)$  such that $\varphi^*\omega_2=\omega_1$. Then 
\begin{equation} \label{eq42}
c_k(X_1, \omega_2) \le c_k(X_2, \omega_2)
\end{equation}
for all $k\ge 1$. 
ECH capacities have been successfully applied to a variety of four-dimensional symplectic embedding problems.  For examples,  see \cite{H3,CCFHV,DCG, FR}.

The purpose of this paper is to determine the $U$-module structure of
embedded contact homology for a class of prequantization bundles over
closed surfaces. We recap the definition of  prequantization bundles as follows.

Let $(\Sigma, \omega_{\Sigma}) $ be a closed  symplectic  surface of genus $g$ with integral symplectic form, i.e.,  $[\omega_{\Sigma}]  \in H^2(\Sigma, \Z)$.  Let
\begin{equation*}
       \pi_E: E \longrightarrow \Sigma
       \end{equation*}
be a complex line bundle with $c_1(E) =-[\omega_{\Sigma}]$.  The degree of this line bundle is 
\begin{equation*}
      e=  \langle c_1(E), \Sigma \rangle=-\int_{\Sigma}{\omega_{\Sigma}}<0.
 \end{equation*}
 Fix a Hermitian metric $h$ and a Hermitian connection 1-form $A_{\nabla}$   on $E$ such that $\frac{i}{2\pi} F_{A_{\nabla}} = -\omega_{\Sigma}$, where $ F_{A_{\nabla}} $ is the curvature of $ {A_{\nabla}} $.  The connection  $A_{\nabla}$   induces  a global angular form $\alpha_{E} \in \Omega^1(E \setminus \Sigma, \mathbb{R})$.  Under a unitary trivialization $U_{\alpha} \times \mathbb{C}$,  $\alpha_{E}$ is of the form 
 $$\frac{1}{2\pi} (d\theta  - i A_{\nabla} \vert_{U_{\alpha}} ),$$ 
 where  $d \theta$ is the angular form of $\mathbb{C}$ and $ A_{\nabla} \vert_{U_{\alpha} }$ is an $i\mathbb{R}$-valued $1$-form. Therefore, we have $d\alpha_{E} = \pi_E^*\omega_{\Sigma}$ over $E \setminus\Sigma$.

 Let $$\pi: Y=\{ v\in E \vert  h(v, v) =1\} \to \Sigma$$ be the  unit circle subbundle of $E$.  Set  $\lambda : =2\alpha_{E } \vert_Y$. Then $\lambda \wedge d\lambda \vert_{U_{\alpha} }=  \frac{2}{\pi} d\theta \wedge \omega_{\Sigma} \vert_{U_{\alpha} } >0$ locally.  Therefore,  $(Y, \lambda)$ is  a contact manifold and it is called  the \textbf{prequantization bundle} of $(\Sigma, \omega_{\Sigma})$. The number $e$ is the Euler number of the circle bundle $\pi: Y \to \Sigma. $

The first homology group of $Y$ is
$$H_1(Y;\mathbb \Z)
\cong \Z^{2g}\oplus  \Z/|e|  \Z; $$
see, for example, Lemma 3.7 of \cite{NW}. In this paper, we restrict attention
to classes in the torsion summand. We   fix
$ \Gamma\in\mathbb \Z/|e|  \Z $
and choose its standard representative
$ 0\le \Gamma\le  |e|-1$ throughout.

Based on  the frameworks of D. Farris \cite{Fa},  J. Nelson and M. Weiler \cite{NW}  give a complete description of the ECH of prequantization bundles as $\Z/2\Z$-modules. Specially,   they show that
$$\ECH(Y, \lambda, \Gamma) \cong \bigoplus_{d \ge 0}\Lambda^{d|e|+\Gamma}H_*(\Sigma, \Z/2\Z)$$ 
as  $\Z/2\Z$-modules. 
The first goal of this paper is to upgrade   their  isomorphisms to  $U$-modules isomorphisms under additional assumptions.  
\begin{thm}\label{thm1}
Let $(Y, \lambda)$ be a prequantization bundle over a symplectic surface  $(\Sigma, \omega_{\Sigma})$ with negative Euler number $e$. Assume $e< \chi(\Sigma)$. If $\Gamma =0$ or $\Gamma>2g-2$,  then we have a (ungraded)  $U$-module isomorphism 
\begin{equation*}
\begin{split}
\ECH(Y, \lambda, \Gamma) \cong  \bigoplus_{i= 1}^{2^{2g}}  \mathcal{T}^+,
\end{split}
\end{equation*}
where $g$ is the genus of $\Sigma$ and $\mathcal{T}^+:=\Q[U^{-1}, U]/U\Q[U]$.
\end{thm}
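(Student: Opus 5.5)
We would deduce the $U$-module structure from the action filtration together with the Morse--Bott description of the chain complex. We perturb $\lambda$ in the standard way, using a perfect Morse function $H$ on $\Sigma$. Below each action level $L$ (where $L$ is not a multiple of the fiber period), the generators are orbit sets supported on fibers over the $2g+2$ critical points of $H$. Nelson--Weiler identify filtered ECH in the class $\Gamma$ with $\bigoplus_{d\le d_L}\Lambda^{d|e|+\Gamma}H_*(\Sigma)$. Here $d$ counts the total fiber multiplicity beyond $\Gamma$, and the action is roughly $2\pi(d|e|+\Gamma)$ times the fiber length.

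\emph{Step 1: a cobordism basis.} For each $d$ and each class in $\Lambda^{d|e|+\Gamma}H_*(\Sigma)$, we construct a filtered ECH class via the cobordism map induced by the symplectic disk bundle cobordism, or equivalently a Seiberg--Witten computation on the associated ruled-type cap. The cobordism and direct limit maps will be shown to be compatible with the filtration maps $\ECH^L\to\ECH^{L'}$. As a result, $\ECH(Y,\lambda,\Gamma)$ decomposes as a direct limit over $d$.

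\emph{Step 2: the $U$ map is "shift by one level".} The key claim is this: with respect to this basis, $U$ sends a basis element at level $d$ and grading $k$ to a unique basis element of grading $k-2$. Concretely, write $\Lambda^{*}H_*(\Sigma)=\Lambda^*H_1\otimes\{1,[\mathrm{pt}],\dots\}$. Within a fixed $d$, $U$ acts as multiplication by the point class, i.e. it moves one fiber from over the index-$2$ critical point to over the minimum. Once the index-$2$ fibers are exhausted, $U$ moves to level $d-1$. The ECH index grows by $2$ per added multiplicity of the minimum fiber. To prove this, we count $I=2$ curves through a generic point $z$. The relevant curves are the lift of the Morse flow lines of $H$, namely cylinders over gradient trajectories, plus branched covers. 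We evaluate these via the correspondence between $J$-holomorphic curves in $\R\times Y$ and holomorphic sections/vortices on $\Sigma$. This is where the hypothesis $e<\chi(\Sigma)=2-2g$ enters. It guarantees automatic transversality (the normal bundle degree makes the relevant $\bar\partial$-operators surjective) and excludes unwanted index-$2$ curves with positive genus. The condition $\Gamma=0$ or $\Gamma>2g-2$ ensures that the relevant symmetric products $\Sym^{d|e|+\Gamma}\Sigma$ or Abel--Jacobi fibers behave uniformly. In particular, $U$ is then given by cap product with the point class, and this cap product is surjective in the required range, with no kernel beyond the bottom.

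\emph{Step 3: assemble.} Given Step 2, every element is eventually killed by a power of $U$, and each basis element lies in a unique infinite $U$-chain. The chains are indexed by the bottom elements, i.e. by the classes in $\Lambda^{\Gamma}$ or the minimal level not hit by $U$. We then count these chains. Using $\Lambda^*H_1(\Sigma)\cong(\Z/2)^{2^{2g}}$ graded pieces, exactly $2^{2g}$ chains survive, since each class in $\Lambda^*H_1$ starts exactly one tower. Each chain is a copy of $\mathcal T^+$, which gives the stated ungraded isomorphism. The main obstacle is Step 2: showing that the $I=2$ moduli count through a point is exactly the combinatorial shift, with no extra contributions from multiply covered or higher-genus curves. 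For this we would first try the Seiberg--Witten/Taubes identification of $U$ together with the degree hypothesis $e<\chi(\Sigma)$, which makes vortex moduli spaces smooth of the expected dimension.
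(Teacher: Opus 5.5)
\textbf{There is a genuine gap: Step 2 is unproved and, as stated, false for your own basis.} Step 2 carries the entire theorem.

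First, the mechanism you propose does not work as given. You want to count $I=2$ currents through a point in $\R\times Y$ via a vortex/section correspondence, with $e<\chi(\Sigma)$ supplying automatic transversality and excluding multiple covers and higher-genus curves. That is exactly the direct symplectization count the paper calls very difficult and deliberately avoids. You give no argument for any of these claims, and the hypothesis does not enter that way. It genuinely changes the answer: for $\chi(\Sigma)\le e<0$ Beiner's result gives $c_1(Y,\lambda)=\infty$, so $[\emptyset]\notin\operatorname{Im}U$ and the conclusion fails. Any valid mechanism must therefore produce an $e$-dependent global computation.

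Second, the asserted shape of $U$ is wrong for the cobordism basis of Step 1. Take the basis $\Phi_d(x)$, $x\in\mathcal{B}_{d|e|+\Gamma}$, built from Macdonald's basis of $H^*(\Sym^{d|e|+\Gamma}\Sigma;\Q)$. Equivariance of cobordism maps gives $U\Phi_d(u^kz)=\Phi_d(u^{k+1}z)$ below the top power of $u$. At the top power, however,
\begin{equation*}
U\Phi_d\bigl(u^{d|e|+\Gamma-g}z\bigr)=\Phi_{d-1}(z)+\sum_{k=1}^{g}\frac{(-1)^{k+1}}{k!}\,\Phi_d\bigl(u^{d|e|+\Gamma-g-k+1}\theta^kz\bigr),
\end{equation*}
with further $[\emptyset]$-corrections when $\Gamma=0$ and $d=1$, or when $d=2$ and $e=1-2g$. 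So $U$ does not send basis elements to basis elements. What is true is that $U$ induces $u\wedge$ on $V_d/V_{d-1}\cong H^*(\Sym^{d|e|+\Gamma}\Sigma;\Q)$. The kernel of $u\wedge$ is spanned by the $2^{2g}$ classes $\Pi(P_d(u,\theta)z)$, $z\in\mathcal{E}$, so your claim of ``no kernel beyond the bottom'' is false. The tower count then needs an actual argument: the snake lemma, $V_{d-1}\subset UV_d$ via $U\Phi_d(P_dz)=\Phi_{d-1}(z)$, $\dim\coker U_d=2^{2g}$, and a direct-limit lemma. It cannot be read off from chains.

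\textbf{The missing idea is a way to compute $U$, and to prove Step 1, without counting curves in $\R\times Y$.} The paper pairs against the filling-side cobordism maps $\Psi_m=\ECH(\mathbb{D}E,\omega_-,A_-(m))$.
\begin{itemize}
\item By the composition law, each coefficient $\Psi_m(x'\otimes U\Phi_d(y))$ becomes a Gromov--Taubes invariant $\Gr(X,A_{d,m},x'uy)$ of the ruled surface $X=\mathbb{P}(\underline{\mathbb{C}}\oplus E)$. The Li--Liu/Okonek--Teleman wall-crossing formula evaluates these explicitly.
\item With the Macdonald basis, the same pairing on the classes $\Phi_m(x)$ is block-diagonal and equals $\bigoplus_m q^{m|e|+\Gamma}$, hence nondegenerate.
\item Nondegeneracy shows your cobordism classes are linearly independent. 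A dimension count against the Nelson--Weiler generators then makes them a basis of filtered ECH, which Step 1 leaves unproved.
\item The same pairing shows that the continuation maps between perturbations are injective and preserve this basis.
\end{itemize}

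This is where the hypotheses actually enter:
\begin{itemize}
\item $\Gamma=0$ or $\Gamma>2g-2$, together with $e<\chi(\Sigma)$, gives $m|e|+\Gamma>2g-2$ for all relevant $m$. So Macdonald's presentation and basis are available, and this is also where $\Q$-coefficients are needed.
\item $e<\chi(\Sigma)$ forces $I(A_{d,m})>\deg$ when $d-m\ge 2$, so the off-diagonal invariants vanish.
\item Through the ECH index inequality, $e<\chi(\Sigma)$ also shows that cap-side currents only reach orbit sets with $d(\alpha)\le m$. This places $\Phi_m$ in the correct filtration level.
\end{itemize}
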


\paragraph{Coefficients} The ECH groups, ECH spectrum and ECH capacities  are defiened over   $\mathbb{F}=\Q$    throughout; unless otherwise stated.

The reason why we use rational coefficient is that our argument relies heavily  on a choice of Lefschetz basis of $H^*(\Sym^n\Sigma, \Q)$. Such a basis is no longer valid if we replace the coefficient to  $\F=\Z$ or $\F=\Z/2\Z$ (see Remark \ref{remark4}).  Some further works   should be required if we want change the coefficients to  $\F=\Z$ or $\F=\Z/2\Z$.   The assumptions  $e< \chi(\Sigma)$, and  $\Gamma =0$ or $\Gamma>2g-2$ are also attributed to our requirement of such a basis (see Remark \ref{remark5}).

In the proof of Theorem \ref{thm1}, we obtain an explicit formula for the $U$ map under a basis  of filtered ECH (Lemma \ref{lem7}).  This  formula  for  $U$ map  in the filtered ECH  help us to compute the explicit values of the  ECH spectrum.
\begin{thm} \label{thm3}
Let $(Y, \lambda)$ be a prequantization bundle over a symplectic surface $(\Sigma, \omega_{\Sigma})$ with negative Euler number $e$.  Assume $e<\chi(\Sigma)$.   Then the  $k$-th  value on the ECH spectrum  of $(Y, \lambda)$  is 
\begin{equation*}
c_k(Y, \lambda) =2d|e|, 
\end{equation*}
where  $d \ge 1$  is  the unique integer satisfying  
$$|e|\frac{d(d-1)}{2} +(d-1)(1-g) -g < k\le  |e|\frac{d(d+1)}{2} +d(1-g) -g.$$
\end{thm}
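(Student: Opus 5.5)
We compute $c_k(Y,\lambda)$ directly from its definition. Recall that $c_k(Y,\lambda)$ is the infimum of $L$ such that some class $\sigma\in \ECH^L(Y,\lambda,0)$ satisfies $U^{k}\sigma=[\emptyset]$, the class of the empty orbit set. So only $\Gamma=0$ matters, and Theorem \ref{thm1} applies. Since $\lambda$ is degenerate (Morse–Bott), we work, as in the proof of Theorem \ref{thm1}, with a perturbation $\lambda_\varepsilon$ of $\lambda$ by a perfect Morse function on $\Sigma$. Its orbit sets of action below $L$ consist of covers of fibres over critical points. The generators with total multiplicity $d|e|$ (the ones homologous to $0$) have action approximately $2d|e|$, since each fibre has $\lambda$-action $2$. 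We then let $\varepsilon\to 0$ and use continuity of the spectrum in the contact form. Thus $c_k(Y,\lambda)$ lies in $\{2d|e| : d\ge 0\}$, and it suffices to find, for each $k$, the smallest $d$ for which $U^k$ hits $[\emptyset]$ from the filtered piece of action $\le 2d|e|+\delta$.

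The key input is the explicit formula for $U$ on filtered ECH (Lemma \ref{lem7}). In the basis defined via cobordism maps, the filtration level $2d|e|$ is spanned by the summands $\Lambda^{d'|e|}H_*(\Sigma;\Q)$ for $d'\le d$. These correspond to $H^*(\Sym^{d'|e|}\Sigma;\Q)$ written in a Lefschetz basis. On such a basis, $U$ acts as cap product with the class of a point, or equivalently the Kähler class $\eta$, modulo lower filtration. Each summand $\Lambda^{n}H_*(\Sigma)$, $n=d|e|$, breaks into Lefschetz strings. We then identify which string, started from the top generator of level $d$, reaches $[\emptyset]$ under iterates of $U$, and how many steps this takes.

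The counting then goes as follows. The $\Z$-grading of a level-$d$ generator is given by the ECH index $I$. The degree of the top class at level $d$ equals twice the number of generators in degrees below it along the $\mathcal T^+$ tower containing $[\emptyset]$. We compute $I$ of the top generator $\gamma_d$ relative to $\emptyset$ using the Nelson–Weiler index formula for Morse–Bott orbit sets of total multiplicity $n=d|e|$ on a prequantization bundle. The relevant quantity is
\[
\tfrac12 I(\gamma_d,\emptyset)= |e|\tfrac{d(d+1)}{2}+d(1-g)-g .
\]
Here the $-g$ comes from the bottom of $H^*(\Sym^n\Sigma)$ differing from the top of the tower by the $g$ odd classes. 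Checking this index formula is a routine adjunction-type computation. The tower $\mathcal T^+$ containing $[\emptyset]$ then satisfies $U^k\sigma=[\emptyset]$ with $\sigma$ at action $\le 2d|e|$ exactly when $k\le \tfrac12 I(\gamma_d,\emptyset)$. It also needs $k>\tfrac12 I(\gamma_{d-1},\emptyset)$ for minimality, and this gives the stated interval.

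The main obstacle is showing that the tower through $[\emptyset]$ really climbs through the filtration levels as claimed. Concretely, the element hitting $[\emptyset]$ after $k$ steps must be representable at level $d$ and no lower level. In particular $U$ must not jump across levels in a way that lowers the action needed. I would first try to prove this with Lemma \ref{lem7}: the Lefschetz decomposition $\eta^j\cdot$ (primitive classes) shows that the tower component of $U^k$ restricted to level $d-1$ vanishes for $k$ above $\tfrac12 I(\gamma_{d-1},\emptyset)$. Here the hypothesis $e<\chi(\Sigma)$ is used to ensure $d|e|>2g-2$, so that hard Lefschetz and the Macdonald description apply.
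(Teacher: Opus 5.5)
Your overall frame is sound: reduce to $\Gamma=0$, work with $\lambda_\varepsilon$, use spectrality to force $c_k(Y,\lambda_\varepsilon)=2d|e|(1+O(\varepsilon))$, then find the least filtration level from which $U^k$ reaches $[\emptyset]$. But the step that produces the interval is asserted, and the index you attach to it is wrong. Write $I_d=|e|\tfrac{d(d+1)}{2}+d(1-g)-g$. By the Nelson--Weiler formula (Remark \ref{remark1}), the top generator $e_+^{d|e|}$ at level $d$, and likewise $\Phi^\varepsilon_d(1)$, has $\tfrac12\operatorname{gr}=|e|\tfrac{d(d+1)}{2}+d(1-g)=I_d+g$, not $I_d$. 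Reading the answer off the top generator would therefore shift the interval by $g$. The ``$-g$'' you insert by appeal to ``the $g$ odd classes'' is the actual content of the theorem, and two things must be proved.
(i) \emph{Existence:} some class at level $d$ and half-grading $I_d$ satisfies $U^{I_d}\sigma=[\emptyset]$. The paper takes $\sigma=\tilde\Phi^\varepsilon_d(S'_d)$ with $u^{I_d-d}S'_d=S_d=P_d\cdots P_1$. It uses $U\Phi^\varepsilon_m(P_m z)=\Phi^\varepsilon_{m-1}(z)$ (Lemma \ref{lem18}) together with the $\mathbb{A}(\Sigma)$-module property. A separate check is needed for $e=1-2g$, $d=2$, where $P_1$ is undefined and the extra $g!\,[\emptyset]$ term of Lemma \ref{lem7} enters (Lemma \ref{lem19}).
(ii) \emph{Non-existence:} no $\sigma\in V^\varepsilon_{d-1}$ has $U^k\sigma=[\emptyset]$ for $I_{d-1}<k\le I_{d-1}+g$. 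Grading alone only excludes $k>I_{d-1}+g$ (Lemma \ref{lem22}). The remaining $g$ values are exactly where the classes $\Phi^\varepsilon_{d-1}(x)$ with $\deg x=2g-2j<2g$ sit.

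The tool you propose for (ii), the Lefschetz picture of $U$ as $u\wedge$ modulo lower filtration, cannot decide it. Take $\sigma=\Phi^\varepsilon_{d-1}(x)$ with $\deg x=2g-2j$ and $k=I_{d-1}+j$. Then $U^k\sigma$ has grading $0$, so its components at all levels $m\ge 2$ vanish for degree reasons, and everything hinges on its components at levels $1$ and $0$. When $g\ge 1$, the grading-$0$ part of $V^\varepsilon_1$ contains, besides $[\emptyset]$, classes $\Phi^\varepsilon_1(u^a z)$ with $2a+\deg z=2|e|+2-2g$ (for example $z=\theta$). Whether these appear in $U^k\sigma$ is governed by the cross-level terms $\Phi^\varepsilon_{m-1}(y//u^{m|e|-g})$ accumulated over many iterations, which the associated graded does not see. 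Your sentence ``the tower component vanishes above $\tfrac12 I(\gamma_{d-1},\emptyset)$'' is the claim to be proved, not an argument for it.

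The paper instead reads off these low-level components directly by pairing with the filling-side maps (Lemma \ref{lem20}). Since $\Psi^\varepsilon_1(y\otimes[\emptyset])=0$ (Lemma \ref{lem3}), the identity $U^k\Phi^\varepsilon_{d-1}(x)=[\emptyset]$ gives $\Gr(X,A_{d-1,1},y\,u^k x)=0$ for $y=u^{g-1-b}\theta^b$. Theorem \ref{thm0} turns this into a square linear system, with nonzero determinant, in the coefficients of the $u^{\ast}\theta^a$ part of $x$, so that part vanishes. Pairing with $\Psi^\varepsilon_0(1\otimes\cdot)$ then gives $1=\Gr(X,A_{d-1,0},u^k x_{\mathrm{prim}})=0$, a contradiction. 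To close the gap you need this argument, or an explicit iteration of the full formula of Lemma \ref{lem7} through all lower levels.
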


\begin{remark}
 In our convention, each fiber of $(Y, \lambda)$ has action $2$,  which accounts for the factor of  $2$ in  $c_k(Y, \lambda) =2d|e|$. 
\end{remark}

 In contract to the case $e<\chi(\Sigma) $,  recently  G. Beiner (Corollary 4.7 of \cite{GB})  show that if $\chi(\Sigma)\le e<0$, then $c_k(Y, \lambda) =\infty$.

 An application of Theorem \ref{thm3} is that we can compute the ECH capacities of certain  positive  divisor complements.

Let $(M, \Omega)$ be a   closed symplectic maniold. A codimension two connected symplectic closed submanifold $(\Sigma,  \omega_{\Sigma} = \Omega \vert_{\Sigma}) \subset M$ is called  a \textbf{symplectic hyperplane section} of $M$     if $[\Sigma]$  is Poincare dual (over $\Q$) to $k_{\Sigma}[\Omega]$ for some $k_{\Sigma}>0$.   The number   $k_{\Sigma}>0$ is called the \textbf{degree} of the symplectic hyperplane section.
 
\begin{corollary}\label{corollary1}
Let $(M, \Omega)$ be a closed  symplectic 4-maniold.   Suppose we have     a symplectic hyperplane section   $\Sigma$ with degree $k_{\Sigma}>0$ and the section  satisfies $[\Sigma]^2>2g(\Sigma)-2$.   Then the $k$-th ECH capacity of the  complement  of $\Sigma$ is 
$$c_k(M\setminus \Sigma, \Omega)=\frac{d}{k_{\Sigma}}[\Sigma]^2 =d k_{\Sigma} \int_M {\Omega}\wedge \Omega,$$
where  $d \ge 1$ is  the unique integer satisfying 
$$[\Sigma]^2\frac{d(d-1)}{2} +(d-1)(1-g(\Sigma)) -g(\Sigma) < k\le  [\Sigma]^2\frac{d(d+1)}{2} +d(1-g(\Sigma) ) -g(\Sigma) .$$
 
\end{corollary}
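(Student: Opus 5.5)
We argue by reducing the ECH capacities of $M\setminus\Sigma$ to the ECH spectrum of a prequantization bundle and then applying Theorem \ref{thm3}. The key geometric input is that the complement of a symplectic hyperplane section is a Liouville-type domain whose boundary at infinity is a prequantization bundle. Specifically, let $N(\Sigma)$ be a tubular neighborhood of $\Sigma$, identified via the symplectic neighborhood theorem with a disk bundle of the normal bundle $\nu\Sigma$, whose degree is $[\Sigma]^2$. Since $[\Sigma]$ is Poincaré dual to $k_\Sigma[\Omega]$, the form $\Omega$ is exact on $M\setminus\Sigma$. After rescaling so that $k_\Sigma\Omega$ restricts to an integral class, the primitive can be chosen, near $\Sigma$, to agree with a multiple of the angular form $\alpha_E$ from the introduction, for $E$ the dual of $\nu\Sigma$; this is the standard Giroux/Biran-type normal form. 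Consequently $M\setminus\Sigma$ is exhausted by Liouville domains $X_r = M\setminus N_r(\Sigma)$. Their boundaries $\partial X_r$ are the circle bundle of Euler number $e=-[\Sigma]^2$ with contact forms $\lambda_r = (1-r^2)\cdot c\,\lambda$, where $\lambda$ is the prequantization form and $c$ is an explicit constant.

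The steps are as follows.
\begin{enumerate}
\item Verify the normalization constant $c$, by comparing the action of a fiber (which is $2$ for $\lambda$) with $\int_{\text{fiber}}$ of the Liouville primitive.
\item Since $M\setminus\Sigma$ is the completion-type union of the $X_r$, use the definition of ECH capacities of open manifolds as $\sup_r c_k(X_r)$.
\item For Liouville domains with nondegenerate (after perturbation) boundary, use Hutchings' result that $c_k(X_r)=c_k(\partial X_r,\lambda_r)$. Here one must check that the relevant ECH class lies in the torsion summand $\Gamma=0$ and that the exact filling maps the empty set generator correctly. This is why we need the spectrum computed with $\Gamma=0$, which Theorem \ref{thm3} covers.
\item Scaling gives $c_k(\partial X_r,\lambda_r)=(1-r^2)c\,c_k(Y,\lambda)$.
\item Letting $r\to 0$ and applying Theorem \ref{thm3} gives $c\cdot 2d|e|$ with $|e|=[\Sigma]^2$. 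The hypothesis $[\Sigma]^2>2g-2$ is exactly $e<\chi(\Sigma)$.
\end{enumerate}

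The numerical identity at the end is bookkeeping. From $[\Sigma]=\mathrm{PD}(k_\Sigma[\Omega])$ we get $[\Sigma]^2=k_\Sigma^2\int_M\Omega\wedge\Omega$, so $\frac{d}{k_\Sigma}[\Sigma]^2=dk_\Sigma\int\Omega^2$. The range condition on $d$ is Theorem \ref{thm3}'s with $|e|=[\Sigma]^2$ and $g=g(\Sigma)$.

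The main obstacle is the constant $c$, namely showing $2c\,|e| = [\Sigma]^2/k_\Sigma$, i.e.\ $c=1/(2k_\Sigma)$. I would derive it by computing $\int_{\text{fiber}}$ of the primitive. By Stokes' theorem this equals the $\Omega$-area of a small disk fiber in the limit, corrected by the flux coming from the class relation $[\Omega]=\mathrm{PD}[\Sigma]/k_\Sigma$. The linking of the fiber with $\Sigma$ then forces the fiber action to be $1/k_\Sigma$, so with fiber action $2$ for $\lambda$ we get $c=1/(2k_\Sigma)$.

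A secondary subtlety is that $\Omega$ is only rational, while the bundle construction in the paper needs integral $\omega_\Sigma$. I would handle this by replacing $\Omega$ with a multiple and using the scaling property of capacities.
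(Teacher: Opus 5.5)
Your proposal is correct and follows essentially the paper's route. Lemma \ref{lem23} uses the Biran--Khanevsky normal form $\frac{1}{k_\Sigma}\Omega_{can}$ near $\Sigma$ to exhibit $M\setminus U_\delta$ as an exact filling of $(Y,\frac{e^{-\delta^2}}{2k_\Sigma}\lambda)$, which is your constant $c=1/(2k_\Sigma)$; it then bounds every Liouville domain in $M\setminus\Sigma$ by some $M\setminus U_\delta$ via compactness and monotonicity, lets $\delta\to 0$, and applies Theorem \ref{thm3} with $|e|=[\Sigma]^2$. Your rationality worry is unnecessary: $\int_\Sigma k_\Sigma\Omega=[\Sigma]^2\in\Z$, so $k_\Sigma\Omega|_\Sigma$ is already integral and no rescaling is needed.
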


\begin{remark} \label{remark2}
Let $(\Sigma, \omega_{\Sigma}) \subset (M, \Omega)$ be a symplectic hyperplane section. By Adjunction formula, we  have 
$$k_{\Sigma}\langle c_1(TM), [\Omega] \rangle = \langle c_1(TM), [\Sigma] \rangle = \chi(\Sigma) + [\Sigma]^2= 2-2g(\Sigma) +  [\Sigma]^2. $$
Therefore, the condition   $[\Sigma]^2>2g(\Sigma)-2$ is equivalent to $\langle c_1(TM), [\Omega]  \rangle>0$.  A classification result by Liu\cite{L} and Ohta-Ono \cite{OO} states that  if  $\langle c_1(TM), [\Omega] \rangle>0$, then  $(M, \Omega)$  is   a rational surface, or a  ruled surface, or   a blow-up of one. 
\end{remark}

\paragraph{Examples} 
Here are some examples  that Corollary \ref{corollary1} applies.
\begin{itemize} 
 \item
 Let $(M, \Omega) = (\mathbb{CP}^2, \omega_{\rm FS})$, where  $\omega_{\rm FS}$ is the standard Fubini-Study K\"ahler form.  Fix an integer $n \ge 1$. Let $\Sigma_n$ be a symplectic surface with $[\Sigma_n] =nH $, where $H \in H_2(\mathbb{CP}^2, \Z)$ is the homology  class of a line in $\mathbb{CP}^2$.  Then 
 $\operatorname{PD}([\Sigma_n]) =n[\omega_{\rm FS}]$, i.e., $k_{\Sigma_n} =n$.  
 
By Adjunction formula, we know that 
$$g(\Sigma_n) =\max\{0 , \frac{(n-1)(n-2)}{2} \}.$$
Note that  $(\mathbb{CP}^2, \omega_{FS})$ is a     symplectic manifold with  $\langle c_1(T\mathbb{CP}^2), [\omega_{\rm FS}]  \rangle>0$. Thus, the assumption  $[\Sigma_n]^2>2g(\Sigma_n)-2$  is true by Remark \ref{remark2}. Applying  Corollary \ref{corollary1}, we obtain  
 $$c_k(\mathbb{CP}^2 \setminus \Sigma_n, \omega_{\rm FS}) =  n\left\lceil
\frac{\sqrt{8k+4n^2-12n+17}-3}{2n}
\right\rceil. $$

 \item
 Let $(M, \Omega) = (\mathbb{S}^2 \times \mathbb{S}^2 , \omega_{\mathbb{S}^2} \oplus \omega_{\mathbb{S}^2})$, where  $\mathbb{S}^2$ is a  volume form of $\mathbb{S}^2$ such that $\int_{\mathbb{S}^2} \omega_{\mathbb{S}^2} =1$.  Let  $\Delta$ denote the diagonal of $\mathbb{S}^2 \times \mathbb{S}^2$.  Fix an integer $n \ge 1$. Let $\Sigma_n$ be a symplectic surface with $[\Sigma_n] =n[\Delta] $.  Then 
 $\operatorname{PD}([\Sigma_n]) =n[\Omega]$, i.e., $k_{\Sigma_n} =n$.  
 
By Adjunction formula, we  have 
$$g(\Sigma_n) =(n-1)^2.$$
By Remark \ref{remark2},  the assumption  $[\Sigma_n]^2>2g(\Sigma_n)-2$  holds because $ (\mathbb{S}^2 \times \mathbb{S}^2 , \omega_{\mathbb{S}^2} \oplus \omega_{\mathbb{S}^2})$ is a ruled symplectic manifold.  

Applying  Corollary \ref{corollary1}, we obtain  
 $$c_k( \mathbb{S}^2 \times \mathbb{S}^2  \setminus \Sigma_n,  \omega_{\mathbb{S}^2} \oplus \omega_{\mathbb{S}^2}) =  2n\left\lceil
\frac{\sqrt{k+n^2-2n+2}-1}{n}
\right\rceil. $$
 \end{itemize}

 \begin{corollary} \label{corollary2}
 Let $(M, \Omega) = (\mathbb{CP}^2, \omega_{\rm FS})$ or   $(M, \Omega) = (\mathbb{S}^2 \times \mathbb{S}^2 , \omega_{\mathbb{S}^2} \oplus \omega_{\mathbb{S}^2})$. For $n \ge 1$,  let $\Sigma_n$ be  the corresponding symplectic hyperplane sections  in the above examples.   Then there is no symplecitc embedding 
 $$\varphi: (M \setminus \Sigma_{n_2}, \Omega) \hookrightarrow    (M \setminus \Sigma_{n_1}, \Omega),$$
 provided that $n_2 >n_1$.  
 \end{corollary}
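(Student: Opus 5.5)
The approach is to compare ECH capacities of the two complements and exhibit one index $k$ where the monotonicity \eqref{eq42} fails. Suppose such a $\varphi$ exists. Then $c_k(M\setminus \Sigma_{n_2},\Omega)\le c_k(M\setminus\Sigma_{n_1},\Omega)$ for all $k\ge1$. By Corollary \ref{corollary1}, the values of both sides are given by the explicit formulas in the Examples. The goal is therefore to find some $k$ at which the left side strictly exceeds the right side. The natural candidate is $k=1$, or more generally small $k$.

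For $k=1$, first determine $d$ for each $n$. In the $\mathbb{CP}^2$ case we have $[\Sigma_n]^2=n^2$ and $g=\frac{(n-1)(n-2)}{2}$. The upper bound for $d=1$ is $n^2+1-2g=n^2+1-(n-1)(n-2)=3n-1\ge 2>1$. The lower bound for $d=1$ is $-g\le 0<1$. Hence $d=1$ and
$$c_1(\mathbb{CP}^2\setminus\Sigma_n)=\frac{1}{k_{\Sigma_n}}[\Sigma_n]^2=\frac{n^2}{n}=n.$$
This agrees with the closed formula: setting $k=1$ gives $\sqrt{4n^2-12n+25}$, which lies strictly between $2n-3$ and $2n+3$, so the ceiling equals $1$ and the result is again $n$.

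In the $\mathbb{S}^2\times\mathbb{S}^2$ case we have $[\Sigma_n]^2=2n^2$ and $g=(n-1)^2$. The upper bound for $d=1$ is $2n^2+1-2(n-1)^2=4n-1\ge 3$, so again $d=1$ and
$$c_1=\frac{2n^2}{n}=2n.$$
In both cases $c_1(M\setminus\Sigma_n)$ is strictly increasing in $n$. So $n_2>n_1$ gives $c_1(M\setminus\Sigma_{n_2})>c_1(M\setminus\Sigma_{n_1})$, which contradicts \eqref{eq42}, and no such embedding exists.

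The only point needing care is confirming that $k=1$ lies in the $d=1$ window for all $n\ge1$, including the degenerate cases. For $n=1,2$ in $\mathbb{CP}^2$ we have $g=0$, and the hypothesis $[\Sigma]^2>2g-2$ holds trivially. There is also a subtlety in applying \eqref{eq42}, which is stated for embeddings between domains with capacities defined via Corollary \ref{corollary1}. The embedding $\varphi$ is a symplectic embedding of open manifolds, and the monotonicity of ECH capacities holds for such embeddings by Hutchings' general definition. I will simply invoke it as stated in \eqref{eq42}.
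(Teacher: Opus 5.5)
Your argument is correct and is the same as the paper's: compute $c_1(M\setminus\Sigma_n,\Omega)=n$ for $\mathbb{CP}^2$ and $2n$ for $\mathbb{S}^2\times\mathbb{S}^2$ from Corollary~\ref{corollary1}, then contradict the monotonicity \eqref{eq42}. Your check that $k=1$ lies in the $d=1$ window (upper bounds $3n-1$ and $4n-1$) makes explicit what the paper asserts without comment.

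One small slip is in your redundant closed-formula cross-check: $\sqrt{4n^2-12n+25}>2n-3$ does not by itself force the ceiling to equal $1$ when $n\le 2$. What is needed there is $\sqrt{4n^2-12n+25}>3$, which holds because $n^2-3n+4>0$.
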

 \begin{proof}
 In the above examples, we know that 
 \begin{equation*}
c_1(M \setminus \Sigma_n, \Omega)=
\begin{cases}
 n & \mbox{ if }  M=\mathbb{CP}^2 \\ 
 2n &    \mbox{ if }  M=\mathbb{S}^2 \times \mathbb{S}^2.  
 \end{cases}
\end{equation*}
If such a symplectic embedding  $\varphi$ exists, then  by (\ref{eq42}), $n_2 \le n_1$.   This  contradicts our assumption. 
 \end{proof}

Combining the results of  S. K. Donaldson \cite{Don},   G. Beiner \cite{GB} and Corollary \ref{corollary1}, we  reveal the infiniteness of ECH capacities for closed symplectic manifolds with rational symplectic forms. This result  is  also proved independently and contemporaneously by G. Beiner \cite{GB1} using a similar idea.
 \begin{thm}  \label{thm4}
 Let $(M, \Omega)$ be a 4-dimensional closed    symplectic manifold. Suppose the symplectic form is rational in the sense that     there exists $\tau>0$ such that $\tau [\Omega] \in H^2(M, \Z)$. Then  for any $k \in \Z_{\ge 1}$,  we have 
 $$c_k(M, \Omega )=\infty. $$
 \end{thm}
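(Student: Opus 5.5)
We prove Theorem \ref{thm4} by combining Donaldson's existence of symplectic hyperplane sections, Corollary \ref{corollary1}, and monotonicity of ECH capacities under symplectic embeddings.

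\emph{Step 1: symplectic divisors of arbitrarily large degree.} Rescaling does not change finiteness, since $c_k(M,\tau\Omega)=\tau c_k(M,\Omega)$. We may therefore assume $[\Omega]\in H^2(M,\Z)$. By Donaldson's theorem \cite{Don}, for every sufficiently large integer $N$ there is a connected symplectic surface $\Sigma_N\subset M$ that is Poincar\'e dual to $N[\Omega]$. Thus $\Sigma_N$ is a symplectic hyperplane section of degree $k_{\Sigma_N}=N$, and
$$[\Sigma_N]^2=N^2\int_M\Omega\wedge\Omega.$$

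\emph{Step 2: the two cases of the Euler number.} The normal bundle of $\Sigma_N$ determines a prequantization-type boundary of a neighborhood, with Euler number $-[\Sigma_N]^2$ up to the convention that the complement $M\setminus\Sigma_N$ is a Liouville domain with concave-to-convex boundary. By adjunction,
$$2g(\Sigma_N)-2=[\Sigma_N]^2-\langle c_1(TM),[\Sigma_N]\rangle=N^2\int_M\Omega\wedge\Omega-N\langle c_1(TM),[\Omega]\rangle.$$
Hence $[\Sigma_N]^2>2g-2$ holds exactly when $\langle c_1(TM),[\Omega]\rangle>0$, and this condition does not depend on $N$.

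\emph{Case A: $\langle c_1(TM),[\Omega]\rangle>0$.} Corollary \ref{corollary1} gives
$$c_k(M\setminus\Sigma_N,\Omega)=dN\int_M\Omega\wedge\Omega\ \ge\ N\int_M\Omega\wedge\Omega,$$
because $d\ge1$. Since $M\setminus\Sigma_N\subset M$ is a symplectic embedding, the monotonicity (\ref{eq42}) gives $c_k(M,\Omega)\ge N\int_M\Omega\wedge\Omega$ for every large $N$, so $c_k(M,\Omega)=\infty$.

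\emph{Case B: $\langle c_1(TM),[\Omega]\rangle\le0$.} Then $\chi(\Sigma_N)\le e<0$ in the Beiner regime. Beiner's Corollary 4.7 of \cite{GB} gives $c_k=\infty$ for the boundary prequantization bundle. His argument should likewise give $c_k(M\setminus\Sigma_N,\Omega)=\infty$, because the ECH capacities of a Liouville domain are computed from its boundary's ECH spectrum. Monotonicity (\ref{eq42}) then again gives $c_k(M,\Omega)=\infty$. If necessary, a single $N$ suffices in this case.

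\emph{Main obstacle.} The delicate point is identifying $M\setminus\Sigma_N$, or a slightly shrunk exhausting subdomain of it, with a Liouville domain whose contact boundary is exactly the prequantization bundle $(Y,\lambda)$ over $(\Sigma_N,\Omega|_{\Sigma_N})$ with Euler number $e=-[\Sigma_N]^2$. This requires the symplectic neighborhood theorem together with a Liouville vector field on the complement, which exists because $N[\Omega]$ is dual to $\Sigma_N$. We also need the ECH capacity of the open complement to be the supremum over such subdomains. I expect this identification to be handled the same way it is in the proof of Corollary \ref{corollary1}, so it can be taken from there. One must also check the sign conventions so that the Euler number is negative, which it is since $[\Sigma_N]^2>0$.
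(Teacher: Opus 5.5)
Your proposal is essentially the paper's proof. It uses Donaldson divisors $\Sigma_N$ and Lemma \ref{lem23}, which is exactly the identification you flag as the main obstacle, to get $c_k(M\setminus\Sigma_N,\Omega)=\frac{1}{2N}c_k(Y,\lambda)$; it then applies Corollary \ref{corollary1} when $[\Sigma_N]^2>2g-2$ and Beiner's Corollary 4.7 of \cite{GB} otherwise, and concludes by monotonicity under $M\setminus\Sigma_N\hookrightarrow M$.

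The one point you pass over is that Beiner's result is stated with $\Z/2\Z$ coefficients, while capacities here are over $\Q$. The paper justifies the transfer by noting that his argument uses only formal properties of ECH cobordism maps, and these remain valid over $\Q$ once a homology orientation is fixed.
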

 \begin{proof}
  It suffices to show that $ c_1(M, \Omega )=\infty$ due to the monotonicity  of   the ECH capacities.  Also, by the conformality  of ECH capacities, i,.e.,  $ c_1(M, \tau \Omega )= \tau c_1(M,  \Omega )$,  it suffices to prove the case that $\tau=1$.
 
 By the result of  S. K. Donaldson  \cite{Don}, there exists a symplectic submanifold $\Sigma_n$ with $\operatorname{PD}[\Sigma_n] =n [\Omega]$,  provided that $n>0$ is sufficiently large.   Moreover, by Proposition 39 of \cite{Don}, $\Sigma_n$ is connected for sufficiently large $n$.  We refer readers to Theorem 1.3 of \cite{H4} for these properties. 

In Lemma \ref{lem23}, we show that  
$$c_k(M \setminus \Sigma_n, \Omega) = \frac{1}{2n} c_k(Y, \lambda), $$
where  $(Y, \lambda)$ is a prequantization over  $(\Sigma_n,  n \Omega \vert_{\Sigma_n})$ with Euler number $$e=-[\Sigma_n]^2 =-n^2 \int_{M}\Omega \wedge \Omega <0.$$ We have the following  obvious  symplectic embedding 
$$(M \setminus \Sigma_n, \Omega) \hookrightarrow  (M , \Omega). $$
Therefore, we have
$$ \frac{1}{2n} c_1(Y, \lambda)=  c_1(M \setminus \Sigma_n, \Omega) \le  c_1 (M , \Omega)$$
for   sufficiently large $n$.  

If  $[\Sigma_n]^2\le 2g(\Sigma_n) -2$,    by Corollary 4.7 of \cite{GB}, then  $c_1(Y, \lambda) =\infty $. Hence, $ c_1 (M , \Omega) =\infty$. It is worth to noting that  $c_1(Y, \lambda) =\infty $ in \cite{GB} is stating for $\Z/2\Z$-coefficient, whereas we are using $\Q$-coefficient.   The proof of  Corollary 4.7 of \cite{GB} uses Theorem 3.1 \cite{GB}. The proof of the latter only relies on the formal properties of ECH cobordism maps  (for symplectic cobordism with non-disconnected boundaries).  By fixing a homology orientation, the ECH cobordism maps also can be defined over $\Q$ (see Remark   1.13 of \cite{HT}). Thus, his argument still holds for the $\Q$-coefficient. 

If  $[\Sigma_n]^2>2g(\Sigma_n) -2$,   by Corollary \ref{corollary1}, we have 
\begin{equation*}
c_1(M \setminus \Sigma_n, \Omega)=
\begin{cases}
n \int_{M} \Omega\wedge \Omega, & [\Sigma_n]^2 \ge 2g(\Sigma_n)\\
2 n \int_{M} \Omega\wedge \Omega,   & [\Sigma_n]^2 = 2g(\Sigma_n)-1. 
\end{cases}
\end{equation*}
 Thus,  $c_1 (M , \Omega) \ge n \int_{M} \Omega\wedge \Omega$ for all sufficiently large $n$. Again, we have $ c_1 (M , \Omega) =\infty$. 
  
 \end{proof}

 \paragraph{Relation with  others  results}
As mentioned at the beginning, D. Farris \cite{Fa},  J. Nelson and M. Weiler \cite{NW}  figured out the $\Z/2\Z$-modules structure of  ECH of prequantization bundles for all $e \in \Z_{<0}$ and  $0\le \Gamma \le |e| -1$. Their method should also work for other coefficients. But they haven't provided  further computations on the $U$ maps.   In a similar spirit, Nelson and Weiler also compute the ECH of certain prequantization orbibundles \cite{NW1, NW2}. On the other hand, our method here  only works for $\Q$-coefficient and additional assumptions on $(e, \Gamma)$,  whereas cannot  be applied to $\Z/2\Z$-coefficient  and general  $(e, \Gamma) $ without  nontrivial  modification.

The Seiberg-Witten Floer homology  of the prequantization bundles  over $\mathbb{S}^2$   (as $U$-modules) is  computed by P. Kronheimer, T. Mrowka, P. Ozsv\'{a}th and Z. Szab\'{o} \cite{KMOS}.  The Heegaard Floer homology for the  general case is obtained by combining the results of    K. Park (Theorem 4.1.1 of  \cite{Park})  and   Ozsv\'{a}th-Szab\'{o}  (Theorem 5.6 of \cite{OS}). The (hat) Seiberg-Witten Floer homology, embedded contact homology,  (positive) Heegaard Floer homology  are equivalent as  $U$-modules.  These  profound results are respectively proved by Taubes  \cite{T1, T2, T3, T4, T5}, 
Kutluhan-Lee-Taubes \cite{KLT1, KLT2, KLT3, KLT4, KLT5}, and  Colin-Ghiggini-Honda \cite{CGH1, CGH2, CGH3}. 
Therefore, the result in Theorem \ref{thm1}  can be obtained alternatively by combining the computations of  Ozsv\'{a}th-Szab\'{o} \cite{OS},  Park \cite{Park}, and ``ECH=SWF=HF".  

Although the  
$U$-module structure can in principle be recovered from known Floer-theoretic equivalences, our construction produces a  basis of filtered ECH and an explicit formula for the 
$U$ map  in that basis. This additional filtered information is used to compute the ECH spectrum and ECH capacities.  The  ECH spectrum of prequantization  bundles for some special cases, especially $\Sigma =\mathbb{S}^2$,  is known to be finite and computed by Ferreira-Ramos \cite{FR}, Shibata \cite{Shi} and the author \cite{GHC}.  For  the case that  $\chi(\Sigma)\le e<0$,  G. Beiner \cite{GB} shows  that  $c_k(Y, \lambda) =\infty$.   As mentioned in the proof of Theorem \ref{thm4}, his method also works for $\Q$-coefficient. Thus, combining  Theorem \ref{thm3}, the ECH spectrum  of prequantization bundles over $\Q$ are known for all cases.

Theorem \ref{thm4}  is  also proved independently by G. Beiner \cite{GB1}. He observes that $c_1(Y, \lambda) \ge 2|e|$ for a prequantization bundle with Euler number $e$ (Lemma 3.2 of \cite{GB1}), and the rest of the argument is basically the same as the proof of Theorem \ref{thm4}. The precise value of  $c_1(Y, \lambda) $ is not necessary in his proof. Thus, his argument is broader because there is no need to make an assumption on the coefficient. Beiner's previous work  also provides some irrational closed symplectic 4-manifolds with infinite ECH capacities (Corollary 4.13  of \cite{GB}). It seems  to be plausible to conjecture that   every  closed symplectic 4-manifold has  infinite ECH capacities.

There is an elementary version of ECH capacities for a symplectic $4$-manifold $(X, \omega)$ introduced by Hutchings \cite{H6}, denoted by $c_k^{ \rm Alt}(X, \omega), k \in\Z_{\ge1}$.  These capacities satisfy various properties that are similar to the ECH capacities (Theorem 6 of \cite{H6}).  Moreover,  $c_k^{\rm Alt}(X, \omega) \le  c_k(X, \omega)$ for all $k \ge 1$  (Theorem 1.2  of \cite{H6}). We refer readers to the survey \cite{H7} for more details on the  elementary version of ECH capacities and their applications.  
In contrast to Theorem \ref{thm4},   Lemma 18  of \cite{H6} tells us that there are many examples,  including  $(\mathbb{CP}^2, \omega_{\rm FS}) $  and  $( \mathbb{S}^2 \times \mathbb{S}^2 ,  \omega_{\mathbb{S}^2} \oplus \omega_{\mathbb{S}^2})  $,  
have finite  $c_k^{\rm Alt}(X, \omega) $ for all $k \in \Z_{\ge 1}$.   In Theorem D of \cite{GB1},   G. Beiner  improves  Hutchings's argument and shows that alternative ECH capacities of closed symplectic 4-manifold with $b_2^{+} =1$ are finite.

\paragraph{Idea of proof}

In \cite{Fa, NW},   D. Farris  and  Nelson-Weiler show that after a Morse-Bott perturbation, the Reeb orbits with action less than $L_{\varepsilon} $   are fibers of $Y$ at critical points of a fixed  perfect Morse function $H$.  
Moreover, they show that the differential on the filered ECH  complex $\ECC^{L_{\varepsilon}}(Y, \lambda_{\varepsilon}, \Gamma)$ vanishes.  Thus, a natural basis of  $\ECH^{L_{\varepsilon}}(Y, \lambda_{\varepsilon}, \Gamma)$ is the ECH generators consisting  of the fibers at critial points of $Y$. However, to compute the $U$ map under this basis need to study the holomorphic curves in the symplectization of $Y$ which is  a very difficult problem.

The core idea of this paper is  inspired by the works of  V. Munoz and B. L. Wang \cite{MW},  where they study the ring structures of the Seiberg-Witten cohomology of $S^1 \times \Sigma.$ S. Zhou uses the similar argument to compute the ring structures of  perturbed  Seiberg-Witten cohomology/ periodic Floer homology  of $S^1 \times \Sigma$ \cite{Zhou}, where   periodic Floer homology  is a sister version of ECH, also introduced by Hutchings \cite{H1}. 

 We adapt the argument of  Munoz-Wang \cite{MW} and Zhou \cite{Zhou} to ECH and prequantization bundles setting.  We construct a basis of $\ECH^{L_{\varepsilon}}(Y, \lambda_{\varepsilon}, \Gamma)$ by using ECH cobordism maps instead. This basis can help us to reduce the computations of $U$ map   to computations of the Gromov-Taubes invariants/Seiberg-Witten invariants  of ruled surfaces, where the latter can be computed by the T. J. Li and A. Liu's wall crossing formula for Seiberg-Witten invariants \cite{LL} and its generalization by C. Okonek and A. Teleman \cite{OT, OT2}.  

Specifically,  our  strategy is as follows:
\begin{itemize}
\item
Let $E\to \Sigma$ and $E^{\vee} \to \Sigma$ be the associated line bundles of $Y$ such that $$c_1(E) =-[\omega_{\Sigma}] \mbox{ and }  c_1(E^{\vee}) =[\omega_{\Sigma}]. $$
We endow   $E, E^{\vee}$ with symplectic forms such that the unit disk bundles $X_-= \mathbb{D}E$ and $X_+=\mathbb{D}E^{\vee}$ are respectively strong  symplectic filling of $(Y, \lambda)$ and   strong  symplectic  cap of $(Y, \lambda)$.  Gluing $X_+$ and $X_-$ along their  common boundary,  we obtain a  symplectic ruled surface $\pi_X: X \to \Sigma$. 

\item
Let $\mathbb{A}(\Sigma) : =\mathbb{Q} [u]\otimes \Lambda^* H^1(\Sigma, \Q).  $ 
For $N > 2g-2$, by      I.G. MacDonald's work \cite{Mac}, we can choose suitable elements $\{z_{N, i_N}\} \subset \mathbb{A}(\Sigma)$ such that they forms a basis of $H^*(\Sym^N \Sigma, \Q)$ (see (\ref{eq17})).   If $N=0$, we set $H^*(\Sym^0 \Sigma, \Q)=\Q$ and the basis is just $1$. 
 We take $N=m|e|+\Gamma$.   Under  assumptions $e<\chi(\Sigma)$,  and
 $\Gamma =0$ or $\Gamma>2g-2$, $N>2g-2$ or $N=0$. We show that 
$$\{\Phi_m(x_{m, i_m})\}_{0\le m\le d} \subset  \ECH^{L_{\varepsilon}}(Y, \lambda_{\varepsilon}, \Gamma)$$
by studying  the properties of holomorphic currents in $E^{\vee}$, where $x_{m, i_m } =z_{m|e|+\Gamma, i_{m|e|+\Gamma}}$. 
Here $\Phi_m$ are  the ECH cobordism maps induces by $X_+$ and suitable choices of homology classes in  $H_2(X_+, Y, \Z)$.  Their roles here are similar to the PSS morphisms in Hamiltonian Floer homology.

\item
Set $V$ to be the vector space  spaned  by $\{\Phi_n(x_{n, i_n})\}_{0\le n\le d} $.  We define a pairing   by 
\begin{equation} \label{eq12}
\langle  \Phi_m(x_{m, j_m}),  \Phi_n(x_{n, i_n})   \rangle_{ech}: = \Psi_m(x_{m, j_m} \otimes  \Phi_n(x_{n, i_n}) )  \  \   \   \  (\mbox{Definition   \ref{definition1}})
\end{equation}
where $\Psi_m$ is the ECH cobordism maps induced by $X_-$ and suitable choices of homology classes in  $H_2(X_-, Y, \Z)$.  By the composition law of  ECH cobordism maps, the right hand side  of (\ref{eq12}) is nothing but  the Gromov-Taubes invariants of the ruled surface $X$.  

\item
Using the wall crossing  formula for the  Seiberg-Witten invariants (\cite{LL, OT, OT2}), we 
show that the pairing $\langle \cdot , \cdot\rangle_{ech}$ is  the direct sum of the intersection pairings  on $$\bigoplus_{0 \le m\le d} H^*(\Sym^{m|e|+\Gamma} \Sigma, \Q) \mbox{ (see Lemma \ref{lem3})}.$$   In particular,  the pairing   $\langle \cdot , \cdot\rangle_{ech}$ is nondegenerate. Then $\{\Phi_n(x_{n, i_n}) \}$ is linear independent. By dimension count, we know that   $\{\Phi_n(x_{n, i_n})\} $ forms a basis of $\ECH^{L'_{\varepsilon}}(Y, \lambda_{\varepsilon},\Gamma)$, where $L'_{\varepsilon} \approx \frac{2}{3}L_{\varepsilon}$.

\item
With the above preparations, computing the $U$ map is equivalent to solve   the following linear equations
$$U \Phi_n(x_{n, i_n}) =\sum_{m} \sum_{j_m} a_{m, j_m}\Phi_m(x_{m, j_m}).$$ 
Then we can express $a_{m, j_m}$ in terms of   inversion of the intersection pairings on $H^*(\Sym^{n|e|+\Gamma} \Sigma, \Q)$ and   $ \langle U   \Phi_n(x_{n, i_n}),  \Phi_m(x_{m, j_m}) \rangle_{ech}$. The latter again are  certain Gromov-Taubes invariants of $X$ which are  computable.  Eventually, we obtain a formula of the $U$ map under the basis  $\{\Phi_d (x_{m, i_m}) \}_{0\le m\le d}$ in Lemma \ref{lem7}.
\item
Finally, we combine  Nelson-Weiler's Morse-Bott argument \cite{NW} and the previous computations to show that $\ECH(Y, \lambda, \Gamma)$ consists of $2^{2g}$ $U$-towers.   The computation of the ECH spectrum are more or less a  byproduct  of  the formula for  the $U$ map.

\end{itemize}

\begin{remark} \label{remark5}
The above idea may give  an algorithm to compute the $U$ map for all $\Gamma, e$ and any coefficient.
However, the choice of the basis  $\{x_{n, i_n}\} \subset \mathbb{A}(\Sigma)$ plays a crucial role in the computations. If we choose them arbitrary, then 
the pairing (\ref{eq12}) may not be the direct sum of the intersection pairings  on $\oplus_{n \ge 0} H^*(\Sym^{n|e|+\Gamma} \Sigma, \Q)$. Instead, it should be very complicated.  Moreover, others choice of the basis   increases  dramatically  the level of difficulty to computing  $ \langle U  \Phi_n(x_{n, i_n}),  \Phi_m(x_{m, j_m}) \rangle_{ech}$. For  a given  tuple  $g, e, \Gamma $, one may follow the above route  to compute the $U$ map, however, it seems hard to get a closed formula.  

Such a choice of the basis is  the reason why we assume $e< \chi(\Sigma)  $,  and  $\Gamma=0$ or  $\Gamma>2g-2$, and use $\Q$-coefficient.

\end{remark}

\begin{ack}
The author would like to express his sincere gratitude to Shaoyang Zhou for generously sharing his manuscript \cite{Zhou} and for introducing the idea of  Munoz-Wang \cite{MW}, which greatly inspired this work.
\end{ack}

\section{Preliminaries}

\subsection{Embedded contact homology}
 In this section, we give a quick review of the embedded contact homology. For more details, please refer to  \cite{H4}.

Let $ Y$ be a closed, contact 3-dimensional manifold equipped with a nondegenerate contact
form $\lambda$. The contact structure of $Y$ is denoted by $\xi: =\ker \lambda$.
The \textbf{Reeb vector field} $R$ of $(Y, \lambda)$ is characterized by  conditions $\lambda(R)=1$ and $d\lambda(R, \cdot)=0$.
A \textbf{Reeb orbit} is a smooth map $\gamma: \mathbb{R}_{\tau} / T \mathbb{Z} \to Y $ satisfying the ODE $\partial_{\tau} \gamma =R \circ \gamma$ for some $T>0$. 
 The linearized Reeb flow for time $T$ defines a symplectic linear map
 \begin{equation*}
   P_{\gamma}: (\xi_{\gamma(0)}, d\lambda) \to (\xi_{\gamma(T)}, d\lambda).
 \end{equation*}
 The Reeb orbit $\gamma$ is called \textbf{nondegenerate} if $1$ is not an eigenvalue of $P_{\gamma}$.  A nondegenerate Reeb orbit $\gamma$ is called \textbf{elliptic} if the eigenvalues of $P_{\gamma}$ are on the unit circle, \textbf{positive hyperbolic} if the eigenvalues   are real positive
numbers, and \textbf{negative hyperbolic} if the eigenvalues are real negative numbers. Given $L \in \mathbb{R}$, the contact form $\lambda$ is called \textbf{$L$-nondegenerate} if all Reeb orbits with action less than $L$  are nondegenerate.  We  call $\lambda$  nondegenerate if all   Reeb orbits  are nondegenerate.

 An \textbf{orbit set}    $\alpha=\{(\alpha_i, m_i)\}$ is a finite set of Reeb orbits together with multiplicities, where $\alpha_i$ are distinct, nondegenerate, irreducible embedded Reeb orbits and $m_i$  are positive integers.  An orbit set $\alpha$ is called an \textbf{ECH generator} if $m_i=1$ whenever $\alpha_i$ is a hyperbolic orbit. Define the \textbf{action} of an orbit set $\alpha$ by
 \begin{equation*}
   \mathcal{A}_{\lambda}(\alpha):=\int_{\alpha} \lambda = \sum_{i} m_i \int_{\alpha_i} \lambda.
 \end{equation*}
 Note that  the action $   \int_{\gamma} \lambda$ is just the period $T$ in the definition of  Reeb orbits.

\textbf{Notation. }In the rest of the paper, we often write an orbit set using multiplicative notation $\alpha=\Pi_i \alpha_i^{m_i}$ instead.

\paragraph{The ECH index}
Fix $\Gamma \in H_1(Y, \mathbb{Z})$. Given orbit sets $\alpha=\Pi_i  \alpha_i^{m_i}$ and $\beta=\Pi_j  \beta_j^{n_j}$ on  $Y$ with $[\alpha]=[\beta] =\Gamma$,  let $H_2(Y, \alpha ,\beta)$ be the set of 2-chains $Z$ such that $\partial Z = \sum_i m_i \alpha_i - \sum_j n_j \beta_j$, modulo boundaries of 3-chains.   An element in  $H_2(Y, \alpha ,\beta)$  is called \textbf{a relative homology class}. Note that the set  $H_2(Y, \alpha ,\beta)$ is an affine space over $H_2(Y, \mathbb{Z})$.

Given $Z \in H_2(Y, \alpha ,\beta)$ and  trivializations    $\tau$ of $\xi\vert_{\alpha}$ and $\xi \vert_{\beta}$,  the ECH index is defined by
\begin{equation*}
I(\alpha, \beta, Z) := c_{\tau}(\xi \vert_Z) + Q_{\tau}(Z) + \sum_i  \sum\limits_{p=1}^{m_i} CZ_{\tau}(\alpha_i^{p})- \sum_j \sum\limits_{q=1}^{n_j} CZ_{\tau}(\beta_j^{q}),
\end{equation*}
where $ c_{\tau}(\xi \vert_Z) $ and $Q_{\tau}(Z)$ are respectively the relative Chern number and the relative self-intersection number  (see \cite{H4} and \cite{H2}), and $CZ_{\tau}$ is the Conley-Zehnder index.  The  ECH index $I$  only depends  on orbit sets $\alpha$, $\beta$ and the  relative homology class  $Z$.

\paragraph{$J$-Holomorphic currents}
An almost complex structure on $(\mathbb{R} \times Y, d(e^s \lambda))$ is called \textbf{admissible} if $J$ is $\mathbb{R}$-invariant,  $J(\partial_s) =R$, $J(\xi) =\xi$ and $J \vert_{\xi}$ is  $d \lambda$-compatible. We denote set of admissible almost complex structures by $\mathcal{J}(Y, \lambda)$.

 A \textbf{$J$-holomorphic current} from $\alpha$ to $\beta$ is a formal sum $\mathcal{C} =\sum_a d_a C_a$, where $C_a$ are distinct  irreducible simple holomorphic curves, the $d_a$ are positive integers,  $\mathcal{C}$ is asymptotic to $\alpha$ as a current as $s \to \infty $ and  is asymptotic to $\beta$ as a current as $s \to -\infty.$ Fix $Z \in H_2(Y, \alpha, \beta)$. Let $\mathcal{M}_Y^J(\alpha, \beta, Z)$ denote the moduli space of holomorphic currents with relative homology class $Z$.

  Let $C$ be $J$-holomorphic curve in $\mathbb{R} \times Y $ whose positive ends are asymptotic to $\alpha= \Pi \alpha_i^{m_i}$  and negative ends are asymptotic to $\beta= \Pi_j \beta_j^{n_j}$. For each $i$, let $k_i$ denotes the number of ends of $C$ at $\alpha_i$, and let $\{p_{ia}\}^{k_i}_{a=1}$ denote their multiplicities. Likewise,
for each $j$, let $l_j$ denote the number of ends of $C$ at $\beta_j$, and let $\{q_{jb}\}^{lj}_{b=1}$ denote their multiplicities.  The set of numbers $\{p_{ia}\}^{k_i}_{a=1}$ modulo order is  called the \textbf{partition} of $C$ at $\alpha_i$.
The \textbf{Fredholm index} of $C$ is defined by
\begin{eqnarray*}
\operatorname{ind} (C ):= -\chi(C) + 2 c_{\tau}(\xi \vert_{C} ) + \sum\limits_i \sum\limits_{a=1}^{k_i}CZ_{\tau} (\alpha^{p_{ia}}_i) -  \sum\limits_j \sum\limits_{b=1}^{l_j} CZ_{\tau} (\beta^{q_{jb}}_j).
\end{eqnarray*}
 By \cite{H1, H2} or  Section 3.4 of \cite{H4}, if $C$ is a simple holomorphic curve, then the ECH
index and Fredholm index satisfy the following \textbf{ECH index inequality}
\begin{equation} \label{eq20}
I(C) \ge \operatorname{ind}(C) +2\delta(C),
\end{equation}
where $\delta(C)$ is the count of singularities of $C$ with positive integer weights.
\paragraph{Embedded contact homology} Fix a homology class $\Gamma \in H_1(Y, \mathbb{Z})$. The chain group $\ECC(Y, \lambda, \Gamma)$ is a free $\F$-module generated by the ECH generators with homology class $\Gamma$.  Fix a generic  $J \in \mathcal{J}(Y, \lambda)$. The differential  is defined by
\begin{equation*}
\langle \partial \alpha, \beta \rangle: =\sum_{Z \in H_2(Y, \alpha, \beta), I(Z) =1} \# \left(\mathcal{M}_Y^{J}(\alpha, \beta, Z)/ \mathbb{R}\right).
\end{equation*}
Then $\ECH(Y, \lambda, \Gamma)$ is the homology of this chain complex  $(\ECC(Y, \lambda, \Gamma), \partial)$.

\paragraph{$U$ map and $H_1(Y)$-action}
There are two additional structures on $\ECH(Y, \lambda, \Gamma)$ called \textbf{$U$ map} and \textbf{$H_1(Y)$-action}. What follows  paraphrases  Section 1.a of \cite{T5}. Also see Section 2.5 \cite{HT1}.  

The $U$ maps is defined as follows. Fix a point $z\in Y$.  Let $\mathcal{M}_Y^J(\alpha, \beta)\langle z \rangle$ denote the moduli space  of holomorphic currents  passing through $(0, z) \in \mathbb{R} \times Y$. 

The $U$ map is defined by 
\begin{equation*}
\langle U \alpha, \beta \rangle: =\sum_{Z \in H_2(Y, \alpha, \beta), I(Z) =2} \# \left(\mathcal{M}_Y^{J}(\alpha, \beta, Z)\langle z\rangle\right).
\end{equation*}
Then $U$ is a chain map and descends to a degree $-2$ map
$$U: \ECH_*(Y, \lambda, \Gamma) \to \ECH_{*-2}(Y, \lambda, \Gamma) $$ on homology. Moreover, the $U$ map is independent of the choice of $z$.  

Let $\eta \in H_1(Y, \Z)/\rm{Torsion}$. Let $\gamma$ be a 1-cycle representing $\eta$.  Let   $\mathcal{M}_Y^J(\alpha, \beta)\langle \gamma \rangle$ denote the moduli space  of holomorphic currents  passing through $(0, \gamma) \in \mathbb{R} \times Y$. 
Define 
\begin{equation*}
\langle \eta \cdot  \alpha, \beta \rangle: =\sum_{Z \in H_2(Y, \alpha, \beta), I(Z) =1} \# \left(\mathcal{M}_Y^{J}(\alpha, \beta, Z)\langle \gamma \rangle\right).
\end{equation*}
$\eta \cdot $ is a chain map and its desecends to a degree $-1$ map $$\eta: \ECH_*(Y, \lambda, \Gamma) \to \ECH_{*-1}(Y, \lambda, \Gamma) $$ on homology. Moreover,  $\eta \cdot  $ is independent of the choice of  the $1$-cycle.

Let $\mathbb{A}(Y): = \mathbb{F}[U]\otimes \Lambda^*H_1(Y, \F) $. One also can show that the $U$ map and the $H_1(Y, \F)$-action  commute. Thus,  we have an $\mathbb{A}(Y)$-action on  $ \ECH_*(Y, \lambda, \Gamma) .$

\paragraph{Filtered ECH, ECH spectrum and ECH capacities}
Given $L >0$, define $\ECC^L(Y, \lambda, \Gamma)$ to be a submodule generated by the ECH generators with $\mathcal{A}_{\lambda}<L$. Note that the differential $\partial$ decreases the action. Therefore,   $\ECC^L(Y, \lambda, \Gamma)$ is a subcomplex and its homology is well defined, denoted by $\ECH^L(Y, \lambda, \Gamma)$. The group  $\ECH^L(Y, \lambda, \Gamma)$ is called \textbf{filtered ECH}.  For $L<L'$, the inclusion induces a homomorphism
\begin{equation*}
i^{L, L'} : \ECH^L(Y, \lambda, \Gamma) \to \ECH^{L'}(Y, \lambda, \Gamma).
\end{equation*}
We write $i^L: =i^{L, \infty}$.

Suppose that $c_1(\xi)$ is torsion. Then $I(\alpha, \beta, Z)$ is independent of the choice of $Z$ for
null-homologous orbit sets $\alpha $ and $ \beta$ (see Proposition 1.6  of \cite{H1}). Hence, we write it as  $I(\alpha, \beta)$. For any orbit set $\alpha$ with $[\alpha]=0$, define its \textbf{grading} by
\begin{equation} 
\operatorname{gr}(\alpha):= I(\alpha, \emptyset).
\end{equation}
This gives a $\mathbb{Z}$ grading on $\ECH(Y, \lambda, 0)$. Note that  the  $U$ map is a degree $-2$ map with respect to  this grading.
\begin{remark}
In the case that $Y$ is a prequantization bundle, by Lemma 3.11 of \cite{NW}, $c_1(\xi)$ is torsion. Therefore,     the grading (\ref{eq10}) on $\ECH_*(Y, \lambda, 0)$ is well defined.
\end{remark}

There is  a canonical element $[\emptyset] \in  \ECH(Y, \lambda, 0)$ which is represented by the empty orbit set.  The class $[\emptyset] $  is  called  the \textbf{contact invariant.} We remark that $[\emptyset] \ne 0$ if $(Y, \lambda)$ admits a symplectic filling.  So the  contact invariant of the prequantization bundle $\pi: Y \to \Sigma$ is nonzero.

Assume that $\lambda$ is nondegenerate. For $k \in \mathbb{Z}_{\ge 1}$, the \textbf{$k$-th value on ECH spectrum}
is defined by
\begin{equation*}
c_k(Y, \lambda) : = \inf\{L \in \mathbb{R}\vert     \exists   \sigma  \in \ECH^L(Y, \lambda, 0) \mbox{ such that } U^{k}   (\sigma)= [\emptyset]\}.
\end{equation*}
If $\lambda$ is degenerate, define $c_k(Y, \lambda) : =  \lim_{n \to \infty} c_k(Y, f_n\lambda), $ where $f_n:Y\to \mathbb{R}_{>0}$  is a sequence of  smooth  functions such that $f_n\lambda$ is nondegenerate and $f_n$  converges to $1$ in $C^0$ topology.

Let $(X, \omega)$ be an exact filling of a contact manifold $(Y, \lambda)$. The \textbf{$k$-th ECH capacity of $(X, \omega)$ } is defined by 
\begin{equation*}
c_k(X, \omega) :=c_k(Y, \lambda). 
\end{equation*}
For a general symplectic manifold $(X, \omega)$, the \textbf{$k$-th ECH capacity of $(X, \omega)$ } is 
\begin{equation*}
c_k(X, \omega) :=\sup\{ c_k(X', \omega') :  (X', \omega') \hookrightarrow (X, \omega),  \mbox{ $(X', \omega')$ is a Liouville domain} \}.
\end{equation*}
 
\paragraph{ECH cobordism maps}
Now we review  the ECH cobordism maps induced by symplectic cobordisms. To begin with, let us  recap  some concepts.

 A \textbf{strong symplectic cobordism} from $(Y_+, \lambda_+)$ to $(Y_-, \lambda_-)$
is a compact symplectic manifold $(X,\omega) $ such that $\partial X =Y_+ \bigsqcup(-Y_-)$ and $\omega(V_{\pm}, \cdot) =\lambda_{\pm}$ along $Y_{\pm}$, where $V_{\pm}$ are Liouville vector fields defined near $Y_{\pm}$.  If $Y_+ =\emptyset$, then $(X, \omega)$ is called a \textbf{strong symplectic cap} of $(Y_-, \lambda_-)$.  If $Y_- =\emptyset$, then $(X, \omega)$ is called a \textbf{strong symplectic filling} of $(Y_+, \lambda_+)$.

Using $V_{\pm}$, we can identify collar neighborhoods of $Y_{\pm}$ symplectically as $((-\delta, 0] \times Y_+, d(e^s \lambda_+) )$ and  $([0,\delta) \times Y_-, d(e^s \lambda_-) )$. We glue $(X,\omega)$ with cylindrical ends $([0, \infty) \times Y_+, d(e^s \lambda_+))$ and
$(-\infty, 0]\times Y_-, d(e^s\lambda_-))$ along $Y_{\pm}$ to obtain a symplectic manifold $(\widehat{X},\hat{\omega})$, which we call
the \textbf{symplectic completion} $(X,\omega)$.  

An almost complex structure $J$ on $\widehat{X}$ is called \textbf{cobordism admissible} if $J \vert_{\mathbb{R}_{\pm} \times Y_{\pm}}$ are admissible and $J$ is compactible with $\hat{\omega}.$ 
Given a cobordism admissible almost complex structure, $J$-holomorphic currents  and their ECH index, Fredholm index   are  defined similarily in the cobordism setting (see \cite{H2}). Furthermore, the ECH index inequality (\ref{eq20}) still holds. 

Let  $\mathcal{M}^J_X(\alpha_+, \alpha_-)$ denote the moduli space of holomorphic currents in $\widehat{X}$. 
A \textbf{broken holomorphic current}  is a chain of holomorphic currents $\{\mathcal{C}_{N_+}, ...,\mathcal{C}_0,..., \mathcal{C}_{-N_-}\}$ such that 
\begin{itemize}
\item
$\mathcal{C}_k \in \mathcal{M}_{Y_+}^{J_+}(\alpha_+^{k}, \alpha_+^{k-1})$ for $1\le k\le N_+$ and $\alpha_+= \alpha_+^{N_+}$.
\item 
$\mathcal{C}_0 \in \mathcal{M}^J_X(\alpha_+^{0}, \alpha_-^{0}) $
\item
$\mathcal{C}_k \in \mathcal{M}_{Y_-}^{J_-}(\alpha_-^{k+1}, \alpha_-^{k})$ for $-N_-\le k\le -1$ and $\alpha_-= \alpha_-^{-N_-} $.
\item
For $k\ne 0$, $\mathcal{C}_k$ is not $\mathbb{R}$-invariant (as current). 
\end{itemize}
Let $\overline{\mathcal{M}^J_X}(\alpha_+, \alpha_-)$ denote the moduli space of broken holomorphic currents from $\alpha_+$ to $\alpha_-$. 

Fix a relative class $A \in H_2(X, \partial X, \mathbb{Z})$ such that $\partial A =\Gamma_+ -\Gamma_-$.   Let $\rho: H_2(X,\partial X, \mathbb{Z}) \to \mathbb{R}$ be  a homomorphism defined by
 \begin{equation}\label{eq21}
\rho(A) : =\int_A \omega-\int_{\Gamma_+} \lambda_+ + \int_{\Gamma_-} \lambda_-
\end{equation}
One can show that $\rho$ is well defined by Stokes' theorem.

The ECH cobordism maps  package   for strong symplectic cobordisms is summarized in the following theorem. 
\begin{theorem}[Cf. Hutchings-Taubes \cite{HT}] \label{thm2}
Let $(Y_{\pm}, \lambda_{\pm})$ be connected   contact $3$-manifolds.  Let $(X, \omega)$ be a strong symplectic cobordism from $(Y_+, \lambda_+)$ to $(Y_-, \lambda_-)$. Let $\mathbb{A}(X):=\F[U]\otimes \Lambda^*H_1(X, \F)$, where $\F$ is a field.  Then there are homomorphisms
\begin{equation*}
\begin{split}
&\ECH^L(X, \omega, A): \mathbb{A}(X)\otimes \ECH^L(Y_+, \lambda_+, \Gamma_+) \to \ECH^{L+ \rho(A)}(Y_-, \lambda_-, \Gamma_-),\\
&\ECH(X, \omega, A): \mathbb{A}(X)\otimes \ECH(Y_+, \lambda_+, \Gamma_+) \to \ECH(Y_-, \lambda_-, \Gamma_-)\\
\end{split}
\end{equation*}
satisfying  the following properties:
\begin{enumerate}
\item (Composition law)
Let $(X_+, \omega_+)$ be a strong  symplectic cobordism from $(Y_+, \lambda_+)$ to $(Y, \lambda)$ and $(X_-, \omega_-)$ be a strong  symplectic cobordism from $(Y, \lambda)$ to $(Y_-, \lambda_-)$.  Gluing  $(X_+, \omega_+)$  and $(X_-, \omega_-)$ along $(Y, \lambda)$, we obtain a strong symplectic cobordism from $(Y_+, \lambda_+)$ to $(Y_-, \lambda_-)$. Let $\eta_{\pm} \in \mathbb{A}(X_{\pm})$. Then  
\begin{equation*} 
\begin{split}
        &\ECH(X_-, \omega_-, A_-) \big(\eta_- \otimes \ECH(X_+, \omega_+, A_+ )(\eta_+ \otimes - ) \big)\\
        =&\sum_{A|_{X_\pm}=A_\pm}  \ECH(X, \omega, A)( i_{X_-}(\eta_-) i_{X_+}(\eta_+) \otimes -). 
        \end{split}
\end{equation*}
Here $i_{X_{\pm}}: X_{\pm} \to X$ are  the inclusions.

\item (Commute with $\mathbb{A}(Y)$-action)
Let $\eta_{\pm} \in \mathbb{A}(Y_{\pm})$.  Then 
\begin{equation*} 
        \eta_- \cdot  \ECH(X, \omega, A)( \eta\otimes \sigma ) =  \ECH(X, \omega, A)((i_{Y_-})_*(\eta_-)\eta \otimes \sigma)
\end{equation*}
and 
\begin{equation*} 
         \ECH(X, \omega, A) (\eta \otimes   ( \eta_+\cdot \sigma))  =  \ECH(X, \omega, A)( (i_{Y_+})_*(\eta_+)\eta) \otimes \sigma)
\end{equation*}
where $i_{Y_{\pm}} : Y_{\pm} \to X$ are  the inclusions.

\item (Gromov-Taubes invariants)
If $X$ is closed, then 
$$\ECH(X, \omega, A)(\eta) =\Gr(X, \omega, A, \eta), $$
where $\Gr(X, \omega, A,  -): \mathbb{A}(X) \to \F$ is the Gromov-Taubes  invariants defined by C. H. Taubes \cite{Ta2}.

\item (Preservation of the contact invariants)
\begin{equation} \label{eq37}
       \ECH(X, \omega, 0)([\emptyset])= \pm [\emptyset].
\end{equation}

\item (Filtration)
For $L<L'$, we have the following diagram:
\begin{equation*}	 
\begin{CD}
				\mathbb{A}(X) \otimes \ECH^L(Y_+, \lambda_+, \Gamma_+)  @>\ECH^L(X, \omega, A)>>\ECH^{L+\rho(A)}(Y_-, \lambda_-, \Gamma_-) \\
				@VVi^{L, L'}_{Y_+}  V @VV i_{Y_-}^{L+\rho(A), L'+\rho(A)} V\\
\mathbb{A}(X) \otimes \ECH^{L'}(Y_+, \lambda_+, \Gamma_+)) 	 @> \ECH^{L'}(X, \omega, A)>>    \ECH^{L'+\rho(A)}(Y_-, \lambda_-, \Gamma_-) 
			\end{CD}
\end{equation*}

\item (Direct limit)  
 $$\lim_{L \to \infty} \ECH^L(X, \omega, A)=\ECH(X, \omega, A). $$

\item (Holomorphic curves axiom)
Let $J$ be a cobordism admissible almost complex structure. Then for $L \in \R$, there is a chain  map 
$$ \ECC^L(X, \omega, A)_J : \mathbb{A}(X)\otimes \ECC^L(Y_+, \lambda_+, \Gamma_+) \to  \ECC^{L+\rho(A)}(Y_-, \lambda_-, \Gamma_-)$$ inducing the ECH cobordism map $ \ECH^L(X, \omega, A)$  and satisfying the following property: If 
\begin{equation*} 
       \langle \ECC^L(X, \omega, A)_J (\eta \otimes \alpha_+), \alpha_- \rangle \ne 0,
\end{equation*}
then  there is  a broken holomorphic current $\mathcal{C} \in \overline{\mathcal{M}^J_X}(\alpha_+, \alpha_-)$ passing through  a fixed  cycle representing  $\eta$ and   $I(\mathcal{C}) =\deg \eta$.

\end{enumerate}

\end{theorem}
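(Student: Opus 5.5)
The plan is not to build these maps from scratch: the theorem collects properties established in the literature, and I would assemble it from those sources. \emph{Construction.} Following Hutchings--Taubes \cite{HT}, I would define $\ECH^L(X,\omega,A)$ by transporting the Seiberg--Witten cobordism map through Taubes' isomorphism $\ECH \cong \widehat{HM}$ \cite{T1,T2,T3,T4,T5}. Concretely:
\begin{itemize}
\item Perturb the Seiberg--Witten equations on the completion $\widehat{X}$ by $r\hat\omega$ with $r\gg 0$.
\item Restrict to the spin-c structure determined by $A$.
\item Use the filtered version of Taubes' isomorphism, via the ``energy'' filtration, to identify filtered Seiberg--Witten Floer cohomology with $\ECH^L$ for $r$ large depending on $L$.
\end{itemize}
The shift $L \mapsto L+\rho(A)$ comes from the energy estimate for Seiberg--Witten solutions on $\widehat X$: the energy is controlled by $\int_A\omega$ minus the boundary actions, which is exactly $\rho(A)$ in (\ref{eq21}). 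The $\mathbb{A}(X)$-action is inserted as in Kronheimer--Mrowka, by evaluating the $U$-class at a point and the $H_1(X)$-classes along loops. With $\F=\Q$, a homology orientation is fixed as in Remark 1.13 of \cite{HT}.

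\emph{Formal properties.} These I would deduce from the corresponding properties of monopole cobordism maps.
\begin{itemize}
\item The composition law and commutation with $\mathbb{A}(Y_\pm)$ follow from the gluing theorem for monopole maps, with the sum over $A$ restricting to $A_\pm$ coming from summing over spin-c structures on the glued manifold.
\item Filtration compatibility and the direct limit follow because the filtered isomorphisms are compatible with the inclusions $i^{L,L'}$.
\item Preservation of $[\emptyset]$ is the statement that Taubes' isomorphism sends $[\emptyset]$ to the contact class, together with the fact that the monopole map with $A=0$ preserves the contact class up to sign.
\item For closed $X$, the cobordism map is the Seiberg--Witten invariant. Taubes' ``SW $=$ Gr'' \cite{Ta2} identifies this with $\Gr(X,\omega,A,\eta)$.
\end{itemize}

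\emph{Main obstacle.} The hard step is the holomorphic curves axiom: nonvanishing of the chain-level coefficient must force a broken $J$-holomorphic current through a cycle representing $\eta$ with $I=\deg\eta$. I would follow \cite{HT} here. For $r\to\infty$, the zero set of the spinor component of a Seiberg--Witten solution counted by the map converges, as a current, to a broken $J$-holomorphic current, by Taubes' convergence theorem applied on the completion. The index constraint then passes to the limit. Incorporating $\eta$ works the same way, with a marked point or loop constraint that survives the limit. This analysis is taken from \cite{HT} and Taubes' work rather than reproved, since the paper only uses the stated properties.
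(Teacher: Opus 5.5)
Your proposal takes the same route as the paper. You define the maps as $\mathcal{T}_{Y_-}^{-1}\circ\widehat{HM}(X,\mathfrak{s}_A)\circ\mathcal{T}_{Y_+}$, with the filtered version coming from the $r\hat\omega$-perturbed equations; you get composition, $\mathbb{A}(Y)$-equivariance and the closed case from the monopole maps together with Taubes' isomorphism and SW$=$Gr; you get the contact-invariant property from the monopole side; and you take the filtration, direct-limit and holomorphic-curves properties from \cite{HT}. One point you gloss over, which the paper flags explicitly: \cite{HT} proves properties 5--7 only for exact cobordisms, so for strong non-exact cobordisms you need a modification of that analysis (the paper cites Hutchings' unpublished work \cite{H5}) rather than \cite{HT} as stated.
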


Heuristicaly, the ECH cobordism maps should be defined by counting   holomorphic currents passing through points and $1$-cycles in $(\widehat{X}, \hat{\omega}) $. However, due to the technical  issues mentioned in   Section 5.5 of \cite{H4}, such an approach has not yet been realized. Hutchings and Taubes \cite{HT} define the cobordism maps by using ``ECH =SWF''. Here ``SWF'' is short for the \textbf{Seiberg-Witten Floer homology} which is defined by P.  Kronheimer and T.  Mrowka \cite{KM}.  

Specifically,  one has  a  canonical $\mathbb{A}(Y)$-module isomorphism
$$\mathcal{T}_Y: \ECH_*(Y, \lambda, \Gamma) \to \widehat{HM}^{-*}(Y, \mathfrak{s}_{\Gamma})$$
 between Seiberg-Witten Floer homology and ECH  \cite{T1, T2, T3, T4, T5}.  Here $\mathfrak{s}_{\Gamma}$ is the spin-c structure determined by the contact structure and $\Gamma$.  A filtered version of $\mathcal{T}_Y^L$ is also well defined (Section 3.3 of \cite{HT}). Then the cobordism maps are  defined by 
\begin{equation*}
\begin{split}
& \ECH(X, \omega, A)= \mathcal{T}_{Y_-}^{-1} \circ \widehat{HM}(X, \mathfrak{s}_{A}) \circ \mathcal{T}_{Y_+},\\
&\ECH(X, \omega, A)= (\mathcal{T}^{L+\rho(A)}_{Y_-})^{-1} \circ \widehat{HM}_L(X, \omega, \mathfrak{s}_{A}) \circ \mathcal{T}^L_{Y_+}.
\end{split}
 \end{equation*}
where $\widehat{HM}(X,  \mathfrak{s}_{A})$ is the Seiberg-Witten cobordism map and $\mathfrak{s}_{A}$ is the spin-c structure determined by the symplectic form and $A \in H_2(X, \partial X, \Z)$.   $\widehat{HM}_L(X, \omega, \mathfrak{s}_{A})$ is a filetred version of  Seiberg-Witten cobordism map which defined by   is the Seiberg-Witten equations perturbed by large scaling of $\hat{\omega}$ (see (4.15) and Corollary 5.3  of \cite{HT}). Then the first three  properties holds because the isomorphisms $\mathcal{T}_Y$  intertwines the respective $\mathbb{A}(Y)$-actions (Theorem 1.1 of \cite{T5}) and corresponding properties of Seiberg-Witten cobordism maps.  Property (\ref{eq37}) is proved by M. Echeverria for Seiberg-Witten cobordism maps.  Thus, the same is true for ECH cobordism maps as $\mathcal{T}_Y$ preserves both side's contact invariants. Hutchings and Taubes prove the 5-7  properties in the case that the cobordism is an exact symplectic cobordism \cite{HT}. The analysis in \cite{HT} could be slightly modified to  prove the properties for strong symplectic cobordisms (see Hutchings's unpubished works \cite{H5}). 

 \paragraph{Homology orientations}
Since we are working with $\Q$-coefficient  rather than $\Z/2\Z$, signs matter. To define $ \widehat{HM}(X,  \mathfrak{s}_{A}) $ and $\widehat{HM}_L(X,  \omega, \mathfrak{s}_{A})$, we need to fix   a suitable choice of homology orientation (see Definition 3.4.1 of \cite{KM}) of $X$.  
See Remark 1.4 and  Remark 1.13 of \cite{HT} for the discussion on the coefficients.

Specifically, we want to fix a homology orientation such that the sign in (\ref{eq37}) is positive, in the situations  where we apply Theorem \ref{thm2}.

We briefly recall the orientation convention for the Seiberg-Witten cobordism maps. Let $X$ be a cobordism  from $Y_+$ to $Y_-$. Given two monopoles $\mathfrak{c}_{\pm}$ on $Y_{\pm}$, let $\mathfrak{M}(\mathfrak{c}_+, \mathfrak{c}_-) $ denote the moduli space of solutions to 4-dimensional   Seiberg-Witten equations which are asymptotic to  $\mathfrak{c}_{\pm}$. Associated to the determinant line of  linearized Seiberg--Witten equations,   two possible orientations form an orientation set which we denote by $\Lambda(\mathfrak{c}_+, \mathfrak{c}_-)$. Namely, a choice of element in  $\Lambda(\mathfrak{c}_+, \mathfrak{c}_-)$ determines the orientation  of $\mathfrak{M}(\mathfrak{c}_+, \mathfrak{c}_-) $. There is a canonical identification
$$\Lambda(\mathfrak{c}_+, \mathfrak{c}_-) \cong \Lambda(\mathfrak{c}_+)  \otimes_{\Z/2\Z} \Lambda(X)\otimes_{\Z/2\Z}\Lambda(\mathfrak{c}_-)$$
where \(\Lambda(\mathfrak{c}_{\pm})\)  are  the orientation set associated to   monopoles $\mathfrak{c}_{\pm}$ and $\Lambda(X)$ is the set of homology orientations of $X$ (Definition 3.4.1 of \cite{KM}). Thus, after fixing an orientation $o(\mathfrak{c})\in\Lambda(\mathfrak{c})$ and  a  homology orientation $o(X)\in\Lambda(X)$ determines the signs with which nondegenerate solutions are counted in the cobordism map.

If  $(X, \omega)$ is a closed symplectic manifold, then the symplectic form induces   a canonical homology  orientation $o(X, \omega) \in \Lambda(X)$  (see Page 460-461 of \cite{Ta4}).  We pick such a homology  orientation  throughout.

Now consider the following situation.  Let $(X_+, \omega_+)$ be  a strong symplectic cap of $(Y, \lambda)$ and $(X_-, \omega_-)$ a strong symplectic  filling  of $(Y, \lambda)$.   We claim that there is a choice of homology orientation $o(X_{\pm}) \in \Lambda(X_{\pm})$ such that 
$$\ECH(X_+, \omega_+, 0)(1) =[\emptyset] \mbox{ and } \ECH(X_-, \omega_-, 0)([\emptyset]) =1. $$

The choice is as follows. Gluing   $(X_+, \omega_+)$ and   $(X_-, \omega_-)$ along $(Y, \lambda)$, we obtain a closed symplectic manifold $(X, \omega)$.    
 Let $\mathfrak{c}_{\emptyset}$ denote the monopole corresponding to the empty ECH generator. That is $$
\Phi^r(\emptyset)=\mathfrak{c}_{\emptyset},$$
where $\Phi^r$ is the map in Theorem 1.1 of \cite{T2}. 
   By Theorem 1.1 of \cite{KM1},  we have a perfect orientation 
 $$o(X_-,\omega_-)\in
\Lambda(\mathfrak{c}_{\emptyset})\otimes_{\Z/2\Z}\Lambda(X_-)$$
such that $\ECH(X_-, \omega_-, 0)([\emptyset]) =1$. 
We choose the homology orientations  $o(X_{\pm}) \in \Lambda(X_{\pm})$ by requiring
\begin{equation} \label{eq39}
  o(\mathfrak{c}_{\emptyset})\otimes o(X_-) =o(X_-,\omega_-)  \mbox{ and } o(X_+)  o(X_-) =o(X, \omega). 
\end{equation}

With the above  choice, we claim that   $ \ECH(X_+, \omega_+, 0)(1) =[\emptyset]$.  Assume $ \ECH(X_+, \omega_+, 0)(1) =\tau[\emptyset]$, where $\tau=1$ or $\tau =-1$.  By the  composition law, we have 
$$\tau = \ECH(X_-, \omega_-, 0) (\ECH(X_+, \omega_+, 0)(1)) =\sum_{A \vert_{X_{\pm}} =0} \Gr(X, \omega, A). $$
Note that  $A \vert_{X_{\pm}} =0$ implies that $\int_A \omega =0$.  Thus, there is no holomorphic currents with homology class $A$ except the empty current.  Thus, the right hand of the above equation is  $\Gr(X, \omega, 0) =1$.  Hence, $\tau =1$. 

We pick the homology orientation satisfying (\ref{eq39}) throughout.

\subsection{Cohomology rings of symmetric products  of surfaces }
 
In this section, we recap the cohomology ring of the symmetric product of  a surface $\Sym^n\Sigma$, computed by Macdonald \cite{Mac}.  For our purpose, we assume that $n >2g-2$ throughout, unless otherwise stated.

Let $\mathbb{A}(\Sigma):=\Q[u]\otimes \Lambda^*H^1(\Sigma, \Q),$ where $u$ is a formal variable with degree $2$.    Let
$$ \{\alpha_i\}_{i=1}^g,   \qquad   \{\beta_i\}_{i=1}^g
$$
be symplectic generators of $H_1(\Sigma;\Z)$, with
$\alpha_i\cap \beta_i=1$.  We identify them with elements of
$H^1(\Sigma;\Q)$ by Poincar\'e duality.

Define
$$
        \theta:=\sum_{i=1}^g\alpha_i  \beta_i \in \Lambda^2 H^1(\Sigma, \Q) \subset \mathbb{A}(\Sigma).
$$
For $0\le r\le g$, define the primitive part
$$
        \Lambda_0^r
        :=\ker\Bigl\{
        \theta^{g-r+1}:
        \Lambda^rH^1(\Sigma, \Q)\longrightarrow
        \Lambda^{2g-r+2}H^1(\Sigma,\Q)
        \Bigr\}.
$$
Let
       $ \{e_r^c\,\}_{c=1}^{N_r}
$
be a basis of $\Lambda_0^r$, where $N_r:=\dim \Lambda_0^r$.  Then
$$
        \mathcal{E}
        :=\Bigl\{\theta^b e_r^c\;\Big|\;
        0\le r\le g,\;1\le c\le N_r,\;0\le b\le g-r
        \Bigr\}
$$
is a basis of $\Lambda^*H^1(\Sigma, \Q)$. Therefore, any $x \in \mathbb{A}(\Sigma)$ is  a  linear combination of $u^a\theta^b e_r^c$, where $ \theta^b e_r^c \in \mathcal{E}$.  The degree of elements  $u^a\theta^b e_r^c$  is defined to be 
\begin{equation} \label{eq38}
\deg(u^a\theta^b e_r^c) =2a + 2b + r. 
\end{equation} 

   Define a polynomial 
\begin{equation} \label{eq1}
\mathfrak{R}_n(u, \theta):=u^{n-2g+1}\prod_{i=1}^g(\alpha_i\beta_i-u)  =\sum_{k=0}^{g} (-1)^{g-k}  \frac{\theta^k}{k!}u^{n-g-k+1}.
\end{equation}
According to Macdonald's result ((6.3) in \cite{Mac}), we know that  
$$H^*(\Sym^n \Sigma, \Q) \cong \mathbb{A}(\Sigma)/\langle \mathfrak{R}_n(u, \theta) \rangle$$
provided that $n>2g-2$.   Then we have a natural projection    $$\Pi:   \mathbb{A}(\Sigma) \to H^*(\Sym^n \Sigma, \Q).$$ 
 $\Pi$ is a ring homomorphism, i.e., $\Pi(x_1x_2) =\Pi(x_1) \wedge \Pi(x_2)$. 
According to Macdonald's result \cite{Mac}, we have 
\begin{itemize}
\item
$\Pi$ maps  
\begin{equation}\label{eq17}
\begin{split}
        \mathcal{B}_n
        :=&\{ \mathcal{E}, u\mathcal{E}, ...,u^{n-g}\mathcal{E}\}\\
        =&\Bigl\{u^a\theta^b e_r^c\;\Big|\;
        0\le r\le g,\;1\le c\le N_r,\;0\le b\le g-r,
        \;0\le a\le n-g
        \Bigr\}.
        \end{split}
\end{equation}
 to a basis of  $H^*(\Sym^n \Sigma, \Q)$.  In particular,
$$ \dim H^*(\Sym^n\Sigma, \Q)=2^{2g}(n-g+1). $$

\item
For $x \in \mathcal{B}_n$, we have 
$$\deg \Pi(x) =\deg x. $$
The degree on the left-hand side of the above equation corresponds to the degree of the cohomology $H^*(\Sym^n \Sigma, \Q)$, whereas the degree on the right-hand side is defined by (\ref{eq38}).
\item
$\ker \Pi=\{P \mathfrak{R}_n(u, \theta)\vert P \in \mathbb{A}(\Sigma)\}$.
\end{itemize}

Let $q^n: H^{*}(\Sym^n \Sigma, \Q) \otimes H^{*}(\Sym^n \Sigma,\Q)   \to \mathbb{Q}$ denote the \textbf{intersection pairing} as follows. Let  $x_1, x_2 \in  \mathcal{B}_n$. If $\degop x_1 + \degop x_2  \ne 2n$,   set $q^n(\Pi(x_1), \Pi(x_2)) :=0.$  If $\degop x_1+ \degop x_2 =2n$,    define  $q^n(\Pi(x_1), \Pi(x_2)) $ to be the number satisfying 
\begin{equation}
\Pi(x_1) \wedge \Pi(x_2) =q^n(\Pi(x_1), \Pi(x_2)) \frac{1}{g!}\Pi(u^{n-g}\theta^g). 
\end{equation}
  Finally, we extend $q^n$  bilinearly over $\mathbb Q$.

In the case that  $\degop x_1 + \degop x_2 =2n$,  then $x_1x_2 - q^n(\Pi(x_1), \Pi(x_2)) \frac{1}{g!} u^{n-g}\theta^g \in \ker \Pi$.
Therefore, 
\begin{equation} \label{eq10}
x_1x_2=q^n(\Pi(x_1), \Pi(x_2))u^{n-g} \theta^g/g! +P\mathfrak{R}_n(u, \theta) 
\end{equation}
 for some homogenous element $P \in \mathbb{A}(\Sigma) $  with  $\degop P = \degop x_1+ \degop x_2- 2(n-g+1) =2g-2$.

\begin{remark} \label{remark4}
By a direct computation, we have 
$$\frac{1}{k!} \theta^k =\sum_{1\le i_1<i_2 <....<i_k\le g} \alpha_{i_1}\beta_{i_1}...\alpha_{i_k}\beta_{i_k}.$$
Let $\theta^{[k]}$ denote the right hand side of the above equation.  Even though $\frac{1}{k!}$ is not well  in $\Z$ or $\Z/2\Z$, the  $\theta^{[k]}$  is  still well defined in $\Z$ or $\Z/2\Z$.  Thus, polynomial $\mathfrak{R}_n(u, \theta)$ is  well defined over    $\Z$ or $\Z/2\Z$.  

However,    simply replacing  $\theta^b$ by $\theta^{[b]}$  in $\mathcal{B}_n$ cannot supply a basis of  $H^{*}(\Sym^n \Sigma, \Z)$ or $H^{*}(\Sym^n \Sigma, \Z/2\Z)$.  For example, taking $g=2$ and $\F=\Z/2\Z$, then $\theta^{[1]}\theta^{[1]} =  0$.   Then $\theta^{[1]}  \in \Lambda^2_0$. Thus, any  basis $\{e_2^c\} $ of $\Lambda^2_0$ together with $\theta^{[1]} $ cannot be linear independent. 

 \end{remark}

\section{Holomorphic currents in  line bundles}
Let $\pi_{E^{\vee}}: E^{\vee}\to \Sigma$ be the dual line bundle of $E$. Hence, $c_1(E^{\vee}) =[ \omega_{\Sigma}]$.  Let 
$$\mathbb{D}E:=\{ v \in E \vert |v|_h \le 1\} \mbox{ and } \mathbb{D}E^{\vee}:=\{ v \in E^{\vee} \vert |v|_{h^{\vee}} \le 1\} $$
be the unit disk subbundles. Let $Y=\{\rho =1\}$ and $Y^{\vee} =\{r=1\}$. Note that $Y^{\vee}  \cong -Y$. The minus sign here meaning reversing the fiber orientation.   Similar to the case of $E$, we have a global angular form $\alpha_{E^{\vee}}  \in  \Omega^1(E^{\vee}\setminus \Sigma, \R)  $  such that $d\alpha_{E^{\vee}}  =-\pi_{E^{\vee}}^*\omega_{\Sigma}$. The global angular form is locally defined by 
\begin{equation}\label{eq33}
\alpha_{E^{\vee}}   : = \frac{1}{2\pi}\left( d\theta^{\vee} -iA_{\nabla}^{\vee}\right)  , 
\end{equation}
where $d\theta^{\vee}$ is the angular form of the dual fiber.  Under the  fiber orientation reversing identification $Y^{\vee} $ and $Y$, we have   $d\theta^{\vee} = -d \theta$ and $A_{\nabla}^{\vee} =-A_{\nabla}$, where $d\theta$  is the angular form of the  fiber of $E$.  Thus, $\alpha_{E^{\vee}}=-\alpha_E$.

Define two $2$-forms  $\Omega $ on $\mathbb{D}E$ and $\Omega^{\vee}$ on $\mathbb{D}E^{\vee}$  respectively  by 
\begin{equation*}
\Omega :=\pi_{E}^*\omega_{\Sigma} + d(\rho^2 \alpha_{E}) \mbox{ and }  \Omega^{\vee}:=3\pi_{E^{\vee}}^*\omega_{\Sigma} + d(r^2 \alpha_{E^{\vee}}),
\end{equation*}
where $\rho, r$ are  the radius coordinates  of $E$ and  $E^{\vee}$ defined by the metric $h$. Extend $\Omega$ and $\Omega^{\vee}$ over the zero section $\Sigma$  smoothly. One can show that both of them are symplectic.

 By a direct computation, the Liouville vector fields $Z, Z^{\vee}$ on $\mathbb{D}E  \setminus \Sigma$ and  $\mathbb{D}E^{\vee} \setminus \Sigma$ are respectively 
$$Z=\frac{1+\rho^2}{2 \rho^2}\rho \partial_{\rho} \mbox{ and }Z^{\vee}=\frac{r^2-3}{2r^2}\,r\partial_r.$$
Therefore, the induced contact form on $Y=\{\rho =1\}$ and $Y^{\vee} =\{r=1\}$ are respectively 
 $$ \lambda= \Omega(Z, \cdot )  \mbox{ and }   \Omega^{\vee}(Z^{\vee}, \cdot )  =-2\alpha_{E^{\vee}} =\lambda. $$
 Note that $Z$ is outward pointed and   $Z^{\vee}$ is inward pointed. Thus,   $(\mathbb{D}E, \Omega)$ is a strong symplectic filling of $(Y, \lambda)$ while  $(\mathbb{D}E^{\vee}, \Omega^{\vee})$ is a strong symplectic cap of $(Y, \lambda).$

Denote $(X_+, \omega_+) :=(\mathbb{D}E^{\vee}, \Omega^{\vee})$ and $(X_-, \omega_-):=(\mathbb{D}E, \Omega)$.  Let $\Sigma_+$ and $\Sigma_- $ denote the zero sections of $\mathbb{D}E^{\vee}$ and $\mathbb{D}E$ respectively.

\subsection{Morse-Bott perturbation and  Reeb orbits}
  Fix    a perfect Morse function 
        $H:\Sigma\longrightarrow \R$
with $|H|_{C^0}\le 1/10$.  Let $\varepsilon>0$ be a small positive number. Define a perturbed contact form on $Y$ by
\[
        \lambda_{\varepsilon} :=(1+\varepsilon \pi^*H )\lambda. 
\]
By Lemma 3.1 of \cite{NW},  there exists $L_{\varepsilon}>0$ such that the Reeb orbits  wtih action smaller than $L_{\varepsilon}$ are covers of the $S^1$-fibers over critical points of $H$. We work with the Reeb orbits with $\mathcal{A}_{\lambda_{\varepsilon}}<L_{\varepsilon}$ throughout. 

Let $\alpha$ be an orbit set with $\mathcal{A}_{\lambda_{\varepsilon}}(\alpha)<L_{\varepsilon}$. Then 
$$
        \alpha
        =e_+^{m_+}e_-^{m_-}\prod_{i=1}^{2g}h_i^{m_i},
$$
where
\begin{itemize}[leftmargin=2.5em]
    \item $e_+$  is the fiber which lies over the maximum of $H$,
    \item $e_-$  is the fiber which lies over the minimum of $H$,
    \item $h_i$   is the fiber which lies over the $i$-th index-one critical point of $H$.
\end{itemize}
Let 
$$
         M_{\alpha}:=m_+ +m_-+\sum_{i=1}^{2g}m_i
$$
to be the total multiplicity of $\alpha$.
Fix $\Gamma \in \mathbb{Z}/|e|\mathbb{Z}$. Note that the $S^1$-fiber of $Y$ has homology class $1 \mod |e|$. Therefore,  if $[\alpha]=\Gamma$, then we have a nonnegative integer $d(\alpha)$ such that 
\[
        M_{\alpha}=d(\alpha)|e| +\Gamma.
\]
The  action  of $\alpha$ is 
$$
        \mathcal{A}_{\lambda_{\varepsilon}}(\alpha) = \int_{\alpha}{\lambda_{\varepsilon}}  =  2\bigl(M_{\alpha} + \varepsilon\pi^*H(\alpha)\bigr),
$$
where $\pi^*H(\alpha)$ is short for $\sum_{i} m_i H(\pi(\alpha_i))$.

We perturb   symplectic manifolds $(X_+, \omega_+)$ and $(X_-, \omega_-)$   so that they are respectively  strong  symplectic cap and  strong  symplectic filling of $(Y, \lambda_{\varepsilon}). $ The construction is as follows. 

By using the Liouville vector field, a collar neighborhood  of $(Y, \lambda_{\varepsilon})$  in $X_+$ is symplecticmorphic  to 
$$\big( [0, \delta)_s\times Y, d(e^s \lambda)\big).$$ 
Fix a non-increasing  cutoff function $\phi(s)$ such that  $\phi(s) =1$ when $s \le \frac{1}{4}\delta$ and  $\phi(s)=0$ when $s \ge \frac{1}{2}\delta$.   Then we define 
$$
\omega^{\varepsilon}_+=
\begin{cases}
\omega_+, & X_+ \setminus  \big( [0, \delta)_s\times Y, d(e^s \lambda)\big), \\
d\big(e^s ( (1+ \phi(s)\varepsilon  \pi^*H)\lambda)  \big), &  [0, \delta)_s\times Y, d(e^s \lambda). \\
\end{cases}
$$
The perturbed symplectic filling $(X_-, \omega^{\varepsilon}_-)$ is constructed similarly (see (3.4) of \cite{GHC} for example).

\subsection{Holomorphic currents in the cap-side}

The main purpose of this section is study the properties of the holomorphic currents in the symplectic completion of   $(X_+, \omega_+^{\varepsilon}).$ The argument is essentially parallel to   the results  for the holomorphic currents in the filling side (Section 3.2   of \cite{GHC}).  We deduce a constrain on the degree of  holomorphic currents.  This simple constrain forces the image of the cobordism map $\ECH(X_+, \omega_+, A)(x)$ lies in a certain truncation of  $\ECH(Y, \lambda_{\varepsilon}, \Gamma) $ (Lemma \ref{lem2}).

The orbit set  $\alpha$ is assumed to satisfies $\mathcal{A}_{\lambda_{\varepsilon}} (\alpha)<L_{\varepsilon}$ throughout.  As a result, $\alpha$ consists of $S^1$-fibers over critical points of $H$.

To begin with, we first illustrate the form of the relative homology classes in $H_2(X_+, \emptyset, \alpha)$.      
For each  orbit set $\alpha
        =e_+^{m_+}e_-^{m_-}\prod_{i=1}^{2g}h_i^{m_i}$, we define a relative homology class $Z^{\vee}_{\alpha} \in H_2(X_+, \emptyset, \alpha)$ which is represented by 
        $$m_+ F_{\pi(e_{+})}^{\vee}  +  \sum_{i=1}^{2g} m_i F_{\pi(h_i)}^{\vee} + m_- F_{\pi(e_{-})}^{\vee},$$
        where $F^{\vee}_p $ denote the disk fiber of $\mathbb{D}E^{\vee}$ over $p \in \Sigma$.  Because $H_2(X_+, \emptyset, \alpha)$ is an affine space of $H_2(X_+, \Z) \cong \Z[\Sigma_+]$, every element in $H_2(X_+, \emptyset, \alpha)$ can be written as $Z_{\alpha}^{\vee}+k[\Sigma_+]$ for some $k \in \Z$.  
     
By Poincar\'e duality, $H_2(X_+, Y, \Z) \cong H^2(X_+, \Z) \cong H^2(\Sigma_+, \Z)  \cong \Z$.   The generator is represented by the disk fiber  $F^{\vee}$ of $\mathbb{D}E^{\vee}$. 
Fix $A \in H_2(X_+, Y, \Z)$ such that $\partial A =\Gamma \in \mathbb{Z}/|e|\Z$. Given an orbit set $\alpha$ with total multiplicity $M_{\alpha}=d(\alpha)|e| + \Gamma$, let  
$$Z_{\alpha, A}^\vee\in H_2(X_+, \emptyset, \alpha)$$
be a relative homology class  such that $[Z_{\alpha, A}^\vee] =A$.  
Write $Z_{\alpha, A}^{\vee}= Z_{\alpha}^{\vee} +k[\Sigma_+]$.  Then
$$A\cdot \Sigma_+=(Z_{\alpha}^{\vee} +k[\Sigma_+]) \cdot \Sigma_+ =M_{\alpha} + k|e|.$$
Therefore, $k=\frac{(A\cdot \Sigma_+ -M_{\alpha})}{|e|} .$ Hence,  $Z_{\alpha, A}^\vee$ is uniquely determinede by $\alpha$ and $A$. 
Let $d(A)$ be the constant such that
$$A \cdot \Sigma_+ =d|e| +\Gamma. $$
Then $$Z_{\alpha, A}^{\vee} = Z_{\alpha}^{\vee} + (d(A)-d(\alpha))[\Sigma_+].$$

We begin with computing  the ECH index of relative homology classes in $X_+$.   
\begin{lemma}  \label{lem13}
With the above conventions, the ECH index of relative homology  $Z_\alpha^\vee+k[\Sigma_+]$ is 
$$
        I_{\mathrm{cap}}\bigl(Z_\alpha^\vee+k[\Sigma_+]\bigr)
        =M_{\alpha}-m_+ +m_-+2kM_{\alpha}+k\chi(\Sigma)+|e| k(k+1).
$$
\end{lemma}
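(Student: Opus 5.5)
The plan is to reduce to the case $k=0$ by a relative adjunction-type additivity formula, and then compute $I_{\mathrm{cap}}(Z_\alpha^\vee)$ directly from the fiber-disk representative.

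\textbf{Reduction to $k=0$.} For a class $W\in H_2(X_+,\Z)$ added to a relative class $Z\in H_2(X_+,\emptyset,\alpha)$, the ECH index changes by
$$I(Z+W)=I(Z)+\langle c_1(TX_+),W\rangle+2\,Z\cdot W+W\cdot W.$$
This follows from the definition, since $c_\tau$ is additive, $Q_\tau(Z+W)=Q_\tau(Z)+2Z\cdot W+W\cdot W$, and the Conley--Zehnder terms depend only on $\alpha$. Take $W=k[\Sigma_+]$. The normal bundle of $\Sigma_+$ is $E^\vee$, of degree $|e|$, so
$$\langle c_1(TX_+),[\Sigma_+]\rangle=\chi(\Sigma)+|e|,\qquad [\Sigma_+]^2=|e|.$$
As in the computation of $A\cdot\Sigma_+$ above, each disk fiber meets $\Sigma_+$ once, so $Z_\alpha^\vee\cdot\Sigma_+=M_\alpha$. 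Substituting gives
$$I(Z_\alpha^\vee+k[\Sigma_+])=I(Z_\alpha^\vee)+k\chi(\Sigma)+k|e|+2kM_\alpha+k^2|e|.$$
This is exactly the $k$-dependent part of the claimed formula. It therefore remains to show that $I(Z_\alpha^\vee)=M_\alpha-m_++m_-$.

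\textbf{The term $I(Z_\alpha^\vee)$.} Fix the trivialization $\tau$ of $\xi\cong\pi^*T\Sigma$ along each critical fiber as the pullback of a fixed frame of $T_p\Sigma$; this is the trivialization used by Nelson--Weiler. I will compute the three ingredients as follows.

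\emph{Relative Chern number.} Over a disk fiber $F^\vee_p$ we have $TX_+|_{F^\vee_p}=T_p\Sigma\oplus TF^\vee_p$. Near the boundary, $TX_+=\xi\oplus\langle\partial_s,R\rangle$, and $R$ is tangent to $\partial F_p^\vee$, so the second summand has winding number one relative to the trivialization it receives from $TF_p^\vee$. Hence $c_\tau(F_p^\vee)=1$ and $c_\tau(Z_\alpha^\vee)=M_\alpha$, up to a sign I will have to pin down using the orientation conventions of the cap.

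\emph{Relative self-intersection.} Push each fiber disk in the $\tau$-direction to a disk fiber over a nearby point. The pushoffs are disjoint, and $Q_\tau$ is quadratic in the multiplicities, so $Q_\tau(Z_\alpha^\vee)=0$.

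\emph{Conley--Zehnder terms.} For $\varepsilon$ small and covers of action below $L_\varepsilon$, the linearized flow in $\tau$ is a small rotation governed by $\varepsilon\,\mathrm{Hess}\,H$. This gives $CZ_\tau(e_+^p)=1$, $CZ_\tau(h_i^p)=0$ and $CZ_\tau(e_-^p)=-1$ for all covers $p$ in range. Since $\alpha$ sits at the negative end of the cap, these enter with a minus sign, contributing $-m_+ + m_-$ in total.

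Summing the three pieces gives $M_\alpha-m_++m_-$, as required.

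\textbf{Main obstacle.} The hard part is getting the signs right. The identification $Y^\vee\cong -Y$ reverses the fiber orientation, so I must check three things carefully: that $c_\tau$ contributes $+M_\alpha$ rather than $-M_\alpha$, that the elliptic fibers over the maximum and minimum of $H$ get the Conley--Zehnder indices in the stated order, and that the Hessian rotation is slow enough that no cover winds past an integer. To settle these I would first test the formula on simple cases: the class with $M_\alpha=0$, and a single fiber $\alpha=e_\pm$ whose index must agree with a Fredholm index count for a fiber disk. I would also compare with the filling-side formula of Section 3.2 of \cite{GHC}, using that gluing cap and filling along $Y$ must give $I=\langle c_1(TX),A\rangle+A\cdot A$ for closed classes $A$ in the ruled surface $X$.
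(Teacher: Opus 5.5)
Your proposal is correct and is essentially the paper's proof. Both use the same splitting $I=c_\tau+Q_\tau-CZ^{ech}_\tau$ with $\tau$ lifted from $T_p\Sigma$, so that $c_\tau(F_p^\vee)=1$ comes from the vertical summand, $Q_\tau(Z_\alpha^\vee)=0$ comes from disjoint fiber disks, and $CZ_\tau(\gamma_p^k)=\operatorname{ind}_pH-1$ comes from Nelson--Weiler; adding $k[\Sigma_+]$ then gives the same correction $2kM_\alpha+k(\chi(\Sigma)+|e|)+k^2|e|$. The sign of $c_\tau(F_p^\vee)$ that you leave open is settled in the paper just as your winding argument suggests: the boundary section of the vertical line bundle is extended over the disk with a single positive zero at the center.
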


\begin{proof}
Recall the ECH index formula
$$
        I(Z)=c_\tau(Z)+Q_\tau(Z) -CZ^{ech}_{\tau}(\alpha).
$$
For $p \in \operatorname{Crit} H$, fix  a   trivialization   on $T_p\Sigma$. Such a trivialzation   can be lifted to
a trivialization $\tau$ of $\xi \vert_{\gamma_p(t)}$.  Under this trivialization, by Lemma 3.9 of  \cite{NW}, we have
\begin{equation} \label{eq5}
CZ_{\tau}(\gamma_p^k) ={\rm{ind}}_p H -1, 
\end{equation}
where  ${\rm{ind}}_p H$ is the Morse index at $p$.

First consider the  relative homology class $Z_\alpha^\vee$.  We regard the connection $\nabla$ as a map $A_{\nabla}^{\vee}  : TE^{\vee} \to E^{\vee}$. This gives a spliting 
$$TE^{\vee} =TE^{\vee}_{hor} \oplus TE^{\vee}_{vert},$$
where $TE^{\vee}_{hor} =\ker A_{\nabla}^{\vee} $ and  $TE^{\vee}_{vert} =\pi_{E^{\vee}}^*E^{\vee}.  $ The trivialization of $T_p\Sigma$ also can be lifted to  $T_pE^{\vee}_{hor}$.  Thus,
$c_{\tau}(T \mathbb{D}E^{\vee}_{hor} \vert_{F_p^{\vee}} ) =0 $. 
The horizational bundle  $T \mathbb{D}E^{\vee}_{hor}$ also can be regarded as a normal bundle of $F_p^{\vee}$. Since  $F_p^{\vee}$ is embedded, we have 
$$Q_{\tau}(F_p^{\vee}) =c_{\tau}(T\mathbb{D}E^{\vee}_{hor} \vert_{F_p^{\vee}} ) =0. $$
If $p, q$ are distinct critical points, then $ F_p^{\vee} $ and $F_q^{\vee}$ are disjoint. Hence, $Q_{\tau}(F_p^{\vee}, F_q^{\vee})  =0 $. 
Because $Q_{\tau}$ is  quadratic,  we have 
\begin{equation} \label{eq6}
Q_{\tau}(Z_{\alpha})  =\sum_{p} m_p^2 Q_{\tau}( F_p^{\vee})   + 2 \sum_{ p \ne q } m_p m_q Q_{\tau}(F_p^{\vee}, F_q^{\vee}) =0.
\end{equation} 

Along $\pi^{*}_{E^{\vee}}E^{\vee} \vert_{F_p}=E_p$, the trivialization is given by a section $ \psi = - \partial_{\theta^{\vee}} \wedge Z^{\vee} = \frac{3-r^2}{2r^2} \partial_{\theta^{\vee}} \wedge r\partial_r$ near $\partial \mathbb{D} E_p$,  where $r$ is the radius coordinate.  Such a section   $\psi$ can be extended to the whole $ \mathbb{D} E_p$ by setting  $\psi =\partial_{\theta^{\vee}} \wedge r\partial_r$.  Thus, 
$c_{\tau}(T\mathbb{D}E^{\vee}_{vert} \vert_{F_p^{\vee}} )  = \#\psi^{-1}(0)=1.$
Sicne relative Chern number is additivity, we have 
\begin{equation} \label{eq7}
c_{\tau}(T\mathbb{D}E^{\vee}\vert_{Z_{\alpha}} )   =\sum_{p} m_p  c_{\tau}(T\mathbb{D}E^{\vee}_{hor} \vert_{F_p^{\vee}} )   +  m_p c_{\tau}(T\mathbb{D}E^{\vee}_{vert} \vert_{F_p^{\vee}} )  =M_{\alpha}. 
\end{equation} 
Combining (\ref{eq5}), (\ref{eq6}), (\ref{eq7}) together, we have 
$$I_{cap}(Z^{\vee}_{\alpha}) = M_{\alpha}-m_+ +m_.$$

Now add $k$ copies of the section $\Sigma_+$.  By definition, 
\begin{equation*}  
I_{cap}(Z^{\vee}_{\alpha} + k[\Sigma_+]) =I_{cap}(Z^{\vee}_{\alpha})  + 2 k Z^{\vee}_{\alpha} \cdot [\Sigma_+] +    kc_1(T\mathbb{D}E^{\vee} \vert_{\Sigma_+})+k^2[\Sigma_+]^2.
\end{equation*} 
The intersection-term between
$Z_\alpha^\vee$ and $k[\Sigma_+]$  is 
        $2kM_{\alpha},$
because each fiber disk intersects $\Sigma_+$ once and the total multiplicity
is $M_{\alpha}$.

By Adjunction formula, we have 
\begin{equation}\label{eq8}
\begin{split}
        c_1(T\mathbb{D}E^{\vee}|_{\Sigma_+})
       = \langle c_1(T\Sigma), \Sigma \rangle +[\Sigma_+]^2  =\chi(\Sigma)+|e|. 
\end{split}
\end{equation}
 
Combining the reference contribution, the cross-term, and the section
contribution gives
\[
        I_{\mathrm{cap}}\bigl(Z_\alpha^\vee+k[\Sigma_+]\bigr)
        =M_{\alpha}-m_+ +m_-+2kM_{\alpha}+k\chi(\Sigma)+|e| k(k+1).
\]
\end{proof}

\begin{remark} \label{remark1}
Assume $\Gamma=0$. By Proposition 1.3  of \cite{NW}, we have 
$$\operatorname{gr}(\alpha)= I(\alpha, \emptyset) =d(\alpha) \chi(\Sigma) +M_{\alpha} d(\alpha) +m_+- m_-. $$
Fix $A \in H_2(X_+, Y,\Z)$ with $\partial A =0$ and $A \cdot \Sigma_+ =d(A)|e|$.  Then $Z_{\alpha, A}^{\vee} = Z_{\alpha}^{\vee} + (d(A)-d(\alpha))[\Sigma_+]$. Combining these results with  Lemma \ref{lem13}, we have 
\begin{equation*}
I_{cap}(Z_{\alpha, A}^{\vee}) =d(A)(d(A)+1)|e| + d(A) \chi(\Sigma) -\operatorname{gr}(\alpha).
\end{equation*}
\end{remark}

Using the computations in  Lemma \ref{lem13}, we obtain a formula for the Fredholm index of a holomorphic curves in $\widehat{X}_+. $
\begin{lemma}
Let $C $ be a $J$ holomorphic curve with relative homology class $Z_{\alpha} +k [\Sigma_+]$.   Then 
$$\operatorname{ind} C= 2g(C)-2 + h(C)+2e_-(C) + 2M_{\alpha} + 2k|e| +2k \chi(\Sigma),$$
where $e_-(C)$ denote the number of ends at covers of $e_-$ and $h(C)$ the number of ends at hyperbolic orbits. 
\end{lemma}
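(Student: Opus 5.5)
The plan is to compute the three terms of the Fredholm index formula separately, reusing the relative Chern number computed in the proof of Lemma \ref{lem13}, and then add them up. Since $(X_+,\omega_+^{\varepsilon})$ is a strong symplectic cap of $(Y,\lambda_{\varepsilon})$, a curve $C$ in $\widehat{X}_+$ has no positive ends. All of its ends are negative ends, asymptotic to covers of the fibers $e_\pm, h_i$ that make up $\alpha$. The Fredholm index in the cobordism setting is therefore
$$\operatorname{ind}(C)=-\chi(C)+2c_\tau(T\widehat{X}_+|_C)-\sum_{\text{ends }\gamma^{q}}CZ_\tau(\gamma^{q}),$$
computed with the same trivialization $\tau$ as in Lemma \ref{lem13}, namely the one lifted from trivializations of $T_p\Sigma$ at critical points of $H$.

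\textbf{Relative Chern number.} The relative Chern number depends only on the relative homology class $Z_\alpha^\vee+k[\Sigma_+]$. By additivity, exactly as in (\ref{eq7}) and (\ref{eq8}),
$$c_\tau(Z_\alpha^\vee+k[\Sigma_+])=M_\alpha+k\,c_1(T\mathbb{D}E^\vee|_{\Sigma_+})=M_\alpha+k(\chi(\Sigma)+|e|).$$
This contributes $2M_\alpha+2k|e|+2k\chi(\Sigma)$ to the index.

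\textbf{Euler characteristic.} Write $e_+(C)$, $e_-(C)$ and $h(C)$ for the numbers of ends of $C$ at covers of $e_+$, of $e_-$, and at the hyperbolic orbits $h_i$, respectively. Here the $h_i$ are the hyperbolic orbits, since they lie over the index-one critical points of $H$. A punctured surface of genus $g(C)$ has
$$-\chi(C)=2g(C)-2+e_+(C)+e_-(C)+h(C).$$

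\textbf{Conley--Zehnder terms.} By (\ref{eq5}), every cover $\gamma_p^{q}$ satisfies $CZ_\tau(\gamma_p^q)=\operatorname{ind}_pH-1$, independently of $q$. So each end at a cover of $e_+$ has $CZ_\tau=1$, each end at $h_i$ has $CZ_\tau=0$, and each end at a cover of $e_-$ has $CZ_\tau=-1$. The negative-end contribution is therefore $-e_+(C)+e_-(C)$.

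\textbf{Assembling.} Adding the three contributions, the $e_+(C)$ terms cancel and the $e_-(C)$ terms double:
$$\operatorname{ind}(C)=2g(C)-2+h(C)+2e_-(C)+2M_\alpha+2k|e|+2k\chi(\Sigma).$$
This is the claimed formula.

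The only delicate point is the sign conventions. One must confirm that $\alpha$ sits at the negative end, so that its Conley--Zehnder terms enter with a minus sign. One must also confirm that the trivialization $\tau$ and the value $c_1(T\mathbb{D}E^\vee|_{\Sigma_+})=\chi(\Sigma)+|e|$ agree with those used in Lemma \ref{lem13}. Everything else is bookkeeping.
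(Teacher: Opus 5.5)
Your proposal is correct and is essentially the paper's proof. The paper likewise writes $-\chi(C)=2g(C)-2+e_+(C)+e_-(C)+h(C)$, takes $2c_\tau(Z_\alpha^\vee+k[\Sigma_+])=2M_\alpha+2k(\chi(\Sigma)+|e|)$ from (\ref{eq7}) and (\ref{eq8}), and uses (\ref{eq5}) with $\alpha$ at the negative end to get the contribution $-e_+(C)+e_-(C)$, so the $e_+(C)$ terms cancel exactly as in your assembly.
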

\begin{proof}
Let   $e_+(C)$ denote the number of ends at covers of $e_+$.  
By definition of the Fredholm index and (\ref{eq5}), (\ref{eq7}),  (\ref{eq8}), we have 
\[
\begin{aligned}
    \operatorname{ind} C=&2g(C)-2 +h(C) +e_+(C) +e_-(C) + 2c_{\tau}(Z_{\alpha} +k [\Sigma_+]) -e_+(C) + e_-(C)\\
=&2g(C)-2 +h(C) +2e_-(C) + 2M_{\alpha} +  2k \chi(\Sigma) + 2k|e|.
\end{aligned}
\]
\end{proof}

Next, we compute the energy of the holomorphic currents.

\begin{lemma} \label{lem8}
Let $\mathcal{C}=\{\mathcal{C}_0, \mathcal{C}_{-1}, ..., \mathcal{C}_{-N}\} \in \overline{\mathcal{M}_{X_+}^J}(\emptyset, \alpha)$ be a broken  holomorphic current with total relative homology class $Z^{\vee}_{\alpha} + k[\Sigma_+]$. Then the total  energy  of $\mathcal{C}$ is 
$$\begin{aligned}
    E_{\omega_+^{\varepsilon}}(\mathcal{C})  =  E_{\omega_+^{\varepsilon}}(\mathcal{C}_0)  + \sum_{i=-1}^{-N} E_{d\lambda_{\varepsilon}} (\mathcal{C}_i)=3(M_{\alpha} +k |e|) - \mathcal{A}_{\lambda_{\varepsilon}}(\alpha). 
\end{aligned}$$
Moreover,  we have  $\mathcal{A}_{\lambda_{\varepsilon}}(\alpha) \le 3(M_{\alpha} +k |e|) =3 [\mathcal{C}] \cdot [\Sigma_+] $.
\end{lemma}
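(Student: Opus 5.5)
The plan is to compute the energy by Stokes' theorem, level by level, and then add the levels up so that the intermediate actions cancel. On the top level, the completion $\widehat{X}_+$ is $X_+$ with the negative end $(-\infty,0]\times Y$ attached. The energy $E_{\omega_+^{\varepsilon}}(\mathcal{C}_0)$ is defined as $\int_{\mathcal{C}_0\cap X_+}\omega_+^{\varepsilon}+\int_{\mathcal{C}_0\cap((-\infty,0]\times Y)} d\lambda_\varepsilon$. Let $\alpha^0$ be the negative asymptotic orbit set of $\mathcal{C}_0$. Since $\omega_+^{\varepsilon}=d(e^s\lambda_\varepsilon)$ near the boundary, Stokes gives
\[
E_{\omega_+^{\varepsilon}}(\mathcal{C}_0)=\int_{[\mathcal{C}_0]}\omega_+^{\varepsilon}+\mathcal{A}_{\lambda_\varepsilon}(\alpha^0)-\mathcal{A}_{\lambda_\varepsilon}(\alpha^0),
\]
up to the orientation convention. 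In the cleanest formulation, $\int_{\mathcal{C}_0\cap X_+}\omega_+^\varepsilon$ is a pairing of $[\omega_+^\varepsilon]$, relative to the primitive $\lambda_\varepsilon$ on $Y$, with the relative class. The cylindrical part contributes $\int d\lambda_\varepsilon=-\mathcal{A}_{\lambda_\varepsilon}(\alpha^0)$ plus the boundary term on $\{0\}\times Y$. This boundary term cancels the corresponding boundary term of the $X_+$ part.

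For each symplectization level $\mathcal{C}_i$, going from $\alpha^{i}$ to $\alpha^{i-1}$ (the cap has no positive end, so the levels sit below), Stokes gives $E_{d\lambda_\varepsilon}(\mathcal{C}_i)=\mathcal{A}(\alpha^{i})-\mathcal{A}(\alpha^{i-1})$. Summing telescopes, and the total energy becomes
\[
\Bigl(\int_{X_+\text{-part}}\omega_+^\varepsilon+\int_{\text{collar}} \lambda_\varepsilon\text{-correction}\Bigr)-\mathcal{A}_{\lambda_\varepsilon}(\alpha).
\]
This depends only on the total relative class $Z^\vee_\alpha+k[\Sigma_+]$. I would phrase it as: the quantity $\int_Z\omega_+^\varepsilon$ with the boundary capped by $-\lambda_\varepsilon$ is a homological invariant, by Stokes, because $\omega_+^\varepsilon$ is exact near $Y$ with primitive $e^s\lambda_\varepsilon$ (the restriction at $s=0$ is $\lambda_\varepsilon$).

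It remains to evaluate this invariant on the class, and I would compute it for each summand. For $[\Sigma_+]$, a closed class in the interior away from the collar, it equals $\int_{\Sigma}3\omega_\Sigma+\int_{\Sigma_+}d(r^2\alpha_{E^\vee})$. The second term is $0$ because $r^2\alpha_{E^\vee}$ extends smoothly, with value zero, across the zero section. This gives $3|e|$, since $\int_\Sigma\omega_\Sigma=|e|$.

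For a disk fiber $F^\vee_p$ at a critical point $p$, the form $\pi^*\omega_\Sigma$ restricts to zero. The quantity is then $\int_{F^\vee_p}d(r^2\alpha_{E^\vee})$ together with the collar contribution from $e^s\lambda_\varepsilon$, which I would write as $\int_{\partial}$ of the primitive plus the correction that converts the boundary value into $\lambda_\varepsilon$. Concretely, the primitive of $\omega_+^\varepsilon$ on the fiber is $r^2\alpha_{E^\vee}$ inside and $e^s(1+\phi\varepsilon H)\lambda$ on the collar. These must agree where they meet, using $\lambda=-2\alpha_{E^\vee}$ and the Liouville identification; I would verify the matching of $Z^\vee$ with the given formula. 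Since $H(p)$ is constant on the fiber, the $\varepsilon$ terms cancel against the subtraction of $\int\lambda_\varepsilon$ and leave a term of the form $3$ per fiber. Equivalently, the quantity for a fiber equals the integral of the closed-form extension: the total class value computed with $\omega_+$ is $3$ per fiber, matching $3(M_\alpha+k|e|)$ after applying $A\cdot\Sigma_+=M_\alpha+k|e|$. The cleanest route is to check that the cohomology-type pairing equals $3\,[\mathcal{C}]\cdot[\Sigma_+]$ on both generators, where the fiber has intersection $1$ and $\Sigma_+$ has $|e|$. Checking the fiber normalization, i.e. that the interior-plus-collar integral on $F^\vee_p$ minus $\mathcal{A}_{\lambda_\varepsilon}(\gamma_p)$ has exactly the constant $3$, is the step I expect to be the main obstacle, because it requires careful bookkeeping of the sign conventions and of the cutoff $\phi$.

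Finally, energy positivity gives the inequality. Each level has nonnegative energy: $\omega_+^\varepsilon$ is $J$-positive, and $d\lambda_\varepsilon\ge0$ on $J$-holomorphic curves in the ends. Hence $3(M_\alpha+k|e|)-\mathcal{A}_{\lambda_\varepsilon}(\alpha)\ge0$. Together with $[\mathcal{C}]\cdot[\Sigma_+]=M_\alpha+k|e|$ from the computation after Lemma \ref{lem13}, this gives the final statement.
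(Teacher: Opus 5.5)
Your outline matches the paper's proof. You apply Stokes on each level and telescope, and you identify the total with the homological quantity $\rho(A)=\int_A\omega_+^{\varepsilon}+\int_{\partial A}\lambda_{\varepsilon}$ (your ``capped by $-\lambda_\varepsilon$''). You then cancel the exact $\varepsilon$-perturbation, evaluate on generators, and conclude with positivity of energy. Your value $3|e|$ on $[\Sigma_+]$ is correct. Two minor points:
\begin{itemize}
\item The $\varepsilon$-cancellation holds for every relative class, so constancy of $H$ on the fibre is not needed. The perturbation is $d(e^s\phi\,\varepsilon\pi^*H\lambda)$, whose value at $s=0$ is $\lambda_\varepsilon-\lambda$.
\item Your first display is garbled. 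The level-zero identity is $E(\mathcal{C}_0)=\int_{\mathcal{C}_0\cap X_+}\omega_+^{\varepsilon}+\int_{\mathcal{C}_0\cap\{0\}\times Y}\lambda_{\varepsilon}-\mathcal{A}_{\lambda_{\varepsilon}}(\alpha^0)$.
\end{itemize}

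The real gap is the evaluation on a disk fibre $F^\vee_p$. This is where the $3M_\alpha$ comes from, and you assert it rather than derive it. The mechanism you sketch would give the wrong answer. You say the interior primitive $r^2\alpha_{E^\vee}$ and the collar primitive $e^s\lambda$ ``must agree where they meet,'' but they do not. Since $\Omega^\vee=(3-r^2)\pi^*\omega_\Sigma+2r\,dr\wedge\alpha_{E^\vee}$, the Liouville primitive on the collar is $\iota_{Z^\vee}\Omega^\vee=(r^2-3)\alpha_{E^\vee}$, which equals $\lambda=-2\alpha_{E^\vee}$ at $r=1$.

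If the two primitives matched, each fibre would contribute $0$, as in an exact cobordism, and you would get $3k|e|-\mathcal{A}_{\lambda_\varepsilon}(\alpha)$. The fibre contribution is exactly the mismatch $3\alpha_{E^\vee}$. This form is closed on the punctured fibre, because $d\alpha_{E^\vee}=-\pi^*\omega_\Sigma$ restricts to zero there.

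Concretely, the boundary circle of $F^\vee_p$ is oriented by $d\theta^\vee=-d\theta$, so $\partial F^\vee_p=-\gamma_p$. Using $\omega_+|_{F^\vee_p}=d(r^2\alpha_{E^\vee})$ and the fact that each fibre has $\lambda$-action $2$,
\[
\int_{F^\vee_p}\omega_+ +\int_{\gamma_p}\lambda=\int_{\partial F^\vee_p}\alpha_{E^\vee}+2=1+2=3 .
\]
Note in particular that $\int_{F^\vee_p}\omega_+$ is $1$, not $3$ as your ``equivalently'' sentence suggests; the other $2$ is the boundary term. This is exactly the paper's computation, $\rho(A)=3k|e|+M_\alpha+2M_\alpha$. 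With it inserted, your argument is complete.
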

\begin{proof}
It suffices to prove the case that $\mathcal{C}$ is unbroken.  Then its energy is 
\begin{equation*}
\begin{split}
   E_{\omega_+^{\varepsilon}}(\mathcal{C}) &= \int_{\mathcal{C} \cap X_+} \omega_+^{\varepsilon}  + \int_{\mathcal{C} \cap  \mathbb{R}_{\le 0} \times Y} d\lambda_{\varepsilon}\\
&= \int_{\mathcal{C} \cap X_+} \omega_+^{\varepsilon}  +  \int_{\mathcal{C} \cap \partial X_+} \lambda_{\varepsilon} -\mathcal{A}_{\lambda_{\varepsilon}}(\alpha).\\
        \end{split}
\end{equation*}
Note that  $\int_{\mathcal{C} \cap X_+} \omega_+^{\varepsilon}  +  \int_{\mathcal{C} \cap \partial X_+} \lambda_{\varepsilon} $ is   the homomorphism $\rho(A)$ (\ref{eq21}), where $A $ denote the  homology class $[\mathcal{C}] \in H_2(X_+, Y, \Z). $ Becasue the exact perturbations on $\omega_+^{\varepsilon} $ and $\lambda_{\varepsilon}$  canncels each other, we have 
 \begin{equation*}
\begin{split}
\rho(A) =\int_A \omega_+ + \int_{\partial A} \lambda. 
        \end{split}
\end{equation*}
Then 
 \begin{equation*}
\begin{split}
\rho(A) &=\int_{Z_{\alpha} + k[\Sigma_+]} (3\pi^*_{E^{\vee}} \omega_{\Sigma} - d(r^2\alpha_{\nabla})) + 2M_{\alpha}\\
&= 3k|e| + M_{\alpha} +2M_{\alpha} =3(k|e| + M_{\alpha}).   
        \end{split}
\end{equation*}

Because the energy of  a holomorphic curve is nonnegative, we have $$3(M_{\alpha} +k |e|) - \mathcal{A}_{\lambda_{\varepsilon}}(\alpha) \ge 0. $$
\end{proof}

\begin{remark}
The   computations in Lemma \ref{lem8} show that $\rho(A) =3 A \cdot [\Sigma_+]$. 
\end{remark}

Combining the index and energy computations,  in the next two lemmas, we deduce a  contraint of a relative homology class provided that the class contains a holomorphic current.   

\begin{lemma} \label{lem6}
Assume $J$ is a generic cobordism admissible almost complex structure. Let $C \in \mathcal{M}_{X_+}^J(\emptyset, \alpha)$ be a simple holomorphic curve with relative homology class 
$[C]=Z_\alpha^\vee+k[\Sigma_+] $.  Then $k \ge 0$. 
\end{lemma}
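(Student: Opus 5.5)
The plan is to prove $k\ge 0$ by computing the intersection number of $C$ with a disk fiber over a non-critical point, and then using positivity of intersections. The index formula alone does not seem to suffice, because the term $2M_\alpha$ can absorb a negative multiple of $2(|e|+\chi(\Sigma))$.

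\textbf{Step 1 (topological computation).} Fix a point $p\in\Sigma$ that is not a critical point of $H$, and let $\widehat F^{\vee}_p$ be the completed fiber over $p$. Its end lies on $Y$ over $p$; away from the collar it is the disk $F^{\vee}_p$. The disks $F^\vee_{\pi(e_\pm)}$ and $F^\vee_{\pi(h_i)}$ making up $Z^\vee_\alpha$ lie over critical points, so they are disjoint from $F^\vee_p$. Moreover $[\Sigma_+]\cdot F^\vee_p=1$. Hence the relative intersection number is
\[
[C]\cdot F^\vee_p=(Z^\vee_\alpha+k[\Sigma_+])\cdot F^\vee_p=k .
\]
This number is well defined, independent of representatives. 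The reason is that $F_p^\vee$ meets $Y$ in a circle disjoint from every orbit of $\alpha$, so no asymptotic correction terms arise.

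\textbf{Step 2 (positivity for a fibered almost complex structure).} Choose a cobordism admissible $J_0$ on $\widehat X_+$ for which the projection $\pi_{E^\vee}$ is $(J_0,j)$-holomorphic. On $\mathbb{D}E^\vee$ one takes $J_0$ to preserve the splitting $TE^\vee_{hor}\oplus TE^\vee_{vert}$. On the cylindrical end, $J_0$ is the standard $S^1$-invariant structure lifted from $j$ on $\xi\cong\pi^*T\Sigma$, suitably adapted to $\lambda_\varepsilon$ as in Nelson--Weiler and Farris.

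With this choice every fiber $\widehat F^\vee_q$ is $J_0$-holomorphic. Positivity of intersections then gives $\mathcal C\cdot \widehat F^\vee_p\ge 0$ for every $J_0$-holomorphic current $\mathcal C$ that contains no component equal to $\widehat F^\vee_p$. The cylinder over $p$ is not a Reeb orbit, so a current asymptotic to $\alpha$ cannot contain $\widehat F^\vee_p$ as a component. Components that are fibers over critical points contribute $0$, since those fibers are disjoint from $\widehat F^\vee_p$. Thus $k\ge 0$ for $J_0$.

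\textbf{Step 3 (passage to generic $J$).} This is the main obstacle, because $J_0$ need not be generic. I would first try the following route. Take a generic $J$ close to $J_0$, arranged so that the fiber $\widehat F^\vee_p$ stays $J$-holomorphic over a small neighbourhood of the fixed non-critical point $p$. Genericity only needs to be imposed on a dense open set, and the fibers over nearby points $p'$ compute the same number $k$. Positivity of intersections with such a $J$-holomorphic fiber then applies directly to $C$.

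If that arrangement is not available, the fallback is a compactness argument. Take a sequence $J_n\to J_0$ and curves $C_n$ in the fixed class $Z^\vee_\alpha+k[\Sigma_+]$. Their energy is uniformly bounded by Lemma \ref{lem8}, since it depends only on the class. Gromov compactness gives a broken $J_0$-holomorphic current $\mathcal C_\infty$ in the same relative class. The symplectization levels are currents in $\mathbb{R}\times Y$ between orbit sets lying over critical points; these pair with the cylinder over $p$ in a way that preserves the total intersection number. Step 2 applied to the cobordism level then yields $k=\mathcal C_\infty\cdot \widehat F^\vee_p\ge 0$. Checking that intersection numbers with $\widehat F^\vee_p$ are additive over the levels is the routine part here, and it uses again that $p$ is not a critical point.
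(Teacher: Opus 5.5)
\textbf{The gap is in Step 2, and the rest of the argument depends on it.} On the negative end $(-\infty,0]\times Y$, a cobordism-admissible $J$ must satisfy $J\partial_s=R_\varepsilon$, the Reeb field of $\lambda_\varepsilon=(1+\varepsilon\pi^*H)\lambda$. Since $\ker\lambda_\varepsilon=\xi$ is the horizontal distribution, $R_\varepsilon$ differs from $(1+\varepsilon\pi^*H)^{-1}R$ by a vector in $\xi$ determined by $\varepsilon\, d(\pi^*H)|_\xi$. That vector is nonzero exactly over non-critical points of $H$. Because $d\pi|_\xi$ is an isomorphism onto $T\Sigma$, at a non-critical $p$ you get $d\pi(J\partial_s)=d\pi(R_\varepsilon)\neq 0=j\,d\pi(\partial_s)$.

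So no admissible $J$ makes $\pi$ holomorphic on the end. In particular the half-cylinder $(-\infty,0]\times\pi^{-1}(p)$ is never $J$-holomorphic; you yourself note that $\pi^{-1}(p)$ is not a Reeb orbit. Positivity of intersections therefore says nothing about intersection points of $C$ with $\widehat{F}^\vee_p$ in the end, and $C$ can meet it there.

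Moving $p$ to a critical point does make the completed fiber holomorphic. But $\alpha$ may contain the orbit over every critical point, and then $C\cdot\widehat{F}^\vee_p$ picks up asymptotic linking terms and no longer equals $k$. Repairing this would take a genuine Morse--Bott degeneration argument ($\varepsilon\to 0$, cascades). That is much more than the genericity/compactness issue you flag in Step 3, which is moot anyway because the fiber is never holomorphic on the end.

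\textbf{Your opening claim that the index approach cannot work is also wrong; it is exactly what the paper does.} The ingredient you are missing is the energy identity of Lemma \ref{lem8}. If $\alpha=\emptyset$, it gives $k\ge 0$ directly. Otherwise it gives $M_\alpha\ge 1-k|e|$. Combine this with $|e|+\chi(\Sigma)>0$ and the ECH index formula of Lemma \ref{lem13}; its $2kM_\alpha$ term is very negative when $k<0$.
\begin{itemize}
\item If $k\le -2$, or if $k=-1$ and $g=0$, one gets $I<0$. This contradicts $I\ge\operatorname{ind}\ge 0$, which holds for a simple curve and generic $J$.
\item If $k=-1$ and $g\ge 1$, one gets $\operatorname{ind}C-I\ge h(C)+m_++2e_-(C)\ge 1$. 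This contradicts the ECH index inequality (\ref{eq20}).
\end{itemize}
Negative $k$ is detected by comparing the ECH index $I$ with the Fredholm index, not by the Fredholm index alone. The hypothesis $e<\chi(\Sigma)$ enters essentially here, whereas your approach never uses it.
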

\begin{proof}
If $\alpha$ is the empty set, i.e., $C$ is closed, then  the  homology class of $C$ is $[C] =k[\Sigma_+]$. The energy of $C$ is 
$$\int_C \omega_+^{\varepsilon} =3k |e|\ge0. $$
Hence, $k \ge 0$. 

Assume $\alpha$ is nonempty.  Lemma \ref{lem8} implies that $3(M_{\alpha} +k |e|)\ge \mathcal{A}_{\lambda_{\varepsilon}}  (\alpha)>0 $.  Therefore, 
\begin{equation} \label{eq9}
M_{\alpha} \ge 1-k|e|. 
\end{equation}
We  divide the situation into the  following three cases. 
\begin{itemize}
\item
 If $k \le -2$, then 
\begin{equation*}
\begin{split}
 I_{\mathrm{cap}}\bigl(Z_\alpha^\vee+k[\Sigma_+]\bigr)
       &=M_{\alpha}-m_+ +m_- + kM_{\alpha} + k(M_{\alpha} + k|e|   + |e|  + \chi(\Sigma))  \\
       &\le  -m_+ +m_- + -M_{\alpha}  + 2k  \le -4< 0. 
        \end{split}
\end{equation*}
Here we have used the observation that $M_{\alpha} + k|e|   + |e|  + \chi(\Sigma)>1$.  This follows from (\ref{eq9}) and $e< \chi(\Sigma)$.

\item
If $k=-1$ and $\chi(\Sigma)=2$, then
 \begin{equation*}
\begin{split}
 I_{\mathrm{cap}}\bigl(Z_\alpha^\vee+k[\Sigma_+]\bigr)
       &=-M_{\alpha}-m_+ +m_- -2\le -2<0. 
        \end{split}
\end{equation*}
\item
If $k=-1$ and $\chi(\Sigma)\le 0$, then
\begin{equation*}
\begin{split}
&\operatorname{ind} C-  I_{\mathrm{cap}}\bigl(Z_\alpha^\vee-[\Sigma_+]\bigr) \\
=&  2g(C)-2 + h(C)+2e_-(C) + 2M_{\alpha}- 2|e| -2 \chi(\Sigma)+m_+-m_-+M_{\alpha}+ \chi(\Sigma)\\
\ge &2(M_{\alpha}-|e| -1) +h(C) + m_+ +2e_-(C)      -\chi(\Sigma)\\
  \ge&  h(C) + m_+ +2e_-(C)  \ge 1. 
        \end{split}
\end{equation*}
Note that we have used (\ref{eq9}) in the last two lines  of the above inequalities.  
\end{itemize}
 In all  cases,  it  voliates the ECH inequality (\ref{eq20}).
\end{proof}

\begin{lemma}  \label{lem5}
Assume $J$ is a generic cobordism admissible almost complex structure.  Let $\mathcal{C}=\{\mathcal{C}_0, \mathcal{C}_{-1}, ..., \mathcal{C}_{-N}\} \in \overline{\mathcal{M}_{X_+}^J}(\emptyset, \alpha)$ be a broken  holomorphic current with total relative homology class $Z^{\vee}_{\alpha} + k[\Sigma_+]$.
Then $k \ge 0$
\end{lemma}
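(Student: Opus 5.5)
The plan is to reduce Lemma \ref{lem5} to Lemma \ref{lem6} by splitting the total class of $\mathcal{C}$ into pieces whose $\Sigma_+$-coefficients are each nonnegative.

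\textbf{The cap level.} Write the cap-level current as $\mathcal{C}_0=\sum_a d_a C_a$ with $C_a$ simple, $d_a\ge1$. Each $C_a$ lies in $\mathcal{M}^J_{X_+}(\emptyset,\alpha_a)$ for some orbit set $\alpha_a$ built from the fibers over critical points. Since $\mathcal{A}_{\lambda_\varepsilon}(\alpha_a)\le \mathcal{A}_{\lambda_\varepsilon}(\alpha^0)<L_\varepsilon$, each $\alpha_a$ is again of the required form. Hence $[C_a]=Z^\vee_{\alpha_a}+k_a[\Sigma_+]$, and Lemma \ref{lem6} gives $k_a\ge0$ for every $a$. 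The reference classes $Z^\vee_\alpha$ are additive in the multiplicities, so $\sum_a d_a Z^\vee_{\alpha_a}=Z^\vee_{\alpha^0}$. Therefore $[\mathcal{C}_0]=Z^\vee_{\alpha^0}+k_0[\Sigma_+]$ with $k_0=\sum_a d_ak_a\ge0$.

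\textbf{The symplectization levels.} For $-N\le i\le -1$ we have $\mathcal{C}_i\in\mathcal{M}^{J}_Y(\alpha^{i+1},\alpha^{i})$, and its class lies in $H_2(Y,\alpha^{i+1},\alpha^i)$. Gluing such a class to $Z^\vee_{\alpha^{i+1}}$ gives $Z^\vee_{\alpha^{i}}+k_i[\Sigma_+]$ for some integer $k_i$. Then $k=k_0+\sum_{i<0}k_i$, and it suffices to show $k_i\ge0$ for each $i<0$. I will do this with the intersection number with $\Sigma_+$. Pairing with $\Sigma_+$ gives
$$(Z^\vee_{\alpha^{i}}+k_i[\Sigma_+])\cdot\Sigma_+=M_{\alpha^i}+k_i|e|,$$
while $Z^\vee_{\alpha^{i+1}}\cdot \Sigma_+=M_{\alpha^{i+1}}$. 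Since the image of $H_2(Y)$ in $H_2(X_+)$ pairs trivially with $\Sigma_+$ (its image in $H_2(X_+)$ is torsion-free only through $[\Sigma_+]$, and $Y$-classes are disjoint from $\Sigma_+$), this yields $k_i|e|=M_{\alpha^{i+1}}-M_{\alpha^i}$.

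It remains to show $M_{\alpha^{i+1}}\ge M_{\alpha^i}$. The curve $\mathcal{C}_i$ has nonnegative $d\lambda_\varepsilon$-energy, so $\mathcal{A}(\alpha^{i+1})\ge\mathcal{A}(\alpha^i)$. Using $\mathcal{A}=2(M+\varepsilon\,\pi^*H)$ with $|H|\le 1/10$, and the fact that $M_{\alpha^{i+1}}-M_{\alpha^i}$ is a multiple of $|e|$ (both orbit sets represent $\Gamma$), we get
$$M_{\alpha^{i+1}}-M_{\alpha^i}\ge -\tfrac{\varepsilon}{10}\,(M_{\alpha^{i+1}}+M_{\alpha^i}).$$
Since $M<L_\varepsilon$, the right-hand side is $>-1$ provided $\varepsilon L_\varepsilon$ is small. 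This is the Morse–Bott regime of \cite{NW}, where $L_\varepsilon\to\infty$ but $\varepsilon L_\varepsilon$ stays small. So the difference is a multiple of $|e|$ that is $>-1$, hence nonnegative, and $k_i\ge0$.

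\textbf{Main obstacle.} The step I expect to be delicate is this last one: justifying that the total multiplicity cannot drop in a symplectization level. It relies on the smallness of $\varepsilon L_\varepsilon$. If that is not available, the fallback is the intersection-positivity argument: $\mathcal{C}_i$ projected to $\Sigma$ has degree $(M_{\alpha^{i+1}}-M_{\alpha^i})/|e|$ times the fundamental class, and this degree is nonnegative for $J$ compatible with the $S^1$-bundle structure. With that in place, summing $k_0\ge0$ and $k_i\ge0$ gives $k\ge0$.
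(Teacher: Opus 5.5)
Your proof is correct and follows essentially the same route as the paper. Both write $k=k_{\mathcal{C}_0}+\sum_{i}(M_{\alpha^{i+1}}-M_{\alpha^{i}})/|e|$, get $k_{\mathcal{C}_0}=\sum_a d_a k_a\ge 0$ from Lemma \ref{lem6}, and use that the total multiplicity does not increase across symplectization levels. The differences are minor. The paper obtains this splitting from the energy identity of Lemma \ref{lem8} rather than by intersecting with $\Sigma_+$, and it cites Proposition 4.4 of \cite{NW} for $M_{\alpha^{i+1}}\ge M_{\alpha^{i}}$, whereas you derive that inequality directly from the action estimate; your version needs only the harmless normalization that $\varepsilon L_\varepsilon$ be small, which can always be arranged by shrinking $L_\varepsilon$.
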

\begin{proof}
Let $\mathcal{C}=\{\mathcal{C}_0, \mathcal{C}_{-1}, ..., \mathcal{C}_{-N}\}$ be broken holomorphic current with relative homology class $Z^{\vee}_{\alpha} + k[\Sigma_+]$, where $\mathcal{C}_0 \in \mathcal{M}^J_X(\emptyset, \alpha_0)$ and  $\mathcal{C}_{-i} \in \mathcal{M}^J_Y(\alpha_{i-1}, \alpha_i)$ and $\alpha_N= \alpha$.  By Lemma \ref{lem8}, we have 
\begin{equation*}
\begin{split}
E_{\omega_+^{\varepsilon}}(\mathcal{C}) =&3(M_{\alpha} + k|e|) -\mathcal{A}_{\lambda_{\varepsilon}}(\alpha)\\
=&E_{\omega_+^{\varepsilon}}(\mathcal{C}_0) + \sum_{i=1}^{N} \int_{\mathcal{C}_{-i}} d\lambda_{\varepsilon}\\
=& 3(M_{\alpha_0} + k_{\mathcal{C}_0}|e|)   -\mathcal{A}_{\lambda_{\varepsilon}}(\alpha_0) + \sum_{i=1}^{N} \mathcal{A}_{\lambda_{\varepsilon}}(\alpha_{i-1})  -\mathcal{A}_{\lambda_{\varepsilon}}(\alpha_{i}). 
\end{split}
\end{equation*}
Hence, we have 
\begin{equation*}
 k=k_{\mathcal{C}_0 } +\frac{M_{\alpha_0} - M_{\alpha}}{|e|} =k_{\mathcal{C}_0 } +\sum_{i=1}^N\frac{M_{\alpha_{i-1}} - M_{\alpha_{i}}}{|e|} 
\end{equation*}
By Proposition 4.4 of \cite{NW}, $M_{\alpha_{i-1}} - M_{\alpha_{i}} \ge 0$ for all $1\le i\le N$.  Thus, it suffices to show that $k_{\mathcal{C}_0 }  \ge 0$. 

By definition, we write $\mathcal{C}_0=\sum_a d_a C_a$, where $C_a$ are distinct simple holomorphic curves and $d_a $ are positive integers. Let $Z_{\alpha_a}^{\vee} + k_a [\Sigma_+]$ denote the relative homology class of $C_a$. By definition,  $k_{\mathcal{C}_0}=\sum_a d_a k_a$. Thus, Lemma \ref{lem6}  implies $k_{\mathcal{C}_0} \ge 0$.

\end{proof}

\section{Gromov-Taubes invariants of Ruled surfaces }

Recall that  $(X_+, \omega^{\varepsilon}_+)$ is a strong  symplectic cap of $(Y, \lambda_{\varepsilon})$ and
$(X_-, \omega^{\varepsilon}_-)$ is a strong  symplectic filling of
$(Y, \lambda_{\varepsilon})$, we  glue them along their common boundary to obtain a
closed symplectic manifold $(X,\omega^{\varepsilon})$.  The disk fibers of $\mathbb{D}E $ and $\mathbb{D}E^{\vee} $ are glued to be a sphere. Thus, $\pi_X :X \to \Sigma$ is a sphere bundle over $\Sigma$ (ruled surface).   Topologically, we have 
$$X=\mathbb{P}(\underline{\mathbb{C}}\oplus E) \to \Sigma,$$
  where $\underline{\mathbb{C}}$ denote  the trivial line bundle over $\Sigma$.   

The gold of this section is to use the wall-crossing formula in  \cite{OT, OT2} to compute the Gromov-Taubes invariants of $X$.  Let us clarify some facts about the topology of $X$ first.

Let $\Sigma_+ \subset  X$ and  $\Sigma_- \subset X $ denote respectively the zero sections of $\mathbb{D}E^{\vee} $ and $\mathbb{D}E$  in $X$.  By the construction, $\Sigma_+$ and $\Sigma_-$ are disjoint and $[\Sigma_{\pm}]^2=\pm|e| ,$ $[\Sigma_{\pm}] \cdot S^2=1$, and both of them are sections of $\pi_X : X \to \Sigma$. 
 It is well known that first homology class of a ruled surface is the first homology of the base, and the second homology of a ruled surface is generated by the $S^2$-fiber and a section. Thus, we have   the following lemma. 
\begin{lemma} \label{lem9}
We have $H_1(X,\Z) \cong H_1(\Sigma,\Z)$ and $H_2(X, \mathbb{Z}) \cong \mathbb{Z} [S^2] \oplus \mathbb{Z}[\Sigma_+] $. Moreover, one has
$[\Sigma_{\pm}]^2=\pm|e| , [\Sigma_{\pm}] \cdot [S^2]=1,$
and 
$$[\Sigma_+] =[\Sigma_-] +|e|[S^2]. $$
\end{lemma}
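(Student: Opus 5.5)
The plan is to treat $X=\mathbb{P}(\underline{\mathbb{C}}\oplus E)$ as an $S^2$-bundle over $\Sigma$ and read off its topology from the fibration, then pin down the classes of the two sections by pairing with a basis.

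First, for $H_1$ I will use the long exact homotopy sequence of the fibration $S^2\to X\xrightarrow{\pi_X}\Sigma$. Since $\pi_1(S^2)=0$ and $\pi_2(\Sigma)=0$ when $g\ge 1$ (and $\pi_1(\Sigma)=0$ when $g=0$), $\pi_X$ induces an isomorphism on $\pi_1$, hence on $H_1$. For $H_2$ I will apply the Leray--Hirsch theorem, or equivalently the Serre spectral sequence, whose $E_2$ page degenerates because the class $\mathrm{PD}[\Sigma_+]$ restricts to a generator of $H^2$ of each fiber. This gives $H^*(X)\cong H^*(\Sigma)\otimes H^*(S^2)$ as modules, so $H_2(X,\Z)$ is free of rank $2$. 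I will take as generators the fiber class $[S^2]$ and the section $[\Sigma_+]$. To see that they span over $\Z$, note that the intersection matrix $\begin{pmatrix}0&1\\1&|e|\end{pmatrix}$ has determinant $-1$, so by unimodularity of the intersection form they form a basis.

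Next come the intersection numbers. Two distinct fibers are disjoint, so $[S^2]^2=0$. Each section meets a fiber transversally in exactly one point, so $[\Sigma_\pm]\cdot[S^2]=1$. The self-intersection of a zero section equals the degree of its normal bundle, which is $E^\vee|_{\Sigma_+}$ of degree $|e|$ and $E|_{\Sigma_-}$ of degree $e=-|e|$. These are exactly the computations already used in (\ref{eq8}) and the identification of the collars. Orientations are compatible because the gluing along $Y$ matches the complex orientations of the disk fibers, which glue into the sphere fiber.

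Finally, for the relation, I will write $[\Sigma_-]=a[S^2]+b[\Sigma_+]$. Pairing with $[S^2]$ gives $b=1$. Since $\Sigma_-\cap\Sigma_+=\emptyset$, pairing with $[\Sigma_+]$ gives $0=a+|e|$, so $a=-|e|$. This yields $[\Sigma_+]=[\Sigma_-]+|e|[S^2]$. As a consistency check, $[\Sigma_-]^2=|e|-2|e|=-|e|$.

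The only point needing care is the orientation and sign bookkeeping when gluing $\mathbb{D}E^\vee$ to $\mathbb{D}E$ along $Y^\vee\cong -Y$, which is where the signs of $[\Sigma_\pm]^2$ come from. I will verify this via the local model: $\mathbb{P}(\underline{\mathbb{C}}\oplus E)$ contains $\Sigma_-=\mathbb{P}(E)$ with normal bundle $E\otimes\underline{\mathbb{C}}^\vee=E$, and $\Sigma_+=\mathbb{P}(\underline{\mathbb{C}})$ with normal bundle $E^\vee$.
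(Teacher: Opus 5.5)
Your argument is correct, but it takes a different route from the paper: the paper gives no argument and simply cites Proposition 7.62 of Wendl for these standard facts about ruled surfaces, while you derive each claim directly.

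Your steps are:
\begin{itemize}
\item The homotopy exact sequence of $S^2\to X\to\Sigma$ gives $H_1(X,\Z)\cong H_1(\Sigma,\Z)$. Only $\pi_1(S^2)=0$ and connectedness of the fiber are needed; your remark about $\pi_2(\Sigma)$ is superfluous.
\item Leray--Hirsch shows that $H_2(X;\Z)$ is free of rank two.
\item The classes $[S^2]$ and $[\Sigma_+]$ have Gram determinant $-1$, so unimodularity of the intersection form shows they span all of $H_2(X;\Z)$.
\item Pairing with $[S^2]$ and using $\Sigma_+\cap\Sigma_-=\emptyset$ gives $[\Sigma_+]=[\Sigma_-]+|e|[S^2]$.
\end{itemize}

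What your route buys is explicit sign bookkeeping. The signs of $[\Sigma_\pm]^2$ are exactly what feed into the index formulas of Lemmas~\ref{lem13} and~\ref{lem4}. Your argument also ties the classes to the specific decomposition $X=\mathbb{D}E^\vee\cup_Y\mathbb{D}E$ used throughout. The citation is shorter, but it leaves the reader to match Wendl's conventions to this gluing.

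If you write it up, make the orientation claim precise. Both $\Omega$ and $\Omega^\vee$ restrict positively to the complex fibers and to horizontal planes. The fiber gluing is $z\mapsto 1/z$, which is holomorphic. Hence the symplectic orientation of $X$ agrees with the complex orientation on each disk bundle. This is what justifies reading $[\Sigma_+]^2=\deg E^\vee=|e|$ and $[\Sigma_-]^2=\deg E=-|e|$ off the zero sections.

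One small caution about your final cross-check. If $\mathbb{P}$ denotes lines, the normal bundle of $\mathbb{P}(L_1)\subset\mathbb{P}(L_1\oplus L_2)$ is $L_1^\vee\otimes L_2$. So $\mathbb{P}(E)$ has normal bundle $E^\vee$ and is $\Sigma_+$, while $\mathbb{P}(\underline{\mathbb{C}})$ has normal bundle $E$ and is $\Sigma_-$. Your assignment is right only in Grothendieck's quotient convention. This does not affect the proof, since your main computation uses the disk-bundle description directly.
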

\begin{proof}
See Proposition 7.62  of \cite{Wen}. 
\end{proof}

By Lemma \ref{lem9}, we identify $\mathbb{A}(X)$ with $\mathbb{A}(\Sigma):=\Q[u] \otimes \Lambda^*H^1(\Sigma, \Q)$ throughout. The following theorem is a reinterpretation of Corollary 4.7 of \cite{OT2} for Gromov-Taubes invariants. 

\begin{theorem}[Gromov-Taubes invariants  of Ruled surfaces \cite{OT2}] \label{thm0} 
Let $x =u^a \theta^b e_c^r \in \mathbb{A}(\Sigma)$, where $0 \le b\le g$ and $a \ge 0$.  If $x =u^a \theta^b $,  $2a+2b = I(A)$ and $A \cdot [S^2] \ge 0$, then 
$$\Gr(X, \omega^{\varepsilon}, A, x) = \frac{g!}{(g-b)!} (1+ A \cdot [S^2])^{g-b};$$
otherwise, $\Gr(X, \omega^{\varepsilon}, A, x) =0$. 
\end{theorem}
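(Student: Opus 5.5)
The plan is to translate Corollary 4.7 of \cite{OT2}, which computes Seiberg--Witten invariants of ruled surfaces $\mathbb{P}(\underline{\mathbb{C}}\oplus E)\to\Sigma$ via the wall-crossing formula, into Gromov--Taubes language using Taubes' $\SW=\Gr$. The first step is to fix the dictionary. For a class $A\in H_2(X,\Z)$, Taubes' correspondence identifies $\Gr(X,\omega^{\varepsilon},A,\cdot)$ with $\SW(X,\mathfrak{s}_{\omega}+\operatorname{PD}(A),\cdot)$. Here $\mathfrak{s}_\omega$ is the canonical spin-c structure, and the homology orientation is the canonical one $o(X,\omega^{\varepsilon})$ fixed in Section 2. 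The generator $u$ of $\mathbb{A}(X)$ goes to the point class $\mu(\mathrm{pt})$, and $H_1(X)\cong H_1(\Sigma)$ (Lemma \ref{lem9}) matches the degree-one insertions. The index $I(A)=c_1(TX)\cdot A+A\cdot A$ equals the SW dimension $d(\mathfrak{s})$. So $\Gr(X,\omega^\varepsilon,A,x)=0$ unless $\deg x=I(A)$, which is why the statement only concerns $2a+2b=I(A)$ (with $r$ accounted for in general).

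The second step is to pin down the chamber. Since $b^+(X)=1$, SW invariants depend on a chamber, and Taubes' theorem says $\Gr$ equals the SW invariant in the chamber determined by a large perturbation $-ir\omega$. I will compare $[\omega^{\varepsilon}]$ with the data in \cite{OT2}. The relevant sign is governed by $\omega\cdot S^2>0$ and by the sign of $(2A-K)\cdot[\omega]$, where $K$ is the canonical class. So I will check that Taubes' chamber coincides with the chamber in which \cite{OT2} expresses the invariant, rather than its opposite. If it is the opposite chamber, I will add the wall-crossing term, which for ruled surfaces is computed in \cite{LL} and \cite{OT} as an explicit polynomial in $\theta$ and $u$.

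The third step is to extract the formula. In the correct chamber, Okonek--Teleman express the invariant as an integral over a Jacobian/symmetric-product moduli space. Set $n=A\cdot[S^2]$, the fiber degree. The $u$-insertions cut down the symmetric-product factor. What remains is $\int_{T^{2g}}\theta^{b}\cdot\exp\big((1+n)\theta\big)$-type expressions, and these give $\frac{g!}{(g-b)!}(1+n)^{g-b}$ after using $\int_{T^{2g}}\theta^g/g!=1$. Any insertion involving a nontrivial primitive $e_r^c$ with $r>0$ integrates to zero by $Sp(2g)$-invariance. More precisely, the invariant is a polynomial in the invariant class $\theta$ only, and it pairs trivially with the primitive parts. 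If $n<0$, then $A\cdot[S^2]<0$ forces every holomorphic representative to contain a fiber with negative intersection. In the SW picture, the relevant moduli space is empty in this chamber, so the invariant vanishes.

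The main obstacle is the second step: matching orientation and chamber conventions between Taubes' $\Gr$ and Okonek--Teleman's formula. This includes checking that the canonical homology orientation gives the sign $+1$, which is consistent with $\Gr(X,\omega,0)=1$ as used in Section 2. I would first test the conventions on $A=0$ and on $A=[S^2]$, where the answer can be verified directly by counting: one fiber through a point, giving the value $2^g$ with the $b=0$ term. Then I would use these cases to fix the global sign.
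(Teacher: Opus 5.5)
Your plan is essentially the paper's proof: Taubes' $\SW=\Gr$ with the canonical homology orientation, followed by Okonek--Teleman's Corollary~4.7. There $\Theta_{\mathfrak{s}_A}=\tfrac12\langle c_1(\mathfrak{s}_A),[S^2]\rangle\,\theta=(1+A\cdot[S^2])\,\theta$ gives $\int_{T_X}\exp\bigl((1+A\cdot[S^2])\theta\bigr)\wedge\theta^b$. Primitive insertions vanish; the paper shows this by the degree count $2m=2g-r$ together with $\theta^{g-r+1}e_r^c=0$, where you use $Sp(2g)$-invariance. The case $A\cdot[S^2]\le -1$ is read off from the same corollary, and the paper simply asserts that $\SW^+$ is Taubes' chamber rather than checking it as you propose.

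One correction to your sanity check. For $A=[S^2]$, the formula gives $\Gr(X,[S^2],u)=1$, matching the direct count of the unique fiber through a point. The value $2^g$ belongs to a class with $A\cdot[S^2]=1$, e.g.\ $A=[\Sigma_+]$ with $x=u^{|e|-g+1}$.
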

 \begin{proof}
Fix  a spin-c structure $\mathfrak{s}$,  a Riemannian metric $g$, and a $i\mathbb{R}$-value  self-dual 2 form $\mu$. The (perturbed) Seiberg-Witten equations is 
 \begin{equation} \label{eq40}
 \begin{cases}
 D_A\Psi = 0 &\\
 F_A^+ =\frac{1}{4}q(\psi) + \mu,
 \end{cases}
 \end{equation}
 where $\psi$ is a section of spin-c bundle $S_+$,  $A$ is connection of $\det S_+$, $F_A$ is the curvature of $A$, $D_A$ is the Dirac operator associated with $A$, and $q$ is a certain quadric operator on $\Gamma(S_+)$. 
 Roughly speaking, the Seiberg-Witten invariant is defined by counting the number of solutions to the above equations.  We refer    the details on  Seiberg-Witten equations to \cite{Sal, KM}.

As mentioned before, the orientation of the moduli space of solutions to Seiberg-Witten equations is determined by the    a choice of homology orientation in  $\Lambda(X)$. 
  Now   $(X, \omega^{\varepsilon}) $ is symplectic. The symplectic form induces a canonical  one $o(X, \omega) \in \Lambda(X)$  (see Page 460-461 of \cite{Ta4}).   
  
 Fix a spin-c structure $\mathfrak{s}_A$  with  first Chern class 
 \begin{equation*}
 \begin{split}
 c_1(\mathfrak{s}_A)=&2\operatorname{PD}(A)+ c_1(K_X^{-1}) \\
 =&2\operatorname{PD}(A)+ 2\operatorname{PD}([\Sigma_+]) + (2-2g - |e|)\operatorname{PD}[S^2]. 
 \end{split}
 \end{equation*}
 Fix a metric $g$ on $X$. Then we have a  self-dual  harmonic 2-form $\omega_g$       
such that  it  is positively proportional to  $[\omega^{\varepsilon}] $. 
Define the discriminant  
\begin{equation}\label{eq36}
  \Delta(g, \mu) =\int_X  i\mu\wedge \omega_g -2\pi [\omega_g] \cdot c_1 (\mathfrak{s}_A).
  \end{equation}
   Let 
  \begin{equation*}
  \SW_X^{ \operatorname{sign} \Delta(g, \mu) }(\mathfrak{s}, -):  \mathbb{A}(X) \to \mathbb{Q}
  \end{equation*}
  denote the Seiberg-Witten invariant defined by  counting solutions to (\ref{eq40}).   $\SW_X^{+} $ is the one in defined by the data in  ``Taubes' chamber''.  Specifically, according to Taubes's    ``Gr=SW"  theorem \cite{Ta1, Ta2, Ta3, Ta4} (Theorem 2 of \cite{Ta4}),        we have 
$$\Gr(X, \omega^{\varepsilon},  A, x)  = \SW_X^{+}(\mathfrak{s}_A, x).  $$

 Thus, it suffices to compute $\SW_X^{+}$. 
To this end, we apply the general wall crossing formula of  C. Okonek and A. Teleman \cite{OT, OT2}. They  define  a class  $\Theta_{\mathfrak{s}_A}  \in \Lambda^2H_1(X, \Z)$ by 
\begin{equation*} 
\begin{split}
\Theta_{\mathfrak{s}_A}(a\cup b) : =\frac{1}{2} \langle c_1(\mathfrak{s}_A) \cup a\cup b, X\rangle. 
\end{split}
\end{equation*} 
By identifications $H_1(X, \Z)  \cong H^1(X, \Z) \cong H^1(\Sigma, \Z)$, we have 
\begin{equation*} 
\begin{split}
\Theta_{\mathfrak{s}_A}  (a\cup b)  &=\frac{1}{2} \langle   c_1(\mathfrak{s}_A ),  [S^2] \rangle\langle a\cup b, \Sigma \rangle \\
&= (1+ A \cdot[S^2]) \langle a\cup b, \Sigma \rangle.
\end{split}
\end{equation*} 
Then $\Theta_{\mathfrak{s}_A} =( 1+ A \cdot[S^2]) \theta. $

If  $I(A)  \ne \degop( u^a\theta^be^r_c) $, then $\SW_X^+(\mathfrak{s}_A, u^a\theta^be^r_c  )  =0$ by its definition.

Consider the case that $A \cdot [S]^2 \ge 0$, i.e.,  $\langle c_1(\mathfrak{s}_A),  [S^2]  \rangle \ge 2$,  and   $I(A)  = \degop( u^a\theta^be^r_c) $. By Corollary 4.7 of \cite{OT2},  we have  
\begin{equation*} 
\begin{split}
\SW_X^+(\mathfrak{s}_A, u^a\theta^be^r_c  ) &= \int_{T_X}   \exp(\Theta_{\mathfrak{s}_A})   \wedge \theta^be^r_c  =  \int_{T_X}   \exp\left((1+A \cdot [S^2]) \theta\right)  \wedge \theta^be^r_c \\
\end{split}
\end{equation*}
where $T_X :=H^1(X, \R)/H^1(X, \Z) \cong H^1(\Sigma, \R)/H^1(\Sigma, \Z) =\mathbb{T}^{2g}$ is the Jacobian torus with symplectic form  $\theta$.
Note that   if $e^r_c    \ne 1$, then $ \int_{T_X}   \theta^m e^r_c \ne 0$  only  if  $$2m =2g -\degop e^r_c =2g-r. $$
If $r =1$, then the right hand side is odd, and hence no such $m$ exists.  If $r \ge 2$, then $m=g-\frac{1}{2}r\ge g+1-r$.  By definition of  $e^r_c$, $\theta^m e^r_c =0$. Thus,   
$$ \int_{T_X}   \theta^m e^r_c = 0$$
for any $m \in \Z_{\ge 0}$.  Thus,  $\SW_X^+(\mathfrak{s}_A, u^a\theta^be^r_c  ) =0 $ if $e^r_c    \ne 1$. 

In the case that $e^r_c= 1$ and $I(A)  = \degop( u^a\theta^b) =2a+2b $, we have 
 \begin{equation*} 
\begin{split}
\SW_X^+(\mathfrak{s}_A, u^a\theta^b ) =&  \int_{T_X}   \exp((1+A \cdot [S^2]) \theta)  \wedge \theta^b \\   
=& \int_{\mathbb{T}^{2g}} \frac{(1+A\cdot [S^2])^{g-b}}{(g-b)!}  \theta^g=\frac{g!}{(g-b)!} (1+ A \cdot [S]^2)^{g-b}. 
\end{split} 
\end{equation*}

Finally, according to  Corollary 4.7 of \cite{OT2}, if $ A \cdot[S]^2 \le -1$ (i.e.,  $\langle c_1(\mathfrak{s}_A),  [S^2]  \rangle\le 0$),  then  $$\SW_X^+(\mathfrak{s}_A, u^a\theta^be^r_c  )  =0.$$
  
 \end{proof}
 
\begin{remark}\label{remark3}
Even though we have $``Gr=SW''$ theorem, the Gromov-Taubes invariants may, in general, depend on the choice of symplectic form, since different symplectic forms  may  determine different homology orientations and different ``Taubes chambers.''

By our construction,  for different $\varepsilon_1 \ne \varepsilon_2$,  we have  
$$\omega^{\varepsilon_1} =\omega^{\varepsilon_2} + d\mathfrak{a}_{\varepsilon_1, \varepsilon_2},$$
where $\mathfrak{a}_{\varepsilon_1, \varepsilon_2}$ is a certain  small 1-form supporting in a small neck neighborhood of $Y$.    In particular,  $[\omega^{\varepsilon_1}] =[\omega^{\varepsilon_2}]$.
For sufficiently small $\varepsilon_1, \varepsilon_2$,  $\omega^{\varepsilon_2} + t d\mathfrak{a}_{\varepsilon_1, \varepsilon_2}$ is symplectic for $t \in [0,1]$. Theorefore, $\omega^{\varepsilon_1} $ is deformation equivalent to $\omega^{\varepsilon_2}$. Then 
they determine the same homology orientation and the same discriminant  (\ref{eq36}).  Thus, we are using the same  homology orientation implicitly for all sufficiently small $\varepsilon>0$ in Theorem \ref{thm0}. As a result,   the Gromov-Taubes invariants  defined using by   $ \omega^{\varepsilon_1} $ and $\omega^{\varepsilon_2}$ agree, and both are equal to   $SW_X^+$.  This is reflected in the conclusion of  Theorem \ref{thm0}. 

  To simplify the notation, we drop the $\omega^{\varepsilon}$ in $\Gr(X, \omega^{\varepsilon},  A, x) $. 
\end{remark}

 \begin{lemma} \label{lem1}
Let $\mathfrak{R}_n(u, \theta)$ be the polynomial (\ref{eq1}), where $n >2g-2$.   If $P \in \mathbb{A}(\Sigma)$ is a homogeneous element with $\degop P =2g-2$, then $$\Gr (X, nS^2, P\mathfrak{R}_n(u, \theta))=0. $$
\end{lemma}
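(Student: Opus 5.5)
The plan is to reduce the statement to an integral over the Jacobian torus by means of Theorem \ref{thm0}, and then to show that this integral vanishes for degree reasons.

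First I would pin down the index and the chamber data for the class $A=n[S^2]$. By Lemma \ref{lem9} we have $A\cdot[S^2]=0$, so the factor $1+A\cdot[S^2]$ in Theorem \ref{thm0} equals $1$. For a closed class the ECH index is $I(A)=\langle c_1(TX),A\rangle+A\cdot A=2n$, since $\langle c_1(TX),[S^2]\rangle=2$. On the other side, $\mathfrak{R}_n(u,\theta)$ is homogeneous of degree $2(n-g+1)$, so $\degop(P\mathfrak{R}_n)=2g-2+2(n-g+1)=2n=I(A)$. Hence every monomial of $P\mathfrak{R}_n$ has the right degree, and we are in the case $A\cdot[S^2]\ge 0$ of Theorem \ref{thm0}.

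Next I would repackage Theorem \ref{thm0} as a linear functional. For a monomial $u^m y$ with $y\in\Lambda^*H^1(\Sigma,\Q)$ and $2m+\deg y=2n$, I claim
$$\Gr(X,nS^2,u^m y)=\int_{T_X}\exp(\theta)\wedge y .$$
This is exactly what the proof of Theorem \ref{thm0} computes from Corollary 4.7 of \cite{OT2}. Alternatively, it can be checked on the basis $\mathcal{E}$:
\begin{itemize}
\item for $y=\theta^b$, the right-hand side is $g!/(g-b)!$;
\item for $y=\theta^b e^r_c$ with $e^r_c\neq 1$, the right-hand side vanishes by the primitivity argument given in that proof.
\end{itemize}
The key point is that the value does not depend on the power of $u$ once the degree condition holds. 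Extending linearly, and noting that every monomial of $P\mathfrak{R}_n$ has degree $2n$, we may set $u=1$:
$$\Gr(X,nS^2,P\mathfrak{R}_n)=\int_{T_X}\exp(\theta)\wedge P(1,\cdot)\wedge\mathfrak{R}_n(1,\theta),\qquad \mathfrak{R}_n(1,\theta)=\prod_{i=1}^g(\alpha_i\beta_i-1).$$

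The central computation uses $(\alpha_i\beta_i)^2=0$. This gives $\exp(\theta)=\prod_i(1+\alpha_i\beta_i)$, and for each $i$,
$$(1+\alpha_i\beta_i)(\alpha_i\beta_i-1)=-1 .$$
Since the factors $\alpha_i\beta_i$ are even and commute, the product collapses to $\exp(\theta)\wedge\mathfrak{R}_n(1,\theta)=(-1)^g$. The integral therefore reduces to $(-1)^g\int_{T_X}P(1,\cdot)$, which only sees the component of $P(1,\cdot)$ in $\Lambda^{2g}$. Because $P$ is homogeneous of degree $2g-2$, its exterior part has degree at most $2g-2<2g$, so the integral is $0$.

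The main obstacle I expect is justifying the extended formula $\Gr(u^m y)=\int e^\theta y$ for arbitrary $y$, rather than only for the basis elements $\theta^b e^r_c$ with $b\le g-r$. The product $P\mathfrak{R}_n$ produces terms $\theta^{b+k}e^r_c$ that may lie outside the range of $\mathcal{E}$. I would handle this by expanding every exterior factor back in the basis $\mathcal{E}$; such terms are either zero or re-expressible in $\mathcal{E}$. I would then use linearity of both sides together with the already-established agreement on $\mathcal{E}$. Since $\Gr(X,A,-)$ is linear on $\mathbb{A}(\Sigma)$ and $\mathcal{E}$ is a basis of $\Lambda^*H^1$, this suffices.
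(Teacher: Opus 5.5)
Your proof is correct, and it takes a somewhat different route from the paper's.

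\textbf{The paper's route.} It uses Theorem \ref{thm0} to discard the primitive part of $P$ and reduce to a single monomial $P=u^a\theta^b$ with $b\le g-1$. It then expands $\mathfrak{R}_n$ term by term and evaluates each $\Gr(X,nS^2,u^{n-g-k+1+a}\theta^{b+k})=g!/(g-b-k)!$. The invariant becomes
$$(-1)^g\frac{g!}{(g-b)!}\sum_{k}(-1)^k\binom{g-b}{k}=0 .$$

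\textbf{Your route.} You package Theorem \ref{thm0} for the class $nS^2$ as the single linear functional $y\mapsto\int_{T_X}e^{\theta}\wedge y$. This is exactly the Okonek--Teleman integral appearing in the proof of Theorem \ref{thm0}. You then set $u=1$ and use the factorization
$$e^{\theta}\,\mathfrak{R}_n(1,\theta)=\prod_i(1+\alpha_i\beta_i)(\alpha_i\beta_i-1)=(-1)^g .$$
The whole invariant collapses to $(-1)^g\int_{T_X}P(1,\cdot)$, which vanishes because $P(1,\cdot)$ has no component in $\Lambda^{2g}$. The paper's alternating binomial sum is the coefficient-level shadow of your product identity. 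For $P=u^a\theta^b$, your formula reads $(-1)^g\int_{T_X}\theta^b=0$, since $b<g$.

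\textbf{Comparison.} Your version is more uniform and explains why the vanishing happens. The primitive terms and all monomials of $P$ are handled at once. It is also transparent that the only possible obstruction would be a top-degree exterior component of $P$, which $\degop P=2g-2$ excludes. The paper's version has the advantage of applying the literal statement of Theorem \ref{thm0} to basis monomials, with no repackaging.

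Your handling of the anticipated obstacle is sound. Since $\mathcal{E}$ consists of homogeneous elements, expanding a homogeneous $y$ in $\mathcal{E}$ stays in the correct degree, and linearity of $\Gr(X,nS^2,-)$ does the rest. In fact, the out-of-range terms $\theta^{b+k}e^c_r$ with $b+k>g-r$ are simply zero by primitivity, which is how the paper implicitly discards them.
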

\begin{proof}
For $P \in \mathbb{A}(\Sigma) =\Q[u] \otimes \Lambda^*H^1(\Sigma, \Q)$, we can write $P$ as a linear combination of $u^a\theta^be_r^c$ because $\theta^b e_r^c$ is a basis of $\Lambda^*H^1(\Sigma, \Q). $

By Theorem \ref{thm0}, it suffices to consider the case that  $P=u^a\theta^b$ for some $a, b \ge 0$ such that $a+b =g-1$.  Since 
$$\mathfrak{R}_n(u, \theta)  =\sum_{k=0}^{g} (-1)^{g-k}  \frac{\theta^k}{k!}u^{n-g-k+1}$$
has degree $2(n-g+1)$,      $\degop P\mathfrak{R}_n(u, \theta) =2n =I(n[S^2]) $.  

By  Theorem \ref{thm0}, we have 
\begin{equation}
\begin{split}
&\Gr(X,  nS^2, P\mathfrak{R}_n(u, \theta)) =\Gr(X, nS^2, \sum^{g-b}_{k=0} (-1)^{g-k} \frac{1}{k!}\theta^{k+b} u^{n-g-k+1+a})\\
=& (-1)^gg!\sum_{k=0}^{g-b} (-1)^{-k} \frac{1}{k!} \frac{1}{(g-b-k)!} (1)^{g-b-k}=0.
\end{split}
\end{equation}

\end{proof}

\section{Computing the $U$ maps}
In this section, we construct a basis of the filtered ECH using the ECH cobordism maps (Theorem \ref{thm2}), and then compute the $U$ map under this basis by using Theorem \ref{thm0}.

\subsection{A basis from ECH cobordism maps}
Because $H_1(X_{\pm}, \Z) \cong H_1(\Sigma, \Z)$ and $H_1(Y, \Q) \cong H_1(\Sigma, \Q)$, we identify $ \mathbb{A}(X_{\pm}), \mathbb{A}(Y)$ with $ \mathbb{A}(\Sigma)$.

\begin{lemma} \label{lem4}
Let $A_{\pm} \in H_2(X_{\pm}, Y, \Z)$ such that $\partial A_{\pm} =\Gamma$. Let $A \in H_2(X, \Z)$ such that $A \vert_{X_{\pm}} =A_{\pm}$. Then 
$$A = \left(A_- \cdot [\Sigma_-] \right) [S^2] + \frac{A _+\cdot [\Sigma_+] -A_- \cdot [\Sigma_-] }{|e|}[\Sigma_+].$$

The ECH index of $A$ is 
\begin{equation*}
\begin{split}
I(A) = A_+\cdot \Sigma_+  + A_-  \cdot   \Sigma_- + \frac{A_+\cdot \Sigma_+ -A_- \cdot \Sigma_-}{|e|} \chi(\Sigma) +  \frac{(A_+\cdot \Sigma_+)^2 -(A_-\cdot \Sigma_-)^2}{|e|}
\end{split}
\end{equation*}
\end{lemma}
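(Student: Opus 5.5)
Write $A = a[S^2] + b[\Sigma_+]$, which is possible by Lemma \ref{lem9}, and find $a,b$ by intersecting with the two sections. Since $\Sigma_\pm$ lie in the interiors of $X_\pm$, we have $A\cdot\Sigma_\pm = A_\pm\cdot\Sigma_\pm$. The relations in Lemma \ref{lem9} give $[\Sigma_+]\cdot[\Sigma_-]=0$ (the sections are disjoint), $[S^2]\cdot\Sigma_\pm=1$ and $[\Sigma_+]^2=|e|$. Hence
\[
A\cdot\Sigma_- = a, \qquad A\cdot\Sigma_+ = a + b|e| .
\]
Solving gives $a = A_-\cdot\Sigma_-$ and $b = (A_+\cdot\Sigma_+ - A_-\cdot\Sigma_-)/|e|$, which is the first formula.

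For the index, recall that for a closed symplectic manifold $I(A) = c_1(TX)\cdot A + A\cdot A$. I will compute both terms.

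First, $c_1(TX)$ on the generators. On the fiber, $c_1(TX)\cdot[S^2]=2$. On $\Sigma_+$, adjunction gives $c_1(TX)\cdot[\Sigma_+] = \chi(\Sigma)+|e|$, in agreement with (\ref{eq8}). So
\[
c_1(TX)\cdot A = 2a + b\bigl(\chi(\Sigma)+|e|\bigr).
\]

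Second, the self-intersection: $A\cdot A = 2ab + b^2|e|$.

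Now write $p = A_+\cdot\Sigma_+$ and $q = A_-\cdot\Sigma_-$, so $a=q$ and $b|e| = p-q$. Substituting,
\[
I(A) = 2q + b\,\chi(\Sigma) + (p-q) + \frac{2q(p-q)}{|e|} + \frac{(p-q)^2}{|e|}.
\]
The last two terms combine to $\frac{(p-q)(p+q)}{|e|} = \frac{p^2-q^2}{|e|}$. The linear terms combine to $p+q$. This gives exactly the stated formula.

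The only possible obstacle is making sure the restriction/gluing conventions are consistent: that $A\cdot\Sigma_\pm$ is really computed inside $X_\pm$ by the relative class $A_\pm$, and that the ECH index of a closed class is $c_1\cdot A + A^2$ with the symplectic orientation. As a consistency check, I will compare against Lemma \ref{lem13} and the analogous filling-side formula, which should add up under the gluing.
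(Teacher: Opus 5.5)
Your proof is correct and follows the paper's argument essentially verbatim. You decompose $A=a[S^2]+b[\Sigma_+]$, solve for $a,b$ by intersecting with $\Sigma_\pm$, and evaluate $I(A)=\langle c_1(TX),A\rangle+A\cdot A$; the only cosmetic difference is that you obtain $c_1(TX)$ on the generators by adjunction rather than quoting $c_1(TX)=2\operatorname{PD}([\Sigma_+])+(2-2g-|e|)\operatorname{PD}[S^2]$, and the two give the same values.
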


\begin{proof}
Write $A =a [S^2] + b[\Sigma_+]$.   By $A \vert_{X_{\pm}} =A_{\pm}$, we have 
\begin{equation*}
\begin{split}
&A \cdot [\Sigma_+] = A_+ \cdot [\Sigma_+] = a+ b|e|\\
&A \cdot [\Sigma_-] = A_- \cdot [\Sigma_-] = a.\\
\end{split}
\end{equation*}
Hence,  $b =(A \cdot [\Sigma_+] -A_- \cdot [\Sigma_-] )/|e|.  $

By the fact that  $c_1(TX) 
 =  2\operatorname{PD}([\Sigma_+]) + (2-2g- |e|)\operatorname{PD}[S^2] $ and Lemma \ref{lem9}, then
\begin{equation*}
\begin{split}
I(A) =&\langle c_1(TX), A \rangle + A \cdot A\\
=&2a  + 2b|e|+ (2-2g-|e|)b  +2ab + b^2|e|\\
=&a + b\chi(\Sigma) +(a+ b|e|) + 2ab + b^2|e|.  
\end{split}
\end{equation*} 
Substituting the previous calculations regarding $a, b$ into the above equation  yields the desired result on ECH index.
\end{proof}

For $d \ge 0$,  take    relative classes  $A_{\pm}(d) \in H_2(X_{\pm}, Y, \Z)$   satisfying
\begin{equation}
        A_{\pm}(d)\cdot [\Sigma_{\pm}]=d|e| +\Gamma.
\end{equation}
Given $d_1, d_2$, by Lemma \ref{lem4}, we have a unique class $A_{d_1, d_2} \in H_2(X, \Z)$ such that $A \vert_{X_+} =A_{+}(d_1)$ and $A \vert_{X_-} =A_-(d_2).$

For simplicity, we write
\begin{equation}
        \tilde{\Phi}^{\varepsilon}_d:=\ECH(X_+, \omega^{\varepsilon}_+, A_+(d)) \mbox{ and }
        \Psi^{\varepsilon}_d:=\ECH(X_-, \omega^{\varepsilon}_-, A_-(d)).
 \end{equation} 
We compute the composition of     $\Psi^{\varepsilon}_{d_2}$ and  $\tilde{\Phi}^{\varepsilon}_{d_1}$ in the sequential lemmas.

 \begin{lemma} \label{lem17}
Suppose $\Gamma=0$. For $x=u^a \theta^b e_r^c  \in \mathbb{A}(\Sigma)$, then  $\tilde{\Phi}^{\varepsilon}_0(x) =0$ unless $x=1$. Moreover,  $\tilde{\Phi}^{\varepsilon}_0(1)= [\emptyset] $. 
\end{lemma}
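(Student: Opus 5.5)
We will work at the chain level, using the holomorphic curves axiom of Theorem \ref{thm2} together with the constraint from Lemma \ref{lem5}. When $\Gamma=0$ and $d=0$, the class $A_+(0)$ satisfies $A_+(0)\cdot[\Sigma_+]=0$, so $d(A_+(0))=0$ and $\rho(A_+(0))=3A_+(0)\cdot[\Sigma_+]=0$. Choose a generic cobordism admissible $J$ on $\widehat{X}_+$. Suppose $\langle \ECC(X_+,\omega_+^\varepsilon,A_+(0))_J(x),\alpha\rangle\neq 0$ for some generator $\alpha$ with $[\alpha]=0$. Then there is a broken current $\mathcal{C}\in\overline{\mathcal{M}^J_{X_+}}(\emptyset,\alpha)$ in the class $Z^\vee_{\alpha,A_+(0)}=Z^\vee_\alpha-d(\alpha)[\Sigma_+]$, with $I(\mathcal{C})=\deg x$.

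By Lemma \ref{lem5}, $k=-d(\alpha)\ge 0$, which forces $d(\alpha)=0$. Since $\Gamma=0$, this means $M_\alpha=0$, i.e.\ $\alpha=\emptyset$. The energy identity in Lemma \ref{lem8} then gives total energy $3(0+0)-0=0$, so $\mathcal{C}$ is the empty current and $I(\mathcal{C})=0$. (Alternatively, Remark \ref{remark1} gives $I_{cap}=-\operatorname{gr}(\emptyset)=0$.) Hence $\deg x=0$, so $x$ is a multiple of $1$. Equivalently, for $x=u^a\theta^b e_r^c$ with $\deg x>0$, every matrix coefficient vanishes and $\tilde\Phi^\varepsilon_0(x)=0$.

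To make this work with the filtered maps, observe that the map lands in $\ECH^{L+\rho}$ and is defined on the empty generator. Only orbit sets with action below $L_\varepsilon$ are relevant, which is harmless because the analysis above applies to every generator that could appear, via the direct limit and filtration properties. There is one more subtlety: Lemma \ref{lem5} was proved for orbit sets with $\mathcal{A}<L_\varepsilon$. We handle this by working in $\ECH^{L}$ with $L<L_\varepsilon$ and $\rho=0$, which confines all possible output generators to the controlled range.

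For $x=1$, the argument above shows that $\tilde\Phi^\varepsilon_0(1)$ is a multiple $c[\emptyset]$. To pin down the coefficient, we note that $A_+(0)=0$ in $H_2(X_+,Y)\cong\Z$, because its pairing with $\Sigma_+$ is $0$ and the pairing is injective. The preservation of contact invariants, with the homology orientation fixed by (\ref{eq39}), then gives $\tilde\Phi^\varepsilon_0(1)=[\emptyset]$. The step we expect to require care is checking that the sign normalization in (\ref{eq39}) applies to the perturbed pair $(X_\pm,\omega^\varepsilon_\pm)$. This follows from the deformation invariance discussed in Remark \ref{remark3}.
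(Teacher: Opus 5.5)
Your proof is correct and is essentially the paper's argument carried out at the chain level. The paper uses $\rho(A_+(0))=0$ with the filtration and direct-limit properties to place $\tilde{\Phi}^{\varepsilon}_0(x)$ in the image of $\ECH^{\delta}(Y,\lambda_\varepsilon,0)=\Q[\emptyset]$, then the grading identity $0=\operatorname{gr}([\emptyset])=I(A_+(0))-\degop x$ (the homology-level form of your index condition $I(\mathcal{C})=\degop x$), and finally preservation of the contact invariant with the orientation (\ref{eq39}). One small point: appealing to Lemma \ref{lem5} is unnecessary and quietly imports $e<\chi(\Sigma)$ through Lemma \ref{lem6}, whereas the energy bound $\mathcal{A}_{\lambda_\varepsilon}(\alpha)\le 3A_+(0)\cdot[\Sigma_+]=0$ from Lemma \ref{lem8} already forces $\alpha=\emptyset$ with no hypothesis on $e$.
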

\begin{proof}
By Lemma \ref{lem8},   the filtration and direct limit properties in Theorem \ref{thm2}, we have $\tilde{\Phi}^{\varepsilon}_0(x) \in \operatorname{Im}\{ i^{\delta}: \ECH^{\delta}(Y, \lambda_{\varepsilon}, 0) \to \ECH(Y, \lambda_{\varepsilon}, 0)\}$, where 
$0< \delta \le 1$.  Note that $\ECH^{\delta}(Y, \lambda_{\varepsilon}, 0) =\Q [\emptyset].$ If $\tilde{\Phi}^{\varepsilon}_0(x) \ne 0 $,  then 
$$0=\operatorname{gr}([\emptyset])=\operatorname{gr}(\tilde{\Phi}^{\varepsilon}_0(x) ) =I(A_+(0)) -\degop x = 0 -\degop x. $$
Thus,  $\tilde{\Phi}^{\varepsilon}_0(x) \ne 0 $ only when $x=1$.

Note that $A_+(0) =0$ when $\Gamma=0$. Because  $\ECH(X_+, \omega^{\varepsilon}_+, 0)$ preserves the contact invariants, 
 we have   $\tilde{\Phi}^{\varepsilon}_0(1)=   [\emptyset] $.  The sign here is positive due to the choice of homology orientation (\ref{eq39}). 
\end{proof}

\begin{lemma} \label{lem14} 
Set $\mathcal{B}_0=\{1\}$.  Suppose that $\Gamma=0$.  For $d \ge 0$ and  $x  \in \mathcal{B}_{d|e|}$, we have  
\begin{equation*}
\Psi^{\varepsilon}_0(1 \otimes  \tilde{\Phi}^{\varepsilon}_d(x))=
\begin{cases}
 \frac{g!}{(g-b)!}2^{g-b}  & d=1 \mbox{ and $x = u^{|e| -g +1 -b} \theta^b$},\\
1 & d=0 \mbox{ and $x =1$},\\
0 & else. 
\end{cases}
\end{equation*}
\end{lemma}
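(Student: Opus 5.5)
The plan is to reduce the composition to a single Gromov--Taubes invariant of the ruled surface $X$, using the composition law, and then to evaluate that invariant with Theorem \ref{thm0} together with a degree count.

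\textbf{Step 1: reduction to one Gromov--Taubes invariant.} Apply the composition law in Theorem \ref{thm2} to $X_+$ followed by $X_-$, with $\eta_+=x$ and $\eta_-=1$. Then $\Psi^{\varepsilon}_0(1\otimes\tilde\Phi^{\varepsilon}_d(x))$ is the sum of $\ECH(X,\omega^\varepsilon,A)(x)$ over all $A\in H_2(X,\Z)$ with $A|_{X_\pm}=A_\pm$. By the Gromov--Taubes property each term equals $\Gr(X,A,x)$. By Lemma \ref{lem4}, the restrictions $A_+(d)$ and $A_-(0)$ determine $A$ uniquely. Since $\Gamma=0$ we have $A_-(0)\cdot\Sigma_-=0$ and $A_+(d)\cdot\Sigma_+=d|e|$, so the sum has the single term $A=A_{d,0}=d[\Sigma_+]$. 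Hence the quantity equals $\Gr(X,d[\Sigma_+],x)$.

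For signs, I rely on the homology-orientation convention (\ref{eq39}) and Remark \ref{remark3}. These should make the sign in the composition law $+1$, with the orientation on $X$ the canonical symplectic one used in Theorem \ref{thm0}. I regard this sign bookkeeping as the main delicate point. I would handle it exactly as in the paper's argument that $\ECH(X_+,\omega_+,0)(1)=[\emptyset]$: the gluing of orientations $o(X_+)o(X_-)=o(X,\omega)$ is built into the choice (\ref{eq39}).

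\textbf{Step 2: the invariants entering Theorem \ref{thm0}.} By Lemma \ref{lem4}, or directly, $I(d[\Sigma_+])=d|e|+d\chi(\Sigma)+d^2|e|=d(d+1)|e|+d\chi(\Sigma)$, and $A\cdot[S^2]=d\ge 0$. Theorem \ref{thm0} then gives the following. If $x=u^a\theta^b e^r_c$ with $e^r_c\neq 1$, the invariant is $0$. If $x=u^a\theta^b$, it is nonzero exactly when $2a+2b=I(A)$, and then it equals $\frac{g!}{(g-b)!}(1+d)^{g-b}$.

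\textbf{Step 3: degree constraint from $x\in\mathcal{B}_{d|e|}$.} Every $x\in\mathcal{B}_{d|e|}$ has $\deg x\le 2(d|e|-g)+2g=2d|e|$. For a nonzero value we therefore need $d(d+1)|e|+d(2-2g)\le 2d|e|$. For $d\ge1$ this reads $(d-1)|e|\le 2g-2$. The hypothesis $e<\chi(\Sigma)$ gives $|e|>2g-2$, so this forces $d\le1$.

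The surviving cases are as follows.
\begin{enumerate}
\item $d=0$: then $\mathcal{B}_0=\{1\}$ and $I=0$, so $x=1$ and the invariant is $\frac{g!}{g!}\cdot 1^g=1$. Equivalently, this is Lemma \ref{lem17} combined with $\Psi^\varepsilon_0([\emptyset])=1$.
\item $d=1$: then $I(A)=2|e|+2-2g$. A nonzero value requires $x=u^a\theta^b$ with $a+b=|e|-g+1$, that is $x=u^{|e|-g+1-b}\theta^b$, and the value is $\frac{g!}{(g-b)!}2^{g-b}$. The constraint $a\le |e|-g$ coming from $\mathcal{B}_{|e|}$ only excludes $b=0$, which is compatible with the stated formula.
\end{enumerate}
All remaining $x$ and $d$ give $0$, which proves the lemma.
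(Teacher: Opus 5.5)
Your proposal is correct and follows essentially the same route as the paper. The composition law reduces the quantity to $\Gr(X,A_{d,0},x)$ with $A_{d,0}=d[\Sigma_+]$; the bound $\deg x\le 2d|e|$ together with $|e|>2g-2$ rules out $d\ge 2$; and Theorem~\ref{thm0} with $A\cdot[S^2]=1$ gives the $d=1$ values. The one cosmetic difference is at $d=0$: the paper argues via Lemma~\ref{lem17} and preservation of the contact invariant (sign fixed by (\ref{eq39})), which you note as equivalent to evaluating $\Gr(X,0,1)=1$ directly.
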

\begin{proof}
  By Lemma \ref{lem17}, $\tilde{\Phi}^{\varepsilon}_0(1) =[\emptyset]$. Note that $A_-(0) =0$ when $\Gamma=0$. Because  $\ECH(X_-, \omega^{\varepsilon}_-, 0)$ preserves the contact invariants, 
 we have  $\Psi^{\varepsilon}_0(1 \otimes  \tilde{\Phi}^{\varepsilon}_0(1)) =1. $   The  positive sign here again  due to the choice of homology orientation (\ref{eq39}).

Consider $d \ge 1$. Let $x =u^a \theta^b e_r^c \in \mathcal{B}_{d|e|}$.  By composition law of ECH cobordism maps and Theorem \ref{thm0}, we have 
$$\Psi^{\varepsilon} _0(1 \otimes \tilde{\Phi}^{\varepsilon}_d(x))=\Gr(X, A_{d, 0}, x).  $$  
By definition, $\Gr(X, A_{d, 0}, x) \ne 0$ only if $I(A_{d, 0}) =\degop x. $

Note that $\deg x \le 2d|e|$ for  $x \in \mathcal{B}_{d|e|}$. If $d \ge 2$, then  by Lemma \ref{lem4}, we obtain 
\begin{equation*}
\begin{split}
I(A_{d, 0}) =d^2|e| + d(|e| + \chi(\Sigma)) \ge 2d|e| +2(|e| + \chi(\Sigma)) > 2 d|e| \ge \degop x. 
\end{split}
\end{equation*}
Here we have used the assumption that $e<\chi(\Sigma)$. 
Thus, $\Psi^{\varepsilon} _0(1 \otimes  \tilde{\Phi}^{\varepsilon}_d(x))=0$ when $d\ge 2$.

Now consider $d=1$.  By Theorem \ref{thm0},  $\Gr(X, A_{1, 0}, x) \ne 0$ only if  $e_r^c=1$ and $I(A_{1, 0}) =\deg x$. Thus, it suffices to consider $x =u^{|e| -g +1 -b} \theta^b$, where $0 \le b \le g$.  By Theorem \ref{thm0}, we have 
$$\Psi^{\varepsilon} _0(1 \otimes  \tilde{\Phi}^{\varepsilon}_1(u^{|e| -g +1 -b} \theta^b))= \frac{g!}{(g-b)!}2^{g-b}. $$  
\end{proof}

\begin{definition} \label{definition2}
Set $\mathcal{B}_0=\{1\}$. For $d \ge 0$ and  $x \in \mathcal{B}_{d|e|+\Gamma}$, define 
 \begin{equation*}
  \Phi_d^{\varepsilon}(x):=
 \begin{cases}
\tilde{\Phi}^{\varepsilon}_1(x) - \Psi_0^{\varepsilon}(1 \otimes \tilde{\Phi}^{\varepsilon}_1(x)) [\emptyset] & d=1 \mbox{ and }\Gamma=0\\
 \tilde{\Phi}^{\varepsilon}_d(x) &  \mbox{ else}. 
 \end{cases}
 \end{equation*}
\end{definition}
Later, we will show that elements in Definition \ref{definition2} forms a basis of the filtered ECH.

The next lemma is an analogy of Proposition 3.4 in \cite{MW}. We show that   the pairing of elements  in Definition \ref{definition2} is diagonal and the pairing matrix is  simply the direct sum of the intersection pairing matrix of $H^*(\Sym^n \Sigma, \Q)$.  
\begin{lemma}\label{lem3}
Assume $e< \chi(\Sigma).$  Suppose that $\Gamma=0$ or $\Gamma>2g-2$.  Let
        $x_i
        \in  \mathcal{B}_{d_i |e| +\Gamma}, i=1,2, $ where $d_i \ge 0$.   One has
$$
        \Psi^{\varepsilon}_{d_2}(x_2 \otimes  \Phi^{\varepsilon}_{d_1}(x_1))
        =0
$$
unless
$$
        d_1=d_2.
$$

 If $d_1=d_2=m$, then 
 \begin{equation*}
  \Psi^{\varepsilon}_{m}(x_2 \otimes  \Phi^{\varepsilon}_{m}(x_1)) =q^{m|e|+\Gamma} (\Pi(x_2), \Pi(x_1)),
 \end{equation*}
 where $q^{m|e|+\Gamma}$ is the intersection  pairing  on $H^*(\Sym^{m|e|+\Gamma}\Sigma, \Q).$   
  \end{lemma}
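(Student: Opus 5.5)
By the composition law in Theorem \ref{thm2} and the Gromov--Taubes axiom, I will reduce every pairing to a Gromov--Taubes invariant of the ruled surface $X$. The key identity is
$$\Psi^{\varepsilon}_{d_2}\bigl(x_2\otimes \tilde\Phi^{\varepsilon}_{d_1}(x_1)\bigr)=\Gr(X, A_{d_1,d_2}, x_2x_1).$$
The sum in the composition law has a single term, because Lemma \ref{lem4} determines $A$ uniquely from $A_\pm$. Write $N_i=d_i|e|+\Gamma$. Lemma \ref{lem4} gives
$$A_{d_1,d_2}=N_2[S^2]+(d_1-d_2)[\Sigma_+],\qquad A_{d_1,d_2}\cdot[S^2]=d_1-d_2.$$
Its index is
$$I(A_{d_1,d_2})=N_1+N_2+(d_1-d_2)(N_1+N_2+\chi(\Sigma)).$$
Theorem \ref{thm0} then kills the case $d_1<d_2$ immediately, since it requires $A\cdot[S^2]\ge 0$. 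For $d_1=1$, $\Gamma=0$, the correction term in Definition \ref{definition2} contributes $-\Psi^\varepsilon_0(1\otimes\tilde\Phi^\varepsilon_1(x_1))\,\Psi^\varepsilon_{d_2}(x_2\otimes[\emptyset])$. Using Lemma \ref{lem17}, Lemma \ref{lem14} and the filling-side analogue (only $d_2=0$, $x_2=1$ survives, since $[\emptyset]$ has action $0$), this correction exactly cancels the $(d_1,d_2)=(1,0)$ contribution. That is the reason for the correction.

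For $d_1>d_2$ I will compare $I(A)$ with $\deg x_1+\deg x_2\le 2N_1+2N_2$. When $d_1-d_2\ge 2$ we have $I(A)\ge N_1+N_2+2(N_1+N_2+\chi(\Sigma))$. Since $N_1\ge |e|>-\chi(\Sigma)$ by the assumption $e<\chi(\Sigma)$, this exceeds $2N_1+2N_2$, so the degree condition in Theorem \ref{thm0} fails and the invariant vanishes.

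The delicate case is $d_1=d_2+1$, where $I(A)=2(N_1+N_2)+\chi(\Sigma)$. For $g\ge1$ this does not exceed the maximal degree, so a pure index count is insufficient. There I will use the explicit shape of the elements of $\mathcal{B}_{N_i}$ and the Gromov--Taubes formula with $k=A\cdot[S^2]=1$, namely $\Gr(X,A,u^a\theta^b)=\frac{g!}{(g-b)!}2^{g-b}$. The goal is to show that $x_1$, viewed through the cap side, lies in the ideal annihilated by this functional. First I will try a filtration/grading argument in the spirit of Lemma \ref{lem17}. Lemma \ref{lem8} and Lemma \ref{lem5} place $\tilde\Phi^\varepsilon_{d_1}(x_1)$ in filtered ECH of action $\le 3(N_1)$, with grading $I_{cap}$ from Remark \ref{remark1}. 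The filling-side analogue of Lemma \ref{lem5} (as in Section 3.2 of \cite{GHC}) constrains which generators $\Psi^\varepsilon_{d_2}$ can see nontrivially. The hypothesis $\Gamma=0$ or $\Gamma>2g-2$ (so $N_2>2g-2$ or $N_2=0$) should be what makes these two windows disjoint unless $d_1=d_2$. I expect this off-diagonal vanishing to be the main obstacle, and the place where the hypotheses on $e$ and $\Gamma$ are really used.

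For $d_1=d_2=m$ we have $A=N[S^2]$ with $N=m|e|+\Gamma$, and $I(A)=2N$. If $\deg x_1+\deg x_2\ne 2N$, both sides vanish: the left side by Theorem \ref{thm0}, the right side by definition of $q^N$. Otherwise (\ref{eq10}) gives
$$x_2x_1=q^N(\Pi(x_2),\Pi(x_1))\,u^{N-g}\theta^g/g!+P\mathfrak{R}_N(u,\theta),$$
where $\deg P=2g-2$. Lemma \ref{lem1} kills the second term. Theorem \ref{thm0} with $b=g$ gives $\Gr(X,NS^2,u^{N-g}\theta^g)=g!$, so the pairing equals $q^N(\Pi(x_2),\Pi(x_1))$. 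For $N=0$ the statement is just $\Psi^\varepsilon_0(1\otimes[\emptyset])=1$, which holds by the choice (\ref{eq39}).
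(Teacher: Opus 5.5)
\textbf{There is a genuine gap, at the step you flag as the main obstacle: the off-diagonal case $d_1=d_2+1$.}

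You say a pure index count cannot handle this case. That comes from the crude bound $\deg x_1+\deg x_2\le 2N_1+2N_2$. The relevant bound is sharper. When $N_i>2g-2$, each $x_i=u^{a_i}\theta^{b_i}e_{r_i}^{c_i}\in\mathcal{B}_{N_i}$ has $a_i\le N_i-g$. Moreover, $\Gr(X,A,x_2x_1)$ can only be nonzero if $x_2x_1\neq 0$ in $\mathbb{A}(\Sigma)$. That forces the product of the two $\Lambda^*H^1(\Sigma,\Q)$-factors to have degree at most $2g$. Hence
$$\deg x_1+\deg x_2\le 2(N_1-g)+2(N_2-g)+2g=2(N_1+N_2)-2g.$$
For $d_1=d_2+1$, your own formula gives $I(A)=2(N_1+N_2)+2-2g$, so the degree condition in Theorem \ref{thm0} fails. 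In fact, for every $d_1-d_2\ge1$ you have $I(A)\ge 2(N_1+N_2)+\chi(\Sigma)$, because $N_1+N_2+\chi(\Sigma)\ge|e|+\chi(\Sigma)>0$. So this one estimate disposes of all off-diagonal terms with $N_2>2g-2$ at once. This is the paper's argument. It is also where $e<\chi(\Sigma)$ and the restriction on $\Gamma$ are really used: they guarantee $N_i>2g-2$ or $N_i=0$, so that $\mathcal{B}_{N_i}$ has the stated shape. The only off-diagonal term that survives the count is $\Gamma=0$, $d_2=0$, $d_1=1$, where $x_2=1$ gives no $-2g$ saving. That is exactly the term the correction in Definition \ref{definition2} cancels, as you correctly observed.

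\textbf{The substitute you sketch would not close the gap.} The action windows are not disjoint. Lemma \ref{lem5} only gives $d(\alpha)\le d_1$ for generators reached by $\tilde\Phi^{\varepsilon}_{d_1}$. Even granting a filling-side analogue giving $d(\alpha)\ge d_2$, the two ranges overlap on $d_2\le d(\alpha)\le d_1$ whenever $d_1>d_2$. A grading argument via Remark \ref{remark1} and its filling-side counterpart only reproduces the condition $I(A)=\deg x_1+\deg x_2$. So you are led back to the same degree comparison, and what is missing there is the sharp bound above.

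Two smaller points:
\begin{enumerate}
\item In the case $d_1-d_2\ge2$ with $N_2=0$, the estimate $N_1\ge|e|$ is not enough; you need $N_1\ge 2|e|$, which holds since $d_1\ge2$.
\item The vanishing of $\Psi^{\varepsilon}_{d_2}(x_2\otimes[\emptyset])$ for $d_2\ge1$ is most cleanly seen as $\Gr(X,A_{0,d_2},x_2)=0$, since $A_{0,d_2}\cdot[S^2]=-d_2<0$. This is better than appealing to the action of $[\emptyset]$.
\end{enumerate}
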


\begin{proof}

By the composition law of ECH cobordism maps (Theorem \ref{thm2}), we have 
$$
        \Psi^{\varepsilon}_{d_2}(x_2 \otimes \tilde{\Phi}^{\varepsilon}_{d_1}(x_1))
        =\Gr(X, A_{d_1, d_2}, x_2x_1), 
$$
 where $A_{d_1, d_2} \in H_2(X, \Z)$ is the homology class such that $A_{d_1, d_2} \vert_{X_+} =A_+(d_1)$ and $A_{d_1, d_2} \vert_{X_-} =A_-(d_2)$. 
By Lemma \ref{lem4}, we have 
$$A_{d_1, d_2}= (d_2|e|+\Gamma) [S^2] + (d_1-d_2)[\Sigma_+]$$  
\begin{equation*}
\begin{split}
\mbox{ and } I(A_{d_1, d_2})= (d_1-d_2) \chi(\Sigma) + (d_1+d_2) |e| + (d_1^2 -d_2^2)|e| + 2\Gamma(d_1-d_2+1). 
\end{split}
\end{equation*}
By Theorem \ref{thm0}, the Gromov-Taubes invariant $\Gr(X, A, x_2x_1)$  is nonzero only if
\begin{equation*}
        d_1\ge d_2,
        \qquad
        x_2x_1\ne 0\in  \mathbb{A}(\Sigma),
        \qquad
        \degop x_1+\degop x_2=I(A).
\end{equation*}
We assume these conditions hold; otherwise, there is nothing to compute.

If $\Gamma=0$ and $d_2 =0$, this case has already been  proved in the Lemma \ref{lem14}.    
In the rest of the cases, we   always have  $d_i|e|+\Gamma > 2g-2$.    Then we  have a basis $\mathcal{B}_{d_i|e|+\Gamma}$  defined  in (\ref{eq17}).  Take $x_i =u^{a_i}\theta^{b_i}e_{r_i}^{c_i} \in  \mathcal{B}_{d_i |e| +\Gamma}$.  
By definition,  $a_i\le d_i|e| +\Gamma-g$.  If $x_2x_1\ne 0 \in \mathbb{A}(\Sigma)$, then
$$
        \degop x_1+\degop x_2\le 2(d_1+d_2)|e| +4\Gamma-2g,
$$
because the total degree of $\theta^{b_1}e_{r_1}^{c_1}\theta^{b_2}e_{r_2}^{c_2}$ is at most $2g$; otherwise $x_2x_1=0$.

Assume $d_1-d_2 \ge 1$. In particular, $d_1 \ge 1$. Then $$(d_1+d_2)|e|+\chi(\Sigma) \ge |e|+ \chi(\Sigma) >0.$$ Thus, 
\[
\begin{aligned}
        I(A)
        &=(d_1+d_2)|e|
        +(d_1-d_2)\bigl((d_1+d_2)|e|+\chi(\Sigma)\bigr)  +2\Gamma(d_1-d_2+1)\\
        &\ge (d_1+d_2)|e|
        +(d_1+d_2) |e| + 2- 2g +4\Gamma\\
&> 2(d_1+d_2)|e| +4\Gamma -2g \ge \degop x_1+\degop x_2. 
\end{aligned}
\]
Thus the degree condition cannot hold.  Hence  $ \Psi^{\varepsilon}_{d_2}(x_2 \otimes \tilde{\Phi}^{\varepsilon}_{d_1}(x_1))
        $ vanishes  when 
$d_1 \ne d_2$.

Now assume $d_1= d_2 =m$.  By Lemma \ref{lem4}, $A =(m|e|+ \Gamma) [S^2]$  and  $I(A) =2(m|e|+ \Gamma)$.   Set $N=m|e|+ \Gamma>2g-2$.

If  $\degop x_1 + \degop x_2 \ne 2N$, then $\Gr(X, A, x_2x_1) =0$ by the degree reason. Also,  $q^{N}(\Pi(x_1), \Pi(x_2)) =0$ by definition.  Thus, they agree. 

Assume  $\degop x_1 + \degop x_2 = 2N$.  Since $N>2g-2$, recall that we have (\ref{eq10}), i.e., 
$$x_2x_1 = q^N(\Pi(x_2), \Pi(x_1)) \frac{u^{N-g}\theta^g}{g!} +P \mathfrak{R}_N(u, \theta),$$
where $P \in \mathbb{A}(\Sigma)$ is a homogenous element  with degree $2g-2$.  By Lemma \ref{lem1} and Theorem \ref{thm0}, 
$$\Gr(X, A, x_2x_1) = q^N(\Pi(x_2), \Pi(x_1)) \Gr(X, A, \frac{u^{N-g}\theta^g}{g!})=q^N(\Pi(x_2), \Pi(x_1)) .$$
\end{proof}

\begin{lemma} \label{lem24}
Assume $e<\chi(\Sigma)$.  Suppose  $\Gamma =0$ or $\Gamma>2g-2$. Let $\{x_{m, i_m}\}=\mathcal B_{m|e|+\Gamma}$  be the basis  described  in  (\ref{eq17}).  
For $d \ge 0$, the set of elements 
$$
        \bigl\{\Phi^{\varepsilon}_m(x_{m, i_m})\bigr\}_{  0 \le m \le d}
$$
are linear independent.  In particular, $\Phi^{\varepsilon}_m(x_{m, i_m})$ are nonzero. 
\end{lemma}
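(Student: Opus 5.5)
The plan is to use the pairing of Lemma \ref{lem3} as a nondegenerate bilinear form against the span of the elements $\Phi^{\varepsilon}_m(x_{m,i_m})$, $0\le m\le d$.

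Suppose there is a linear relation
\begin{equation*}
\sum_{m=0}^{d}\sum_{i_m} c_{m,i_m}\Phi^{\varepsilon}_m(x_{m,i_m})=0
\end{equation*}
in $\ECH(Y,\lambda_{\varepsilon},\Gamma)$. Fix $n$ with $0\le n\le d$ and a basis element $x_{n,j}\in\mathcal{B}_{n|e|+\Gamma}$. Apply the linear map $\Psi^{\varepsilon}_n(x_{n,j}\otimes -)$ to the relation. By Lemma \ref{lem3}, every term with $m\ne n$ vanishes. Each term with $m=n$ becomes $q^{n|e|+\Gamma}(\Pi(x_{n,j}),\Pi(x_{n,i_n}))$. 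This gives
\begin{equation*}
\sum_{i_n} c_{n,i_n}\, q^{n|e|+\Gamma}\bigl(\Pi(x_{n,j}),\Pi(x_{n,i_n})\bigr)=0 \quad\text{for all } j.
\end{equation*}
One point needs care. Lemma \ref{lem3} is stated for $\Phi^{\varepsilon}$ in the sense of Definition \ref{definition2}, including the corrected element when $d=1$ and $\Gamma=0$. Its proof deferred the case $\Gamma=0$, $d_2=0$ to Lemma \ref{lem14}. For the corrected element, $\Psi^{\varepsilon}_0(1\otimes\Phi^{\varepsilon}_1(x))=0$ follows directly from $\Psi^{\varepsilon}_0(1\otimes[\emptyset])=1$, so the block-diagonal statement holds for all pairs $(m,n)$. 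I would spell this out briefly.

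It remains to show that the intersection pairing $q^N$ on $H^*(\Sym^N\Sigma,\Q)$ is nondegenerate, for $N=n|e|+\Gamma$.
\begin{itemize}
\item If $N=0$, then $\mathcal{B}_0=\{1\}$ and the pairing is $\Psi^{\varepsilon}_0(1\otimes\Phi^{\varepsilon}_0(1))=1$ by Lemma \ref{lem14}.
\item If $N>2g-2$, then $\Pi(\mathcal{B}_N)$ is a basis of $H^*(\Sym^N\Sigma,\Q)$ by Macdonald's theorem. The form $q^N$ is, by construction, the cup-product pairing followed by evaluation on the top class $\frac{1}{g!}\Pi(u^{N-g}\theta^g)$. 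This class is a nonzero multiple of the generator of $H^{2N}$: Theorem \ref{thm0} and Lemma \ref{lem3} give it value $1$, or one can use the fact that $u^{N-g}\theta^g/g!$ evaluates to $1$ on $[\Sym^N\Sigma]$. Poincar\'e duality on the closed oriented manifold $\Sym^N\Sigma$ over $\Q$ then gives nondegeneracy.
\end{itemize}
The hypotheses $e<\chi(\Sigma)$ and $\Gamma=0$ or $\Gamma>2g-2$ guarantee that $N=0$ or $N>2g-2$ for every $n\ge0$, so one of these two cases always applies.

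Nondegeneracy forces $c_{n,i_n}=0$ for all $i_n$. Since $n$ was arbitrary, the elements are linearly independent, and in particular each one is nonzero.

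I expect the main obstacle to be the identification of $q^N$ with the Poincar\'e duality pairing. Concretely, this means checking that $\Pi(u^{N-g}\theta^g)$ is nonzero in top degree and that $q^N$ agrees with cup product, so that nondegeneracy is genuinely inherited. I would first verify this through Macdonald's evaluation formula on $[\Sym^N\Sigma]$.
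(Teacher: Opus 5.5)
Your proposal is correct and follows the paper's proof exactly. Like the paper, you pair the relation with $\Psi^{\varepsilon}_n(x_{n,j}\otimes -)$, use Lemma \ref{lem3} to kill every block with $m\neq n$, and conclude from the nondegeneracy of $q^{n|e|+\Gamma}$. Your extra checks are sound and fill in points the paper leaves implicit: the treatment of the corrected $\Phi^{\varepsilon}_1$ when $\Gamma=0$, and the fact that $\Pi(u^{N-g}\theta^g)\neq 0$, so that $q^N$ is a nonzero multiple of the Poincar\'e pairing. For the corrected element with $n\ge 1$ you also need $\Psi^{\varepsilon}_n(x\otimes[\emptyset])=0$; this follows from Theorem \ref{thm0}, since $A_{0,n}\cdot[S^2]=-n<0$.
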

\begin{proof}
 Suppose
$$
        \sum_{0 \le m \le d}  \sum_{i_m} a_{m, i_m}\Phi^{\varepsilon}_m(x_{m, i_m})=0.
$$
Pair with the   filling-side ECH cobordism  maps $\Psi^{\varepsilon}_n(x_{n, j_n})$. By Lemma \ref{lem3}, we have 
\begin{equation*}
\begin{split}
       0&= \sum_{0  \le m \le d}  \sum_{i_m} a_{m, i_m}\Psi^{\varepsilon}_n(x_{n, j_n} \otimes  \Phi^{\varepsilon}_m (x_{m, i_m}))\\
&=  \sum_{i_n} a_{n, i_n}\Psi^{\varepsilon}_n(x_{n, j_n} \otimes \Phi^{\varepsilon}_n (x_{n, i_n}))\\
&=  \sum_{i_n} a_{n, i_n}q^{n|e| +\Gamma}(\Pi(x_{n, j_n}), \Pi(x_{n, i_n})),
\end{split}
\end{equation*}
where $q^{n|e| +\Gamma}$ is the intersection pairing of $H^*(\Sym^{n|e| +\Gamma}\Sigma, \Q)$.  Thus, it is nondegenerate. As a result, $a_{n, i_n}=0$ for all $i_n$ and $n$.    In other words, $\{\Phi^{\varepsilon}_m(x_{m, i_m})\}_{0 \le m \le d }$ are linear independent.

\end{proof}

Recall that $L_{\varepsilon}$ is the number such that the Reeb orbits with action $\mathcal{A}_{\lambda_{\varepsilon}}<L_{\varepsilon}$ consists of  covers of fibers over critical points of $H$.  We choose $L_{\varepsilon}$ to be irrational throughout.  
\begin{lemma}\label{lem2}
Assume $e<\chi(\Sigma)$.  Suppose  $\Gamma =0$ or $\Gamma>2g-2$. Let $\{x_{m, i_m}\}=\mathcal B_{m|e|+\Gamma}$  be the basis  described  in  (\ref{eq17}). .

 Fix  $d_{\varepsilon}: = \min\{\lfloor \frac{(L_{\varepsilon}  -3\Gamma) }{3|e|}\rfloor,  \lfloor  \frac{1}{2}(1/\varepsilon -2\Gamma/|e|) -1/2\rfloor \}$.    For $0\le d\le d_{\varepsilon}$,   then the inclusion morphisms 
 $$i^{L_{\varepsilon, d}} : \ECH^{L_{\varepsilon, d}}(Y, \lambda_{\varepsilon},  \Gamma ) \to \ECH(Y, \lambda_{\varepsilon},  \Gamma ) $$
 are injective,  where  $L_{\varepsilon, d}:=\max\{2(1+\varepsilon) (d|e| +\Gamma), 1\}. $
  Moreover, there exists  elements $\{\sigma_{m, i_m}\}_{0\le m\le d}  \subset \ECH^{L_{\varepsilon, d}}(Y, \lambda_{\varepsilon},  \Gamma )$ such that  
$$
         \Phi^{\varepsilon}_m(x_{m, i_m}) = i^{L_{\varepsilon, d}}(\sigma_{m, i_m})
$$
and they 
forms a basis of  $\ECH^{L_{\varepsilon, d}}(Y, \lambda_{\varepsilon},  \Gamma )$.  \end{lemma}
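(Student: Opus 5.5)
The plan is to combine three ingredients: the Nelson--Weiler description of the filtered chain complex below $L_{\varepsilon}$, the action bound from Lemma \ref{lem8} and Lemma \ref{lem5}, and the linear independence in Lemma \ref{lem24}. The two assertions are then settled by a dimension count.

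\textbf{Step 1: the filtered group and injectivity.} By \cite{Fa, NW}, for action less than $L_{\varepsilon}$ the chain complex $\ECC^{L_{\varepsilon}}(Y,\lambda_{\varepsilon},\Gamma)$ is generated by orbit sets $e_+^{m_+}e_-^{m_-}\prod h_i^{m_i}$ with each $m_i\in\{0,1\}$, and its differential vanishes.

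Generators with $M_\alpha=m|e|+\Gamma$ have action $2(M_\alpha+\varepsilon\pi^*H(\alpha))$. Since $|H|\le 1/10$, this lies in $[2M_\alpha(1-\varepsilon/10),\,2M_\alpha(1+\varepsilon/10)]$. The condition $d\le \frac12(1/\varepsilon-2\Gamma/|e|)-\frac12$ makes $2\varepsilon(d|e|+\Gamma)$ smaller than $|e|$. Hence the action windows for different $m$ are disjoint, and the generators of action less than $L_{\varepsilon,d}$ are exactly those with $0\le m\le d$ (plus only $\emptyset$ when $L_{\varepsilon,d}=1$). Counting these generators gives
$$\dim \ECH^{L_{\varepsilon,d}}=\sum_{m=0}^{d}\dim H^*(\Sym^{m|e|+\Gamma}\Sigma,\Q)=\sum_{m=0}^{d} 2^{2g}(m|e|+\Gamma-g+1),$$
which is also $\sum_m |\mathcal B_{m|e|+\Gamma}|$.

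For injectivity of $i^{L_{\varepsilon,d}}$, I would use the fact that the differential vanishes on $\ECC^{L_{\varepsilon}}$. So $\ECH^{L}\to\ECH^{L_{\varepsilon}}$ is injective for $L<L_{\varepsilon}$. To pass from $\ECH^{L_{\varepsilon}}$ to the full $\ECH$, I would invoke Nelson--Weiler's direct-limit argument over $\varepsilon\to 0$ (Proposition 4.4 of \cite{NW}, that the differential does not increase $M$). A cleaner alternative avoids this: if $i^{L_{\varepsilon,d}}(\sigma)=0$, pair $\sigma$ with the maps $\Psi_n^{\varepsilon}(x\otimes -)$ and apply the nondegeneracy of Step 3. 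I would in fact use this second route, since it only needs the lower-action generators to map into the span of the $\Phi$'s.

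\textbf{Step 2: each $\Phi^{\varepsilon}_m(x_{m,i_m})$ lies in the image of $i^{L_{\varepsilon,d}}$.} By Lemma \ref{lem8}, $\rho(A_+(m))=3(m|e|+\Gamma)$. By the filtration and direct-limit axioms of Theorem \ref{thm2}, $\tilde\Phi^{\varepsilon}_m(x)$ is therefore represented by a cycle in $\ECC^{3(m|e|+\Gamma)}$, and $3(m|e|+\Gamma)<L_{\varepsilon}$ by the choice of $d_{\varepsilon}$.

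To improve the bound from $3(m|e|+\Gamma)$ to $L_{\varepsilon,m}$, I would use the holomorphic curve axiom. Any broken current from $\emptyset$ to $\alpha$ in class $A_+(m)$ has relative class $Z^{\vee}_{\alpha}+(m-d(\alpha))[\Sigma_+]$, so Lemma \ref{lem5} forces $d(\alpha)\le m$. Hence only generators with $M_\alpha\le m|e|+\Gamma$ appear, and these have action at most $2(1+\varepsilon)(m|e|+\Gamma)\le L_{\varepsilon,d}$. The correction term $[\emptyset]$ in Definition \ref{definition2} also has small action. This produces the classes $\sigma_{m,i_m}$.

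\textbf{Step 3: basis.} The $\sigma_{m,i_m}$ are linearly independent, because their images under $i^{L_{\varepsilon,d}}$ are independent by Lemma \ref{lem24}. Their number equals $\dim\ECH^{L_{\varepsilon,d}}$ from Step 1, so they form a basis.

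Injectivity then follows directly. Any class $\sigma$ in $\ECH^{L_{\varepsilon,d}}$ is a combination $\sum a\,\sigma_{m,i_m}$. If $i^{L_{\varepsilon,d}}(\sigma)=0$, then $\sum a\,\Phi^{\varepsilon}_m(x_{m,i_m})=0$, so all $a=0$ by Lemma \ref{lem24}.

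\textbf{Main obstacle.} The delicate step is the action bookkeeping in Step 2. One must verify that Lemma \ref{lem5}, which is stated for currents in $\widehat X_+$, really constrains the chain-level map through the holomorphic curve axiom. One must also check that the $\varepsilon$-perturbation of the actions leaves a gap between consecutive values of $m$, so that the truncation at $L_{\varepsilon,d}$ is exactly $M_\alpha\le d|e|+\Gamma$. This is precisely what the two terms in the definition of $d_{\varepsilon}$ are designed to guarantee.
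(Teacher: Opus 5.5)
Your proposal is correct and is essentially the paper's proof. It uses the same ingredients: the action bound on the chain-level representative $\phi^{\varepsilon}_m(x_{m,i_m})$ from the holomorphic curves axiom plus Lemma~\ref{lem5}, linear independence of the $\sigma_{m,i_m}$ via Lemma~\ref{lem24}, a dimension count, and injectivity of $i^{L_{\varepsilon,d}}$ again from Lemma~\ref{lem24}.

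There is one difference, plus one small slip:
\begin{itemize}
\item You import the Farris--Nelson--Weiler vanishing of the differential to get $\dim\ECH^{L_{\varepsilon,d}}$ exactly. The paper needs only the trivial bound $\dim\ECH^{L_{\varepsilon,d}}\le\dim\ECC^{L_{\varepsilon,d}}$, which is preferable: that vanishing result is stated over $\Z/2\Z$ while you work over $\Q$, and the paper recovers $\partial=0$ as a consequence.
\item For $\Gamma=m=0$ the summand in your count should be $1$, not $2^{2g}(1-g)$.
\end{itemize}
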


\begin{proof}

If $\Gamma=0$ and $d=0$, then  $\ECH^{L_{\varepsilon, 0}}(Y, \lambda_{\varepsilon},  0)  = \Q [\emptyset]$. Then $[\emptyset]$ is a basis.   By Lemma \ref{lem14}, $\Phi_0^{\varepsilon}(1) =[\emptyset] =i^{L_{\varepsilon, 0}}([\emptyset]).$

Now consider the general case that $L_{\varepsilon, d}>1$.  Let $0 \le d \le d_{\varepsilon}$.     By the filtration and direct limit   properties in Theorem \ref{thm2}, we have 
 \begin{equation*}
 \begin{split}
 &\tilde{\Phi}^{\varepsilon}_m(x_{m, i_m})  = i^{\delta + \rho(A_+(m))} \circ \ECH^{\delta}(X_+, \omega_+^{\varepsilon}, A_+(m))(x_{m, i_m}),\\
 \mbox{ and } & \ECH^{\delta}(X_+, \omega_+^{\varepsilon}, A_+(m))(x_{m, i_m}) \in \ECH^{\rho(A_+(m)) + \delta}(Y, \lambda_{\varepsilon}, \Gamma), 
 \end{split}
 \end{equation*}
 where $\delta>0$ is an arbitrary  small constant. 
  By Lemma \ref{lem8},   we have  
  $$ \rho(A_+(m)) = 3(m|e|+\Gamma). $$ 
  Then  $\rho(A_+(m)) + \delta \le \rho(A_+(d)) + \delta \le \rho(A_+(d_{\varepsilon})) + \delta<L_{\varepsilon}$. Such  a $\delta$ exists because $L_{\varepsilon}$ is irrational.

Fix a generic cobordism almost complex structure $J$ on the symplectic completion of  $(X_+, \omega_+^{\varepsilon})$.  By Theorem \ref{thm2}, we have a chain map  
$$\phi^{\varepsilon}_m : \mathbb{A}(\Sigma) \to \ECC^{\delta + \rho(A_+(m))}(Y, \lambda_{\varepsilon}, \Gamma) \subset  \ECC^{L_{\varepsilon}}(Y, \lambda_{\varepsilon}, \Gamma)$$
   inducing  $\ECH^{\delta}(X_+, \omega_+^{\varepsilon}, A_+(m))$ and satisfying the holomorphic curve axioms. Write  $$\phi^{\varepsilon}_m(x_{m, i_m}) =\sum_{\alpha}  \langle   \phi^{\varepsilon}_m(x_{m, i_m}) , \alpha \rangle \alpha \in \ECC^{L_{\varepsilon}}(Y, \lambda_{\varepsilon},  \Gamma).  $$
If $ \langle   \phi^{\varepsilon}_m(x_{m, i_m}) , \alpha \rangle   \ne 0$,  by the holomorphic curve axioms, we have a broken holomorphic current $\mathcal{C} \in \overline{\mathcal{M}^J_{X_+}}(\emptyset, \alpha)$ with relative homology class $Z^{\vee}_{\alpha, A_+(m)}  =Z^{\vee}_{\alpha} + (m-d(\alpha))[\Sigma_+]$. By Lemma \ref{lem5}, $d(\alpha) \le m$.  Then
\begin{equation*}
\begin{split}
\mathcal{A}_{\lambda_{\varepsilon}}(\alpha) = &2 (1+O(\varepsilon)) M_{\alpha} = 2(1+O(\varepsilon))(d(\alpha) |e| +\Gamma) \\
\le & 2(1+O(\varepsilon))(m |e| +\Gamma)\\
<& L_{\varepsilon, m}\le L_{\varepsilon, d}
\end{split}
\end{equation*}
where $L_{\varepsilon, d}=2(1+\varepsilon) (d|e|+ \Gamma). $ Note that here the size of $O(1)$ is bounded  by $|H|_{C^0}$. Since we choose $|H|_{C^0} \le 1/10$,     we have the above strict  inequality.   

By the above discussion,   we have  a well-defined  class  
$$ [\phi^{\varepsilon}_m(x_{m, i_m})] \in  \ECH^{L_{\varepsilon, d}}(Y, \lambda_{\varepsilon},   \Gamma ) \mbox{ and } \tilde{\Phi}^{\varepsilon}_m(x_{m, i_m}) = i^{L_{\varepsilon, d}}  (  [\phi^{\varepsilon}_m(x_{m, i_m})]). $$
Define 
\begin{equation*}
\sigma_{m, i_m}:=
\begin{cases}
[\phi^{\varepsilon}_1(x_{1, i_1})] - \Psi_0^{\varepsilon}(1 \otimes \tilde{\Phi}^{\varepsilon}_1(x_{1, i_1})) [\emptyset] & m=1 \mbox{ and }\Gamma=0\\
[\phi^{\varepsilon}_m(x_{m, i_m})] & else. 
\end{cases}
\end{equation*}
Then ${\Phi}^{\varepsilon}_m(x_{m, i_m}) = i^{L_{\varepsilon, d}}  (\sigma_{m, i_m}).$

  Suppose 
$$\sum_{m=0}^d \sum_{i_m} a_{m, i_m}\sigma_{m, i_m} =0.$$
Applying $i^{L_{\varepsilon, d}}$ to the above equation, we obtain
$$\sum_{m=0}^d \sum_{i_m} a_{m, i_m}\Phi^{\varepsilon}_m(x_{m, i_m})=0.$$
By Lemma \ref{lem24},  $a_{m, i_m} =0$. Therefore,  $\{\sigma_{m, i_m}\}_{0\le m\le d} $ are linear independent.  As a result,  
\begin{equation*}
\begin{split}
\dim \ECH^{L_{\varepsilon, d} }(Y, \lambda_{\varepsilon},   \Gamma)  \ge&  \dim \operatorname{Span}_{\Q} \{\sigma_m(x_{m, i_m})\}_{0\le m\le d} \\
=& \delta(\Gamma) + \sum_{m=1}^d\dim H^*(\Sym^{m|e| + \Gamma}\Sigma, \Q) \\
= &\delta(\Gamma) + \sum_{m=1}^d2^{2g}(m|e|+\Gamma -g+ 1),
\end{split}
\end{equation*}
where $\delta(\Gamma) :=2^{2g}(\Gamma-g+1)$ if $\Gamma>2g-2$, and $\delta(\Gamma) :=1$ if $\Gamma=0$.

On the other hand,  if $\alpha $  is an  ECH generator such that $d(\alpha)  \ge d+1$ and $[\alpha]=\Gamma$, then 
\begin{equation*}
\begin{split}
\mathcal{A}_{\lambda_{\varepsilon}}(\alpha) >& 2(1-\varepsilon) ( (d+1)|e|+\Gamma)\\
=&2(1+\varepsilon)( d|e| +\Gamma)  +2|e| - 2\varepsilon((2d+1)|e|+2\Gamma)\\
\ge & 2(1+\varepsilon)( d|e| +\Gamma)   =L_{\varepsilon, d}.
\end{split}
\end{equation*}
In other words,  $\ECC^{L_{\varepsilon, d}}(Y, \lambda_{\varepsilon},  \Gamma)$ is generated by the ECH generators with  $d(\alpha) \le d$.   It is directly to show that there 
are $2^{2g}(m|e|+\Gamma-g+1)$ 
 ECH generators   with   $d(\alpha)=m$ and $[\alpha]=\Gamma$, provided that $m|e|+\Gamma >2g-2$. Thus, the dimension of the filtered ECH is 
$$\dim \ECH^{L_{\varepsilon, d} }(Y, \lambda_{\varepsilon},   \Gamma)  \le \dim \ECC^{L_{\varepsilon, d}}(Y, \lambda_{\varepsilon},  \Gamma)=\delta(\Gamma) + \sum_{m=1}^d2^{2g}(m|e| +\Gamma  -g+ 1),$$

The dimension count implies that   $\{\sigma_{m, i_m}\}_{0\le m\le d} $   is a basis of $  \ECH^{L_{\varepsilon, d}}(Y, \lambda_{\varepsilon},   \Gamma)$.  The injectivity of $i^{L_{\varepsilon, d}}$ follows from Lemma \ref{lem24} and   that $\{\sigma_{m, i_m}\}_{0\le m\le d} $  is a basis.

  \end{proof}

\begin{remark}
Up to now, we didn't use the core result  $\partial =0$ in \cite{Fa, NW}.  Therefore, under the assumptions $|e| >2g-2$, $\Gamma=0$ or $\Gamma>2g-2$,  Lemma \ref{lem2} actually  gives a way to compute the $\ECH^{L_{\varepsilon, d} }(Y, \lambda_{\varepsilon},   \Gamma) $ which is different from Farris \cite{Fa} and  Nelson-Weiler's \cite{NW} approaches.  The dimension count in  Lemma \ref{lem2},    $  \ECC^{L_{\varepsilon, d}}(Y, \lambda_{\varepsilon},   \Gamma) =  \ECH^{L_{\varepsilon, d}}(Y, \lambda_{\varepsilon},   \Gamma)$,  indeed implies that $\partial =0$. 
\end{remark}

 \begin{remark}
 Due to  Lemma \ref{lem2}, in the rest of the paper, we implicitly identify  $\ECH^{L_{\varepsilon, d} }(Y, \lambda_{\varepsilon},   \Gamma)$ as a subspace of  $\ECH(Y, \lambda_{\varepsilon},   \Gamma)$ and regard  $\{\Phi^{\varepsilon}_m(x_{m, i_m})\}_{0\le m\le d}$ as a basis  of $\ECH^{L_{\varepsilon, d} }(Y, \lambda_{\varepsilon},   \Gamma).$
 \end{remark}

  \begin{definition} \label{definition1}
Fix  $0\le d\le d_{\varepsilon}$. 
Define a bilinear map  $$\langle \cdot ,  \cdot \rangle_{ech}:  \ECH^{L_{\varepsilon, d}}(Y, \lambda_{\varepsilon}, \Gamma) \times     \ECH^{L_{\varepsilon, d}}(Y, \lambda_{\varepsilon}, \Gamma)  \to \Q $$ by 
 \begin{equation*}
   \langle  \Phi_m^{\varepsilon}(x_{m, i_m}) ,   \Phi_n^{\varepsilon}(x_{n, i_n}) \rangle_{ech}: =  \Psi_m^{\varepsilon} (x_{m, i_m} \otimes  \Phi_n^{\varepsilon}(x_{n, i_n})).  
 \end{equation*}
 \end{definition}
By Lemma \ref{lem14} and Lemma \ref{lem3},  we have 
 $$\langle \cdot,  \cdot \rangle_{ech} =\bigoplus_{m=0}^d  q^{m|e|+\Gamma}. $$
Here if $m= \Gamma=0$, $q^0$ is understood as a bilinear map on $\Q$ such that  $q^0(1, 1) =1$.

\subsection{Computing the $U$ maps}
We assume $d \le d_{\varepsilon}$ throughout; unless otherwise stated. 
With the preparations in the last section,  now we can  compute the $U$ maps
\begin{equation*}
U:  \ECH_*^{L_{\varepsilon, d}}(Y, \lambda_{\varepsilon},  \Gamma) \to \ECH_{*-2}^{L_{\varepsilon, d}}(Y, \lambda_{\varepsilon},  \Gamma)
\end{equation*}
under the basis  $\{\Phi^{\varepsilon}_m(x_{m, i_m})\}_{0 \le m\le d} $  given by  Lemma \ref{lem2}.

By the $ \mathbb{A}(Y) $-commuting  property of the ECH cobordism maps, we have 
\begin{equation}
\begin{split}
U\Phi_d^{\varepsilon}(u^{k }\theta^b e_r^c)  = \Phi_d^{\varepsilon}(u^{k+1}\theta^b e_r^c), 0\le k \le d|e|+\Gamma -g, \Gamma\ne 0 \mbox{ or } d\ne 1.
\end{split}
\end{equation}
So the $U$  map looks simple under this basis. The only problem  is the case that $k=d|e|+\Gamma-g$ because  $u^{d|e| +\Gamma -g+1}\theta^b e_r^c$ is no longer  inside $\mathcal{B}_{d|e| +\Gamma -g}$, and  $\Phi_d^{\varepsilon}(u^{d|e|  +\Gamma-g+1}\theta^b e_r^c)$ is not in the basis given by Lemma \ref{lem2}. Thus, it suffices to compute that the case that $k = d|e|+\Gamma -g$.

We introduce the following notations:  Let $y =u^k \theta^be_r^c \in \mathcal{B}_{d|e|+\Gamma}$, where $0\le k\le d|e|+\Gamma- g$.  Define 
\[y//u^{d|e|+\Gamma-g} =
\begin{cases}
 0 &  0\le k \le d|e|+\Gamma-g-1, \\
\theta^be_r^c &  k =d|e|+\Gamma-g.
\end{cases}
\]
Define 
\begin{equation} \label{eq22}
\begin{split}
Q_d(y): =& u y - (-1)^g\mathfrak{R}_{d|e|+\Gamma}(u, \theta) \left(y//u^{d|e|+\Gamma-g}\right)\\
\end{split}
\end{equation}
It is worth to knowing that   $Q_d(y) \in \operatorname{Span}_{\Q} \mathcal{B}_{d|e|+\Gamma}$  and  $\Pi(Q_d(y) ) =u \wedge \Pi(y)$.

The  computation of the $U$ map under the basis $\{\Phi^{\varepsilon}_d(x)  \vert x \in \mathcal{B}_{d|e|+\Gamma}\}$ is given by the following lemma.

\begin{lemma}  \label{lem7}
Assume $e<\chi(\Sigma)$.  Suppose $\Gamma= 0$ or $\Gamma>2g-2$. 
Let $z =\theta^b e_r^c\in\mathcal E$, where $\mathcal E$ is the chosen basis of
$\Lambda^*H^1(\Sigma;\mathbb Q)$. Then the following formulas hold.

\begin{enumerate} [label=\textbf{C.\arabic*}]
\item \label{C0}
 Assume $0\le k \le d|e| +\Gamma-g -1$. If      $\Gamma \ne 0$ or $d\ne 1$,   then 
\begin{equation} \label{eq41}
U\Phi_d^{\varepsilon}(u^{k}z)  = \Phi_d^{\varepsilon}(u^{k+1}z). 
\end{equation}

If $\Gamma=0 $ and  $d =1$, then 
$$U \Phi_1^{\varepsilon}(u^{k}z) ={\Phi}_1^{\varepsilon}(u^{k+1}z)   + \Psi_0^{\varepsilon}(1 \otimes \tilde{\Phi}^{\varepsilon}_1(u^{k+1}z) ) [\emptyset] .$$

\item\label{C1}
Suppose that either
$$
\Gamma>2g-2,\qquad d\ge1,
$$
or
$$
\Gamma=0,\qquad d\ge2,
$$
and that
$$
(d,e,z)\ne (2, 1-2g,\theta^g).
$$
Then
\begin{equation} \label{eq27}
U\Phi_d^\varepsilon
\left(
u^{d|e|+\Gamma-g}z
\right)
=
\Phi_{d-1}^\varepsilon(z)\
+
\sum_{k=1}^{g}
\frac{(-1)^{k+1}}{k!}
\Phi_d^\varepsilon
\left(
u^{d|e|+\Gamma-g-k+1}\theta^kz
\right).
\end{equation}
Let $y=u^kz \in \mathcal{B}_{d|e| +\Gamma}, 
0\le k\le d|e|+\Gamma-g$, then (\ref{eq41}) and (\ref{eq27}) can be unified to 
\begin{equation*} 
U\Phi_d^\varepsilon\left(y\right)
=\Phi_{d-1}^\varepsilon(y//u^{d|e|+\Gamma -g})\
+
\Phi_d^\varepsilon(Q_d(y)). 
\end{equation*}

\item  \label{C2} Suppose that
$$
\Gamma=0 \mbox{ and } d=1.$$
Then
\begin{equation*}
U\Phi_1^\varepsilon
\left(
u^{|e|-g}z
\right)
=
2^g\Phi_0^\varepsilon(z) + \sum_{k=1}^{g}
\frac{(-1)^{k+1}}{k!}
\Phi_1^\varepsilon
\left(
u^{|e|-g-k+1}\theta^kz
\right).
\end{equation*}
Note that  $\Phi_0^\varepsilon(1) =[\emptyset] $; otherwise $\Phi_0^\varepsilon(z)=0$. (See Lemma \ref{lem17}). 

\item   \label{C3} Suppose that
\[
\Gamma=0,\qquad d=2,\qquad  e=1-2g.
\]
Then
\begin{equation*}
U\Phi_2^\varepsilon
\left(
u^{2|e|-g}z
\right)
= 
\Phi_1^\varepsilon(z)+
\sum_{k=1}^{g}
\frac{(-1)^{k+1}}{k!}
\Phi_2^\varepsilon
\left(
u^{2|e|-g-k+1}\theta^kz
\right)+
g!\delta_{z,\theta^g}[\emptyset],
\end{equation*}
where $\delta_{z,\theta^g}=1$ if $z=\theta^g$, and
$\delta_{z,\theta^g}=0$ otherwise. In particular, 
\begin{equation*}
U\Phi_2^\varepsilon(u^{2|e|-g}\theta^g)
= 
\Phi_1^\varepsilon(\theta^g)+
g![\emptyset] = \tilde{\Phi}_1^\varepsilon(\theta^g),
\end{equation*}

\end{enumerate}
\end{lemma}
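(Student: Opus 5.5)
The plan is to express $U\Phi_d^\varepsilon(y)$ in the basis of Lemma \ref{lem2} and read off its coefficients with the nondegenerate pairing $\langle\cdot,\cdot\rangle_{ech}=\bigoplus_m q^{m|e|+\Gamma}$ of Definition \ref{definition1}. Since $U$ has degree $-2$ and lowers action, and $\rho(A_+(d))$ controls the action, $U\Phi_d^\varepsilon(y)$ lies in $\ECH^{L_{\varepsilon,d}}$. We may therefore write
$$U\Phi_d^\varepsilon(y)=\sum_{m\le d}\sum_{i_m} a_{m,i_m}\Phi_m^\varepsilon(x_{m,i_m}).$$
Pairing with $\Psi_n^\varepsilon(x_{n,j_n}\otimes\cdot)$ and using Lemma \ref{lem3}, each coefficient vector is $(q^{n|e|+\Gamma})^{-1}$ applied to the numbers $\Psi^\varepsilon_n(x_{n,j_n}\otimes U\Phi^\varepsilon_d(y))$.

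To compute these numbers, use the $\mathbb{A}(Y)$-commuting property of Theorem \ref{thm2} to move $U$ inside: $U\tilde\Phi_d^\varepsilon(y)=\tilde\Phi_d^\varepsilon(uy)$. The composition law then gives
$$\Psi_n^\varepsilon\bigl(x\otimes U\tilde\Phi_d^\varepsilon(y)\bigr)=\Gr(X,A_{d,n},x\,u y).$$

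\textbf{Item C.1.} For $k<d|e|+\Gamma-g$, $uy$ is again in $\mathcal{B}_{d|e|+\Gamma}$, and the formula follows immediately from the commuting property. When $\Gamma=0$, $d=1$, the correction term $-\Psi_0^\varepsilon(\cdot)[\emptyset]$ in Definition \ref{definition2} is killed by $U$, because $U[\emptyset]=0$. Rewriting $\tilde\Phi_1^\varepsilon(u^{k+1}z)$ in terms of $\Phi_1^\varepsilon$ then produces the stated extra $[\emptyset]$ term.

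\textbf{Item C.2.} This is the top-degree case $y=u^{d|e|+\Gamma-g}z$. Decompose
$$uy=Q_d(y)+(-1)^g\mathfrak{R}_{d|e|+\Gamma}(u,\theta)\,z,$$
so that $\tilde\Phi^\varepsilon_d(uy)=\Phi_d^\varepsilon(Q_d(y))+(-1)^g\tilde\Phi_d^\varepsilon(\mathfrak{R}z)$. Expanding $(-1)^g\mathfrak{R}$ via (\ref{eq1}) gives the sum $\sum_k\frac{(-1)^{k+1}}{k!}u^{d|e|+\Gamma-g-k+1}\theta^kz$, up to the leading term $u\cdot y$. It remains to identify $(-1)^g\tilde\Phi_d^\varepsilon(\mathfrak{R}_{d|e|+\Gamma}z)$, which is the piece I expect to equal $\Phi_{d-1}^\varepsilon(z)$. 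Pair it with $\Psi_n^\varepsilon(x\otimes\cdot)$ to get $(-1)^g\Gr(X,A_{d,n},x\,\mathfrak{R}z)$, and split by $n$:
\begin{itemize}
\item For $n=d$, this vanishes by Lemma \ref{lem1}.
\item For $n\le d-2$, the index estimate in Lemma \ref{lem3} gives $I(A_{d,n})>\deg(x\,\mathfrak{R}z)$, so the term vanishes; this uses $e<\chi(\Sigma)$.
\item For $n=d-1$, we have $A_{d,d-1}\cdot[S^2]=1$. Theorem \ref{thm0} evaluates $\Gr(X,A_{d,d-1},x\,\mathfrak{R}z)$ as a sum $\sum_k(-1)^{g-k}\frac{1}{k!}\frac{g!}{(g-b-k)!}2^{g-b-k}$ over the $\theta$-content. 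The binomial identity $\sum_k(-1)^{k}\binom{m}{k}2^{m-k}=1$ turns this into exactly the number obtained by pairing $x$ with $z$ under $q^{(d-1)|e|+\Gamma}$, computed via Lemma \ref{lem3} and (\ref{eq10}). Hence the $n=d-1$ component is $\Phi_{d-1}^\varepsilon(z)$.
\end{itemize}
Checking that the $u$-powers match so that $x\,z$ lands in top degree of $\Sym^{(d-1)|e|+\Gamma}$ is a routine degree count.

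\textbf{Items C.2--C.3 (boundary cases).} When $n=0$ and $\Gamma=0$, the basis element $\Phi_0^\varepsilon$ pairs by Lemma \ref{lem14} rather than Lemma \ref{lem3}. For $d=1$, this produces the factor $2^g$ of item C.2, since the binomial collapse above is unavailable and we get $\frac{g!}{(g-b)!}2^{g-b}$-type values directly. For $d=2$, the index inequality that excludes $n=d-2=0$ becomes an equality exactly when $I(A_{2,0})=\deg$, i.e. when $2|e|+2\chi(\Sigma)=2$, i.e. $e=1-2g$. In that case only $z=\theta^g$ survives the degree count, and Theorem \ref{thm0} gives the extra coefficient $g!$ in front of $[\emptyset]$. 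The last identity of C.3 then follows from Definition \ref{definition2} together with Lemma \ref{lem14}.

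The main obstacle I expect is the $n=d-1$ computation: getting signs and factorials right so that the Okonek--Teleman values $(1+A\cdot S^2)^{g-b}=2^{g-b}$ combine with $\mathfrak{R}$ to reproduce the intersection pairing on the lower symmetric product. I would first verify this for $g=1$ before doing the general binomial sum.
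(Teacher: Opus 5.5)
Your proposal is correct and is essentially the paper's argument. You expand $U\Phi_d^\varepsilon(y)$ in the basis of Lemma~\ref{lem2} and read off the coefficients through the block-diagonal pairing of Lemma~\ref{lem3}. You then evaluate the resulting invariants $\Gr(X,A_{d,n},x\,uy)$ by the index estimate for $d-n\ge 2$, by Lemma~\ref{lem1} at $n=d$, and by Theorem~\ref{thm0} at $n=d-1$ and for the $\Gamma=0$, $n=0$ terms. Splitting $uy=Q_d(y)+(-1)^g\mathfrak{R}_{d|e|+\Gamma}z$ at every level, rather than only at $n=d$, is just bookkeeping.

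Two small corrections:
\begin{itemize}
\item The binomial sum you expect at $n=d-1$ never appears. Write $N'=(d-1)|e|+\Gamma$. The degree condition forces $x=u^{N'-g}w$ with $wz=c_0\theta^g$, so $x\theta^k z=0$ for $k\ge1$ and only the $k=0$ term of $\mathfrak{R}$ contributes.
\item For $\Gamma=0$, $d=1$, the coefficient $2^g$ is simply $\Gr(X,A_{1,0},uy)$. In your splitting it arises as $(2^g-1)+1$, because $\tilde\Phi_1^\varepsilon(Q_1(y))\neq\Phi_1^\varepsilon(Q_1(y))$; the binomial collapse does occur for the $\mathfrak{R}$-part, and that is what gives the $1$.
\end{itemize}
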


\begin{proof}
Consider the situation in  \ref{C0}. If $\Gamma \ne 0 $ or $d \ne 1$, then  $\Phi_d^{\varepsilon}(u^{k}z)   = \tilde{\Phi}_d^{\varepsilon}(u^{k}z)$. The conclusion follows directly the fact that the cobordism maps commute with the $U$ maps. 

If $\Gamma =0 $ and $ d= 1$,  by Definition \ref{definition2}, we have  
\begin{equation*}
\begin{split}
U \Phi_1^{\varepsilon}(u^{k}z)  =&U\left( \tilde{\Phi}_1^{\varepsilon}(u^{k}z)  - \Psi_0^{\varepsilon}(1 \otimes \tilde{\Phi}^{\varepsilon}_1(u^{k}z)) [\emptyset]  \right)\\
=& \tilde{\Phi}_1^{\varepsilon}(u^{k+1}z) \\
=&{\Phi}_1^{\varepsilon}(u^{k+1}z)   + \Psi_0^{\varepsilon}(1 \otimes \tilde{\Phi}^{\varepsilon}_1(u^{k+1}z)) [\emptyset] .  
\end{split}
\end{equation*}

 Let
       $ \{x_{n,i_n}:  1\le i_n \le \dim H^*(\Sym^{n|e| + \Gamma}\Sigma, \Q)\}$
be the  elements of $\mathcal{B}_{n|e| +\Gamma }$, where  $n \ge 0$.  Now we consider the cases \ref{C1}, \ref{C2} and \ref{C3}, i.e., the top degree case that 
$$
        y=u^{d|e| +\Gamma -g} z =u^{d|e| +\Gamma -g}\theta^b e_r^c.
$$
Then 
$\Phi^{\varepsilon}_d(y) \in \ECH^{L_{\varepsilon, d}}(Y, \lambda_{\varepsilon},  \Gamma) $.
Because the $U$ map decreases the action,  $U\Phi^{\varepsilon}_d(y) \in \ECH^{L_{\varepsilon, d}}(Y, \lambda_{\varepsilon},  \Gamma)$. 
By Lemma \ref{lem2}, we can write
\[
        U \Phi^{\varepsilon}_d(y)
        =\sum_{n=0}^{d}\sum_{i_n} a_{n, i_n}\Phi^{\varepsilon}_n(x_{n, i_n}),  \mbox{ where } a_{n, i_n} \in \Q.  
\]
Pair with $\Phi^{\varepsilon}_m(x_{m, j_m})$.  By Lemma \ref{lem3}, only the
$n=m$ summand survives:
\[
\begin{aligned}
      &\langle  \Phi^{\varepsilon}_m(x_{m, j_m}),  U \Phi^{\varepsilon}_d(y) \rangle_{ech}=  \Psi^{\varepsilon}_m(x_{m, j_m} \otimes U \Phi^{\varepsilon}_d(y))\\
        =&\sum_{n=0}^{d}\sum_{i_n} a_{n, i_n}\,
         \langle  \Phi^{\varepsilon}_m(x_{m, j_m}), \Phi^{\varepsilon}_n(x_{n, i_n}) \rangle_{ech}  \\
        =&\sum_{i_m} a_{m, i_m} q^{m|e| +\Gamma }( \Pi(x_{m,j_m}),  \Pi(x_{m, i_m})),
\end{aligned}
\]
where $q^{m|e| +\Gamma}$  denote the intersection pairing of $H^*(\Sym^{m|e| +\Gamma}\Sigma, \Q)$. Denote   $$q^{m|e|+\Gamma }_{i_m, j_m} :=q^{m|e| +\Gamma}(\Pi(x_{m, i_m}), \Pi(x_{m, j_m})).$$
Then 
$$
        a_{m, i_m}
        =\sum_{ j_m}(q^{m|e| +\Gamma })^{-1}_{i_m, j_m}\,
        \Psi^{\varepsilon}_m(x_{m, j_m} \otimes U  \Phi^{\varepsilon}_d(y)).
$$
Note that $ \Psi^{\varepsilon}_m(x_{m, j_m} \otimes U  \Phi^{\varepsilon}_d(y))=  \Psi^{\varepsilon}_m(x_{m, j_m} \otimes U  \tilde{\Phi}^{\varepsilon}_d(y))$ becasue $U[\emptyset] =0$. Using the composition law  of ECH cobordism maps, this becomes
\begin{equation} \label{eq43}
        a_{m, i_m}
        =\sum_{ j_m}(q^{m|e|+\Gamma})^{-1}_{i_m, j_m} 
        \Gr\bigl(X,A_{d,m},x_{m, j_m}u^{d|e|+\Gamma  -g+1}\theta^b e_r^c\bigr),
\end{equation}
where $A_{d, m} \in H_2(X, \Z) $ such that $A_{d, m} \vert_{X_+} =A_+(d)$ and 
$A_{d, m} \vert_{X_-} =A_-(m)$.  By Lemma \ref{lem4}, its  ECH index is 
\begin{equation*}
\begin{split}
        I(A_{d,m})&=(d-m+1)(d+m)|e|+(d-m)\chi(\Sigma)   +2\Gamma(d-m+1)\\
        &=(d-m)( (d+m) |e| + \chi(\Sigma)) + (d+m)|e|+  2\Gamma(d-m+1). 
        \end{split}
\end{equation*}

The Gromov invariants   $\Gr\bigl(X,A_{d,m},x_{m, j_m}u^{d|e|  +\Gamma -g+1}\theta^b e_r^c\bigr) \ne 0$ only if  
\begin{equation}\label{eq11}
I(A_{d,m}) =\degop x_{m, j_m}u^{d|e| +\Gamma  -g+1}\theta^b e_r^c.  
\end{equation}

We now compute all the coefficients appearing in (\ref{eq43}). We distinguish two cases according to whether
$m\neq 0$ or $\Gamma\ne 0$, or $m=\Gamma=0$.
In the first case, since $0\le m\le d$, we further divide the three   exhaustive subcases: $d-m\ge 2$, $d-m=1$, and $d=m$.

\paragraph{Case 1: $m\ne 0$ or $\Gamma \ne 0$} We  first consider the case  $m\ne 0$ or $\Gamma \ne 0$.   Then $m|e| + \Gamma >2g-2$ and  $x_{m, j_m} \in \mathcal{B}_{m|e| +\Gamma}$ is of the form $ u^{a'} \theta^{b'}e_{r'}^{c'}$, where $a' \le m|e|+\Gamma -g$.  Observes  that 
\begin{equation} \label{eq2}
\begin{split}
\degop x_{m, j_m}u^{d|e| +\Gamma  -g+1}\theta^b e_r^c \le &2(d|e|  +\Gamma -g+1) +2 (m|e|  +\Gamma  -g) +2g\\
 \le& 2(d+m)|e|  +4\Gamma  +2  -2g. 
\end{split}
\end{equation}

 Under the assumption $m\ne 0$ or $\Gamma \ne 0$, we divide the situation into the following three subcases. 
\begin{itemize}
\item
If $d-m \ge 2$, then  
\begin{equation*}
\begin{split}
I(A_{d,m})& \ge 2(d+m) |e| + \chi(\Sigma)  +\chi(\Sigma) +(d+m)|e|  + 2\Gamma(d-m+1)  \\
& \ge  2(d+m) |e| + \chi(\Sigma)   + ( |e| + \chi(\Sigma)  ) + 2\Gamma(d-m+1)\\
&> 2(d+m) |e| + \chi(\Sigma)  + 6\Gamma \\
&\ge \degop x_{m,j_m}u^{d|e| +\Gamma -g+1}\theta^b e_r^c . 
        \end{split}
\end{equation*}
In the last second line,  we have used the assumption that $|e|+\chi(\Sigma)>0$.
Thus,  $$\Gr\bigl(X,A_{d,m},x_{m,j_m}u^{d|e| +\Gamma-g+1}\theta^b e_r^c\bigr) =0$$ because (\ref{eq11}) does not hold. 

\item
If $d-m=1$, then 
\begin{equation*}
\begin{split}
I(A_{d,d-1})  = 2(2d-1)|e| + \chi(\Sigma)+4\Gamma  =2 ((d-1 )|e| +\Gamma) +  2(d|e| +\Gamma  -g+1). 
        \end{split}
\end{equation*}
When $x_{d-1, j_{d-1}}$ satisfies (\ref{eq11}), we have 
\begin{equation} \label{eq3}
\degop x_{d-1, j_{d-1}}\theta^b e_r^c =2 ((d-1)|e| +\Gamma) . 
\end{equation}
Assume (\ref{eq3}) holds. By the definition of the basis  $\mathcal{B}_{(d-1)|e| +\Gamma}$,  $x_{d-1,j_{d-1}} $ is of the form $u^{(d-1)|e|+\Gamma  -g-i} \theta^{b_1} e^{c_1}_{r_1} $.  Then
$$x_{d-1,j_{d-1}}\theta^b e_r^c = u^{(d-1)|e|+\Gamma   -g-i} \theta^{b_1} e^{c_1}_{r_1} \theta^b e_r^c.$$
The  degree of  $ \theta^{b_1} e^{c_1}_{r_1} \theta^b e_r^c$ is at most $2g$. Thus,  (\ref{eq3}) forces $i=0$ and $\degop  \theta^{b_1} e^{c_1}_{r_1} \theta^b e_r^c =2g $. Since $\dim \Lambda^{2g}H^1(\Sigma, \Q) =1$, we   have  $ \theta^{b_1} e^{c_1}_{r_1} \theta^b e_r^c = c_0 \theta^g$ for some constant $c_0$.  Then  
$$x_{d-1,j_{d-1}}\theta^b e_r^c = c_0 u^{(d-1)|e| +\Gamma -g} \theta^g. $$

By the definition of the intersection pairing, we have $c_0=\frac{1}{g!}q^{(d-1)|e| +\Gamma }(\Pi(x_{d-1,j_{d-1}}), \Pi(\theta^b e_r^c )).$

By Theorem \ref{thm0}, we obtain
\begin{equation}
\begin{split}
 &\Gr\bigl(X,A_{d,d-1},x_{d-1,j_{d-1}}u^{d|e|+\Gamma-g  +1}\theta^b e_r^c\bigr) \\
=&q^{(d-1)|e| +\Gamma}(\Pi(x_{d-1,j_{d-1}}), \Pi( \theta^b e_r^c)) \Gr(X,A_{d,d-1}, \frac{u^{(2d-1)|e| +2\Gamma   -2g +1} \theta^g}{g!} ) \\ =& q^{(d-1)|e| +\Gamma }(\Pi(x_{d-1,j_{d-1}}), \Pi(\theta^b e_r^c)). 
 \end{split}
\end{equation}

If the degree condition (\ref{eq3}) does not hold, then the above equality still hold because both of the Gromov-Taubes invariants and intersection pairing $q^{(d-1)|e| +\Gamma }(\Pi(x_{d-1,j_{d-1}}), \Pi(\theta^b e_r^c))$ vanish. 

Taking into account all  the above disscusion, we have 
\begin{equation}\label{eq25}
\begin{split}
&\sum_{i_{d-1}} a_{d-1, i_{d-1}}\Phi_{d-1}^{\varepsilon}(x_{d-1, i_{d-1}})\\
=&
\sum_{i_{d-1}}\sum_{ j_{d-1}}(q^{(d-1)|e| +\Gamma})^{-1}_{i_{d-1}, j_{d-1}} 
        \Gr\bigl(X,A_{d,d-1},x_{d-1, j_{d-1}}u^{d|e|+\Gamma-g+1}\theta^b e_r^c\bigr)  \Phi_{d-1}^{\varepsilon}(x_{d-1, i_{d-1}})\\
=&\sum_{i_{d-1}}\sum_{ j_{d-1}}(q^{(d-1)|e|+\Gamma})^{-1}_{i_{d-1}, j_{d-1}} 
        q^{(d-1)|e| +\Gamma }(\Pi(x_{d-1,j_{d-1}}), \Pi(\theta^b e_r^c)) \Phi_{d-1}^{\varepsilon}(x_{d-1, i_{d-1}})\\
=& \Phi_{d-1}^{\varepsilon}(\theta^b e_r^c)= \Phi_{d-1}^{\varepsilon}(y//u^{d|e|+\Gamma- g}). 
 \end{split}
\end{equation}

\item 
Finally, we consider the case $d=m$.  In this case,  $I(A_{d, d}) = 2(d|e|+\Gamma) $. 
Recall that  
\begin{equation} \label{eq18}
\begin{split}
 uy =
  (-1)^g\mathfrak{R}_{d|e|+\Gamma}(u, \theta) \big(y//u^{d|e|+\Gamma-g} \big) + Q_d(y),
 \end{split}
\end{equation}
where 
$$Q_d(y): = \left(\sum_{k=1}^g (-1)^{k+1} \frac{1}{k!} \theta^k u^{d|e|+\Gamma -g -k +1} \right) (y//u^{d|e|+\Gamma-g}).$$
Note that $Q_d(y) \in \operatorname{Span}_{\Q} \mathcal{B}_{d|e|+\Gamma }$. Equation (\ref{eq18}) implies that $\Pi(Q_d(y)) = \Pi(uy)$.

Then $x_{d, j_d} uy = x_{d, j_d}  \big(y//u^{d|e|+\Gamma-g} \big) (-1)^g\mathfrak{R}_{d|e|+\Gamma} (u, \theta)+x_{d, j_d}Q_d(y).$ If the degree condition (\ref{eq11}) holds, i.e., 
\begin{equation} \label{eq34}
\degop x_{d, j_{d}} +  2 + \degop y =2(d|e|+\Gamma),  
\end{equation}
then 
\begin{equation*}
\begin{split}
&\degop   x_{d, j_d}  \big(y//u^{d|e|+\Gamma-g} \big)  =2(d|e| +\Gamma-1) -2(d|e|+\Gamma-g)= 2g-2,\\
\mbox{ and }&\degop x_{d, j_d} + \degop Q_{d}(y) =2(d|e|+\Gamma).
\end{split}
\end{equation*}
 By Lemma \ref{lem1} and Lemma \ref{lem3}, we have 
\begin{equation*}
\begin{split}
\Gr(X, A_{d, d}, x_{d, j_d}uy)= &\Gr(X, A_{d, d}, x_{d, j_d}Q_{d}(y))\\
=&\Psi^{\varepsilon}_d(x_{d, j_d} \otimes   \tilde{\Phi}^{\varepsilon}_d(Q_d(y)))  \\
=&\langle \Phi^{\varepsilon}_d(x_{d, j_d}),  \Phi^{\varepsilon}_d(Q_d(y))\rangle_{ech} =q^{d|e|+\Gamma}(\Pi(x_{d, j_d}), \Pi(Q_{d}(y))).
\end{split}
\end{equation*}
In the last second step of the above equation, we remark that  $\Psi^{\varepsilon}_d(x_{d, j_d} \otimes   \tilde{\Phi}^{\varepsilon}_d(Q_d(y)))  
= \Psi^{\varepsilon}_d(x_{d, j_d} \otimes   {\Phi}^{\varepsilon}_d(Q_d(y)))   + c \Psi^{\varepsilon}_d(x_{d, j_d} \otimes    [\emptyset])   $  and the term  $\Psi^{\varepsilon}_d(x_{d, j_d} \otimes    [\emptyset]) \ne 0$ only if $d=\Gamma =0$.
However, $d=\Gamma =0$ is ruled out by the assumption $\Gamma \ne 0$ or $m \ne 0$.

If the degree condition (\ref{eq34}) does not hold, then 
$$\Gr(X, A_{d, d}, x_{d, j_d}uy)=0 = q^{d|e|+\Gamma}(\Pi(x_{d, j_d}), \Pi(Q_{d}(y))).$$

In sum, we have  
\begin{equation} \label{eq26}
\begin{split}
&\sum_{i_{d}} a_{d, i_{d}}\Phi_{d}^{\varepsilon}(x_{d, i_{d}})\\
=&
\sum_{i_{d}}\sum_{ j_{d}}(q^{d|e|+\Gamma})^{-1}_{i_{d}, j_{d}} 
        \Gr\bigl(X,A_{d,d},x_{d, j_d}u^{d|e|+\Gamma-g+1}\theta^b e_r^c\bigr)  \Phi_{d}^{\varepsilon}(x_{d, i_{d}})\\
=&\sum_{i_{d}}\sum_{ j_{d}}(q^{d|e|+\Gamma})^{-1}_{i_{d}, j_{d}} 
        q^{d|e|+\Gamma }(\Pi(x_{d,j_{d}}), \Pi(Q_d(y))) \Phi_{d}^{\varepsilon}(x_{d, i_{d}})\\
=&\Phi_{d}^{\varepsilon} \left( \sum_{i_{d}}\sum_{ j_{d}}(q^{d|e|+\Gamma})^{-1}_{i_{d}, j_{d}} 
        q^{d|e|+\Gamma }(\Pi(x_{d,j_{d}}), \Pi(Q_d(y)))  x_{d, i_{d}} \right) \\
=& \Phi_{d}^{\varepsilon}(Q_d(y)). \\
 \end{split}
\end{equation}
\end{itemize}

Combining (\ref{eq25}) and (\ref{eq26}),  we obtain the conclusion (\ref{eq27}) for the cases  $(\Gamma >2g-2, d \ge1)$.

\paragraph{Case 2: $\Gamma=0$ and $m=0$} Now consider the exceptional  case that $\Gamma=0$ and $m=0$.  Then $x_{0, 1}=1$ and the observation (\ref{eq2}) does not hold. Instead, we have  
\begin{equation*}  
\degop u^{d|e|    -g+1}\theta^b e_r^c \le 2d|e| +2.
\end{equation*}
If $d\ge 2$, then 
\begin{equation*}
\begin{split}
I(A_{d, 0})=&d^2|e| +d|e| +d\chi(\Sigma) \\
\ge& 2d|e| + d(|e| + \chi(\Sigma)) \ge 2d|e| +2d\\
>& 2d|e| +2 \ge \degop u^{d|e|    -g+1}\theta^b e_r^c 
\end{split}
\end{equation*}
provided that $|e| \ge 2g .$   Then $\Psi^{\varepsilon}_0(1  \otimes U  \Phi^{\varepsilon}_d(y)) =\Gr(X, A_{d, 0}, uy)=0$ because (\ref{eq11}) does not hold.   Combining this with (\ref{eq25}),  (\ref{eq26}), the formula (\ref{eq27}) still hold  for the case $(\Gamma=0, d\ge 2)$ provided that   $|e| \ge 2g$. 

If $e=1-2g$, then   $I(A_{d, 0}) =d^2|e|+d$. The condition  $I(A_{d, 0}) =\deg u^{d|e|    -g+1}\theta^b e_r^c$ implies that 
$$d(d-2)|e| +d- 2 +2g- \deg \theta^b e_r^c=0.$$
Thus, (\ref{eq11})
 holds if and only if $\theta^be_r^c =\theta^g$ and $d=2$. In this case, by Theorem \ref{thm0}, we have  $$\Gr(X, A_{2, 0}, u^{2|e|-g+1} \theta^g) =g!.$$ 

If $d=1$, then (\ref{eq11}) is equivalent to 
$$2|e| + \chi(\Sigma) =2|e| + 2-2g+\degop \theta^be_r^c .$$
This forces $\theta^be_r^c=1$.  By Theorem \ref{thm0}, $\Gr(X, A_{1, 0}, u^{|e|-g+1}) =2^g$.

Thus,  the contribution to the $\Phi^{\varepsilon}_0(1)$ term is  
\begin{equation}  \label{eq24}
a_{0, 1} \Phi^{\varepsilon}_0(1)=
\begin{cases}
g! \Phi^{\varepsilon}_0(1) =g![\emptyset]& \theta^be_r^c =\theta^g,  e=1-2g,   \mbox{ and } d=2\\
2^{g}\Phi^{\varepsilon}_0(1)=2^g[\emptyset] & \theta^be_r^c =1 \mbox{ and } d=1\\
0 =2^g\Phi^{\varepsilon}_0(\theta^be_r^c) (\mbox{Lemma \ref{lem17}})  & \theta^be_r^c  \ne 1 \mbox{ and } d=1\\
0 & else.
\end{cases}
\end{equation}

Combining (\ref{eq25}), (\ref{eq26}) and (\ref{eq24}), we obtain the results for \ref{C1}, \ref{C2} and \ref{C3}. 

\end{proof}

The conclusions in Lemma \ref{lem7} can be simplified into a simple form when $|e| +\Gamma \ge 2g$.  This simple form is very useful when we prove Theorem \ref{thm1} and compute the ECH spectrum. To illustrate this,     we define  a  polynomial 
\begin{equation}  \label{eq31}
P_m (u, \theta):= u^{m|e| +\Gamma -g} + \sum_{k=1}^g (-1)^{k}  \frac{1}{k!} \theta^{k} u^{m|e| +\Gamma -g -k}.
\end{equation}
Note that the above polynomial  is well defined when   $m|e|+\Gamma \ge 2g$.  When it is well defined, we have $P_m (u, \theta) \in \operatorname{Span}_{\Q}\mathcal{B}_{m|e|+\Gamma}$ and $$ uP_m (u, \theta) =(-1)^g\mathfrak{R}_{m|e|+\Gamma}(u, \theta) .$$

\begin{lemma} \label{lem18}
 Assume $\Gamma=0$ or $\Gamma>2g-2$.   Suppose that  $|e| +\Gamma \ge 2g$. Then we have 
  \begin{equation} \label{eq16}
U  \Phi^{\varepsilon}_d(P_d (u, \theta) z) = \Phi^{\varepsilon}_{d-1}(z)  
\end{equation}
for $z \in \mathcal{E}$ and $d \ge 1$.     If $\Gamma=0$ and $e=1-2g$, then (\ref{eq16}) still holds for $d \ge3$. 
  \end{lemma}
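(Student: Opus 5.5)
We expand $P_d(u,\theta)z$ in the basis $\mathcal{B}_{d|e|+\Gamma}$ and apply Lemma~\ref{lem7} term by term, using linearity of $U$ and of $\Phi^{\varepsilon}_d$. Write $N=d|e|+\Gamma$. Since $|e|+\Gamma\ge 2g$ and $d\ge1$, we have $N-g-k\ge 0$ for $0\le k\le g$, so $P_d$ is well defined. The summands of $P_d(u,\theta)z$ are $u^{N-g-k}\theta^k z$, where $\theta^k z=\theta^{k+b}e_r^c$.

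The first step is to handle the terms that do not lie in the basis. A term $\theta^k z$ with $k+b>g-r$ vanishes, so it contributes nothing. If $k+b\le g-r$, then $\theta^kz\in\mathcal{E}$, so $u^{N-g-k}\theta^kz\in\mathcal{B}_N$.

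Next we apply \ref{C0} to the terms with $k\ge1$, which are not of top $u$-degree. This gives $U\Phi_d^\varepsilon(u^{N-g-k}\theta^kz)=\Phi_d^\varepsilon(u^{N-g-k+1}\theta^kz)$, provided $(\Gamma,d)\ne(0,1)$.

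The $k=0$ term is the top-degree term $u^{N-g}z$. For it we use \ref{C1}, which gives
\[
\Phi_{d-1}^\varepsilon(z)+\sum_{k=1}^g\frac{(-1)^{k+1}}{k!}\Phi_d^\varepsilon(u^{N-g-k+1}\theta^kz).
\]
Adding the $k\ge1$ terms with their coefficients $(-1)^k/k!$ cancels this sum exactly. Only $\Phi^\varepsilon_{d-1}(z)$ survives, which is (\ref{eq16}).

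Equivalently, in the unified form of \ref{C1} we have $U\Phi_d^\varepsilon(y)=\Phi_{d-1}^\varepsilon(y//u^{N-g})+\Phi_d^\varepsilon(Q_d(y))$. By linearity this reduces the claim to $Q_d$ applied to $P_dz$. Because $\Pi(Q_d(y))=u\wedge\Pi(y)$ and $\Pi(uP_dz)=\pm\Pi(\mathfrak{R}_Nz)=0$, the element $Q_d(P_dz)$ lies in $\operatorname{Span}\mathcal{B}_N\cap\ker\Pi=0$. This gives a clean argument once the hypotheses of \ref{C1} are met.

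The main obstacle is checking the exceptional cases of Lemma~\ref{lem7}. Four cases need attention.

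1. The case $\Gamma>2g-2$, $d\ge1$ is covered by \ref{C1}, unless the exception $(2,1-2g,\theta^g)$ applies. That exception only occurs with $\Gamma=0$ in \ref{C3}, so it does not arise here.

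2. The case $\Gamma=0$, $d\ge2$ with $|e|\ge 2g$ is covered by \ref{C1}. If $e=1-2g$ and $d\ge3$, we are also outside \ref{C3}, so \ref{C1} again applies.

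3. The case $\Gamma=0$, $d=1$ uses \ref{C0} for $k\ge1$ and \ref{C2} for $k=0$. Here \ref{C0} produces the extra terms $\Psi_0^\varepsilon(1\otimes\tilde\Phi^\varepsilon_1(u^{|e|-g-k+1}\theta^kz))[\emptyset]$, and \ref{C2} produces $2^g\Phi_0^\varepsilon(z)$. By Lemma~\ref{lem14} the extra terms vanish unless $z=\theta^{b}$ with $k+b$ fixed. So we must verify
\[
2^g\delta_{z,1}+\sum_{k\ge1}\frac{(-1)^k}{k!}\,\Gr\bigl(X,A_{1,0},u^{|e|-g-k+1}\theta^kz\bigr)=\delta_{z,1}.
\]
This holds because $\Gr(X,A_{1,0},\cdot)$ kills $P_1(u,\theta)uz$ up to the leading term. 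Concretely, $\sum_{k=0}^g\frac{(-1)^k}{k!}\frac{g!}{(g-k)!}2^{g-k}=(2-1)^g=1$, and Theorem~\ref{thm0} makes the terms with $z\ne1$ vanish. This identity is the step I would verify most carefully.

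4. The case $\Gamma=0$, $e=1-2g$, $d=2$ is excluded from the claim, consistent with the extra $g![\emptyset]$ term in \ref{C3}.
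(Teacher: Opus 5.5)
Your proposal is correct and is essentially the paper's proof: you expand $P_d(u,\theta)z$, apply \ref{C0} to the terms $u^{N-g-k}\theta^kz$ with $k\ge1$ and \ref{C1} to the top term $u^{N-g}z$ so that the sums cancel, and in the case $\Gamma=0$, $d=1$ you combine \ref{C2} with the $[\emptyset]$-corrections from \ref{C0}. By Lemma \ref{lem14} those corrections survive only for $z=1$ (not merely for $z=\theta^b$, as one of your sentences suggests), where they sum to $\sum_{k=0}^g\binom{g}{k}(-1)^k2^{g-k}=1$. Your $Q_d(P_dz)=0$ reformulation is sound. So is your observation that the $(2,1-2g,\theta^g)$ exception in \ref{C1} only matters when $\Gamma=0$, where the hypotheses $|e|\ge 2g$ or $d\ge 3$ exclude it.
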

  \begin{proof}
  Suppose that we are under the conditions (\ref{C1}) that (\ref{eq27}) holds. Then $\Phi_d^{\varepsilon}(x) = \tilde{\Phi}_d^{\varepsilon}(x)$.  The conclusion follows from  (\ref{eq27}) 
 and the fact that ECH cobordism maps commute the $\mathbb{A}(\Sigma)$-action.

Suppose that we are under the conditions (\ref{C2}) and $|e|\ge 2g$.   Now   
$$\Phi_1^{\varepsilon}(u^{|e| -g-k} \theta ^k z ) = \tilde{\Phi}_1^{\varepsilon} (u^{|e| -g-k} \theta ^k z ) -  \Psi_0(1 \otimes \tilde{\Phi}_1^{\varepsilon} (u^{|e| -g-k} \theta ^k z ) [\emptyset],$$
where $z\in \mathcal{E}$.  Then for $1 \le k\le |e|-g$, we have 
\begin{equation*}
\begin{split}
U  \Phi_1^{\varepsilon}(u^{|e| -g-k} \theta ^k z )   =  &U \tilde{\Phi}_1^{\varepsilon} (u^{|e| -g-k} \theta ^k z )= \tilde{\Phi}_1^{\varepsilon} (u^{|e| -g-k+1} \theta ^k z ) \\
=&
\Phi_1^{\varepsilon}(u^{|e| -g-k+1} \theta ^k z)  +  \Psi_0(1 \otimes \tilde{\Phi}_1^{\varepsilon} (u^{|e| -g-k+1} \theta ^k z ) [\emptyset].
\end{split}
\end{equation*}

By Lemma \ref{lem7}, we obtain
\begin{equation*}
\begin{split}
U  \Phi^{\varepsilon}_1(P_1 (u, \theta) z) =& U \Phi^{\varepsilon}_1(u^{|e|-g}z) 
+  \sum_{k=1}^g \frac{(-1)^k}{k!}U \Phi^{\varepsilon}_1(u^{|e|-g-k} \theta^kz) \\
=& 2^g\Phi_0^\varepsilon(z) + \sum_{k=1}^{g}
\frac{(-1)^{k+1}}{k!}
\Phi_1^\varepsilon
\left(
u^{|e|-g-k+1}\theta^kz
\right) +  \sum_{k=1}^g \frac{(-1)^k}{k!} \Phi^{\varepsilon}_1(u^{|e|-g-k+1} \theta^k z)  \\
+&  \sum_{k=1}^g   \frac{(-1)^k}{k!}  \Psi_0(1 \otimes \tilde{\Phi}_1^{\varepsilon} (u^{|e| -g-k+1} \theta ^k z ) [\emptyset]\\
=&2^g\Phi_0^\varepsilon(z)  + \sum_{k=1}^g   \frac{(-1)^k}{k!}  \Psi_0(1 \otimes \tilde{\Phi}_1^{\varepsilon} (u^{|e| -g-k+1} \theta ^k z ) [\emptyset]. 
\end{split}
\end{equation*}

By Lemma \ref{lem14}, if  $z \ne 1$, then $\Phi_0^\varepsilon(z) =0 $ and   $ \Psi_0(1 \otimes \tilde{\Phi}_1^{\varepsilon} (u^{|e| -g-k+1} \theta ^k z ) =0$ for  $1 \le k \le|e|- g$.   Then (\ref{eq16}) holds. 
If $z=1$, then 
\begin{equation*}
\begin{split}
&2^g\Phi_0^\varepsilon(1)  + \sum_{k=1}^g   \frac{(-1)^k}{k!}  \Psi_0(1 \otimes \tilde{\Phi}_1^{\varepsilon} (u^{|e| -g-k+1} \theta ^k  ) [\emptyset]\\
=&2^g\Phi_0^\varepsilon(1)   + \left( \sum_{k=1}^g   \frac{(-1)^k}{k!}  \frac{g!2^{g-k}}{(g-k)!}\right) \\
=& [\emptyset] = \Phi_0^\varepsilon(1). 
\end{split}
\end{equation*}

Note that the assumption   $|e| +\Gamma \ge 2g$ rules  out the  case \ref{C3}.  In sum, we finish the proof. 

\end{proof}

\section{Proof of Theorem \ref{thm1}}

Write $V^{\varepsilon}_d:= \ECH^{L_{\varepsilon, d}}(Y, \lambda_{\varepsilon}, \Gamma)$.   Then Lemma \ref{lem2} implies that  $V^{\varepsilon}_d =\operatorname{Span}_{\Q}\{\Phi_m(x_{m,i_m})\}_{0\le m \le d}$.  Define  
\begin{equation*}
\pi_d:  V^{\varepsilon}_d  \to H^*(\Sym^{d|e|+\Gamma} \Sigma, \Q) 
\end{equation*}
by sending  $\Phi^{\varepsilon}_m(x_{m,i_m})$ to $0$ for $0\le m\le d-1$ and 
$$\pi_d( \Phi^{\varepsilon}_d(x_{d,i_d})) =\Pi(x_{d, i_d} ), $$
where $\Pi: \mathbb{A}(\Sigma) \to H^*(\Sym^{d|e|+\Gamma} \Sigma, \Q) $ is the projection.  Here we set $ H^*(\Sym^0\Sigma, \Q)  =\Q$ with $u=0$ action.

The following lemma is an observation regarding the relation between $U$ map and $\wedge u$ on  cohomology rings of symmetric product of $\Sigma$. 

\begin{lemma} \label{lem10}
Suppose $\Gamma= 0$ or $\Gamma>2g-2$.  We have the following  diagram for $1 \le d\le d_{\varepsilon}$: 
\begin{equation} \label{eq13}
\begin{tikzcd}
0   \arrow[r, " "]  & V^{\varepsilon}_{d-1}  \arrow[r, " "]  \arrow[d, "U"] & V^{\varepsilon}_d \arrow[r, "\pi_d"] \arrow[d, "U"'] & H^*(\Sym^{d|e|+\Gamma} \Sigma, \Q)  \arrow[r, " "] \arrow[d, "u\wedge "] &  0  \\
 0  \arrow[r, " "']                     & V^{\varepsilon}_{d-1} \arrow[r, " "']                    & V^{\varepsilon}_d  \arrow[r, "\pi_d"]                     & H^*(\Sym^{d|e|+\Gamma} \Sigma, \Q) \arrow[r, " "']                    & 0
\end{tikzcd}
\end{equation}
Moreover,   $U$ is a nilpotent. 
\end{lemma}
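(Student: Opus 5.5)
The plan is to read everything off the explicit $U$-formulas of Lemma \ref{lem7} in the basis $\{\Phi^{\varepsilon}_m(x_{m,i_m})\}_{0\le m\le d}$ of $V^{\varepsilon}_d$ supplied by Lemma \ref{lem2}. The first step is exactness of the rows. The inclusion $V^{\varepsilon}_{d-1}\to V^{\varepsilon}_d$ is the map $i^{L_{\varepsilon,d-1},L_{\varepsilon,d}}$. By Lemma \ref{lem2}, both spaces inject into $\ECH(Y,\lambda_\varepsilon,\Gamma)$, and their images are spanned by the elements $\Phi^{\varepsilon}_m(x_{m,i_m})$ with $m\le d-1$ and $m\le d$ respectively. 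Hence this map is injective and its image is exactly $\operatorname{Span}\{\Phi^{\varepsilon}_m(x_{m,i_m})\}_{m\le d-1}$, which is $\ker\pi_d$ by the definition of $\pi_d$. The map $\pi_d$ is surjective because $\Pi$ sends $\mathcal{B}_{d|e|+\Gamma}$ to a basis of $H^*(\Sym^{d|e|+\Gamma}\Sigma,\Q)$. For $d=1,\Gamma=0$ one also has to check that the modified elements $\Phi^{\varepsilon}_1$ of Definition \ref{definition2} are still among the chosen basis elements; this is true by construction.

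The second step is commutativity. The left square commutes because the $U$ map commutes with the inclusion maps $i^{L,L'}$; at chain level $U$ preserves the action filtration. For the right square it suffices to check $\pi_d(U\Phi^{\varepsilon}_m(x))=u\wedge\pi_d(\Phi^{\varepsilon}_m(x))$ on basis elements.
\begin{itemize}
\item For $m\le d-1$: $U\Phi^{\varepsilon}_m(x)$ lies in $V^{\varepsilon}_{m}\subset V^{\varepsilon}_{d-1}$, since $U$ decreases action and by Lemma \ref{lem7} it is a combination of $\Phi^{\varepsilon}_{m'}$ with $m'\le m$. So both sides vanish.
\item For $m=d$ and $y=u^kz\in\mathcal B_{d|e|+\Gamma}$: in every case \ref{C0}--\ref{C3}, Lemma \ref{lem7} gives $U\Phi^{\varepsilon}_d(y)=\Phi^{\varepsilon}_d(Q_d(y))+(\text{terms in }V^{\varepsilon}_{d-1})$. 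Here the extra pieces $\Phi^{\varepsilon}_{d-1}(z)$, $2^g\Phi_0^\varepsilon(z)$, $g!\,\delta_{z,\theta^g}[\emptyset]$ and the $[\emptyset]$-corrections in \ref{C0} all lie in $V^{\varepsilon}_{d-1}$, because $[\emptyset]=\Phi^{\varepsilon}_0(1)$ and $d\ge1$. In case \ref{C0}, $Q_d(y)=uy$.
\end{itemize}
Therefore $\pi_d(U\Phi^{\varepsilon}_d(y))=\Pi(Q_d(y))=u\wedge\Pi(y)$, using the identity $\Pi(Q_d(y))=\Pi(uy)$ noted after (\ref{eq22}). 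When $d|e|+\Gamma=0$ the target is $\Q$ with $u$ acting by $0$, and this case does not arise for $d\ge1$.

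The third step is nilpotency, by induction on $d$. For $d=0$ with $\Gamma=0$, $V^{\varepsilon}_0=\Q[\emptyset]$ and $U[\emptyset]=0$. For $d=0$ with $\Gamma>2g-2$, the case $m=0$ of the diagram identifies $U$ with $u\wedge$ on $H^*(\Sym^\Gamma\Sigma)$, which is nilpotent for degree reasons. For the inductive step, $u\wedge$ on $H^*(\Sym^{N}\Sigma,\Q)$ is nilpotent since it raises degree by $2$, say $u^{N+1}=0$. Commutativity of the right square then gives $U^{N+1}(V^{\varepsilon}_d)\subset\ker\pi_d=V^{\varepsilon}_{d-1}$, on which $U$ is nilpotent by induction, so $U^{N+1+K}=0$ on $V^{\varepsilon}_d$.

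The main obstacle is purely bookkeeping in the second step. One must confirm that in the exceptional cases \ref{C2} and \ref{C3}, and in the $\Gamma=0,d=1$ part of \ref{C0}, every term other than $\Phi^{\varepsilon}_d(Q_d(y))$ lies in the span of $\Phi^{\varepsilon}_m$ with $m<d$. One must also confirm that $Q_d(y)$ is genuinely in $\operatorname{Span}\mathcal{B}_{d|e|+\Gamma}$, so that $\pi_d$ can be evaluated termwise. I expect this to go through by direct inspection of the formulas.
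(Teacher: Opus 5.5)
Your proposal is correct and follows the paper's argument. You check $\pi_d\circ U=u\wedge\pi_d$ on the basis elements $\Phi^{\varepsilon}_d(y)$ using $U\Phi^{\varepsilon}_d(y)=\Phi^{\varepsilon}_d(Q_d(y))+(\text{terms in }V^{\varepsilon}_{d-1})$ from Lemma \ref{lem7} together with $\Pi(Q_d(y))=\Pi(uy)$, and you obtain nilpotency by iterating the diagram, exactly as the paper does with $U^N\sigma\in V^{\varepsilon}_{-1}=0$. What you add is explicit detail the paper leaves implicit: exactness of the rows, the left square, and the basis elements with $m<d$. Your $d=0$, $\Gamma>2g-2$ base case uses the top-degree formula $U\Phi^{\varepsilon}_0(y)=\Phi^{\varepsilon}_0(Q_0(y))$, which is not in the statement of \ref{C1} (stated only for $d\ge1$); it does follow from the $d=m$ subcase of Lemma \ref{lem7}'s proof, and the paper's own appeal to $V^{\varepsilon}_{-1}=0$ relies on it in the same way.
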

\begin{proof}
It suffices to check $ \pi_d \circ U =u \wedge  \pi_d$ for $\Phi_d^{\varepsilon}(u^{k}\theta^be^c_r)$, where $u^{k}\theta^be^c_r \in \mathcal{B}_{d|e|+\Gamma}$. 
 
By Lemma \ref{lem7}, for $y =u^{k}\theta^be^c_r $, we have 
\begin{equation} \label{eq14}
\begin{split}
&\pi_d \circ U \Phi_d^{\varepsilon}(y) =\pi_d(\Phi_d^{\varepsilon}(Q_d(y))) =\Pi(Q_d(y)). \end{split}
\end{equation}
On the other hand,  we have 
\begin{equation} \label{eq15}
\begin{split}
u\wedge \pi_d(\Phi_d^{\varepsilon}(y)) &= u\wedge \Pi(y)   =\Pi((-1)^g\mathfrak{R}_{d|e|+\Gamma}(u, \theta) \big(y//u^{d|e|+\Gamma-g} \big) + Q_d(y)) = \Pi(Q_d(y)) .
\end{split}
\end{equation}
Comparing (\ref{eq14}) and (\ref{eq15}),  we have $\pi_d \circ U \Phi_d^{\varepsilon}(y)  = u\wedge \pi_d(\Phi_d^{\varepsilon}(y))$.  

By the diagram, we know that $\pi_d \circ U^{d|e|+\Gamma+1}\sigma = \wedge^{d|e|+\Gamma+1} u \wedge \pi_d(\sigma) =0$, where $\sigma \in V^{\varepsilon}_{d}$. Hence,  $$U^{d|e|+\Gamma+1}\sigma\in V^{\varepsilon}_{d-1}. $$ 
Let $N=\sum_{m=0}^d(m|e|+\Gamma+1)  =\frac{d(d+1)}{2}|e| +(d+1)(\Gamma+1)$. Then $U^N \sigma \in V_{-1}=0 $, i,e.,  $U$ is a nilpotent.  

\end{proof}

By Lemma \ref{lem10} and Jordan decomposition,  we know that $V_d^{\varepsilon}$ consists of finite $U$-towers, i.e., 
\begin{equation*}
V_d^{\varepsilon} \cong \oplus_{j=1}^r \Q[U^{-1}]/U^{-l_{d, j}}\Q[U^{-1}],
\end{equation*}
where $l_{d, j} \ge 0$ and $r =\dim \coker \{ U: V_d^{\varepsilon} \to V_d^{\varepsilon}  \}. $  Our next task is to compute $   \dim \coker \{ U: V_d^{\varepsilon} \to V_d^{\varepsilon}  \}$.

Applying  Snake Lemma to the diagram (\ref{eq13}), we have 
\begin{equation} \label{eq19}
0\to \ker U_{d-1} \to \ker U_d \to \ker \wedge u  \xrightarrow{\delta_d}  \coker U_{d-1} \to \coker U_d  \xrightarrow{\pi_d}    \coker \wedge u \to 0.
\end{equation}
Here $U_m$ denote the restriction  $U: V_m^{\varepsilon} \to V_m^{\varepsilon} $.

 We first figure out the dimension of $ \ker \wedge u$ and  $\coker \wedge u$ in the following lemma.
 \begin{lemma}  \label{lem11}
Assume $d|e|+\Gamma \ge 2g$.
Then  $  \{ \Pi(P_d (u, \theta) z) : z \in \mathcal{E}\} $ forms a  basis of $\ker \wedge u$.  In particular,
$$\dim \ker \wedge u =\dim \coker \wedge u =2^{2g}. $$
 \end{lemma}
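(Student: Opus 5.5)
Set $N:=d|e|+\Gamma\ge 2g$; we work in $H^*(\Sym^N\Sigma,\Q)=\mathbb{A}(\Sigma)/\langle\mathfrak{R}_N\rangle$ with basis $\Pi(\mathcal{B}_N)$, where $\mathcal{B}_N$ consists of the monomials $u^a w$ with $w\in\mathcal{E}$ and $0\le a\le N-g$. We will show that the classes $\Pi(P_d z)$, $z\in\mathcal{E}$, lie in $\ker \wedge u$ and are linearly independent. We will then show that $\dim\ker\wedge u\le 2^{2g}$. Since the source and target of $\wedge u$ coincide and are finite dimensional, $\dim\coker\wedge u=\dim\ker\wedge u$, which gives the final claim.

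\emph{Membership.} Since $N\ge 2g$, the polynomial $P_d(u,\theta)$ in (\ref{eq31}) is well defined, and every exponent of $u$ occurring in it lies in $[N-2g,N-g]\subset[0,N-g]$. The identity $uP_d=(-1)^g\mathfrak{R}_N$ gives $u\,P_d z=(-1)^g\mathfrak{R}_N z\in\ker\Pi$. Hence $u\wedge\Pi(P_dz)=0$.

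\emph{Independence.} Expand $P_d z$ in $\mathcal{B}_N$. The term $u^{N-g}z$ is itself an element of $\mathcal{B}_N$. Each remaining term $u^{N-g-k}\theta^k z$ with $k\ge 1$ has $u$-exponent strictly less than $N-g$; after re-expanding $\theta^k z$ in $\mathcal{E}$ (or if it vanishes), it is a combination of elements of $\mathcal{B}_N$ with $u$-exponent below $N-g$. Thus the coefficient of $u^{N-g}z'$ in $P_d z$ equals $\delta_{z,z'}$. Since $\Pi(\mathcal{B}_N)$ is a basis, the projection onto $\operatorname{Span}\{\Pi(u^{N-g}z')\}$ sends $\Pi(P_dz)\mapsto\Pi(u^{N-g}z)$. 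This map is injective on the span of the $\Pi(P_d z)$, so these $2^{2g}$ classes are linearly independent.

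\emph{Upper bound.} This is the step needing some care. Consider the subspace $W=\operatorname{Span}\Pi\{u^aw: w\in\mathcal{E},\,a\le N-g-1\}$, of dimension $2^{2g}(N-g)$. For $a\le N-g-1$ we have $u\cdot u^aw=u^{a+1}w\in\mathcal{B}_N$, so $\wedge u$ maps the basis of $W$ bijectively onto distinct basis elements of $H^*(\Sym^N\Sigma,\Q)$. Hence $\wedge u|_W$ is injective, and $\operatorname{rank}(\wedge u)\ge 2^{2g}(N-g)$. Since $\dim H^*(\Sym^N\Sigma,\Q)=2^{2g}(N-g+1)$, this forces $\dim\ker\wedge u\le 2^{2g}$. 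Together with the independence step, the classes $\Pi(P_d z)$, $z\in\mathcal{E}$, form a basis of $\ker\wedge u$, and $\dim\ker\wedge u=\dim\coker\wedge u=2^{2g}$.
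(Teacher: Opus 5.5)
Your proof is correct. Its skeleton matches the paper's: membership via $uP_d=(-1)^g\mathfrak{R}_N$, then independence, then an upper bound on $\dim\ker\wedge u$, then $\dim\coker=\dim\ker$. The two middle steps, however, are carried out by a different mechanism.

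\textbf{Independence.} The paper multiplies a putative relation $\sum_z a_z\Pi(P_dz)=0$ by $\Pi(z')$. It then extracts the coefficient of the top class $\Pi(u^{N-g}\theta^g/g!)$ and invokes nondegeneracy of the intersection pairing on $\Lambda^*H^1(\Sigma,\Q)$. You instead observe that $P_dz$ already lies in $\operatorname{Span}_{\Q}\mathcal{B}_N$ and is unitriangular with respect to the top layer $u^{N-g}\mathcal{E}$. The fact that $\theta^kz=\theta^{k+b}e_r^c$ is either an element of $\mathcal{E}$ or zero, because $\theta^{g-r+1}e_r^c=0$, is exactly what makes this work.

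\textbf{Upper bound.} The paper identifies $\coker\wedge u$ with $(\operatorname{Im}\wedge u)^{\perp}$ under $q^N$. It then shows that any $y$ in this complement is determined by its pairings with $\Pi(\mathcal{E})$. You simply note that $\wedge u$ sends the basis vectors $\Pi(u^a\mathcal{E})$, $a\le N-g-1$, to distinct elements of the basis $\Pi(\mathcal{B}_N)$. This gives $\operatorname{rank}\wedge u\ge 2^{2g}(N-g)$ directly.

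\textbf{Comparison.} The underlying observation is the same in both versions: $\Pi(u^k\mathcal{E})\subset\operatorname{Im}\wedge u$ for $k\ge 1$. Your version needs only Macdonald's basis statement and rank--nullity, with no appeal to Poincar\'e duality on $\Sym^N\Sigma$ or on $\Lambda^*H^1(\Sigma,\Q)$. The paper's pairing-based formulation is chosen because it is mirrored in the subsequent lemma on $\coker U_d$. There $\langle\cdot,\cdot\rangle_{ech}$ plays the role of $q^N$, and the cokernel is bounded via $(\operatorname{Im}U_d)^{\perp}$.
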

 \begin{proof}
 First, note that $uP_d (u, \theta) =(-1)^g\mathfrak{R}_{d|e|+\Gamma}(u, \theta)$ when $d|e|+\Gamma \ge 2g$.    Thus,
 $$u\wedge \Pi(P_d (u, \theta) z) = \Pi(uP_d (u, \theta)z) =\Pi((-1)^g\mathfrak{R}_{d|e|+\Gamma} (u, \theta)z ) =0. $$
 Thud, $ \{ \Pi(P_d(u, \theta) z) : z \in \mathcal{E}\}  \subset \ker \wedge u$.

Suppose  $\sum_{z \in \mathcal{E}} a_z  \Pi(P_d(u, \theta) z) =0$. For $z' \in \mathcal{E}$, then 
 \begin{equation*}
 \begin{split}
0=&\Pi(z') \wedge (\sum_{z \in \mathcal{E}} a_z  \Pi(P_d (u, \theta) z)) = \sum_{z \in \mathcal{E}} a_z  \Pi(P_d(u, \theta) z'z)  \\
=&  \sum_{z \in \mathcal{E}} a_z  \Pi(u^{d|e|+\Gamma -g}  z'z) +  \sum_{z \in \mathcal{E}} \sum_{k=1}^g (-1)^{k}  \frac{1}{k!}   \Pi(u^{d|e| +\Gamma -g -k} \theta^{k}zz') .\\
\end{split}
\end{equation*}
Note that coefficients of the term  $\Pi(u^{d|e|+\Gamma-g} \frac{\theta^g}{g!})$ is
$$0= \sum_{z \in \mathcal{E}, \degop z + \degop z'=2g} a_z    \langle z, z' \rangle_{\Lambda^*H^1(\Sigma, \Q)}. 
$$
Here $\langle \cdot,  \cdot \rangle_{\Lambda^*H^1(\Sigma, \Q)} $ is the intersection pairing  of $\Lambda^*H^1(\Sigma, \Q)$, and thus it is nondegenerate.  Then $a_z =0$ for all $z \in \mathcal{E}$. Therefore,  $  \{ \Pi(P_d(u, \theta) z) : z \in \mathcal{E}\} $ is linear independent. 

 We express $y \in H^*(\Sym \Sigma, \Q)$ in terms of  the basis $\{\Pi(x_{d, i}), x_{d, i} \in \mathcal{B}_{d|e|+\Gamma}\}=\{\mathcal{E}, u\mathcal{E},..., u^{d|e|+\Gamma-g} \mathcal{E}\}$, i.e., 
  \begin{equation*}
 \begin{split}
y = \sum_{i, j} (q^{d|e|+\Gamma})^{-1}_{i,j}q^{d|e|+\Gamma}(  \Pi(x_{d, j}), y ) \Pi(x_{d, i}).
 \end{split}
\end{equation*}
If $y \perp \operatorname{Im} \wedge u$ in the sene that $q^{d|e|+\Gamma}(y', y)=0$ for any $y' \in \operatorname{Im} \wedge u$,  then $q^{d|e|+\Gamma}( \Pi(x_{d, j}), y ) =0 $ if $x_{d, j} \in u^k\mathcal{E}$ for $k \ge 1$. As a result, we have 
  \begin{equation*}
 \begin{split}
y =  \sum_{x_{d, i} \in \mathcal{E}}  q^{d|e|+\Gamma}( \Pi(x_{d, i}), y ) (\Pi(x_{d, i}))^*,\\
 \end{split}
\end{equation*}
where  $ (\Pi(x_{d, i}))^* = \sum_{j} (q^{d|e|+\Gamma})^{-1}_{i,j} \Pi(x_{d, j}). $  Consequently, we have $$\dim (\operatorname{Im} \wedge u)^{\perp} \le \dim \Lambda^*H^1(\Sigma, \Q) =2^{2g}.$$

Because the intersection pairing is nondegenerate, we identify   (noncanonically) $ \coker \wedge u$ with $\operatorname{Im} \wedge u$. Thus, 
$$\dim \coker \wedge u =\dim (\operatorname{Im} \wedge u)^{\perp} \le 2^{2g}.$$ 
On the other hand,   we have  $$\dim \coker \wedge u = \dim \ker \wedge u  \ge \dim  \operatorname{Span}  \{ \Pi(P_d z) : z \in \mathcal{E}\} =2^{2g}. $$

 \end{proof}

\begin{lemma} \label{lem16}
Assume that $e < \chi(\Sigma)$ and that either
$\Gamma=0$ or $\Gamma>2g-2$. If $|e|+\Gamma \ge 2g$,  every $d\ge 1 $, we have 
$$
V_{d-1}^{\varepsilon}\subset UV_d^{\varepsilon}.
$$
If $\Gamma=0$ and $e=1-2g$, then the same conclusion holds for $d \ge 2$. 
\end{lemma}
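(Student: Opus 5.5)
Since $V_{d-1}^{\varepsilon}$ is spanned by $\{\Phi^{\varepsilon}_m(x_{m,i_m})\}_{0\le m\le d-1}$ (Lemma \ref{lem2}), it suffices to show that each basis element $\Phi^{\varepsilon}_m(z')$, $z'\in\mathcal{B}_{m|e|+\Gamma}$, $m\le d-1$, lies in $UV_d^{\varepsilon}$. We induct on $d$ and, within a fixed $d$, handle the top level $m=d-1$ first. The inductive hypothesis gives $V_{d-2}^{\varepsilon}\subset UV_{d-1}^{\varepsilon}\subset UV_d^{\varepsilon}$, using $V_{d-1}^{\varepsilon}\subset V_d^{\varepsilon}$. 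It therefore remains to show $\Phi^{\varepsilon}_{d-1}(x)\in UV_d^{\varepsilon}$ for every $x\in\mathcal{B}_{(d-1)|e|+\Gamma}$, that is, for $x=u^kz$ with $z\in\mathcal{E}$ and $0\le k\le (d-1)|e|+\Gamma-g$. For $d=1$ this reduces to $V_0^{\varepsilon}$, which is either $\Q[\emptyset]$ (when $\Gamma=0$) or spanned by $\Phi^{\varepsilon}_0(u^kz)$ (when $\Gamma>2g-2$).

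The key input is Lemma \ref{lem18}: when $|e|+\Gamma\ge 2g$ and $d\ge1$, we have $U\Phi^{\varepsilon}_d(P_d(u,\theta)z)=\Phi^{\varepsilon}_{d-1}(z)$ for every $z\in\mathcal{E}$. Here $P_d(u,\theta)z$ lies in $\operatorname{Span}_\Q\mathcal{B}_{d|e|+\Gamma}$, so $\Phi^{\varepsilon}_d(P_d z)$ is an element of $V_d^{\varepsilon}$. This puts the elements with $k=0$ into $UV_d^{\varepsilon}$.

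For $k\ge1$, first suppose we are not in the case $\Gamma=0$, $d-1=1$. Then $\Phi^{\varepsilon}_{d-1}=\tilde\Phi^{\varepsilon}_{d-1}$, and by the $\mathbb{A}(Y)$-commutation property (case \ref{C0} of Lemma \ref{lem7}), $\Phi^{\varepsilon}_{d-1}(u^kz)=U^k\Phi^{\varepsilon}_{d-1}(z)=U^{k+1}\Phi^{\varepsilon}_d(P_dz)$, which lies in $UV_d^{\varepsilon}$ because $U$ preserves $V_d^{\varepsilon}$. More directly, $\tilde\Phi^{\varepsilon}_d$ intertwines multiplication by $u$ with $U$, so $U\tilde\Phi^{\varepsilon}_d(u^kP_dz)=\tilde\Phi^{\varepsilon}_{d-1}(u^kz)$. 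Note that $u^kP_dz$ need not lie in the span of $\mathcal{B}_{d|e|+\Gamma}$, but its image under $\tilde\Phi^{\varepsilon}_d$ still lands in $V_d^{\varepsilon}$ by the filtration argument of Lemma \ref{lem2}.

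Now suppose $\Gamma=0$ and $d=2$, so the target level is $d-1=1$. Here $\Phi^{\varepsilon}_1(u^kz)=\tilde\Phi^{\varepsilon}_1(u^kz)-c_k[\emptyset]$ for constants $c_k$. We have $\tilde\Phi^{\varepsilon}_1(u^kz)=U^k\tilde\Phi^{\varepsilon}_1(z)\in UV_2^{\varepsilon}$ by the same argument as above; for $k=0$, $\tilde\Phi^{\varepsilon}_1(z)=\Phi^{\varepsilon}_1(z)+c_0[\emptyset]$. Since $[\emptyset]\in V_0^{\varepsilon}\subset UV_1^{\varepsilon}$ by the $d=1$ step, the correction terms are harmless. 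This completes the case $|e|+\Gamma\ge 2g$.

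For the exceptional case $\Gamma=0$, $e=1-2g$, Lemma \ref{lem18} is only available for $d\ge3$, so we start the induction at $d=2$ and treat that base case by hand. At $d=2$ we use formula \ref{C3} of Lemma \ref{lem7}. Expanding $U\Phi^{\varepsilon}_2(P_2z)$ gives $\Phi^{\varepsilon}_1(z)+g!\,\delta_{z,\theta^g}[\emptyset]$, so we must additionally show $[\emptyset]\in UV_2^{\varepsilon}$. I expect this bookkeeping of the $[\emptyset]$ corrections in the low-degree cases, $\Gamma=0$ with $d\le2$, to be the main obstacle. My first attempt would be to obtain $[\emptyset]$ as $U^{N}$ of a suitable top-degree element built from Lemma \ref{lem14} and \ref{C2}, since $U\Phi^{\varepsilon}_1(P_1\cdot1)$ contributes $2^g[\emptyset]$ plus terms already known to lie in the image. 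Once $[\emptyset]$ is handled, the remaining elements $\Phi^{\varepsilon}_1(u^kz)$ follow from the $U$-commutation argument above.
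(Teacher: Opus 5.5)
Your treatment of the case $|e|+\Gamma\ge 2g$ is correct and follows the paper's argument: Lemma~\ref{lem18}, $U$-equivariance of $\tilde\Phi^{\varepsilon}$, and absorption of the $[\emptyset]$-corrections at $\Gamma=0$, $m=2$ via $[\emptyset]=\Phi^{\varepsilon}_0(1)=U\Phi^{\varepsilon}_1(P_1)\in UV^{\varepsilon}_1$.

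The gap is in the exceptional case $\Gamma=0$, $e=1-2g$, $d=2$. You correctly reduce it to showing $[\emptyset]\in UV^{\varepsilon}_2$, but you do not prove this, and it is the substantive part of that case. Your suggested route does not work as written, for two reasons.

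\begin{enumerate}
\item $P_1(u,\theta)$ is not defined when $|e|=2g-1$: its $k=g$ term would be $u^{-1}\theta^g$. So ``$U\Phi^{\varepsilon}_1(P_1\cdot 1)$'' has no meaning, and you must start instead from \ref{C2} for $U\Phi^{\varepsilon}_1(u^{g-1})$.
\item The other terms in \ref{C2} are \emph{not} already known to lie in $UV^{\varepsilon}_2$. Combine your expansion of $U\Phi^{\varepsilon}_2(P_2z)$, the $U$-equivariance of $\tilde\Phi^{\varepsilon}_1$, Lemma~\ref{lem14} and \ref{C3}. This gives
\[
\Phi^{\varepsilon}_1(u^{g-k}\theta^k)=U^{g-k+1}\Phi^{\varepsilon}_2(P_2\theta^k)-\tfrac{g!}{(g-k)!}2^{g-k}[\emptyset]\quad (1\le k\le g-1),\qquad \Phi^{\varepsilon}_1(\theta^g)=U\Phi^{\varepsilon}_2(u^{2|e|-g}\theta^g)-g![\emptyset].
\]
So each of these terms differs from an element of $UV^{\varepsilon}_2$ by a nonzero multiple of $[\emptyset]$.
\end{enumerate}

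The missing step is to substitute these expressions into \ref{C2} and check that the $[\emptyset]$-coefficients do not cancel. Modulo $UV^{\varepsilon}_2$, the coefficient of $[\emptyset]$ in $U\Phi^{\varepsilon}_1(u^{g-1})$ is
\[
2^g+\sum_{k=1}^{g-1}(-1)^k\binom{g}{k}2^{g-k}+(-1)^g=(2-1)^g=1 .
\]
Since $U\Phi^{\varepsilon}_1(u^{g-1})\in UV^{\varepsilon}_2$, this yields explicitly
\[
[\emptyset]=U\bigl(\Phi^{\varepsilon}_1(u^{g-1})+\tfrac{(-1)^g}{g!}\Phi^{\varepsilon}_2(u^{2|e|-g}\theta^g)-\sum_{k=1}^{g-1}\tfrac{(-1)^{k+1}}{k!}U^{g-k}\Phi^{\varepsilon}_2(P_2\theta^k)\bigr)\in UV^{\varepsilon}_2 .
\]
If this coefficient had been $0$, your strategy would give nothing. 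This binomial cancellation is exactly what the paper carries out in its Case~2.

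Once $[\emptyset]\in UV^{\varepsilon}_2$ is established, the rest of your argument goes through. That covers your reduction for the remaining $\Phi^{\varepsilon}_1(u^kz)$ and the induction for $d\ge 3$ via Lemma~\ref{lem18}.
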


\begin{proof}
For each \(m\ge 0\), set
\[
B_m^{\varepsilon}
:=
\operatorname{Span}_{\mathbb Q}
\left\{
\Phi_m^{\varepsilon}(u^kz)
\ \middle|\
u^kz\in\mathcal B_{m|e|+\Gamma},
\ z\in\mathcal E 
\right\}.
\]
If $\Gamma=0$, set $B_0^{\varepsilon}=\Q[\emptyset]$. By definition,
$$
V_d^{\varepsilon}
=
\bigoplus_{m=0}^{d}B_m^{\varepsilon}.
$$

 We divide the proof into two cases. 
\paragraph{Case 1:  $|e|+\Gamma\ge 2g$.}
  Then  the polynomial  $P_m(u,\theta)$ (\ref{eq31})  is well defined for every $m\ge1$.
By Lemma \ref{lem18}, we have 
$$
U\Phi_m^{\varepsilon}\bigl(P_m(u,\theta)z\bigr)
=
\Phi_{m-1}^{\varepsilon}(z),
\qquad
m\ge1,\quad z\in\mathcal E.
$$
For $m \ne 2$ or $\Gamma \ne 0$, applying $U^k$ to both sides gives
$$
\Phi_{m-1}^{\varepsilon}(u^kz)=U^k\Phi_{m-1}^{\varepsilon}(z)
=
U^{k+1}
\Phi_m^{\varepsilon}\bigl(P_m(u,\theta)z\bigr)
$$
whenever \(u^kz\in\mathcal B_{(m-1)|e|+\Gamma}\), i.e., $0\le k\le(m-1)|e|+\Gamma- g$.

If $\Gamma=0 $ and  $m=2$, note that $\Phi_1^{\varepsilon}$  no longer commutes  with $U$.  By Lemma \ref{lem14}, we have $\Phi_{1}^{\varepsilon}(z)= \tilde{\Phi}_{1}^{\varepsilon}(z)$.   By \ref{C0} case in Lemma \ref{lem7},  $0 \le k \le  |e|-g$,  we have  
\begin{equation*}
\begin{split}
U^{k+1}
\Phi_2^{\varepsilon}\bigl(P_2(u,\theta)z\bigr)=&U^k\Phi_{1}^{\varepsilon}(z)\\
=&   {\Phi}_{1}^{\varepsilon}(u^kz)  +  \Psi_0^{\varepsilon}(1 \otimes \tilde{\Phi}^{\varepsilon}
_1(u^kz)) [\emptyset] \\
=&{\Phi}_{1}^{\varepsilon}(u^kz)  +  \Psi_0^{\varepsilon}(1 \otimes \tilde{\Phi}^{\varepsilon}
_1(u^kz)) U{\Phi}_{1}^{\varepsilon}(P_1(u, \theta)).
\end{split}
\end{equation*}
Thus,  
$${\Phi}_{1}^{\varepsilon}(u^kz) = U\left( U^{k} \Phi_2^{\varepsilon}\bigl(P_2(u,\theta)z\bigr) - \Psi_0^{\varepsilon}(1 \otimes \tilde{\Phi}^{\varepsilon}
_1(u^kz)) {\Phi}_{1}^{\varepsilon}(P_1(u, \theta)) \right)  \in UV_2^{\varepsilon}\subset  UV_d^{\varepsilon}.$$
Also, $[\emptyset] =\Phi_0^{\varepsilon}(1) =  U\Phi_1^{\varepsilon}(P_1(u, \theta)) \in UV_1^{\varepsilon}$ when $\Gamma=0$. In sum, we have 
$$
B_{m-1}^{\varepsilon}
\subset UV_m^{\varepsilon}
\subset UV_d^{\varepsilon}
$$
for every $ 1\le m\le d$.  
 Therefore,
$$
V_{d-1}^{\varepsilon}
=
\bigoplus_{m=1}^{d}B_{m-1}^{\varepsilon}
\subset UV_d^{\varepsilon}.
$$

\paragraph{Case 2:  $\Gamma=0$ and $e=1-2g$.} Now we assume $d \ge 2$. 
Note that $e=1-2g$ implicitly implies that $g \ge 1$ because $e<0$. Thus,  $d|e| \ge 2g$.   Then (\ref{eq27}) still holds when $d \ge 3$ or  $ d =2 $ and $z \ne \theta^g \in \mathcal{E}$.  The argument in Lemma \ref{lem18} still show that 
$$\Phi_{m-1}^{\varepsilon}(z)
=U\Phi_m^{\varepsilon}\bigl(P_m(u,\theta)z\bigr). 
$$
when $m \ge 3$,  or $m=2$ and $z \ne \theta^g$.  By the same argument in  Case 1, we have  the following statements:
\begin{enumerate}
 
\item
When $m \ge 3$,  
$$
\Phi_{m-1}^{\varepsilon}(u^kz)
=
U^{k+1}
\Phi_m^{\varepsilon}\bigl(P_m(u,\theta)z\bigr)
$$
 where $0\le k \le (m-1)|e| -g$.    
 Then 
$$B_{m-1}^{\varepsilon}
\subset UV_m^{\varepsilon}\subset UV_d^{\varepsilon}$$
when $m \ge 3$.

\item
When $m=2$ and $0\le k \le |e|-g$ and $z \ne \theta^g$, we have  
\begin{equation} \label{eq28}
{\Phi}_{1}^{\varepsilon}(u^kz) =  U^{k+1} \Phi_2^{\varepsilon}\bigl(P_2(u,\theta)z\bigr)- \Psi_0^{\varepsilon}(1 \otimes \tilde{\Phi}^{\varepsilon}
_1(u^kz))[\emptyset] .
\end{equation}
If $z$ contains $e_c^r \ne 1$,  then  $\Psi_0^{\varepsilon}(1 \otimes \tilde{\Phi}^{\varepsilon}
_1(u^kz)) =0$ by Lemma \ref{lem14}.  Thus,  ${\Phi}_{1}^{\varepsilon}(u^kz)  \in UV_2 \subset UV_d^{\varepsilon}$ in these cases.

\end{enumerate}

 By Lemma \ref{lem7}, Lemma \ref{lem14}, and (\ref{eq28}), we have 
   \begin{equation*}
 \begin{split}
&\Phi^{\varepsilon}_1(u^{|e|-g+1-k} \theta^k) =U^{|e|-g-k+2} \Phi^{\varepsilon}_2(P_2(u,\theta) \theta^k) -\frac{g!}{(g-k)!}2^{g-k} [\emptyset],  \ \  1\le k \le g-1 \\
&U\Phi^{\varepsilon}_2(u^{2|e|-g} \theta^g) =g! [\emptyset] + \Phi^{\varepsilon}_1(\theta^g)\\
&U\Phi^{\varepsilon}_1(u^{|e|-g}) =2^g [\emptyset] + \frac{(-1)^{g+1}}{g!}\Phi^{\varepsilon} _1(\theta^g) + \sum_{k=1}^{g-1} \frac{(-1)^{k+1}}{k!} \Phi^{\varepsilon} _1(u^{|e|-g-k+1}\theta^k)\\
 \end{split}
\end{equation*}
Combing the first and third equations above, we have 
   \begin{equation*}
 \begin{split}
U \Phi^{\varepsilon}_1(u^{|e|-g})  = (1+(-1)^{g+1}) [\emptyset] +   \frac{(-1)^{g+1}}{g!}\Phi^{\varepsilon} _1(\theta^g) + \sum_{k=1}^{g-1} \frac{(-1)^{k+1}}{k!} U^{|e| -g -k + 2} \Phi_2(P_2(u,\theta)\theta^k) 
 \end{split}
\end{equation*}
Then we obtain 
   \begin{equation*}
 \begin{split}
&[\emptyset]= U\Phi^{\varepsilon}_1(u^{|e|-g})  + \frac{(-1)^g}{g!}  U\Phi^{\varepsilon}_2(u^{2|e|-g} \theta^g) - P  \\
&\Phi^{\varepsilon} _1(\theta^g)  =  (1+(-1)^{g+1}) U\Phi^{\varepsilon}_2(u^{2|e|-g} \theta^g)  -g! U\Phi^{\varepsilon}_1(u^{|e|-g})  + g!P,
 \end{split}
\end{equation*}
where  $P:=  \sum_{k=1}^{g-1} \frac{(-1)^{k+1}}{k!} U^{|e| -g -k + 2} \Phi^{\varepsilon}_2(P_2(u,\theta)\theta^k) $.  Hence,  $[\emptyset] ,\Phi^{\varepsilon} _1(\theta^g)    \in UV_2^{\varepsilon} \subset UV_d^{\varepsilon} $. In particular, $B_0^{\varepsilon} =\F[\emptyset] \in UV_2^{\varepsilon}  \subset UV_d^{\varepsilon} .$

By (\ref{eq28}), we have $ \Phi^{\varepsilon} _1(u^kz) \in  UV_2^{\varepsilon}$ except $z =\theta^g$.  By Lemma \ref{lem14},  $ \Phi^{\varepsilon} _1(u^k\theta^g)= U^k \Phi^{\varepsilon} _1(\theta^g)  \in   UV_2^{\varepsilon} $ for $0 \le k\le |e|-g.$
Therefore, $B_{1}^{\varepsilon}
\subset UV_2^{\varepsilon} \subset UV_d^{\varepsilon}$.

We have prove that $B_{m-1}^{\varepsilon} \subset UV_d^{\varepsilon}$ when $m \ge 3$. Taking account of all the above discussion, we have 
$$
V_{d-1}^{\varepsilon}
=
\bigoplus_{m=1}^{d}B_{m-1}^{\varepsilon}
\subset UV_d^{\varepsilon}.
$$
when $d \ge 2$. 
\end{proof}

The following lemma is an analogy of Lemma \ref{lem11} for the $U$ map. 
\begin{lemma}
Assume $e<\chi(\Sigma)$.  Assume $\Gamma=0$ or $\Gamma>2g-2$.  Then 
$$ \coker U_d  \xrightarrow{\pi_d}    \coker \wedge u.
$$
are isomorphisms for $d \ge 2$. In particular, $\dim  \coker U_d = \dim \ker U_d=2^{2g}  $  when $d \ge 2$.
\end{lemma}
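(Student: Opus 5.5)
Use the exact sequence (\ref{eq19}) from the Snake Lemma applied to (\ref{eq13}):
\[
\ker \wedge u \xrightarrow{\delta_d} \coker U_{d-1} \to \coker U_d \xrightarrow{\pi_d} \coker \wedge u \to 0 .
\]
Surjectivity of $\pi_d$ on cokernels is part of this sequence. Injectivity follows once the map $\coker U_{d-1}\to \coker U_d$, induced by the inclusion $V^{\varepsilon}_{d-1}\subset V^{\varepsilon}_d$, is zero. That in turn is exactly the statement that every element of $V^{\varepsilon}_{d-1}$ lies in $UV^{\varepsilon}_d$, which is Lemma \ref{lem16}.

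The hypotheses of Lemma \ref{lem16} are met in both regimes for $d\ge 2$. If $|e|+\Gamma\ge 2g$, the lemma applies for every $d\ge1$. Otherwise $e<\chi(\Sigma)$ forces $|e|\ge 2g-1$. Combined with $\Gamma=0$ or $\Gamma>2g-2$, the only remaining possibility is $\Gamma=0$ and $e=1-2g$, and there Lemma \ref{lem16} applies for $d\ge2$. This is precisely why the statement requires $d\ge 2$. Hence $V^{\varepsilon}_{d-1}\subset UV^{\varepsilon}_d$, so the map $\coker U_{d-1}\to\coker U_d$ vanishes. Exactness at $\coker U_d$ then makes $\pi_d$ injective, and therefore an isomorphism.

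For the dimension statement, we need $d|e|+\Gamma\ge 2g$ in order to invoke Lemma \ref{lem11}. For $d\ge2$ this holds, since $2|e|\ge 2(2g-1)\ge 2g$ when $g\ge1$, and the case $g=0$ is trivial. Lemma \ref{lem11} then gives $\dim\coker U_d=\dim \coker\wedge u=2^{2g}$. Finally, $V^{\varepsilon}_d$ is finite dimensional (Lemma \ref{lem2}), and rank–nullity for the endomorphism $U_d$ gives $\dim\ker U_d=\dim\coker U_d=2^{2g}$.

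The only delicate point is confirming that the exceptional case $\Gamma=0$, $e=1-2g$ is covered. That is handled by the second clause of Lemma \ref{lem16}, which is where the restriction $d\ge2$ enters.
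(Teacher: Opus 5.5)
Your argument is correct, and it gets injectivity of $\pi_d$ by a shorter route than the paper's.

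The paper does not take injectivity from exactness of (\ref{eq19}) at $\coker U_d$. Instead it uses the nondegenerate pairing $\langle\cdot,\cdot\rangle_{ech}$ to identify $\coker U_d$ (noncanonically) with $(\operatorname{Im}U_d)^{\perp}$. It then shows that any $\sigma\in(\operatorname{Im}U_d)^{\perp}$ pairs to zero with $V^{\varepsilon}_{d-1}$, by Lemma \ref{lem16}. It also pairs to zero with every $\Phi^{\varepsilon}_d(u^kz)$, $k\ge1$, because $\Phi^{\varepsilon}_d(u^kz)=U\Phi^{\varepsilon}_d(u^{k-1}z)$ by case \ref{C0} of Lemma \ref{lem7}. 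Hence $\sigma$ lies in the span of the $2^{2g}$ dual elements $(\Phi^{\varepsilon}_d(z))^*$, $z\in\mathcal E$, so $\dim\coker U_d\le 2^{2g}$. Surjectivity of $\pi_d$ from (\ref{eq19}) together with Lemma \ref{lem11} gives the reverse inequality, and a dimension count finishes the proof.

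Both proofs rest on the same input, Lemma \ref{lem16}, but you use it more economically. The inclusion $V^{\varepsilon}_{d-1}\subset UV^{\varepsilon}_d$ kills the map $\coker U_{d-1}\to\coker U_d$ directly, so you need no pairing, no dual basis and no explicit $U$-formula.

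One consequence is that your proof never uses the uncorrected identity $U\Phi^{\varepsilon}_d(u^{k-1}z)=\Phi^{\varepsilon}_d(u^kz)$. That identity fails at $\Gamma=0$, $d=1$ because of the $[\emptyset]$ term in case \ref{C0}, and this is a second place where the paper's proof needs $d\ge2$. So your remark that the exceptional case $e=1-2g$ is ``precisely'' why $d\ge 2$ is required holds for your argument, not for the paper's. Your argument also gives the isomorphism for $d=1$ whenever $|e|+\Gamma\ge 2g$, since Lemmas \ref{lem10}, \ref{lem16} and \ref{lem11} all apply there.

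What the paper's route buys in exchange is an explicit description of $(\operatorname{Im}U_d)^{\perp}$, which the pairing identifies with $(\coker U_d)^*$. It is the span of the duals of the classes $\Phi^{\varepsilon}_d(z)$, $z\in\mathcal E$.
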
 
\begin{proof}
Let $\sigma \in V^{\varepsilon}_d$.   By Lemma \ref{lem2} and Lemma \ref{lem3}, we have 
  \begin{equation} \label{eq23}
 \begin{split}
\sigma =  \sum_{m=0}^d\sum_{j_m}  \langle   \Phi_m^{\varepsilon}(x_{m, j_m}), \sigma \rangle_{ech} (\Phi_m^{\varepsilon}(x_{m, j_m}) )^*,\\
 \end{split}
\end{equation}
where $(\Phi_m^{\varepsilon}(x_{m, i_m}) )^*:=  \sum_{j} (q^{m|e|+\Gamma})^{-1}_{i_m,j_m} \Phi_m^{\varepsilon}(x_{m, j_m})$ is the dual basis with respect to $\langle \cdot, \cdot \rangle_{ech}$.  Note that $\{(\Phi_m^{\varepsilon}(x_{m, i_m}) )^*\}$ are  still linear independent.

 Since $\langle \cdot, \cdot \rangle_{ech}$ is nondegenerate, we identify (noncanonically)  $\coker U_d$ with 
$$(\operatorname{Im} U_d)^{\perp} :=\{ \sigma \vert   \langle U\sigma', \sigma \rangle_{ech} =0, \forall \sigma' \in V^{\varepsilon}_d \}. $$
Let $\sigma \in (\operatorname{Im} U_d)^{\perp} $. By Lemma \ref{lem16}, we have  $V^{\varepsilon}_{d-1} \subset UV^{\varepsilon}_d$.  Then 
$\langle   \sigma', \sigma \rangle_{ech} =0 $  for all  $\sigma' \in V^{\varepsilon}_{d-1} $.  By Lemma \ref{lem7}, for  $d\ge 2$,  we have $$U\Phi_d^{\varepsilon}(u^{k-1}z) =\Phi_d^{\varepsilon}(u^kz)$$ for $1\le k \le d|e|+\Gamma-g $ and $z \in \mathcal{E}$.  Hence, $\langle    \Phi_d^{\varepsilon}(u^kz), \sigma \rangle_{ech} =0 $.     Consequently, Equation (\ref{eq23}) becomes
  \begin{equation*}
 \begin{split}
\sigma  =  \sum_{x_{d, j_d} \in \mathcal{E}}    \langle   \Phi_d^{\varepsilon}(x_{d, j_d}), \sigma \rangle_{ech} (\Phi_d^{\varepsilon}(x_{d, j_d}) )^*.\\
 \end{split}
\end{equation*}
Thus,  $ \dim  \coker U_d =  \dim (\operatorname{Im} U_d)^{\perp}  \le \#\mathcal{E} =2^{2g}$.  Since  $\pi_d: \coker U_d \to \coker \wedge u$ is surjective by the exact sequence  (\ref{eq19}),   $\dim \coker U_d  \ge \coker \wedge u=2^{2g}$.  In sum, $$\dim  \coker U_d  =2^{2g}. $$ The dimension count and the exact sequence  (\ref{eq19}) imply that $\pi_d$ is an isomorphism. 
\end{proof}

The above lemma tells  us  that $\ECH^{L_{\varepsilon, d}}(Y, \lambda_{\varepsilon}, \Gamma)$  consists of  $2^{2g}$ finite $U$-towers. The final thing we need to do is to show that these finite towers converges to  $2^{2g}$  infinite towers   as  $\varepsilon \to 0 $. To this end, we need the following algebraic lemma. 

\begin{lemma} \label{lem15}
Let $\F$ be a field.  Let 
$$V_0 \xrightarrow{\iota_0} V_1    \xrightarrow{\iota_1} V_2  \xrightarrow{\iota_2} ....$$
 be a direct system of finite dimensional $\F[U]$-modules. Assume 
 \begin{enumerate}
 \item
 $\iota_n$ are injective  $U$-equivariant homomorphisms. 
 \item
$U$ is nilpotent on each $V_n$.
\item
There exists $N\ge 0$ such that  $\iota_n(V_n) \subset UV_{n+1}$ for all $n \ge N$. 
\item
 $\dim \coker U\vert_{V_n} =r$ for each $n \ge N$. 
 \end{enumerate}
 Then $V:=\varinjlim_{n\to\infty}V_n \cong \oplus_{j=1}^r \mathcal{T}^+$.
\end{lemma}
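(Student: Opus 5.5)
We identify each $V_n$ with its image in $V$; by hypothesis 1 the maps $\iota_n$ are injective and $U$-equivariant, so $V$ is the increasing union of the $U$-submodules $V_n$. By hypothesis 2 every element of $V$ is killed by a power of $U$. Without loss of generality we may assume $N=0$ by discarding $V_0,\dots,V_{N-1}$, since this does not change the direct limit. The plan has three steps. First we show that $U:V\to V$ is surjective. Second we show that $\ker U\subset V$ has dimension exactly $r$. Third we use the standard structure theory of $U$-torsion, $U$-divisible modules over $\F[U]$ to identify $V\cong\bigoplus_{j=1}^r\mathcal T^+$.

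\emph{Surjectivity.} Any $v\in V$ lies in some $V_n$. Hypothesis 3 gives $\iota_n(v)=Uw$ with $w\in V_{n+1}$, so $U$ is onto $V$.

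\emph{The kernel.} Each $V_n$ is finite dimensional, so $\dim\ker U|_{V_n}=\dim\coker U|_{V_n}=r$ by hypothesis 4. Injectivity of $\iota_n$ and equivariance give $\iota_n(\ker U|_{V_n})\subset\ker U|_{V_{n+1}}$. Both spaces have dimension $r$, so this inclusion is an equality. Every element of $\ker U\subset V$ comes from some $\ker U|_{V_n}$, hence $\ker U|_V\cong\ker U|_{V_0}$ has dimension $r$.

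\emph{Structure.} This is the step I expect to need the most care; it is algebra rather than anything deep. I would construct the towers explicitly. Choose a basis $k_1,\dots,k_r$ of $\ker U|_V$. Using surjectivity, choose inductively $k_j^{(0)}=k_j$ and $k_j^{(s+1)}$ with $Uk_j^{(s+1)}=k_j^{(s)}$. Define $\Phi:\bigoplus_j\mathcal T^+\to V$ by $U^{-s}\cdot 1_j\mapsto k_j^{(s)}$; this is $\F[U]$-linear.

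For injectivity of $\Phi$: if a nonzero element lies in the kernel, apply a suitable power of $U$ to land it in the socle $\bigoplus_j\F\cdot 1_j$, which stays nonzero and stays in the kernel. Then the image is a nontrivial combination of the $k_j$, which is nonzero, a contradiction.

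For surjectivity of $\Phi$, induct on the nilpotency order of $v\in V$, i.e.\ the least $s$ with $U^sv=0$. If $U^{s}v=0$, then $U^{s-1}v\in\ker U$, so $U^{s-1}v=\sum c_jk_j$. Set $w=\sum c_jk_j^{(s-1)}\in\operatorname{Im}\Phi$. Then $U^{s-1}(v-w)=0$, so $v-w$ has smaller order and lies in $\operatorname{Im}\Phi$ by induction, with the base case $v=0$. Hence $\Phi$ is an isomorphism, giving $V\cong\bigoplus_{j=1}^r\mathcal T^+$.
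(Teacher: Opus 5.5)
Your proof is correct and follows the paper's argument essentially step for step. Both show that $U$ is surjective on $V$ by hypothesis 3, and that $\dim\ker U|_V=r$ because the $r$-dimensional kernels $\ker U|_{V_n}$ stabilize under the injective maps $\iota_n$. Both then build $r$ infinite towers by lifting a basis of $\ker U$, checking independence by applying powers of $U$ and spanning by induction on nilpotency order. Your explicit $\F[U]$-linear map $\Phi\colon\bigoplus_j\mathcal T^+\to V$ is just a cleaner packaging of the paper's basis $\{e_{j,k}\}$.
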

 \begin{proof}
Let $j_n: V_n \to V$ be the canonical maps  of the direct limit. Since all the structure maps $\iota_n$ are injective,  
$j_n$ are injective. We may therefore identify $V_n$ with its image in $V$ via $j_n$.
Under these identifications, we have 
$$ V_1\subseteq V_2\subseteq V_3\subseteq\cdots$$
and
$$ V=\bigcup_{n\geq 1}V_n.$$

We next show that
$$ U:V\longrightarrow V $$
is surjective. Let $x\in V$. By replacing the index by a larger one if
necessary, we may choose $n\geq N$ such that $x\in V_n$. By assumption,
$$ \iota_n(V_n)\subseteq U V_{n+1}. $$
Thus there exists $y\in V_{n+1}$ such that
$$ Uy=\iota_n(x). $$
The elements $x$ and $\iota_n(x)$ determine the same element of the
direct limit. Therefore,
$$ Uy=x $$
in $V$, and hence
$UV=V.$

For each $n$, let $U_n $ denote the restriction of $U$ on $V_n$.   By assumption, we have 
$$\ker U_n =\coker U_n =r$$
for every $n\geq N$.
Because $\iota_n$ are injection, the dimension calculation implies that the restriction of $\iota_n$ to $\ker U_n$ are isomorphisms for all $n \ge N$.

We claim that
$$ \ker\{ U:V\longrightarrow V\} = \bigcup_{n\geq N} \ker U_n. $$
The inclusion from right to left is immediate. Conversely, let $x\in V$
satisfy $Ux=0$. Choose $n\geq N$ such that $x\in V_n$. Since the
canonical map $V_n\to V$ is injective, the equality $Ux=0$ in $V$
implies that $Ux=0$ already in $V_n$. Hence $x\in \ker U_n$.

As a result, we have  
$$ \dim_{\mathbb{F}}\ker\{U:V\longrightarrow V\}=r. $$
Choose a basis
$$ e_{1,0},...,e_{r,0} $$
of $\ker U$.
Since $U:V\to V$ is surjective, for each $j$ we   recursively choose
elements
$$
e_{j,k}\in V,\qquad k\geq 1, $$
such that
$$ Ue_{j,k}=e_{j,k-1}.$$
Thus, for each $j$, we obtain an infinite $U$-tower
$$
\cdots
\xrightarrow{U} e_{j,2}
\xrightarrow{U} e_{j,1}
\xrightarrow{U} e_{j,0}
\xrightarrow{U} 0.
$$

Next, we prove that 
$\left\{ e_{j,k} \vert1\le j\le  r,  k \ge 0 \right\} $
is linearly independent. Suppose that
$$
\sum_{k=0}^{m}\sum_{j=1}^{r}a_{j,k}e_{j,k}=0.
$$
Applying $U^m$ to the above equation, and using
$
U^m e_{j,k}=0 $ for $k<m$,
 $U^m e_{j,m}=e_{j,0},
$
we obtain
$$
\sum_{j=1}^{r}a_{j,m}e_{j,0}=0.
$$
Since $e_{1,0},\ldots,e_{r,0}$ are linearly independent, it follows that
$ a_{j,m}=0$
for every $j$. Repeating this argument for $m-1,m-2,\ldots,0$, we obtain
$$a_{j,k}=0 $$
for every $j$ and $k$. Thus the collection is linearly independent.
  
We now prove that these elements span $V$. For any $x \in V = \cup_{n \ge 1} V_n$, then $x \in V_n $ for some $n \ge N$. By the assumption that $U_n$ is nilpotent, $U^{m+1} x=0$ for some $m \ge 0$.

We claim that if
$$ U^{m+1}x=0,$$
then
$$
x\in
\operatorname{Span}_{\mathbb{F}}
\left\{
e_{j,k}\ \middle|\
1\leq j\leq r,\ 0\leq k\leq m
\right\}. $$
 We argue by induction on $m$. For $m=0$, the condition $Ux=0$ implies that $x\in\ker U$, so
$
x\in
\operatorname{Span}_{\mathbb{F}}
\{e_{1,0},\ldots,e_{r,0}\}.
$

Suppose that the claim holds for $m-1$.  Let $x\in V$ satisfy $U^{m+1}x=0.$
Then
$$ U^m x\in\ker U.$$
Therefore there exist $a_1,\ldots,a_r\in\mathbb{F}$ such that
$$ U^m x=\sum_{j=1}^{r}a_j e_{j,0}. $$
Let 
$$ x' := x-\sum_{j=1}^{r}a_j e_{j,m}.$$
Since $U^m e_{j,m}=e_{j,0}$, we have
$$ U^m x' = U^m x-\sum_{j=1}^{r}a_jU^m e_{j,m}=0.$$
By the induction hypothesis, $x'$ lies in the span of the elements
$e_{j,k}$ with $0\leq k\leq m-1$. Hence $x$ lies in the span of the
elements $e_{j,k}$ with $0\leq k\leq m$.

 Hence,  the above discussion implies that 
$$
V=
\bigoplus_{j=1}^{r}
\operatorname{Span}_{\mathbb{F}}
\{e_{j,0},e_{j,1},e_{j,2},\ldots\} \cong \bigoplus_{j=1}^{r} \mathcal{T}^+.
$$

\end{proof}

\begin{proof}[Proof of Theorem \ref{thm1}]
Recall    $d_{\varepsilon}: = \min\{\lfloor \frac{L_{\varepsilon}  -3\Gamma }{3|e|}\rfloor,  \lfloor  \frac{1}{2}(1/\varepsilon -2\Gamma/|e|) -\frac{1}{2}\rfloor \}$.      Pick a sequence $\varepsilon_0> \varepsilon_1 >\varepsilon_2... \to 0$ such that  $L_{\varepsilon_n} \to \infty$ and 
$$d_{\varepsilon_1}<d_{\varepsilon_2}<d_{\varepsilon_3}<d_{\varepsilon_4}<.... $$ 
Recall $L_{\varepsilon}>0$  here is the irrational constant such that all the Reeb orbits  of $\lambda_{\varepsilon}$ with action strictly less  $L_{\varepsilon}$ only consists of the covers of the fibers over critical points of $H$. 

   Define $V_n : =V^{\varepsilon_n}_{d_{\varepsilon_n}}$ and  $L_n := L_{\varepsilon_n, d_{\varepsilon_n}}$. Then we have continuous maps  
$$\iota_n : V_n \to V_{n+1}$$
defined by $$\iota_n :=i^{L_n, L_{n+1}} \circ \ECH([\varepsilon_{n+1}, \varepsilon_n] \times Y, d\lambda_{\varepsilon_{n+1}, \varepsilon_n}).$$ Here $([\varepsilon_{n+1}, \varepsilon_n]  \times Y, d\lambda_{\varepsilon_{n+1}, \varepsilon_n})$ is an exact symplectic cobordism from $(Y, \lambda_{\varepsilon_n})$ to $(Y, \lambda_{\varepsilon_{n+1}})$.  Moreover, the continuous maps $\iota_n$ are   $U$-equivariant homomorphisms.
By the same  argument in Proposition 3.2 of \cite{NW},   $\{(V_n, \iota_n)\}$ forms a direct system. 
Copying Nelson-Weiler's Morse-Bott argument (see Section 7.1.4 of \cite{NW}),   we also have 
 $$\ECH(Y, \lambda, \Gamma) =\varinjlim_{n\to\infty}V_n$$
The argument in  Proposition 3.2 and Section 7.1.4 of \cite{NW} only relies on the formal  properties of the ECH cobordism maps. Thus, it is safe to copy their argument  possibly with  only notation changes.

Let $(X_+, \omega_{n+1}):=\left([\varepsilon_{n+1}, \varepsilon_n] \times Y,  d\lambda_{\varepsilon_{n+1}, \varepsilon_n} \right) \circ (X_+, \omega_+^{\varepsilon_n})$ denote the composition of  the symplectic cobordisms  $ \left([\varepsilon_{n+1}, \varepsilon_n] \times Y, d\lambda_{\varepsilon_{n+1}, \varepsilon_n}\right) $ and  $(X_+, \omega_+^{\varepsilon_n})$. It  is a strong symplectic cap of $ (Y,  \lambda_{\varepsilon_{n+1}}) $.    
  By Lemma \ref{lem2}, we have 
$$\iota_n(\Phi^{\varepsilon_n}_m(x_{m,i_m})) = \sum_{k=0}^{d_{\varepsilon_{n+1}}} \sum_{i_k}a_{k, i_k} \Phi^{\varepsilon_{n+1}}_k(x_{k,i_k}), $$
where  $a_{k, i_k} \in \Q$. 
Acting both side of the above equation by $\Psi^{\varepsilon_{n+1}}_{l}( x_{l,j_l} \otimes  \cdot )$, by  Lemma \ref{lem3},  we obtain
 \begin{equation}\label{eq32}
 \Psi^{\varepsilon_{n+1}}_{l}( x_{l,j_l} \otimes  \iota_n(\Phi^{\varepsilon_n}_m(x_{m,i_m})))= \sum_{i_l}a_{l, i_l}  q^{l|e| +\Gamma} (\Pi(x_{l,j_l}),  \Pi(x_{l,i_l})). 
 \end{equation}
By  the composition law of ECH cobordism maps (Theorem \ref{thm2}), we still have 
\begin{equation*}
\begin{split}
&\Psi^{\varepsilon_{n+1}}_{l}( x_{l,j_l} \otimes  \iota_n(\Phi^{\varepsilon_n}_m(x_{m,i_m})))\\
=& \ECH(X_-, \omega_-^{\varepsilon_{n+1}}, A_-(l)) (x_{l, j_l} \otimes   \left(\ECH(X_+, \omega_{n+1}, A_+(m))(x_{m, i_m})  - c(x_{m, i_m}) [\emptyset] \right))  \\
=& \Gr(X, A_{m, l}, x_{l, j_l}x_{m, i_m} )  - c(x_{m, i_m}) \ECH(X_-, \omega_-^{\varepsilon_{n+1}}, A_-(l)) (x_{l, j_l} \otimes    [\emptyset] ),
\end{split}
\end{equation*}
where  $c(x_{m, i_m}) $ is a constant which is zero unless $\Gamma=0$ and $m=1$ and it has been calculated by Lemma \ref{lem14}. Here the Gromov-Taubes invariants are defined by the symplectic form $\omega$ obtained  by gluing  $\omega_-^{\varepsilon_{n+1}}$ and  $\omega_{n+1}$.  From the construction, one can  verify that 
\begin{itemize}
\item
$[\omega] =[\omega^{\varepsilon_{n+1}}]$, where $\omega^{\varepsilon_{n+1}}$ 
is the symplectic form obtained by gluing $\omega_+^{\varepsilon_{n+1}}$ and $\omega_-^{\varepsilon_{n+1}}$.
\item 
$\omega$ is deformation equivalent to $\omega^{\varepsilon_{n+1}}$. 
\end{itemize}
 Then $\omega$ and  $\omega^{\varepsilon_{n+1}}$ determine the same homology orientation and ``Taubes chamber" (see Remark \ref{remark3}).  The Gromov-Taubes invariants $\Gr(X, A_{m, l}, x_{l, j_l}x_{m, i_m} ) $ are still computed by Theorem \ref{thm0}.

 Repeating  the argument in Lemma \ref{lem3}, we have 
\begin{equation}\label{eq35}
\Psi^{\varepsilon_{n+1}}_{l}( x_{l,j_l} \otimes  \iota_n(\Phi^{\varepsilon_n}_m(x_{m,i_m}))) =\delta_{lm}  q^{l|e| +\Gamma} (\Pi(x_{l,j_l}),  \Pi(x_{m,i_m})). 
\end{equation}
Combining the  (\ref{eq35}) with (\ref{eq32}), we have 
$$\iota_n(\Phi^{\varepsilon_n}_m(x_{m,i_m})) = \Phi^{\varepsilon_{n+1}}_m(x_{m,i_m}). $$
Thus, $\iota_n$ preserves the basis.  In particular,  $\iota_n$ are injective.

Without loss of generality, assume $d_{\varepsilon_n} \ge n$. By Lemma \ref{lem7} (also see (\ref{eq16})), we know that 
$$V_n \subset U   V_{n+1}$$
for all $n \ge 2$. 
Therefore, we have 
$$  \iota_n(V_n) = \iota_n(V^{\varepsilon_n}_{d_{\varepsilon_n}}) \subset    V^{\varepsilon_{n+1}}_{d_{\varepsilon_n}}   \subset U   V^{\varepsilon_{n+1}}_{d_{\varepsilon_{n+1}}} =U V_{n+1}. $$

By  Lemma \ref{lem11},  $\dim \coker U\vert_{V_n} =2^{2g}$ for  $n\ge 2$.   Then the direct system $\{(V_n, \iota_n)\}$ satisfies the four assumptions  in Lemma \ref{lem15}, we have 
$$\ECH(Y, \lambda, \Gamma) =\varinjlim_{n\to\infty}V_n \cong \oplus_{j=1}^{2^{2g}} \mathcal{T}^+.$$

\end{proof}

\section{Computing ECH spectrum}
We assume $\Gamma=0$ in this section throughout. Fix $k \in \Z_{\ge1}$. We take small $\varepsilon$ such that $d_{\varepsilon}  \gg  k$ through this section. Using  Lemma \ref{lem18}, we can construct a sequence $\sigma_d$ such that $U^d \sigma_d =[\emptyset]. $  The construction is as follows.

Define  polynomials
\begin{equation*}
\begin{split}
&S_d:=P_d(u,\theta)P_{d-1}(u,\theta)...P_1(u,\theta)\\
\mbox{ and }&S'_d:= \sum_{k=0}^g \frac{(-d)^k}{k!} \theta^ku^{g-k}.  
\end{split}
\end{equation*}
By a direct computation, we have 
\begin{equation*}
\begin{split}
S_d= \sum_{k=0}^g \frac{(-d)^k}{k!} \theta^ku^{|e|\frac{d(d+1)}{2}-gd-k} \mbox{ and } S_d= u^{ |e|\frac{d(d+1)}{2}-gd -g } S_d'. 
\end{split}
\end{equation*}
Note that  $P_1(u,\theta)$ is not well defined when $e=1-2g$. However, the above equations imply that $S_d$ is still well defined for $d \ge 2 $ in the case $e=1-2g$.

Assume $d \ge 3$. By  Lemma \ref{lem18}, $P_m(u,\theta) [\emptyset]=0$, and Theorem \ref{thm2},  we have 
\begin{equation} \label{eq30}
\begin{split}
&U \tilde{\Phi}_d^{\varepsilon}(S_d) =S_{d-1} U \tilde{\Phi}_d^{\varepsilon}(P_d(u, \theta)) =S_{d-1} U {\Phi}_d^{\varepsilon}(P_d(u, \theta))  \\
=& S_{d-1}{\Phi}_{d-1}^{\varepsilon}(1)   = \tilde{\Phi}_{d-1}^{\varepsilon}(S_{d-1}). 
\end{split}
\end{equation}

If we assume $e\le -2g$ additionally, then (\ref{eq30}) holds for $d \ge 1$. Therefore, we have 
\begin{equation*}
\begin{split}
U^d\tilde{\Phi}_d^{\varepsilon}(S_d)=U^{d-1}\tilde{\Phi}_{d-1}^{\varepsilon}(S_{d-1})...=U\tilde{\Phi}_1^{\varepsilon}(S_1)  = U{\Phi}_1^{\varepsilon}(P_1)=[\emptyset]. 
\end{split}
\end{equation*}
 
\begin{lemma} \label{lem19}
We have the following statements: 
\begin{enumerate}
\item
Assume $e\le -2g$. For $d\ge 1$, we have 
\begin{equation*}
\begin{split}
U^d\tilde{\Phi}_d^{\varepsilon}(S_d)=[\emptyset]. 
\end{split}
\end{equation*}

\item
Assume $e=1-2g$.  For $d \ge 2$, we have 
$$U^d \tilde{\Phi}_d^{\varepsilon}(S_d) =[\emptyset].$$
\end{enumerate}
 
\end{lemma}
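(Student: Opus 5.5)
Both statements will follow from the telescoping identity (\ref{eq30}), $U\tilde{\Phi}_m^{\varepsilon}(S_m)=\tilde{\Phi}_{m-1}^{\varepsilon}(S_{m-1})$. The plan is to check for which $m$ it holds and then iterate it down to a base case that Lemma \ref{lem18}, or the explicit formulas of Lemma \ref{lem7}, evaluates as $[\emptyset]$.

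\emph{Step 1 (the inductive step).} I would establish (\ref{eq30}) for every $m$ in the relevant range:
\begin{itemize}
\item Write $S_m=S_{m-1}P_m(u,\theta)$, where $S_{m-1}\in\mathbb{A}(\Sigma)$ acts through the $\mathbb{A}$-module structure.
\item By the $\mathbb{A}(Y)$-commuting property in Theorem \ref{thm2}, $U\tilde{\Phi}_m^{\varepsilon}(S_m)=S_{m-1}\cdot U\tilde{\Phi}_m^{\varepsilon}(P_m)$.
\item When $m\ne 1$ or $\Gamma\ne0$, we have $\tilde{\Phi}_m^{\varepsilon}=\Phi_m^{\varepsilon}$. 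Lemma \ref{lem18} with $z=1$ then gives $U\Phi_m^{\varepsilon}(P_m)=\Phi_{m-1}^{\varepsilon}(1)$.
\item For $m-1\ge2$, or $m-1=1$ (where Definition \ref{definition2} makes no correction, since $\Psi_0^\varepsilon(1\otimes\tilde{\Phi}_1^\varepsilon(1))=0$ by Lemma \ref{lem14}), we have $\Phi_{m-1}^{\varepsilon}(1)=\tilde{\Phi}_{m-1}^{\varepsilon}(1)$. The $\mathbb{A}$-action then gives $S_{m-1}\tilde{\Phi}_{m-1}^{\varepsilon}(1)=\tilde{\Phi}_{m-1}^{\varepsilon}(S_{m-1})$.
\end{itemize}
For $e\le -2g$, i.e.\ $|e|\ge 2g$, Lemma \ref{lem18} applies for all $m\ge1$, so the step holds for $m\ge2$. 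For $e=1-2g$ it holds for $m\ge3$.

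\emph{Step 2 (case 1).} Iterating Step 1 gives $U^d\tilde{\Phi}_d^{\varepsilon}(S_d)=U\tilde{\Phi}_1^{\varepsilon}(S_1)$ with $S_1=P_1$. I then split $\tilde{\Phi}_1^{\varepsilon}(P_1)=\Phi_1^{\varepsilon}(P_1)+c[\emptyset]$. Since $U[\emptyset]=0$, Lemma \ref{lem18} for $d=1,z=1$ gives $U\tilde{\Phi}_1^{\varepsilon}(P_1)=\Phi_0^{\varepsilon}(1)=[\emptyset]$ by Lemma \ref{lem17}.

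\emph{Step 3 (case 2, $e=1-2g$).} Iterating down to $m=3$ reduces everything to showing $U^2\tilde{\Phi}_2^{\varepsilon}(S_2)=[\emptyset]$. This is the main obstacle: $P_1$ is undefined here, and case \ref{C3} of Lemma \ref{lem7} carries the extra term $g!\,\delta_{z,\theta^g}[\emptyset]$. I would use the closed form
\begin{equation*}
S_2=\sum_{k=0}^g\frac{(-2)^k}{k!}\theta^k u^{3|e|-2g-k},
\end{equation*}
noting that $3|e|-2g-k=(2|e|-g)+(|e|-g-k)$. Each term is therefore $u^{|e|-g-k}$ times the top-degree element $u^{2|e|-g}\theta^k$, so $\tilde{\Phi}_2^{\varepsilon}(S_2)$ is an explicit combination of the classes $U^{|e|-g-k}\Phi_2^{\varepsilon}(u^{2|e|-g}\theta^k)$.

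I would apply $U$ once using \ref{C1} (for $k<g$) and \ref{C3} (for $k=g$). This produces $\Phi_1^{\varepsilon}$ terms, $\Phi_2^{\varepsilon}$ terms, and the constant $\frac{(-2)^g}{g!}\,g!\,[\emptyset]$. Applying $U$ a second time uses \ref{C0} and \ref{C2}, and each $\Phi_1^{\varepsilon}$ contributes $2^g\Phi_0^{\varepsilon}(1)$ plus Lemma \ref{lem14} corrections $\frac{g!}{(g-b)!}2^{g-b}[\emptyset]$.

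An alternative route avoids the algebra and verifies the identity by pairing. By Lemma \ref{lem2} both sides lie in $V_2^\varepsilon$, so it suffices to check the pairings $\Psi^{\varepsilon}_m(x\otimes\cdot)$ for $m=0,1,2$. By the composition law these are Gromov–Taubes invariants $\Gr(X,A_{2,m},xu^2S_2)$, which Theorem \ref{thm0} evaluates. The $m=0$ pairing is $\sum_k\frac{(-2)^k}{k!}\frac{g!}{(g-k)!}3^{g-k}=g!$, hmm — more precisely $\sum_k\binom{g}{k}(-2)^k3^{g-k}=1$, matching $\langle\Phi_0^\varepsilon(1),[\emptyset]\rangle_{ech}=1$. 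The $m=1,2$ pairings should vanish by the cancellation pattern of Lemma \ref{lem1}.

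I expect this final bookkeeping, confirming that the $\Phi_1^{\varepsilon}$ and $\Phi_2^{\varepsilon}$ components cancel exactly, to be the hardest part. I would attempt it first through the pairing computation.
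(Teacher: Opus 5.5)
Your Steps 1 and 2, and the reduction of case 2 to showing $U^2\tilde{\Phi}^{\varepsilon}_2(S_2)=[\emptyset]$, are exactly the paper's use of (\ref{eq30}). For that base case the paper stays inside the explicit $U$-formulas.
\begin{itemize}
\item It splits $S_2=P_2P_1^+ +\frac{(-1)^g}{g!}u^{2|e|-g-1}\theta^g$, where $P_1^+$ drops the $u^{-1}\theta^g$ term of $P_1$.
\item It applies Lemma \ref{lem18} to $P_2$ (legitimate since $z=1\neq\theta^g$), and \ref{C0} followed by \ref{C3} to the isolated term.
\item It finishes with \ref{C2} and Lemma \ref{lem14}, ending with $\sum_k\binom{g}{k}(-1)^k2^{g-k}=1$.
\end{itemize}

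Your pairing route is genuinely different, and it does close.
\begin{itemize}
\item By Lemmas \ref{lem2}, \ref{lem3} and the argument of Lemma \ref{lem24}, the functionals $\Psi^{\varepsilon}_m(x\otimes\cdot)$, for $m\le 2$ and $x\in\mathcal B_{m|e|}$, separate points of $V^{\varepsilon}_2$.
\item On $[\emptyset]=\tilde\Phi^{\varepsilon}_0(1)$ they give $1$ for $m=0$. For $m\ge1$ they give $\Gr(X,A_{0,m},x)=0$, since $A_{0,m}\cdot[S^2]<0$.
\item On the other side, the $m=2$ pairing vanishes by degree, because $I(A_{2,2})=4|e|<\deg(xu^2S_2)$.
\item For $m=1$ the degree forces $\deg x=2g-2$. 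So only $x=u^{g-1-b}\theta^b$ with $b\le g-1$ can contribute (primitive factors are killed by Theorem \ref{thm0}). These give $\frac{g!}{(g-b)!}\sum_k\binom{g-b}{k}(-2)^k2^{g-b-k}=0$. This is a binomial cancellation in the spirit of Lemma \ref{lem1}, not Lemma \ref{lem1} itself.
\end{itemize}

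What your route buys is uniformity. Since $A_{d,m}\cdot[S^2]=d-m$, the degree condition reads $\deg x=-m\bigl((m-1)|e|+\chi(\Sigma)\bigr)$. So only $m=0,1$ survive, with values $(1+d-d)^g=1$ and $\frac{g!}{(g-b)!}(d-d)^{g-b}=0$. The same computation therefore proves $U^d\tilde\Phi^{\varepsilon}_d(S_d)=[\emptyset]$ for every $d\le d_\varepsilon$ with $S_d\in\mathbb A(\Sigma)$ at once. It needs neither (\ref{eq30}) nor the exceptional formula \ref{C3}. The paper's route instead works directly with the explicit basis formulas of Lemma \ref{lem7}.

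Your first, direct route would fail as written. With $|e|=2g-1$, the exponent $|e|-g-k$ equals $-1$ at $k=g$. So the $\theta^g$ term of $S_2$ contributes $\frac{(-2)^g}{g!}\Phi^{\varepsilon}_2(u^{2|e|-g-1}\theta^g)$, a basis class one step below the top, and not a $U$-power of a top class. This is the same $u^{-1}$ that makes $P_1$ undefined, and it is why the paper peels that term off. Moreover, for $k<g$ the $k$-th term needs $U^{g+1-k}$ applied to the top class $\Phi^{\varepsilon}_2(u^{2|e|-g}\theta^k)$. Then \ref{C1} re-enters repeatedly, not in the two passes you describe.
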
 
 \begin{proof}

 By (\ref{eq30}), it suffices to verify the case that  $e=1-2g$ and $d=2$. 
 
  Formally, write $P_1(u, \theta) =P^+_1(u, \theta) +\frac{(-1)^g}{g!} \theta^gu^{|e| -2g} $, where 
 $$P^+_1(u, \theta) =\sum_{k=0}^{g-1} \frac{(-1)^k}{k!}\theta^ku^{|e|-g-k}.$$
 Note that $P^+_1(u, \theta) $ is well defined in $\mathbb{A}(\Sigma) $.  Moreover, 
 $$u P^+_1(u, \theta) =\sum_{k=0}^{g-1} \frac{(-1)^k}{k!}\theta^ku^{|e|-g-k+1} =u^{|e|-g+1} -Q_1(u^{|e| -g} ) + \frac{(-1)^{g+1}}{g!} \theta^g.$$
 Then
 $$S_2= P_2(u, \theta)P_1(u, \theta) = P_2(u, \theta)P^+_1(u, \theta) +  \frac{(-1)^g}{g!} \theta^gu^{3|e| -3g}. $$
 
By Lemma \ref{lem18}, Lemma \ref{lem7}, Lemma \ref{lem14} and $U[\emptyset]=0$, we have  
 \begin{equation*}
\begin{split}
U^2 \tilde{\Phi}_2^{\varepsilon}(S_2) =&U^2 \tilde{\Phi}_2^{\varepsilon}(P_2(u, \theta)P^+_1(u, \theta)) +  \frac{(-1)^g}{g!} U^2 \tilde{\Phi}_2^{\varepsilon}(\theta^gu^{3|e| -3g})\\
=&U \Phi_1^{\varepsilon}(P^+_1(u, \theta)) + \frac{(-1)^g}{g!} U^2  \tilde{\Phi}_2^{\varepsilon}( u^{2|e| -g-1} \theta^g)\\
=& \tilde{\Phi}_1^{\varepsilon}(uP^+_1(u, \theta)) +  \frac{(-1)^g}{g!} U \Phi_2^{\varepsilon}( u^{2|e| -g} \theta^g)\\
=& U \tilde{\Phi}_1^{\varepsilon}(u^{|e|-g}) -  \tilde{\Phi}_1^{\varepsilon}(Q_1(u^{|e|-g}))  + \frac{(-1)^{g+1}}{g!} \tilde{\Phi}_1^{\varepsilon}(\theta^g) + \frac{(-1)^g}{g!}  \left( {\Phi}_1^{\varepsilon}(\theta^g) + g! [\emptyset]  \right)\\
=&U {\Phi}_1^{\varepsilon}(u^{|e|-g})  - \tilde{\Phi}_1^{\varepsilon}(Q_1(u^{|e|-g}))  \\
=& 2^{g} [\emptyset] +   {\Phi}_1^{\varepsilon}(Q_1(u^{|e|-g}) -  \tilde{\Phi}_1^{\varepsilon}(Q_1(u^{|e|-g})) \\
=& \left(2^{g}  + \sum_{k=1}^{g}
\frac{(-1)^{k}}{k!}
 \frac{g!}{(g-k)!}2^{g-k} \right)[\emptyset] =[\emptyset] .
\end{split} 
\end{equation*}

 \end{proof}
 
For $d \ge 0$, define 
\begin{equation}
I_d:=  |e|\frac{d(d+1)}{2}-gd -g + d .
\end{equation}
  By Lemma \ref{lem19},   we have 
\begin{equation*}
\begin{split}
U^{I_d}\tilde{\Phi}_d^{\varepsilon}(S'_d)= [\emptyset] \mbox{ and } \tilde{\Phi}_d^{\varepsilon}(S'_d)  \in V_d^{\varepsilon}=\ECH^{L_{\varepsilon, d}}(Y, \lambda_{\varepsilon}, 0). 
\end{split}
\end{equation*}
The above identity holds for $d \ge 1$ if $e\le -2g$; and holds for $d\ge 2$ provided that $e=1-2g$.
An immediate consequence of the above equations  is that  
$$c_{I_d}(Y, \lambda_{\varepsilon}) \le L_{\varepsilon, d} =2d|e|(1 + \varepsilon). $$

 \begin{lemma} \label{lem22}
Let  $d \ge 1$.  For  $I_{d-1} + g <k\le I_d$, we have 
 $$c_k(Y, \lambda_{\varepsilon}) =2d|e|(1+O(\varepsilon)).$$
 \end{lemma}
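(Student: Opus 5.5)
The plan is to prove matching upper and lower bounds. For the upper bound we use the class constructed just before the lemma. For the lower bound we use a grading argument on the filtered complex: by Lemma \ref{lem2} and the remark after it, the differential vanishes on the filtered complex, so every filtered class is a combination of generators of bounded grading.

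\emph{Upper bound.} Just before the lemma we showed that $\tilde{\Phi}_d^{\varepsilon}(S'_d)\in V_d^{\varepsilon}=\ECH^{L_{\varepsilon,d}}(Y,\lambda_{\varepsilon},0)$ satisfies $U^{I_d}\tilde{\Phi}_d^{\varepsilon}(S'_d)=[\emptyset]$. This gives $c_{I_d}(Y,\lambda_\varepsilon)\le L_{\varepsilon,d}=2d|e|(1+\varepsilon)$. (This uses Lemma \ref{lem19}, which covers all $d\ge1$ when $e\le -2g$. When $e=1-2g$ and $d=1$ it is not available, and one uses Lemma \ref{lem18} and Lemma \ref{lem7} directly instead.) Since the ECH spectrum is nondecreasing in $k$, we get $c_k(Y,\lambda_\varepsilon)\le 2d|e|(1+\varepsilon)$ for every $k\le I_d$.

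\emph{Lower bound.} Suppose $\sigma\in\ECH^{L}(Y,\lambda_\varepsilon,0)$ with $U^k\sigma=[\emptyset]$ and $L\le 2(1-\varepsilon)d|e|$. Every ECH generator $\alpha$ with $d(\alpha)\ge d$ has action $>2(1-\varepsilon)d|e|$, by the estimate in the proof of Lemma \ref{lem2}. Hence $\ECC^{L}$ is spanned by generators with $d(\alpha)\le d-1$. Because $\partial=0$ on this filtered complex, $\sigma$ is a linear combination of such generators. We now bound their gradings using Remark \ref{remark1} (Proposition 1.3 of \cite{NW}): for $d(\alpha)=m$ and $M_\alpha=m|e|$,
$$\operatorname{gr}(\alpha)=m\chi(\Sigma)+m^2|e|+m_+-m_-\le m^2|e|+m(2-2g)+m|e|=2I_m+2g.$$
This is nondecreasing in $m$, so all generators of $\sigma$ have grading at most $2I_{d-1}+2g$.

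Since $U$ has degree $-2$ and $\operatorname{gr}[\emptyset]=0$, we may replace $\sigma$ by its homogeneous component of grading $2k$; that component still satisfies $U^k(\cdot)=[\emptyset]$ and has no larger action. This forces $2k\le 2I_{d-1}+2g$, i.e.\ $k\le I_{d-1}+g$. Therefore, for $k>I_{d-1}+g$, any $\sigma$ with $U^k\sigma=[\emptyset]$ must have action level $L>2(1-\varepsilon)d|e|$, and so $c_k(Y,\lambda_\varepsilon)\ge 2d|e|(1-\varepsilon)$. Combining the two bounds gives $c_k=2d|e|(1+O(\varepsilon))$.

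\emph{Main obstacle.} The delicate point is the passage from a filtered class to its grading. We need the chain-level fact $\partial=0$ on $\ECC^{L_{\varepsilon}}$, which follows from the dimension count in Lemma \ref{lem2}. We also need the fact that $U^k\sigma=[\emptyset]$ is tested in $\ECH(Y,\lambda_\varepsilon,0)$, where the inclusion $i^{L}$ is injective by Lemma \ref{lem2}, so the identity already holds in the filtered group. Handling the exceptional case $e=1-2g$, $d=1$ in the upper bound is a secondary check.
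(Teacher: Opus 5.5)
Your proof is correct, and your upper bound is the paper's: $U^{I_d}\tilde{\Phi}_d^{\varepsilon}(S'_d)=[\emptyset]$ plus monotonicity. Your lower bound takes a somewhat different route.

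\emph{The paper's lower bound.} It works with the cobordism basis $\{\Phi^{\varepsilon}_m(x_{m,i_m})\}_{m\le d-1}$ of $V^{\varepsilon}_{d-1}$. It reads off gradings from Remark \ref{remark1}, namely $\tfrac12\operatorname{gr}\Phi^{\varepsilon}_m(x)=\tfrac{m(m+1)}{2}|e|+m(1-g)-\tfrac12\deg x$. This only gives $c_k(Y,\lambda_\varepsilon)>L_{\varepsilon,d-1}$. The paper then invokes spectrality (Lemma 4.1 of \cite{GHC}) to identify $c_k$ with the action of an orbit set $\alpha$, and forces $d(\alpha)=d$.

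\emph{Your lower bound.} You bound gradings of ECH generators directly via Proposition 1.3 of \cite{NW}. This uses $\partial=0$ on the filtered complex, which the dimension count in Lemma \ref{lem2} supplies. You also push the filtration level up to $2(1-\varepsilon)d|e|$, noting that no generator has action between $L_{\varepsilon,d-1}$ and $2(1-\varepsilon)d|e|$. That yields the sandwich $2d|e|(1-\varepsilon)\le c_k\le 2d|e|(1+\varepsilon)$ with no appeal to spectrality.

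\emph{Comparison.} Both routes produce the same grading bound $2I_{d-1}+2g$, and both rest on the same absence of actions in that gap. Yours is a slightly more self-contained version of the same idea. The paper's stays inside its cobordism-basis framework rather than using chain-level gradings.

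\emph{A small correction.} Your parenthetical about $e=1-2g$, $d=1$ is unnecessary, and the fix you suggest would not work as stated. When $e=1-2g$, the polynomial $P_1(u,\theta)$ is not defined, and Lemma \ref{lem18} only covers $d\ge 3$ in that case. But no fix is needed: there $I_1=|e|-2g+1=0$ and $I_0+g=0$. So the range $I_0+g<k\le I_1$ is empty, and the lemma is vacuous for $e=1-2g$, $d=1$.
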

 \begin{proof}
  By the monotone property of ECH spectrum,  we have 
$$c_{I_{d-1}}(Y, \lambda_{\varepsilon})   \le c_k(Y, \lambda_{\varepsilon})\le c_{I_d}(Y, \lambda_{\varepsilon}) \le L_{\varepsilon, d} = 2d|e|(1+\varepsilon). $$
 If  $c_k(Y, \lambda_{\varepsilon})\le L_{\varepsilon, d-1}$, then there exists $\sigma \in V^{\varepsilon}_{d-1}$  such that  $U^k \sigma =[\emptyset]$. Without loss of generality, we assume $\sigma$ is homogenous.  Since $U$ has degree $-2$, we have $\operatorname{gr}(\sigma)=2k$.  By Lemma \ref{lem2},  $\sigma =\sum_{m=0}^{d-1} a_{m, i_m}\Phi_{m}^{\varepsilon}(x_{m, i_m})$ for some $a_{m, i_m} \in \Q$.  By Remark \ref{remark1}, we have 
\begin{equation} \label{eq29}
\begin{split}
 k=&\frac{1}{2} \operatorname{gr}(\sigma) = \frac{m(m+1)}{2}|e| + m(1-g) -\frac{1}{2}\degop x_{m, i_m}\\
 \le&\frac{d(d-1)}{2}|e| + (d-1)(1-g)=I_{d-1} + g<k.
\end{split}
\end{equation} 
 We obtain a contradiction. Hence,  $c_k(Y, \lambda_{\varepsilon})> L_{\varepsilon, d-1}$. By spectrality of ECH spectrum (see Lemma 4.1 of \cite{GHC} for example),  we have an orbit set $\alpha$ such that  $[\alpha] =0$ and 
 $$\max\{2(d-1)|e|(1+\varepsilon), 1\}=L_{\varepsilon, d-1}<c_k(Y, \lambda_{\varepsilon})  =2d(\alpha)|e|(1+O(\varepsilon))  \le L_{\varepsilon, d}=2d|e|(1+\varepsilon). $$
Here $|O(\varepsilon) | \le \frac{\varepsilon}{10}$ because $|H|_{C^0} \le \frac{1}{10}$. Thus, we have $d(\alpha) =d$.

 In sum, we have 
 $$c_k(Y, \lambda_{\varepsilon}) =2d|e|(1+O(\varepsilon)). $$

\end{proof}

To deal with the case that  $I_{d-1}<k \le I_{d-1} +g$, we need to rule out the probability that there exists $\sigma \in   V^{\varepsilon}_{d-1}$  such that $U^k \sigma =[\emptyset]$. 
\begin{lemma} \label{lem20}
Let $d\ge 1$. Assume that $I_d<k \le I_d +g$.  Then 
$$[\emptyset] \notin U^kV_d^{\varepsilon}.$$
\end{lemma}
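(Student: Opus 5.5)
The plan is to argue by contradiction: suppose $[\emptyset]=U^k\sigma$ for some $\sigma\in V_d^{\varepsilon}$, and pair both sides against the filling maps $\Psi^{\varepsilon}_m$. This turns the statement into a polynomial identity that cannot hold.

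\textbf{Step 1: reduce to a single level and degree.} We may take $\sigma$ homogeneous with $\operatorname{gr}(\sigma)=2k$. By Lemma \ref{lem2} we expand $\sigma=\sum_{m\le d}\sum a_{m,i_m}\Phi^{\varepsilon}_m(x_{m,i_m})$. The grading formula in Remark \ref{remark1} gives $\frac12\operatorname{gr}\Phi^{\varepsilon}_m(x)=\frac{m(m+1)}2|e|+m(1-g)-\frac12\deg x$. For $m\le d-1$ this is at most $I_{d-1}+g\le I_d<k$, as in (\ref{eq29}). Hence only $m=d$ contributes, and $\deg x_{d,i_d}=2(I_d+g-k)<2g$.

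Write $\sigma=\sum_{b}c_b\Phi^{\varepsilon}_d(u^{a_b}\theta^b)+\sigma'$, where $\sigma'$ collects the terms involving some $e_r^c\neq1$. By Theorem \ref{thm0}, every Gromov--Taubes number used below vanishes on $\sigma'$, so $\sigma'$ can be ignored. Since $U[\emptyset]=0$, we may replace $\Phi^{\varepsilon}_d$ by $\tilde\Phi^{\varepsilon}_d$ when computing $U^k\sigma$.

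\textbf{Step 2: translate $U^k\sigma=[\emptyset]$ into Gromov--Taubes equations.} The pairing is block diagonal: $\langle\cdot,\cdot\rangle_{ech}=\bigoplus q^{m|e|}$ (Lemmas \ref{lem14}, \ref{lem3}). So $U^k\sigma=[\emptyset]$ holds exactly when two conditions hold. First, $\Psi^{\varepsilon}_0(1\otimes U^k\sigma)=1$. Second, $\Psi^{\varepsilon}_m(x'\otimes U^k\sigma)=0$ for every $1\le m\le d$ and every $x'\in\mathcal B_{m|e|}$.

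By the composition law these quantities are $\Gr(X,A_{d,m},x'u^k x)$. Note $A_{d,m}\cdot[S^2]=A_{d,m}\cdot[\Sigma_-]=m|e|$ for $m\ge1$, while $A_{d,0}\cdot[S^2]=0$. Define the polynomial
$$f(t):=\sum_b c_b\,\frac{g!}{(g-b)!}\,t^{\,g-b}.$$
Then Theorem \ref{thm0} makes the $m=0$ condition read $f(1)=1$ (recall $A_{d,0}\cdot[S^2]=0$). For $m\ge1$, take test elements $x'=u^{a'}\theta^j$. By Theorem \ref{thm0} each equation becomes a fixed nonzero multiple of $f^{(j)}(1+m|e|)=0$, since multiplying by $\theta^j$ lowers $g-b$ by $j$ and produces the falling-factorial coefficients.

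\textbf{Step 3: conclude.} The key choice is $m=1$, which is available because $d\ge1$. Suppose $x'=u^{a'}\theta^j$ lies in $\mathcal B_{|e|}$ with the degree forced by $I(A_{d,1})=\deg x'+2k+\deg x$, and that this happens for every $j=0,\dots,g$. Then $f$ and all its derivatives up to order $g$ vanish at $t=1+|e|$. Since $\deg f\le g$, this forces $f\equiv0$, contradicting $f(1)=1$. If some $j$ are missing at $m=1$, I would supply them from the levels $m=2,\dots,d$: vanishing of lower-order derivatives at several distinct points $1+m|e|$ still gives at least $g+1$ independent linear conditions on $f$.

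\textbf{Main obstacle.} The hard step is the bookkeeping in Step 3. One must check, via the index formula of Lemma \ref{lem4} and the bound $a'\le m|e|-g$ in $\mathcal B_{m|e|}$, that enough test elements $x'$ exist with the exact degree required, using $e<\chi(\Sigma)$ and $I_d<k\le I_d+g$. The borderline cases $d=1$ and $e=1-2g$ need separate attention. When $d=1$ there is only the level $m=1$, so the exponent range of $a'$ must be verified directly. When $e=1-2g$, the modified basis $\Phi^{\varepsilon}_1$ introduces the correction term $\Psi^{\varepsilon}_0(1\otimes\tilde\Phi^{\varepsilon}_1(\cdot))[\emptyset]$. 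That term only alters the $m=0$ coefficient, so I expect it to be harmless, but it has to be tracked.
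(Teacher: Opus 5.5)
\textbf{Verdict: there is a genuine gap in Step 3, and the evaluation points are also wrong.} Your Steps 1--2 follow the paper's route. You reduce to $\sigma=\Phi^{\varepsilon}_d(x)$ with $\deg x=2(g-j)$, where $j:=k-I_d\in\{1,\dots,g\}$. You then pair with $\Psi^{\varepsilon}_0$ and $\Psi^{\varepsilon}_1$ and read off Gromov--Taubes numbers from Theorem~\ref{thm0}.

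\textbf{The evaluation points.} By Lemma~\ref{lem4}, $A_{d,m}=m|e|[S^2]+(d-m)[\Sigma_+]$, so $A_{d,m}\cdot[S^2]=d-m$. You computed $A_{d,m}\cdot[\Sigma_-]$ instead. So the $m=1$ equations read $f^{(i)}(d)=0$, and the $m=0$ equation reads $f(d+1)=1$, not $f(1)=1$.

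\textbf{The counting.} The $g+1$ conditions you need do not exist. The degree forced on a test element at level $m$ is
\[
\deg x' = I(A_{d,m})-2k-\deg x = m(2g-2)-m(m-1)|e| .
\]
This equals $2g-2$ for $m=1$. It is negative for every $m\ge 2$, since $|e|>2g-2$. So the levels $m\ge 2$ give no equations at all. At $m=1$ the only test elements are $u^{g-1-i}\theta^i$ with $0\le i\le g-1$. That gives exactly $g$ conditions $f^{(i)}(d)=0$, never $g+1$. These only force $f=C(t-d)^g$, and $C=1$ is compatible with $f(d+1)=1$. In fact $f(t)=(t-d)^g$ is exactly the polynomial attached to $S'_d$. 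In the setting of Lemma~\ref{lem19}, $U^{I_d}\tilde\Phi^{\varepsilon}_d(S'_d)=[\emptyset]$. So the statement fails at $k=I_d$, and any argument that uses $k>I_d$ only to discard the lower levels cannot close.

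\textbf{The missing idea.} Use $j\ge 1$ a second time. Since $\deg x=2(g-j)$, only $b\le g-j$ occur in $x$. So $f$ contains only the monomials $t^{j},\dots,t^{g}$, and in particular $f(0)=0$. Combined with $f=C(t-d)^g$ and $d\ge 1$, this forces $C=0$. Then every $c_b=0$ and $x=x_{prim}$, so
\[
\Psi^{\varepsilon}_0(1\otimes U^k\sigma)=\Gr(X,A_{d,0},u^k x_{prim})=0\neq 1,
\]
which is the desired contradiction. This is what the paper does, phrased as invertibility of the $(g-j+1)\times(g-j+1)$ system coming from the test elements $y=u^{g-1-b}\theta^b$ with $j-1\le b\le g-1$.

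\textbf{Your borderline worries.} With this fix, your concerns about $d=1$ and $e=1-2g$ disappear. Only the levels $m=0,1$ are used. Any correction term in $\Phi^{\varepsilon}_1$ is a multiple of $[\emptyset]$, which $U^k$ kills because $k\ge 1$.
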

\begin{proof}
Let $\sigma \in  V_d^{\varepsilon}$ such that  $U^k\sigma =[\emptyset]$ and $\operatorname{gr}(\sigma) =2k$.  
By Lemma \ref{lem2},  we write $\sigma =\sum_{m=0}^{d} a_{m, i_m}\Phi_{m}^{\varepsilon}(x_{m, i_m})$.  Note that $I_d -I_{d-1}=d|e|-g+1$.  If $a_{m, i_m} \ne 0 $ for some $m \le d-1$, then  by (\ref{eq29}), we have 
$$k\le I_{d-1} +g = I_d -d|e|+2g-1\le  I_d. $$
This contradicts the assumption that  $k>I_d$.
Consequently,  we have  
$$\sigma \in  V_d^{\varepsilon} \setminus V_{d-1}^{\varepsilon}  \mbox{ and } \sigma =\Phi_d^{\varepsilon}(x)$$ for some $x \in \operatorname{Span}_{\Q} \mathcal{B}_{d|e|}$.

Write $k=I_d +j$ for some $1\le j\le g$. The index computation implies that 
\begin{equation*}
\begin{split}
 I_d+ j=k=&\frac{1}{2} \operatorname{gr}(\sigma) = \frac{d(d+1)}{2}|e| + d(1-g) -\frac{1}{2}\degop x \\
=&I_{d} +g  -\frac{1}{2}\degop x
\end{split}
\end{equation*} 
Hence, $\degop x =2g-2j$. 
Write  
$$x = \sum_{a=0}^{g-j} c_a u^{g-j-a} \theta^{a} + x_{prim},$$ 
where $x_{prim}$ contains  factors $e_{r}^c\ne 1$. 
 
 For any $y \in \mathbb{A}(\Sigma)$,  by Theorem \ref{thm0} and composition law of ECH cobordism maps, we have 
\begin{equation*}
\begin{split}
&0= \Psi_1^{\varepsilon}(y\otimes \Phi_0^{\varepsilon}(1))  =\Psi_1^{\varepsilon}(y\otimes[\emptyset] ) =  \Psi_1^{\varepsilon}(y\otimes U^k \Phi_d^{\varepsilon}(x))=\Psi_1^{\varepsilon}(y\otimes U^k \tilde{\Phi}_d^{\varepsilon}(x)) =\Gr(X, A_{d, 1}, yu^k x).  
\end{split}
\end{equation*}  
We want to find $y$ such that  the right hand side  of the above equation to be nonzero and obtain a contradiction. The Gromov-Taubes invariants $\Gr(X, A_{d, 1}, yu^k x)$ are nonzero  only if $$I(A_{d, 1}) = 2k + \degop y + \degop x. $$
This implies that  $ \degop y  =2g-2$. We take $y =u^{g-1-b} \theta^{b} $, where $0\le b \le g-1.$

Then by Theorem \ref{thm0},  we have 
\begin{equation*} 
\begin{split}
 \Gr(X, A_{d, 1}, yu^k x)=& \sum_{a=0}^{g-j} c_a  \Gr(X, A_{d, 1},   u^{g-1-b}  \theta^b u^k  u^{g-j-a}\theta^a ) \\
 =& \sum_{a=0}^{g-j} c_a  \Gr(X, A_{d, 1},   u^{g-1-b + k + g-j-a} \theta^{a+b}) I(a+b \le g)\\
 =&\sum_{a=0}^{g-j} c_a \frac{g!}{(g-a-b)!}d^{ g- a-b} I(a+b \le g).  
\end{split}
\end{equation*}  
Here $ I(a+b \le g) =1$ provided that  $a+b \le g$,  and equals to  zero otherwise. 
Then
$$\sum_{a=0}^{g-j} c_a \frac{g!d^{g-a-b}}{(g-b-a)!} I(a+b \le g) =0. $$
Take $j-1\le b \le g-1$. Equivalently, $1\le g-b\le  g-j+1$.   Let $l= g-b-1$. Then the above equations is equivalent to 
$$\sum_{a=0}^{g-j} M_{l, a}c_a  =0, $$
where $M_{l, a} =  \frac{g!d^{l+1-a}}{(l+1-a)!} $   if $0\le a \le l+1$, else, $M_{l, a} =0$. One can show that the determinant of the matrix is  
$$\det (M_{l,a}) = \frac{(g!)^{g-j+1} d^{g-j+1}}{(g-j+1)!}.$$  Thus, it is an invertible,  and  $c_a=0$ for $0 \le a\le g-j$. 

Hence, $x =x_{prim}$.  We have another equation
\begin{equation*}
\begin{split}
&1=\Psi_0^{\varepsilon}(1\otimes \Phi_0^{\varepsilon}(1)) =\Psi_0^{\varepsilon}(1\otimes[\emptyset] )=\Psi_0^{\varepsilon}(1\otimes U^k \Phi_d^{\varepsilon}(x_{prim})) = \Gr(X, A_{d, 0}, u^k x_{prim}). \\
\end{split}
\end{equation*}  
However, by Theorem \ref{thm0},   $\Gr(X, A_{d, 0}, u^k x_{prim}) =0$. We obtain a contradiction.  In sum, such a $\sigma$ cannot exist. 

\end{proof}

\begin{lemma} \label{lem21}
Let $d \ge 2$. Assume $I_{d-1} <k \le I_{d-1} +g$.  Then 
 $$c_k(Y, \lambda_{\varepsilon}) =2d|e|(1+O(\varepsilon)). $$
\end{lemma}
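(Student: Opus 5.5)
The plan follows Lemma \ref{lem22}: an upper bound from the explicit elements of Lemma \ref{lem19}, and a lower bound from Lemma \ref{lem20} plus spectrality.

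\textbf{Upper bound.} Since $k \le I_{d-1}+g \le I_d$, monotonicity of the ECH spectrum gives
$$c_k(Y,\lambda_\varepsilon)\le c_{I_d}(Y,\lambda_\varepsilon)\le L_{\varepsilon,d}=2d|e|(1+\varepsilon).$$
The middle inequality uses the element $\tilde{\Phi}_d^{\varepsilon}(S'_d)\in V^\varepsilon_d$, which satisfies $U^{I_d}\tilde{\Phi}_d^{\varepsilon}(S'_d)=[\emptyset]$. By Lemma \ref{lem19} this element is available for every $d\ge 2$, both when $e\le -2g$ and when $e=1-2g$. This is why the statement assumes $d\ge 2$.

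\textbf{Lower bound.} Suppose, for contradiction, that $c_k(Y,\lambda_\varepsilon)\le L_{\varepsilon,d-1}$.
\begin{itemize}
\item Since $L_{\varepsilon}$, and hence the filtration levels, can be chosen to avoid actions, there is then some $\sigma\in \ECH^{L_{\varepsilon,d-1}}(Y,\lambda_\varepsilon,0)$ with $U^k\sigma=[\emptyset]$.
\item By Lemma \ref{lem2} we have $\ECH^{L_{\varepsilon,d-1}}=V^\varepsilon_{d-1}$, so $\sigma\in V^\varepsilon_{d-1}$.
\item We may take $\sigma$ homogeneous of grading $2k$, since $U$ has degree $-2$ and $[\emptyset]$ has grading $0$.
\end{itemize}
Lemma \ref{lem20}, applied with $d-1\ge 1$ in place of $d$ (its hypothesis $I_{d-1}<k\le I_{d-1}+g$ is exactly our assumption), gives $[\emptyset]\notin U^kV^\varepsilon_{d-1}$. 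This is a contradiction, so $c_k(Y,\lambda_\varepsilon)>L_{\varepsilon,d-1}=\max\{2(d-1)|e|(1+\varepsilon),1\}$.

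\textbf{Pinning down the value.} By spectrality (Lemma 4.1 of \cite{GHC}), $c_k(Y,\lambda_\varepsilon)$ equals the action of some null-homologous orbit set $\alpha$. Such an action has the form $2d(\alpha)|e|(1+O(\varepsilon))$ with $|O(\varepsilon)|\le \varepsilon/10$, because $|H|_{C^0}\le 1/10$. Combining the two bounds,
$$2(d-1)|e|(1+\varepsilon)<2d(\alpha)|e|(1+O(\varepsilon))\le 2d|e|(1+\varepsilon),$$
which forces $d(\alpha)=d$ for $\varepsilon$ small. Hence $c_k(Y,\lambda_\varepsilon)=2d|e|(1+O(\varepsilon))$.

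\textbf{Expected obstacle.} The main difficulty is already handled by Lemma \ref{lem20}: excluding elements of the lower filtration level through the Gromov--Taubes pairing and the invertible matrix $M_{l,a}$. The only remaining care point is checking that the index range in Lemma \ref{lem20} matches after the shift $d\mapsto d-1$, and that its hypothesis $d-1\ge 1$ holds, which it does because $d\ge 2$.
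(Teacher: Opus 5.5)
Your proposal is correct and follows the paper's proof: the lower bound comes from contradicting Lemma~\ref{lem20} at level $d-1$, and spectrality with $|O(\varepsilon)|\le \varepsilon/10$ then forces $d(\alpha)=d$. You also make explicit the upper bound $c_k\le c_{I_d}\le L_{\varepsilon,d}$ (via $I_{d-1}+g\le I_d$ and Lemma~\ref{lem19}), which the paper uses without comment. One small correction: $L_{\varepsilon,d-1}$ avoids action values not because of how $L_\varepsilon$ is chosen, but because for small $\varepsilon$ no null-homologous orbit set has action strictly between $2(d-1)|e|(1+\varepsilon/10)$ and $2d|e|(1-\varepsilon/10)$.
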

\begin{proof}  
 If  $c_k(Y, \lambda_{\varepsilon})\le L_{\varepsilon, d-1}=\max\{2(d-1)|e|(1+\varepsilon), 1\}$, then there exists $\sigma \in V^{\varepsilon}_{d-1}$  such that $U^k \sigma =[\emptyset]$.  However, this contradicts with Lemma \ref{lem20}. Hence,  $c_k(Y, \lambda_{\varepsilon})> L_{\varepsilon, d-1}$.  
 
 By spectrality of ECH spectrum,  we have an orbit set $\alpha$ such that  $[\alpha] =0$ and 
 $$\max\{2(d-1)|e|(1+\varepsilon), 1\}=L_{\varepsilon, d-1}<c_k(Y, \lambda_{\varepsilon})  =2d(\alpha)|e|(1+O(\varepsilon))  \le L_{\varepsilon, d}=2d|e|(1+\varepsilon). $$
Here $|O(\varepsilon) | \le \frac{\varepsilon}{10}$. Thus, we have $d(\alpha) =d$.

Then 
 $$c_k(Y, \lambda_{\varepsilon}) =2d|e|(1+O(\varepsilon)). $$
\end{proof}

\begin{proof}[Proof of Theorem \ref{thm3}]
Because $I_d$ is increasing with respect to $d$, for any $k \in \Z_{\ge 1}$,  we can find a unique $d$ such that $I_{d-1}<k \le I_d$. 
By Lemma \ref{lem21}  and Lemma \ref{lem22}, 
 $$c_k(Y, \lambda_{\varepsilon}) =2d|e|(1+O(\varepsilon)). $$
 Taking  $\varepsilon \to 0 $, we have $$c_k(Y, \lambda ) =2d|e|. $$
\end{proof}

 To prove Corollary \ref{corollary1}, it suffices to prove the following lemma. 
 \begin{lemma} \label{lem23}
 Let $(M, \Omega)$ be a symplectic 4-manifold.   Suppose we have a symplectic hyperplane section $\Sigma \subset M$  with degree $k_{\Sigma}>0$. Let $(Y, \lambda)$ be the prequantization bundle over $(\Sigma,  k_{\Sigma}\Omega \vert_{\Sigma})$ with Euler number $e =-[\Sigma]^2$. Then
 $$c_k(M \setminus \Sigma, \Omega) = \frac{1}{2k_{\Sigma}} c_k(Y, \lambda). $$
 \end{lemma}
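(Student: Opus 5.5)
The plan is to identify $M\setminus\Sigma$, up to exhausting subsets, with a completion of a Liouville-type domain whose boundary is a rescaled copy of $(Y,\lambda)$. Then invoke the definition of ECH capacities through Liouville domains, together with monotonicity and conformality.

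\textbf{Step 1 (normal form near $\Sigma$).} By the symplectic neighborhood theorem, a neighborhood of $\Sigma$ in $(M,\Omega)$ is symplectomorphic to a disk bundle in the normal bundle $N\Sigma$. The normal bundle has degree $[\Sigma]^2$, which is dual to $E^\vee$, and we take the standard form $\pi^*\Omega|_\Sigma + d(r^2\alpha_{N})$, with the angular form normalized so that $d\alpha_N=-\pi^*\Omega|_\Sigma$ up to the factor $k_\Sigma$. Since $[\Sigma]$ is Poincar\'e dual to $k_\Sigma[\Omega]$ over $\Q$, the class $[\Omega]$ restricts to zero on $M\setminus\Sigma$ rationally. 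Indeed $\Omega$ is exact on the complement: one constructs a primitive $\lambda_M$ on $M\setminus\Sigma$ that agrees, near $\Sigma$, with $\tfrac{1}{k_\Sigma}\cdot$ (radial rescaling of) the angular form $-\alpha_{E^\vee}$ plus an exact correction. This is the standard Giroux/Biran decomposition: $M\setminus \Sigma$ deformation retracts onto a Liouville domain $W$ whose boundary is contact-isomorphic to $(Y,\tfrac{1}{2k_\Sigma}\lambda)$. The boundary is the circle bundle with Euler number $-[\Sigma]^2$ over $(\Sigma,k_\Sigma\Omega|_\Sigma)$. The factor $\tfrac{1}{2k_\Sigma}$ comes from $\lambda=2\alpha_E$ and from the scaling $k_\Sigma\Omega|_\Sigma$ of the base form. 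Concretely, the region $\{r\le r_0\}$ near $\Sigma$ is a concave cap, and its complement $W_{r_0}=M\setminus\{r<r_0\}$ has contact boundary $(Y,c(r_0)\lambda)$ with $c(r_0)\nearrow \tfrac{1}{2k_\Sigma}$ as $r_0\to 0$. Moreover $M\setminus\Sigma=\bigcup_{r_0}W_{r_0}$.

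\textbf{Step 2 (lower bound).} Each $W_{r_0}$ is a Liouville domain, since it is exact with outward Liouville field given by the primitive $\lambda_M$. This requires checking that $\lambda_M$ restricts to a positive contact form on $\{r=r_0\}$, which the model computation for $\mathbb{D}E^\vee$ in Section 3 already provides. By definition, $c_k(M\setminus\Sigma)\ge c_k(W_{r_0})=c_k(Y,c(r_0)\lambda)=c(r_0)c_k(Y,\lambda)$, using conformality of the ECH spectrum. Letting $r_0\to0$ gives $\ge \tfrac{1}{2k_\Sigma}c_k(Y,\lambda)$.

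\textbf{Step 3 (upper bound).} Let $(X',\omega')\hookrightarrow M\setminus\Sigma$ be any Liouville domain. By compactness it lies in some $W_{r_0}$. ECH capacities are monotone under symplectic embeddings of Liouville domains into Liouville domains, by the cobordism-map argument of \cite{H3}, where the complement is a weakly exact cobordism. Hence $c_k(X')\le c_k(W_{r_0})\le \tfrac{1}{2k_\Sigma}c_k(Y,\lambda)$. Here we use that $c(r_0)\le \tfrac1{2k_\Sigma}$ and that $c_k(Y,\lambda)$ may be infinite. Taking the supremum completes the proof.

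The main obstacle is Step 1. It requires producing a global primitive of $\Omega$ on $M\setminus\Sigma$ that matches the model angular form near $\Sigma$ with exactly the right normalization, when the duality $[\Sigma]=\mathrm{PD}(k_\Sigma[\Omega])$ holds only rationally. Torsion in $H^2(M\setminus\Sigma;\Z)$ is harmless since we only need a real primitive. I would first choose any primitive $\beta$ on $M\setminus\Sigma$, then compare $\beta$ with the model primitive on the punctured neighborhood. Their difference is closed, and its period on the meridian circle is fixed by $k_\Sigma$, so it equals a closed form pulled back from $\Sigma$ plus an exact form. That form pulled back from $\Sigma$ can be absorbed into the connection, and the exact part can be cut off, giving the required $\lambda_M$.
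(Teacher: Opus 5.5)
Your proposal is correct and follows the paper's proof: $M\setminus\Sigma$ is exhausted by Liouville domains $X_\delta=M\setminus U_\delta$ whose boundaries are $(Y,c\lambda)$ with $c\nearrow \frac{1}{2k_\Sigma}$; compactness plus monotonicity gives the upper bound, and the supremum definition plus conformality gives the lower bound. The only difference is that the paper does not construct the primitive itself but quotes Proposition 2.1 of \cite{BK} (see also \cite{Gir}) for the model $\frac{1}{k_\Sigma}\Omega_{can}$, $\Omega_{can}=-d(e^{-r^2}\alpha_{E^{\vee}})$, whose boundary form is $\frac{e^{-\delta^2}}{2k_\Sigma}\lambda$. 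If you build the primitive by hand instead, note that the leftover basic term $\pi^*\eta$ shifts the connection by an amount that depends on $r_0$, so you also need that connections with the same curvature give strictly contactomorphic prequantization forms (lift a non-Hamiltonian symplectic isotopy of $\Sigma$ with the right flux, then gauge-transform).
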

\begin{proof}  
Let $e =-[\Sigma]^2.$   
Let $E^{\vee}$ denote the normal bundle of $\Sigma$.   By the assumption, we have     $c_1(E^{\vee}) =k_{\Sigma}[\omega_{\Sigma}]$, where $\omega_\Sigma =\Omega{\vert_{\Sigma} }.   $ The Chern number of $E^{\vee}$ is $[\Sigma]^2$.

Before  computing  the ECH capacities, we first  give the description  on the symplectic neighborhood of $\Sigma \subset M$. There is a standard  symplectic form $\Omega_{can}$ on  the normal bundle $E^{\vee}$ defined by
$$\Omega_{can} := - d(e^{-r^2} \alpha_{E^{\vee}})   .$$
where $r $ is the radius coordinate of $E^{\vee}$ and $\alpha_{E^{\vee}} $ is defined in (\ref{eq33}).  Note that $d\alpha_{E^{\vee}} = -k_{\Sigma} \omega_{\Sigma}$.  One can extend $\Omega_{can}$  smoothly across  the zero section so that it becomes a symplectic form  of $E^{\vee}$.   The Liouville vector field is  of $\Omega_{can} $ on $E^{\vee} \setminus \Sigma$ is 
$$Z  = -\frac{1}{2r}  \partial_r.  $$

By  Proposition 2.1 of \cite{BK},   the  open disk subbundle  $\left(\mathring{\mathbb{D}}_{\delta}E^{\vee}, \frac{1}{k_{\Sigma}} \Omega_{can}  \right) $ is symplectic to an open  neighourhood  $U_{\delta}  \subset M$  open  neighborhood of $\Sigma$, where $\mathbb{D}_{\delta} E^{\vee}:= \{ v \in E^{\vee}: |v| < \delta\}$.   
Let $Y=-\partial U_{\delta}$.   Then $Y$ is a circle bundle over $\Sigma$ with Euler number $e<0$. The induced contact form on   is  
  $$ \frac{1}{k_{\Sigma}} \Omega_{can}(Z,  \cdot) =-\frac{e^{-\delta^2}}{k_{\Sigma}}  \alpha_{E^{\vee}} =\frac{e^{-\delta^2}}{2k_{\Sigma}} \lambda . $$   
Then $(\overline{U}_{\delta}, \Omega ) $ is a strong symplectic cap of  the prequantization bundle $(Y, \frac{e^{-\delta^2}}{2k_{\Sigma}}  \lambda)$. Let $X_{\delta} :=M \setminus U_{\delta}$.     By  Proposition 2.1 of \cite{BK} (also see \cite{Gir}),    $(X_{\delta}, \Omega \vert_{X_{\delta}} =d\lambda_{X_{\delta}})$ is an exact symplectic  filling  (in fact, a Weinstein filling) of $(Y, \frac{e^{-\delta^2}}{2k_{\Sigma}}  \lambda)$.

For any Liouville domain $(X', d\lambda_{X'}) \hookrightarrow (M \setminus\Sigma,  \Omega)$,  because $X'$ is compact, we can find a sufficiently small $\delta>0$ such that $(X', d\lambda_{X'})   \hookrightarrow (X_{\delta}, d\lambda_{X_{\delta}})$.  By the monotonicity of ECH capacities,  we have 
$$c_k(X', d\lambda_{X'})  \le  c_k (X_{\delta}, d\lambda_{X_{\delta}}) = c_k(Y, \frac{e^{-\delta^2}}{2k_{\Sigma}}  \lambda)=\frac{e^{-\delta^2}}{2k_{\Sigma}} c_k(Y, \lambda)\le \frac{1}{2k_{\Sigma}} c_k(Y, \lambda).$$
Thus, $c_k(M \setminus \Sigma, \Omega ) = \sup \{c_k(X', d\lambda_{X'})  \vert (X', d\lambda_{X'})   \hookrightarrow  (M \setminus\Sigma,  \Omega)\} \le  \frac{1}{2k_{\Sigma}} c_k(Y, \lambda)$.  
On the other hand,  $ (X_{\delta}, d\lambda_{X_{\delta}})$ is a sequence of Liouville domains in $M \setminus \Sigma$. Hence,  
$$c_k(M \setminus \Sigma, \Omega ) \ge \limsup_{\delta \to 0} c_k (X_{\delta}, d\lambda_{X_{\delta}}) =  \limsup_{\delta \to 0}c_k(Y, \frac{e^{-\delta^2}}{2k_{\Sigma}}\lambda)=  \frac{1}{2k_{\Sigma}} c_k(Y, \lambda).$$
In sum,  $c_k(M \setminus \Sigma, \Omega ) =  \frac{1}{2k_{\Sigma}} c_k(Y, \lambda).$
\end{proof}

\begin{proof}[Proof of Corollary \ref{corollary1}]
 By assumption,  we have  
$$-e = [\Sigma]^2 = k_{\Sigma}^2 \int_M \Omega \wedge \Omega >2g-2. $$
Then the prequantization bundle $(Y, \lambda)$ in Lemma \ref{lem23} has Euler number $e<2-2g$. Thus, we can apply the results in Theorem \ref{thm3}. 
The conclusion of Corollary \ref{corollary1} follows directly from Lemma \ref{lem23}  and  Theorem \ref{thm3}. 
\end{proof}

Guanheng Chen  \\
Shenzhen University\\
Email: \texttt{ghchen@szu.edu.cn}
\end{document}